\crefname{equation}{}{} 
\numberwithin{equation}{section}
\newtheorem{theo}{Theorem}[section]
\newtheorem{coro}[theo]{Corollary}
\newtheorem{lemm}[theo]{Lemma}
\newtheorem{prop}[theo]{Proposition}
\theoremstyle{definition}
\newtheorem{defi}[theo]{Definition}
\newtheorem{exam}[theo]{Example}
\newtheorem{rema}[theo]{Remark}
\newtheorem{assumption}[theo]{Assumption}
\newcommand{\B}{\mathbb B}
\newcommand{\D}{\mathbb D}
\newcommand{\E}{\mathbb E}
\newcommand{\F}{\mathbb F}
\newcommand{\bN}{\mathbb N}
\renewcommand{\P}{\mathbb P}
\newcommand{\oP}{\overline{\mathbb P}}
\newcommand{\R}{\mathbb R}
\newcommand{\1}{\mathbbm{1}}
\newcommand{\od}{\mathrm{d}}
\newcommand{\cB}{\mathcal B}
\newcommand{\cC}{\mathcal C}
\newcommand{\cD}{\mathcal D}
\newcommand{\cF}{\mathcal F}
\newcommand{\cK}{\mathcal K}
\newcommand{\cL}{\mathcal L}
\newcommand{\cN}{\mathcal N}
\newcommand{\cP}{\mathcal P}
\newcommand{\cR} {{\mathcal R}}
\newcommand{\Oz}{\Omega}
\newcommand{\tz}{\theta}
\newcommand{\sz}{\sigma}
\newcommand{\sign}{{\rm sign}}
\DeclareMathOperator*{\esssup}{ess\,sup}
\newcommand{\sptext}[3]{\hspace{#1 em}\mbox{#2}\hspace{#3 em}}
\newcommand{\fz}{\infty}
\newcommand{\chf}{\mathds{1}}
\newcommand{\BMO}{\mathrm{BMO}}
\newcommand{\hs}{\mathrm{HS}}
\newcommand{\iue}{(I_u^\varepsilon)}
\newcommand{\cadap}{{\rm c,adap}}
\newcommand{\br}{{\bf r}}
\begin{document}

\title[Coupling of SDEs]{Regularity of stochastic differential equations on the Wiener space by coupling}

\author{Stefan Geiss}
\address{Department of Mathematics and Statistics, P.O.Box 35, FI-40014 University of Jyv\"asky\-l\"a, Finland}
\email{stefan.geiss@jyu.fi}
\author{Xilin Zhou}
\address{Department of Mathematics and Statistics, P.O.Box 35, FI-40014 University of Jyv\"asky\-l\"a, Finland}
\email{xilin.j.zhou@jyu.fi}

\subjclass[2020]{Primary:
60H07, % Stochastic calculus of variations and the Malliavin calculus
60H10, % Stochastic ordinary differential equations
46E35. % Sobolev spaces and other spaces of “smooth” functions, embedding theorems, trace theorems
Secondary:
46B70.} %Interpolation between normed linear spaces

\keywords{
stochastic differential equations,
coupling,
Malliavin Besov spaces,
real interpolation}

\begin{abstract}
Using the coupling method introduced in \cite{Geiss:Ylinen:21}, we investigate regularity properties
of stochastic differential equations, where we consider the Lipschitz case in $\R^d$ and
allow for H\"older continuity of the diffusion coefficient of scalar valued stochastic differential equations.
Two cases of the coupling method are of special interest: The uniform coupling to treat 
the Malliavin Sobolev space $\D_{1,2}$ and real interpolation spaces, and secondly a cut-off coupling to treat the $L_p$-variation
of backward stochastic differential equations where the forward process is the investigated stochastic 
differential equation. 
\end{abstract}

\maketitle
\tableofcontents

%%%%%%%%%%%%%%%%%%%%%%%%%%%%%%%%%%%%%%%%%%%%%%%%%%%%%%%%%%%%%%%%%%%%%%%%%%%%%%%%%%%%%%%%%%%%%%%%%%%%%%%%%%%%%%%%%%%%%
%%%%%%%%%%%%%%%%%%%%%%%%%%%%%%%%%%%%%%%%%%%%%%%%%%%%%%%%%%%%%%%%%%%%%%%%%%%%%%%%%%%%%%%%%%%%%%%%%%%%%%%%%%%%%%%%%%%%%

\section{Introduction}

We consider the regularity of $d$-dimensional stochastic differential equations (SDEs) of type
\begin{equation}\label{eqn:intro:SDE_random_coefficients}
        X_s^{t,\xi} 
        = \xi + \int_t^s b(u, X^{t,\xi})\,\od u
        + \int_t^s \sigma(u, X^{t,\xi})\,\od W_u
\end{equation}
with random coefficients and driven by an $N$-dimensional Brownian motion,
where $d,N\ge 1$ are fixed. The investigation is based on the coupling method introduced
in \cite{Geiss:Ylinen:21}. Although the title of  \cite{Geiss:Ylinen:21} refers to {\it decoupling}
\cite{Geiss:Ylinen:21} actually proposes a {\it coupling by a partial decoupling}, so that we use  
the description {\it coupling} in the following.
Originally this coupling method was designed to investigate the $L_p$-regularity in time of solutions
to backward stochastic differential equations (BSDEs). For this the starting point was \cite{Geiss:Geiss:Gobet:2012}
on the Wiener space which was extended in \cite{CGeiss:Steinicke:2016} to BSDEs driven by L\'evy processes. The
full framework was developed in \cite{Geiss:Ylinen:21} 
including connections to Malliavin calculus, in particular developing further
descriptions from \cite{Hirsch:1999,Geiss:Toivola:2015} of classical Malliavin Besov spaces by isotropic couplings.
The advantage of the approach \cite{Geiss:Ylinen:21} is that the proposed method is very robust and only needs a minimal set of 
More precisely, the method allows for 
\medskip

\begin{enumerate}[--]

\item a characterization for the Malliavin Sobo\-lev space
      $\D_{1,2}$, see \cref{sec:characterization_D12};

\item a characterization whether a random variable belongs to the Malliavin Sobo\-lev space
      $\D_{1,2}$, where additionally the Malliavin derivative satisfies
      \[ \esssup_{s\in [0,T]} \| D_s \xi \|_{L_2}<\infty, \]
      see \cite[Theorem 4.22]{Geiss:Ylinen:21};

\item a characterization whether a random variable belongs to the Malliavin Besov space
      $(L_p,\D_{1,p})_{\theta,q}$ obtained by real interpolation, 
      see \cite[Theorem 4.16]{Geiss:Ylinen:21};

\item an investigation for the $L_p$-path-regularity of backward stochastic differential equations,
      see \cite[Section 6.5]{Geiss:Ylinen:21}.
\end{enumerate}
\medskip

The plan of the article is as follows:
After some preliminaries in \cref{sec:preliminaries} we introduce in
\underline{\cref{sec:transference}} the transference method, in \underline{\cref{sec:coupling}} the coupling method,
and in \underline{\cref{sec:Besov}} related Besov spaces.
\bigskip

\underline{\cref{sec:Lipschitz}} deals with SDEs in the Lipschitz setting.
The Malliavin differentiability of SDEs in the Markovian setting under Lipschitz type assumptions was studied for example in
\cite{Nualart:06}, \cite{Imkeller:Reis:Salkeld:2019}, and \cite{dosReis:Wilde:2024},  the Sobolev differentiability with respect
to the starting value in \cite{Xie:Zhang:2016}.
Recently, a path-dependent (i.e. non Markovian) setting was considered in \cite{Lee_etal:2023} to obtain
Malliavin differentiability under continuity assumptions on the Fr\'echet derivative of the drift and diffusion coefficient.
Using the coupling method general regularity results for non-random Markovian Lipschitz coefficients
and for $p\ge 2$ were obtained in \cite[Theorem 3]{Geiss:Geiss:Gobet:2012} and \cite[Section 4.7.1]{Geiss:Ylinen:21}.
\medskip

In \cref{sec:Lipschitz} we extent the above results by the coupling method in the following directions:
\begin{enumerate}[--]
\item We allow general predictable random coefficients and random initial conditions, 
      where we also replace the uniform Lipschitz condition on the drift term by a bounded mean oscillation condition. The 
      latter is possible due to Fefferman's inequality and has its origin in the treatment of the generator term
      in backward stochastic differential equations (see \cite[Section 5.4]{Geiss:Ylinen:21}).

\item The coefficients of the SDE are fully path-dependent.
\end{enumerate}

In this framework the general coupling result is \cref{statement:Lipschitz_case_path}
(where we consider $L_p$-moments for all $p\in (0,\infty)$ as in applications (see \cref{sec:application:BSDE})
 small values of $p$ occur due to H\"older assumptions on the data of the BSDE).
In \cref{statement:Malliavin_differentiability} and \cref{statement:Malliavin_differentiability_II}
we consider the Malliavin differentiability and in 
\cref{statement:real_interpolation_SDE_Lipschitz} fractional smoothness in terms of real interpolation spaces.
To do so, we introduce a fractional potential $(U,V)$ for the coefficients $(b,\sigma)$ of the stochastic differential equation, 
which fits the setting of controlled diffusions, see \cref{statement:controlled_diffusions}.
\bigskip

In \underline{\cref{sec:counterexample}} we show that, for H\"older continuous diffusion coefficients, we do not have
Malliavin differentiability of the SDE in general. Our example is based on Cieselski's characterization of H\"older
functions in terms of expansions into Schauder functions.
For example, this is in contrast to the special Heston volatility model, where 
Malliavin differentiability was shown in
\cite{Alos:Ewald:2008}.
\bigskip

In \underline{\cref{sec:Hoelder}} we study the case when the diffusion coefficient is H\"older continuous in dimension one.
Strong approximations of these SDEs are of particular interest, for recent results and references see \cite{Do:Ngo:Pho:2024} and \cite{MG:LY:2024}.
Existence, uniqueness, and the non confluence property have been studied, for example, in
\cite{Fang:Zhang:2005},
\cite{Lan:Wu:2014},
\cite{FYWang:XZhang:2015},
\cite{ZWang:XZhang:2020}, and
\cite{Ren:Zhang:2024}.
Moreover, results about the dependence on the initial value of the SDEs are contained in
\cite{Ren:XZhang:2003},
\cite{XZhang:2005}, and
\cite{Ren:XZhang:2006}.
\smallskip

The aim of \cref{sec:Hoelder} is to investigate whether a diffusion with a H\"older continuous  diffusion coefficient
belongs to the Besov space $\B_p^{\Phi_\alpha}$ introduced in \cref{sec:Bpalpha},
see \cref{statement:holder:main} which follows from the more general \cref{statement:holder:main_general}.
This space plays a key-role
in \cref{sec:application:BSDE} when we discuss the applications to BSDEs. Our results will depend jointly
on the integrability $p$ and the H\"older continuity $\theta\in (1/2,1)$ of the diffusion coefficient. Here we do not assume
that the diffusion coefficient is non-degenerated and will exploit 
results from  Gy\"ongy and R\'asonyi  \cite{Gyongy:Rasonyi:11} about the Euler approximation.
\bigskip 

In \underline{\cref{sec:Zvonkin}} we explain that Zvonkin type transforms and the Lamperti transform go very well
together with the coupling method investigated in this article.
\bigskip

In \underline{\cref{sec:application:BSDE}} we apply the results to backward stochastic
differential equations.  Recent results about Malliavin differentiability of BSDEs -
without using the coupling method - under  classical structural assumptions and with a Markovian forward process can be found in \cite{Imkeller_etal:2024}.
Under minimal structural assumptions and for fully path-dependent forward diffusions 
our approach allows not only to treat the classical  Malliavin differentiability, but also the full scale
of Malliavin Besov spaces  $\B_{p,q}^\theta$.

%%%%%%%%%%%%%%%%%%%%%%%%%%%%%%%%%%%%%%%%%%%%%%%%%%%%%%%%%%%%%%%%%%%%%%%%%%%%%%%%%%%%%%%%%%%%%%%%%%%%%%%%%%%%%%%%%%%%%
%%%%%%%%%%%%%%%%%%%%%%%%%%%%%%%%%%%%%%%%%%%%%%%%%%%%%%%%%%%%%%%%%%%%%%%%%%%%%%%%%%%%%%%%%%%%%%%%%%%%%%%%%%%%%%%%%%%%%

\section{General setting and preliminaries}
\label{sec:preliminaries}

%%%%%%%%%%%%%%%%%%%%%%%%%%%%%%%%%%%%%%%%%%%%%%%%%%%%%%%%%%%%%%%%%%%%%%%%%%%%%%%%%%%%%%%%%%%%%%%%%%%%%%%%%%%%%%%%%%%%%

\subsection{General notation}\ \medskip

Given $A,B\ge 0$ and $c\ge 1$, the notation $A \sim_c B$ stands for
$\frac{1}{c} A \le B \le c A$.
For $x\in \R$ we use $\sign_0(x):= -\1_{(-\infty,0)}(x)+ \1_{(0,\infty)}(x)$. We denote by $|x|$ the euclidean norm for $x\in \R^d$,
and for a matrix $A=(a_{i,k})_{i=1,k=1}^{d,N} \in \R^{d\times N}$ the Hilbert-Schmidt norm
by $|A|_{\hs}:= \sqrt{\sum_{i=1,k=1}^{d,N} |a_{i,k}|^2}$.
Given a function $f:\R^d \to \R$ and $\theta\in (0,1]$ we use the H\"older semi-norms
\[ |f|_\theta:= \sup_{x\not = y} \frac{|f(x)-f(y)|}{|x-y|^\theta}.\]
The Lebesgue measure on $[0,T]$ will be denoted by $\lambda$.

\subsection{Stochastic basis}\ \medskip

In the sequel we assume a stochastic basis satisfying the usual conditions as follows:
For a fixed time horizon $T>0$ we let
\[     W = (W_s)_{s \in [0, T]}
     = \left ( (W_{s,1},\ldots,W_{s,N})^\top \right )_{s \in [0, T]} \]
be an $N$-dimensional standard Brownian motion defined
on a complete probability space $(\Omega,\cF,\P)$, where $W_0\equiv 0$ and all paths are continuous.
We denote by  $(\cF_s)_{s\in [0,T]}$ the augmented natural filtration of $W$ and assume
$\cF=\cF_T$.
\smallskip

\subsection{The Malliavin Sobolev space  $\D_{1,2}$}\ \medskip

Regarding the Malliavin Sobolev spaces $\D_{1,2}\subseteq L_2(\Omega,\cF,\P)$ the reader is referred
to \cite[cf. also Example 1.1.2]{Nualart:06}. For $\xi\in \D_{1,2}$ the Malliavin derivative is a map
\[ D: \D_{1,2} \to L_2(\Omega \times [0,T];\R^N) \]
and $\D_{1,2}$ is a Hilbert space under
$\| \xi\|_{\D_{1,2}}^2:=  \| \xi\|_{L_2}^2 + \| D\xi \|_{L_2(\Omega \times [0,T];\R^N)}^2$.
We use that, for $\xi\in \D_{1,2}$ and $s\in [0,T]$,
\begin{equation}\label{eqn:expected_value_D12}
   \|D (\E[\xi|\cF_s])\|_{L_2(\Omega \times [0,T];\R^N)} \le  \|  D\xi\|_{L_2(\Omega \times [0,T];\R^N)}
\end{equation}
and that for an $\cF_s$-measurable $\xi\in \D_{1,2}$ one has
\begin{equation}\label{eqn:D12_Fs}
\| D\xi\|_{L_2(\Omega\times [0,T];\R^N)} = \| D\xi\|_{L_2(\Omega\times [0,s];\R^N)}.
\end{equation}
For $p\in [2,\infty)$ the subspaces $\D_{1,p} \subseteq \D_{1,2}$ are defined by
\[ \| \xi\|_{\D_{1,p}} := \left ( \| \xi\|_{L_p}^p + \| D\xi \|_{L_p(\Omega \times [0,T];\R^N)}^p \right )^\frac{1}{p}.\]
For example, for $N=1$, if $h:\R^K\to \R$ is a smooth function with compact support, and if $0=t_0<t_1<\cdots < t_K=T$,
then a representative of $D \xi$ with $\xi=h(W_{t_1}-W_{t_0},\ldots, W_{t_K}-W_{t_{K-1}})$ is
\[ \sum_{k=1}^K \frac{\partial h}{\partial x_k}(W_{t_1}-W_{t_0},\ldots, W_{t_K}-W_{t_{K-1}}) \chf_{(t_{k-1},t_k]}
    \in  \cL_2(\Omega \times [0,T]).\]

\subsection{The $\sigma$-algebra of predictable events}\ \medskip

In the article we shall fix $t\in [0,T)$.
We let $\cP_{t,T}$ be the predictable $\sigma$-algebra generated by the continuous $(\cF_s)_{s\in[t,T]}$-adapted processes,
see \cite[page 47]{Revuz:Yor:1999}, where for $t=0$ we simply write  $\cP_T$. We shall use the following fact that can be proved
along the equivalences for predictable processes formulated on \cite[page 47]{Revuz:Yor:1999}:
\bigskip

\begin{lemm}
\label{statement:restriction_operator_predictable_processes}
Let $t\in [0,T)$, $L:[t,T]\times \Omega\to \R$ with $L_t\equiv 0$, and
$\widetilde{L}: [0,T]\times \Omega\to \R$ with $\widetilde{L}_s:= L_{s\vee t}$.
Then the following assertions are equivalent:
\begin{enumerate}[{\rm (1)}]
\item $L$ is $\cP_{t,T}$-measurable.
\item $\widetilde{L}$ is $\cP_T$-measurable.
\end{enumerate}
\end{lemm}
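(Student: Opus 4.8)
The plan is to reduce the equivalence to the generating families of the two predictable $\sigma$-algebras described on \cite[p.~47]{Revuz:Yor:1999}, namely the predictable rectangles and the continuous adapted processes, and then to upgrade from generators to an arbitrary measurable $L$ by the functional (multiplicative system) monotone class theorem. The implication (2)$\Rightarrow$(1) is the soft one and needs no assumption on $L_t$: since $\widetilde{L}_s=L_s$ for $s\in[t,T]$, the process $L$ is exactly the restriction of $\widetilde{L}$ to $[t,T]\times\Omega$, so it suffices to check that the trace $\sigma$-algebra $\cP_T|_{[t,T]\times\Omega}$ is contained in $\cP_{t,T}$. Assuming $t>0$ (the case $t=0$ is trivial since then $\cP_{t,T}=\cP_T$), I would verify this on the generating rectangles of $\cP_T$: the trace of $\{0\}\times F_0$ is empty, while the trace of a rectangle $(r,s]\times F_r$ is empty if $s<t$, equals $(r,s]\times F_r$ if $r\ge t$, and equals $(\{t\}\cup(t,s])\times F_r$ if $r<t\le s$; in the last case $F_r\in\cF_r\subseteq\cF_t$, so the trace is again a union of $\cP_{t,T}$-rectangles. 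Since the trace operation commutes with generation, this yields $\cP_T|_{[t,T]\times\Omega}\subseteq\cP_{t,T}$ and hence (1).

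For (1)$\Rightarrow$(2) the hypothesis $L_t\equiv0$ is essential, and this is the \textbf{main obstacle}: the naive extension of an adapted process fails to be adapted on $[0,t]$, because $H_t$ is only $\cF_t$-measurable and not $\cF_s$-measurable for $s<t$. To circumvent this I would work with the linear operator $L\mapsto\chf_{(t,T]}L$ (extended by $0$ on $[0,t]$), which coincides with $\widetilde{L}$ precisely when $L_t\equiv0$, and show that it maps bounded $\cP_{t,T}$-measurable processes to $\cP_T$-measurable ones. The key computation is that for a continuous $(\cF_s)_{s\in[t,T]}$-adapted process $H$ the process $\chf_{(t,T]}H$ is $\cP_T$-measurable: writing $H_s=(H_s-H_t)+H_t$, the process $s\mapsto\chf_{(t,T]}(s)(H_s-H_t)$ equals $H_{s\vee t}-H_t$, which is continuous and $(\cF_s)_{s\in[0,T]}$-adapted (it vanishes on $[0,t]$, so adaptedness there is automatic), while $\chf_{(t,T]}H_t$ is a simple predictable process with $\cF_t$-measurable coefficient; both summands are therefore $\cP_T$-measurable.

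It then remains to pass from this multiplicative generating family to all of $\cP_{t,T}$. I would apply the functional monotone class theorem to the class $\mathcal{H}$ of bounded $\cP_{t,T}$-measurable $L$ for which $\chf_{(t,T]}L$ is $\cP_T$-measurable: $\mathcal{H}$ is a vector space containing the constants, closed under bounded monotone limits (as $L\mapsto\chf_{(t,T]}L$ is linear and stable under bounded pointwise limits), and it contains the multiplicative class of continuous adapted processes by the previous step. Since that class generates $\cP_{t,T}$, the theorem gives that $\mathcal{H}$ contains every bounded $\cP_{t,T}$-measurable process, and a truncation $L^{(n)}=(L\wedge n)\vee(-n)$ removes the boundedness restriction by pointwise passage to the limit. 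Specializing to our $L$ with $L_t\equiv0$, where $\widetilde{L}=\chf_{(t,T]}L$, yields (2).
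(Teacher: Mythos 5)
Your proof is correct and follows exactly the route the paper indicates: the paper gives no written proof of this lemma, only the remark that it "can be proved along the equivalences for predictable processes formulated on [Revuz--Yor, p.\ 47]", and your argument -- predictable rectangles and traces for (2)$\Rightarrow$(1), continuous adapted generators plus the functional monotone class theorem and the decomposition $\chf_{(t,T]}H = (H_{\cdot\vee t}-H_t) + \chf_{(t,T]}H_t$ for (1)$\Rightarrow$(2) -- is precisely a fleshed-out version of that. (One cosmetic point: in the monotone class step the multiplicative class should consist of \emph{bounded} continuous adapted processes, which generate the same $\sigma$-algebra $\cP_{t,T}$ after truncation.)
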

\bigskip

We also need \cref{statement:composition_proedictable_new} below. For its proof
we start with the following construction: Assume $t\in [0,T)$ and 
\begin{equation}\label{eqn:h}
h:[t,T]\times \Omega \times C([t,T];\R^L) \times \R^M \to \R,
\end{equation}
$L,M\in \bN$, such that
\begin{enumerate}[(h1)]
\item \label{h1} $h(t,\cdot) \equiv 0$,
\item \label{h2} $h(s,\omega,x,z)=h(s,\omega,x(\cdot \wedge s),z)$ on $[t,T]\times \Omega \times C([t,T];\R^L)\times \R^M$,
\item \label{h3} $(s,\omega)\mapsto h(s,\omega,x,z)$ is $\cP_{t,T}$-measurable for all $(x,z)\in C([t,T];\R^L)\times \R^M$,
\item \label{h4} $(x,z)\mapsto h(s,\omega,x,z)$ is continuous for all $(s,\omega)\in [t,T]\times \Omega$.
\end{enumerate}
\medskip

For $K\in \bN$ we let $t_k^K:= t + (T-t)\frac{k}{2^K}$, $k=0,\ldots,2^K$,
\[ P^K:C([t,T];\R^L)\to \left ( \ell_\infty^{2^K}\right )^L  \sptext{1}{with}{1}
   P^K x := (x(t_0^K),\ldots,x(t_{2^K-1}^K)), \]
where each entry itself is a vector of dimension $L$, and
\[ S^K:\left (\ell_\infty^{2^K}\right )^L \to C([t,T];\R^L) \]
such that $(a_k)_{k=1}^{2^K}$ is mapped to a vector of $L$ linear splines over $[t,T]$ based on
\[ (t_0^K,a_1),(t_1^K,a_1),\ldots,  (t_{2^K}^K,a_{2^K}). \]
Finally, we define
\[ h^K:[t,T]\times \Omega \times \left (\ell_\infty^{2^K}\right )^L \times \R^M \to \R
   \sptext{1}{by}{1}
   h^K(s,\omega,a,z) := h(s,\omega,S^K a,z).\]
The construction has the following properties which are standard to check:

\pagebreak

\begin{lemm}\label{statement:properties:DNSN}\
\begin{enumerate}[{\rm (1)}]
\item $P^K:C([t,T];\R^L)\to \left (\ell_\infty^{2^K}\right )^L$ and $S^K:\left (\ell_\infty^{2^K}\right )^L\to C([t,T];\R^L)$ are linear operators with
      norm one under the identification $\R^L=\ell_\infty^L$ and 
      $\left (\ell_\infty^{2^K}\right )^L=\ell_\infty^{L 2^K}$.
\item $h^K(t,\cdot) \equiv 0$.
\item $h^K(s,\omega,P^K x,z)= h^K(s,\omega,P^K (x(\cdot \wedge s))  ,z)$
      on $[t,T]\times \Omega\times C([t,T];\R^L)\times \R^M$.
\item $(s,\omega)\mapsto h^K(s,\omega,a,z)$ is $\cP_{t,T}$-measurable
      for all $(x,z)\in C([t,T];\R^L)\times \R^M$.
\item $(a,z)\mapsto h^K(s,\omega,a,z)$ is continuous
      for all $(s,\omega)\in [t,T]\times \Omega$.
\item $\lim_{K\to \infty} h^K(s,\omega,P^K x,z)= h(s,\omega,x,z)$
      on $[t,T]\times \Omega\times C([t,T];\R^L)\times \R^M$.
\end{enumerate}
\end{lemm}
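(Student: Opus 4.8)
The plan is to read off all six assertions from the construction, reserving genuine work for the non-anticipation property~(3) and the convergence~(6) and treating the rest as bookkeeping. For~(1), both $P^K$ and $S^K$ are linear by inspection; under the max-norm identifications one has $\|P^K x\| = \max_{0\le k\le 2^K-1}|x(t_k^K)| \le \|x\|_\infty$, while a linear spline takes values in the convex hull of its nodes, so $\|S^K a\|_\infty = \max_k|a_k|$, and equality on constant paths gives operator norm one in both cases. Item~(2) is immediate from (h1), since $h^K(t,\omega,a,z)=h(t,\omega,S^K a,z)=0$. For~(4), fix $(a,z)$, so that $S^K a$ is a fixed path and $(s,\omega)\mapsto h(s,\omega,S^K a,z)$ is $\cP_{t,T}$-measurable by~(h3). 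For~(5), $S^K$ is continuous by~(1) and $(y,z)\mapsto h(s,\omega,y,z)$ is continuous by~(h4), so the composition $(a,z)\mapsto h(s,\omega,S^K a,z)$ is continuous.

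The crux is~(3). With $a=P^K x$ the decisive feature is the one-step delay built into $S^K$: by the node assignment the spline $S^K P^K x$ equals $x(t_0^K)$ on $[t_0^K,t_1^K]$ and takes the value $x(t_{j-1}^K)$ at each node $t_j^K$ with $j\ge 1$, hence on $[t_j^K,t_{j+1}^K]$ it interpolates between $x(t_{j-1}^K)$ and $x(t_j^K)$. Thus for $r$ in that interval the value $(S^K P^K x)(r)$ depends only on the samples $x(t_i^K)$ with $t_i^K\le t_j^K\le r$. Now $P^K x$ and $P^K(x(\cdot\wedge s))$ agree at every grid index $i$ with $t_i^K\le s$. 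I would fix $r\le s$, locate the interval $[t_j^K,t_{j+1}^K]\ni r$, so that $t_j^K\le s$, and conclude that all samples entering $(S^K P^K x)(r)$ and $(S^K P^K(x(\cdot\wedge s)))(r)$ coincide; therefore the two splines agree on $[t,s]$. Applying~(h2) to $h(s,\omega,\cdot,z)$ converts this pathwise agreement on $[t,s]$ into the asserted identity for $h^K$.

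For~(6) I would prove $S^K P^K x\to x$ uniformly on $[t,T]$ and then invoke~(h4). Since $x$ is uniformly continuous on the compact interval $[t,T]$ and the mesh $(T-t)2^{-K}\to 0$, both the delay gap $|x(t_{j-1}^K)-x(t_j^K)|$ and the interpolation error tend to $0$ uniformly in $j$; as $(S^K P^K x)(r)$ lies between $x(t_{j-1}^K)$ and $x(t_j^K)$, comparing with $x(r)$ gives $\|S^K P^K x-x\|_\infty\to 0$, whence $h^K(s,\omega,P^K x,z)=h(s,\omega,S^K P^K x,z)\to h(s,\omega,x,z)$ by~(h4).

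I expect~(3) to be the only real obstacle: one must ensure that evaluating the spline at a time $r\le s$ never reaches forward past $s$ to a sample altered by stopping at $s$. The constant piece on $[t_0^K,t_1^K]$ together with the index shift in the node assignment is exactly what guarantees this, and the main care lies in the index bookkeeping, in particular when $s$ is itself a grid point.
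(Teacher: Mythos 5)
Your proof is correct. The paper in fact offers no proof of this lemma --- it simply declares the properties ``standard to check'' --- and your verification is exactly the intended one: the one-step delay in the node assignment of $S^K$ (node $t_j^K$ carries the sample $x(t_{j-1}^K)$, with a constant piece on $[t_0^K,t_1^K]$) is precisely what makes the non-anticipation identity (3) work via (h2), and the uniform continuity of $x$ on $[t,T]$ gives $\|S^K P^K x - x\|_{C([t,T])}\to 0$, which yields (6) via (h4); the remaining items are the bookkeeping you describe.
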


For a continuous function $x:[t,T]\to \R^L$ we obtain a vector
$(x^{K,0},\ldots,x^{K,2^K-1})$
of continuous functions by
\[ x^{K,k}:[t,T] \to \R^L \sptext{1}{by}{1}
   x^{K,k}_s:= x_{s\wedge t_k^K}, \]
for which \cref{statement:properties:DNSN} implies
\[ \lim_{K\to \infty} h^K(s,\omega,(x^{K,k}_s)_{k=0}^{2^K-1},z)
   = h(s,\omega,x,z),\]
where $(x^{K,k}_s)_{k=0}^{2^K-1}$ is interpreted as an element of
$(\ell_\infty^{2^K})^L$.
\smallskip

\begin{prop}
\label{statement:composition_proedictable_new}
Let $h$
satisfy \eqref{eqn:h}, {\rm (h\ref{h1}),\ldots,(h\ref{h4})},  
$A:[t,T]\times \Omega \to \R^L$ be continuous and $(\cF_s)_{s\in [t,T]}$-adapted,
and $B:[t,T]\times \Omega \to \R^M$ be $\cP_{t,T}$-measurable. Then
\[ [t,T]\times \Omega\ni (s,\omega) \mapsto h(s,\omega,A(\omega),B_s(\omega))
   \sptext{1}{is $\cP_{t,T}$-measurable}{0}. \]
\end{prop}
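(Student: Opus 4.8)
The plan is to reduce the claim to the finite-dimensional approximants $h^K$ supplied by the construction, exploit their Carath\'eodory structure, and then pass to a pointwise limit. Concretely, I set
\[ G^K(s,\omega) := h^K\bigl(s,\omega,(A_{s\wedge t_k^K}(\omega))_{k=0}^{2^K-1},B_s(\omega)\bigr), \]
viewing $(A_{s\wedge t_k^K}(\omega))_{k=0}^{2^K-1}$ as an element of $(\ell_\infty^{2^K})^L$ and recalling that $A_{s\wedge t_k^K}(\omega)=A^{K,k}_s(\omega)$ in the notation preceding \cref{statement:composition_proedictable_new}. By the displayed limit following \cref{statement:properties:DNSN}, applied pathwise with $x=A(\omega)$ and $z=B_s(\omega)$, one has $G^K(s,\omega)\to h(s,\omega,A(\omega),B_s(\omega))$ for every $(s,\omega)$. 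Since a pointwise limit of $\cP_{t,T}$-measurable functions is $\cP_{t,T}$-measurable, it suffices to prove that each $G^K$ is $\cP_{t,T}$-measurable.

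First I would check that the inner maps feeding into $h^K$ are predictable. For fixed $k$ the stopped process $(s,\omega)\mapsto A_{s\wedge t_k^K}(\omega)$ is continuous in $s$, being the composition of $s\mapsto s\wedge t_k^K$ with the continuous $A$, and it is adapted because $A_{s\wedge t_k^K}$ is $\cF_{s\wedge t_k^K}\subseteq\cF_s$-measurable; hence it is $\cP_{t,T}$-measurable by the very definition of $\cP_{t,T}$. Stacking over $k=0,\ldots,2^K-1$ shows that $(s,\omega)\mapsto(A_{s\wedge t_k^K}(\omega))_{k=0}^{2^K-1}$ is $\cP_{t,T}$-measurable with values in $(\ell_\infty^{2^K})^L$. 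The map $(s,\omega)\mapsto B_s(\omega)$ is $\cP_{t,T}$-measurable by hypothesis. Consequently
\[ \Psi^K:(s,\omega)\mapsto \bigl(s,\omega,(A_{s\wedge t_k^K}(\omega))_{k=0}^{2^K-1},B_s(\omega)\bigr) \]
is measurable from $([t,T]\times\Omega,\cP_{t,T})$ into $[t,T]\times\Omega\times(\ell_\infty^{2^K})^L\times\R^M$.

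It remains to compose with $h^K$. By \cref{statement:properties:DNSN}(4) the map $(s,\omega)\mapsto h^K(s,\omega,a,z)$ is $\cP_{t,T}$-measurable for each fixed $(a,z)$, and by \cref{statement:properties:DNSN}(5) the map $(a,z)\mapsto h^K(s,\omega,a,z)$ is continuous for each fixed $(s,\omega)$; that is, $h^K$ is a Carath\'eodory function on the finite-dimensional space $(\ell_\infty^{2^K})^L\times\R^M$. The standard fact that a Carath\'eodory function is jointly $\cP_{t,T}\otimes\cB$-measurable, proved by approximating $h^K$ in the $(a,z)$-variable by functions that are piecewise constant on a measurable partition and measurable in $(s,\omega)$ and then passing to the limit, yields that $G^K=h^K\circ\Psi^K$ is $\cP_{t,T}$-measurable, which finishes the argument. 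I expect this last composition to be the only genuine obstacle: evaluating $h$ directly at the $C([t,T];\R^L)$-valued ``process'' $\omega\mapsto A(\omega)$ is awkward because of the infinite-dimensional path argument together with the adaptedness constraint (h\ref{h2}), and the whole purpose of the finite-dimensional reduction through $P^K$, $S^K$, and $h^K$ is precisely to convert this into a clean Carath\'eodory composition in finitely many real variables.
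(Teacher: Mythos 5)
Your proof is correct and follows essentially the same route as the paper's: approximate by $h^K$ evaluated at the stopped processes $(A_{s\wedge t_k^K})_k$ and $B_s$, observe the pointwise limit from \cref{statement:properties:DNSN}, and reduce to the $\cP_{t,T}$-measurability of each approximant. You simply make explicit the Carath\'eodory composition step (and verify predictability of the stopped processes directly on $[t,T]$ via continuity and adaptedness, where the paper instead extends to $[0,T]$ after redefining the value at $t$), which the paper's terse proof leaves implicit.
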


\begin{proof}
We know that 
\begin{multline*}
    \lim_{K\to \infty} h^K(s,\omega,(A_{s\wedge t_k^K}(\omega))_{k=0}^{2^K-1},B_s(\omega))
   = \lim_{K\to \infty} h^K(s,\omega,P^K(A(\cdot\wedge s,\omega)),B_s(\omega)) \\
   =  h(s,\omega,A(\omega),B_s(\omega)) 
\end{multline*}
and that 
$[t,T]\times \Omega\ni (s,\omega) \mapsto  h^K(s,\omega,P^K(A(\cdot\wedge s,\omega)),B_s(\omega))$ is $\cP_{t,T}$-measurable.
For the latter we redefine $A_{t\wedge t_k^K}$ to be zero without changing 
$h^K(s,\omega,P^K(A(\cdot\wedge s,\omega)),B_s(\omega))$ on $[t,T]$ and use that 
$(A_{s\wedge t_k^K} \1_{(t,T]}(s))_{s\in [0,T]}$ is $\cP_T$-measurable.
\end{proof}

\subsection{Fefferman's inequality}\ \medskip

To relax standard Lipschitz assumptions on the drift term of the SDEs we consider we use a BMO-condition 
instead and exploit Fefferman's inequality (\cref{statement:Fefferman_inequality}) to handle
this setting:
\medskip

\begin{defi}[{\cite[Definition 5.7]{Geiss:Ylinen:21}}]
\label{definition:BMO(S_2)}
For an $(\cF_s)_{s\in [t,T]}$-progressively measurable $\R$-valued process $C=(C_s)_{s\in [t,T]}$ with
$\int_t^T \E |C_s|^2 \od s < \infty$ we let $C\in  \BMO(S_2)$ provided that
\[ \|C\|_{ \BMO(S_2)} := \sup_{s\in [t,T]} \left \| \E \left ( \int_s^T |C_u|^2 \od u\Big |\cF_s \right ) \right \|_{L_\infty}^\frac{1}{2}
   <\infty.\]
\end{defi}
\medskip

Here, the $S_2$ in the notation stands for {\it square function.}

\begin{lemm}[Fefferman's inequality, {\cite[Corollary 5.19]{Geiss:Ylinen:21}}]
\label{statement:Fefferman_inequality}
Let $A=(A_s)_{s\in [t,T]}$ and $C=(C_s)_{s\in [t,T]}$ be progressively
measurable $\R$-valued processes with
$\E \int_t^T |C_s|^2 \od s < \infty$ and let $p\in [1,\infty)$. Then one has that
\[ \left \| \int_t^T |A_s C_s| \od s \right \|_{L_p}
   \le \sqrt{2p} \left \| \sqrt{\int_t^T |A_s|^2 \od s} \right \|_{L_p} \|C\|_{ \BMO(S_2)}.\]
\end{lemm}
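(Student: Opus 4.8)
The plan is to follow the classical route to Fefferman's inequality: a \emph{weighted} Cauchy--Schwarz step that conjures up the square function $S:=\big(\int_t^T|A_u|^2\,\od u\big)^{1/2}$, followed by an energy estimate for the random measure $|C_s|^2\,\od s$ that is controlled by $\kappa:=\|C\|_{\BMO(S_2)}$. Since the asserted inequality is trivial when $\|S\|_{L_p}=\infty$, and since truncating $A^n:=A\1_{\{|A|\le n\}}$ together with monotone convergence reduces the general case to bounded $A$, I may assume $A$ bounded and $\|S\|_{L_p}<\infty$.

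First I would introduce the continuous, increasing, adapted weight $\Psi_s:=\big(\int_t^s|A_u|^2\,\od u\big)^{1/2}$, so that $\Psi_t=0$ and $\Psi_T=S$. Weighted Cauchy--Schwarz in the variable $s$ against $\od s$ gives, pointwise in $\omega$,
\[
   \int_t^T|A_sC_s|\,\od s
   \le\Big(\int_t^T\frac{|A_s|^2}{\Psi_s}\,\od s\Big)^{1/2}
        \Big(\int_t^T\Psi_s|C_s|^2\,\od s\Big)^{1/2}.
\]
The first factor telescopes: with $v(s)=\Psi_s^2$ and $\od v=|A_s|^2\,\od s$, the change of variables yields $\int_t^T\Psi_s^{-1}|A_s|^2\,\od s=\int_0^{S^2}v^{-1/2}\,\od v=2S$ (flat pieces of $v$ carry no mass, with the convention $0/0=0$). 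Writing $V:=\int_t^T\Psi_s|C_s|^2\,\od s$, this is the pointwise bound $\int_t^T|A_sC_s|\,\od s\le\sqrt2\,S^{1/2}V^{1/2}$.

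The heart of the matter is an energy estimate for $V$. Set $V_s:=\int_t^s\Psi_u|C_u|^2\,\od u$, an increasing adapted process with $V_T=V$. For a stopping time $\tau$ I would write, using $\Psi_u=\Psi_\tau+\int_\tau^u\od\Psi_r$ for $u\ge\tau$ and Fubini on $\{\tau\le r\le u\le T\}$,
\[
   \int_\tau^T\Psi_u|C_u|^2\,\od u
   =\Psi_\tau\int_\tau^T|C_u|^2\,\od u
     +\int_\tau^T\Big(\int_r^T|C_u|^2\,\od u\Big)\,\od\Psi_r.
\]
Conditioning on $\cF_\tau$, using that $\Psi_\tau$ is $\cF_\tau$-measurable, inserting $\E[\,\cdot\mid\cF_r]$ inside the $\od\Psi_r$-integral (predictable projection, $\Psi$ being continuous increasing adapted), and invoking the defining bound $\E\big(\int_r^T|C_u|^2\,\od u\mid\cF_r\big)\le\kappa^2$ of \cref{definition:BMO(S_2)}, both terms collapse to
\[
   \E\big[V_T-V_\tau\,\big|\,\cF_\tau\big]
   \le\kappa^2\big(\Psi_\tau+\E[\Psi_T-\Psi_\tau\mid\cF_\tau]\big)
   =\E\big[\kappa^2 S\,\big|\,\cF_\tau\big].
\]
Thus the increasing process $V$ is dominated, in the Garsia sense, by the random variable $\Lambda:=\kappa^2 S$.

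Finally I would apply the Garsia--Neveu $L_p$-domination lemma: if $\E[V_T-V_\tau\mid\cF_\tau]\le\E[\Lambda\mid\cF_\tau]$ for all stopping times $\tau$, then $\|V_T\|_{L_p}\le p\,\|\Lambda\|_{L_p}=p\,\kappa^2\|S\|_{L_p}$ for every $p\in[1,\infty)$. Combining this with the pointwise bound through Hölder ($\E[(\sqrt2\,S^{1/2}V^{1/2})^p]\le 2^{p/2}(\E S^p)^{1/2}(\E V^p)^{1/2}$) gives
\[
   \Big\|\int_t^T|A_sC_s|\,\od s\Big\|_{L_p}
   \le\sqrt2\,\|S\|_{L_p}^{1/2}\,\|V\|_{L_p}^{1/2}
   \le\sqrt2\,\|S\|_{L_p}^{1/2}\big(p\,\kappa^2\|S\|_{L_p}\big)^{1/2}
   =\sqrt{2p}\,\|S\|_{L_p}\,\kappa,
\]
which is the claim. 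The main obstacle is the sharp constant: the $\sqrt2$ is produced for free by the telescoping weight $\Psi$, but the factor $\sqrt p$ hinges on the Garsia--Neveu lemma holding with constant exactly $p$ (rather than $p!$). For $p\in\{1,2\}$ this is elementary --- e.g.\ for $p=2$ one linearises via the martingale $M_s=\E[\Lambda\mid\cF_s]$ and the identity $\E\int_t^T M_s\,\od V_s=\E[\Lambda V_T]$ --- while for general $p$ this is precisely the point where a careful domination/induction argument (or an appeal to the cited \cite[Corollary 5.19]{Geiss:Ylinen:21}) is required.
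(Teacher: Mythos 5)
Your proof is correct, and it reconstructs what is in effect the paper's proof: the paper does not prove \cref{statement:Fefferman_inequality} itself but imports it from \cite[Corollary 5.19]{Geiss:Ylinen:21}, and the argument behind that corollary is exactly your route --- weighted Cauchy--Schwarz with the weight $\Psi_s=\big(\int_t^s|A_u|^2\,\od u\big)^{1/2}$, the telescoping identity $\int_t^T\Psi_s^{-1}|A_s|^2\,\od s=2S$, the energy estimate $\E[V_T-V_\tau\,|\,\cF_\tau]\le\E[\kappa^2 S\,|\,\cF_\tau]$, and Garsia--Neveu domination, producing the same constant $\sqrt{2p}$.

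Two points should be tightened. First, your closing hedge is unnecessary in one direction and circular in the other: the Garsia--Neveu lemma does hold with constant exactly $p^p$, i.e. $\E V_T^p\le p^p\,\E\Lambda^p$ for all $p\in[1,\infty)$; this is classical (Neveu, Dellacherie--Meyer; with $\tau_\lambda:=\inf\{s:V_s>\lambda\}$ the domination hypothesis gives $\E(V_T-\lambda)^+\le\E\big[\Lambda\,\1_{\{V_T\ge\lambda\}}\big]$ for every $\lambda>0$, from which $\E V_T^p\le p^p\,\E\Lambda^p$ follows by integrating in $\lambda$), so you may cite it outright --- whereas ``an appeal to the cited Corollary 5.19'' would be circular, since that corollary is precisely the statement being proved. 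Second, two measure-theoretic steps deserve a line each: (i) \cref{definition:BMO(S_2)} bounds $\E\big[\int_s^T|C_u|^2\,\od u\,|\,\cF_s\big]$ only for deterministic $s$, while you invoke it at the stopping time $\tau$ (first term, with the factor $\Psi_\tau$) and, implicitly, at all stopping times when upgrading the a.s.\ bound $\E\big[\int_r^T|C_u|^2\,\od u\,|\,\cF_r\big]\le\kappa^2$ for each fixed $r$ to a bound up to evanescence, which is what the integration against the random measure $\od\Psi_r$ actually requires; the extension to stopping times follows by approximating $\tau$ from the right by stopping times with finitely many values. (ii) Inserting $\E[\,\cdot\,|\,\cF_r]$ under $\od\Psi_r$ is the optional (here predictable, as $\Psi$ is continuous, adapted and increasing) projection theorem for integrals against $\od\Psi$, combined with a section-theorem argument for the evanescence upgrade in (i); this is standard, and it is the place where adaptedness of $\Psi$ --- hence progressive measurability of $A$ --- is genuinely used. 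With these two standard facts supplied, your truncation reduction, the pointwise bound $\int_t^T|A_sC_s|\,\od s\le\sqrt2\,S^{1/2}V^{1/2}$, and the final H\"older step are all correct and yield exactly the asserted constant $\sqrt{2p}$.
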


\subsection{Other inequalities}\ \medskip

We will exploit the Burkholder--Davis--Gundy inequality:
Given $\cP_{t,T}$-measurable processes $L^{(i)}=(L_s^{(i)})_{s\in [t,T]}$, $L_s^{(i)}:\Omega\to \R^N$, $i=1,\ldots,d$, with
$\P(\int_{[t,T]} |L_s^{(i)}|^2 \od s <\infty)=1$, we have
\begin{equation}\label{eqn:BDG}
   \left \| \sup_{s\in [t,T]} \sqrt{\sum_{i=1}^d \left | \int_t^T \langle L_s^{(i)}, \od W_s \rangle \right |^2} \right \|_{L_p}
   \sim_{\beta_p}
       \left \| \sqrt{ \int_t^T \sum_{i=1}^d |L_s^{(i)}|^2  \od s} \right \|_{L_p}
\end{equation}
for $p\in (0,\infty)$ and $\beta_p \ge 1$ not depending on $(d,N)$,
see \cite[Theorem 1.1]{Marinelli:Roeckner:2016}.
We also use Lenglart's inequality
(\cite[Corollaire II]{Lenglart:1977},
 \cite[Exercise IV.4.30, Question IV.1]{Revuz:Yor:1999},
 \cite[Theorem 2.1]{SaGeiss:Scheutzow:2021}):
\bigskip

\begin{lemm}[Lenglart's inequality]
\label{statement:Lenglart}
For a fixed $t\in [0,T)$
let $A=(A_s)_{s\in [t,T]}$ and $C=(C_s)_{s\in [t,T]}$ be non-negative adapted right-continuous
processes, where $C$ is non-decreasing and $\cP_{t,T}$-measurable. Assume
that
\[ \E[A_\tau| \cF_t] \le
   \E[C_\tau| \cF_t] \mbox{ a.s.} \]
for any stopping time $\tau:\Omega\to [t,T|$. Then for all $q\in (0,1)$ and any stopping time
$\sigma:\Omega\to [t,T]$
one has
\begin{equation}\label{eqn:statement:Lenglart}
    E\left [\sup_{s\in [t,\sigma]}|A_s|^q | \cF_t\right ]
   \le \frac{q^{-q}}{1-q} E[|C_\sigma|^q | \cF_t] \mbox{ a.s.}
\end{equation}
\end{lemm}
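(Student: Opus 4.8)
The plan is to reduce the statement to a deterministic distributional estimate via a stopping-time argument, following the classical route. The key structural fact to exploit is the hypothesis $\E[A_\tau \mid \cF_t] \le \E[C_\tau \mid \cF_t]$ for \emph{every} stopping time $\tau$ valued in $[t,T]$, which is much stronger than a single inequality and is what allows us to control the supremum of $A$ rather than just $A$ at a fixed time. First I would fix $\lambda > 0$ and the stopping time $\sigma$, and introduce the entry time
\[ \tau_\lambda := \inf\{ s \in [t,\sigma] : A_s > \lambda \} \wedge \sigma, \]
which is a stopping time because $A$ is adapted and right-continuous. On the event $\{\sup_{s\in[t,\sigma]} A_s > \lambda\}$ right-continuity gives $A_{\tau_\lambda} \ge \lambda$, so that
\[ \lambda\, \P\Big( \sup_{s\in[t,\sigma]} A_s > \lambda \,\Big|\, \cF_t \Big) \le \E[A_{\tau_\lambda} \mid \cF_t] \le \E[C_{\tau_\lambda} \mid \cF_t]. \]

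\smallskip

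The next step is to pass from $C_{\tau_\lambda}$ to the terminal value $C_\sigma$ by using monotonicity of $C$. Since $C$ is non-decreasing we have $C_{\tau_\lambda} \le C_\sigma$, but a cruder bound loses the sharp constant; the standard trick is to split according to whether $C_\sigma$ itself is large. Writing $\mu$ for an auxiliary level, one estimates
\[ \E[C_{\tau_\lambda} \mid \cF_t] \le \mu + \E\big[ C_\sigma \1_{\{C_\sigma > \mu\}} \mid \cF_t \big], \]
using $C_{\tau_\lambda} \le C_\sigma$ on $\{C_\sigma > \mu\}$ and $C_{\tau_\lambda} \le C_\sigma \le \mu$ otherwise. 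Combining with the previous display yields the conditional distributional inequality
\[ \P\Big( \sup_{s\in[t,\sigma]} A_s > \lambda \,\Big|\, \cF_t \Big) \le \frac{\mu}{\lambda} + \frac{1}{\lambda}\, \E\big[ C_\sigma \1_{\{C_\sigma > \mu\}} \mid \cF_t \big]. \]

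\smallskip

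Finally I would integrate this in $\lambda$ against $q \lambda^{q-1}\,\od\lambda$ to recover the $L_q$-type (here $L_q$-conditional) moment via the layer-cake formula $\E[\sup_s A_s^q \mid \cF_t] = \int_0^\infty q\lambda^{q-1} \P(\sup_s A_s > \lambda \mid \cF_t)\,\od\lambda$. The decisive move is to choose $\mu = \mu(\lambda)$ optimally as a function of $\lambda$ — the classical choice is $\mu$ proportional to $\lambda$, say $\mu = c\lambda$ — so that both terms scale like $\lambda^{q-1}$ after the substitution, the integrals converge because $q \in (0,1)$, and a Fubini/Tonelli interchange turns the second term into a constant multiple of $\E[C_\sigma^q \mid \cF_t]$. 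Tracking the constant through the $\lambda$-integration and then minimizing over the free parameter $c$ is what produces the sharp factor $q^{-q}/(1-q)$.

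\smallskip

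\emph{The main obstacle} I expect is twofold and both parts are technical rather than conceptual. First, one must be careful that all manipulations respect the conditioning on $\cF_t$: the layer-cake identity, the Tonelli interchange, and the insertion of the auxiliary level $\mu$ all have to be carried out inside $\E[\,\cdot\mid\cF_t]$, which requires $\mu$ and $c$ to be genuine constants (not $\cF_t$-measurable randomness) so that they factor out cleanly. Second, extracting the \emph{exact} constant $q^{-q}/(1-q)$ rather than merely \emph{some} constant demands that the optimization over $c$ be done precisely after the $\lambda$-integration, which is the one genuinely computational pinch-point; the convergence of the resulting $\lambda$-integrals near $\lambda=0$ and $\lambda=\infty$ is exactly where the restriction $q<1$ is used and must be checked.
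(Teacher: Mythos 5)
Your overall strategy (entry time for $A$, a two-level distributional estimate, integration against $q\lambda^{q-1}\,\od\lambda$ with $\mu=c\lambda$, optimization over $c$) is the classical route, and it differs from the paper, which does not reprove the inequality at all: it invokes the known case $\sigma=T$ from the cited literature (Lenglart; Revuz--Yor; Geiss--Scheutzow) and obtains the general $\sigma$ by applying that case to the stopped processes $(A_{s\wedge\sigma})_{s\in[t,T]}$ and $(C_{s\wedge\sigma})_{s\in[t,T]}$. However, your plan has a genuine gap: the predictability of $C$ --- an explicit hypothesis ($C$ is $\cP_{t,T}$-measurable), and known to be necessary for the constant $q^{-q}/(1-q)$ --- is never used. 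It must enter precisely in the passage from $\E[C_{\tau_\lambda}\mid\cF_t]$ to the distributional bound, where your splitting is too lossy. The correct inequality is
\[ \P\Big(\sup_{s\in[t,\sigma]}A_s>\lambda\,\Big|\,\cF_t\Big)\le\frac{1}{\lambda}\,\E[C_\sigma\wedge\mu\mid\cF_t]+\P(C_\sigma\ge\mu\mid\cF_t), \]
and its proof stops $C$ \emph{strictly before} it exceeds $\mu$: since $C$ is predictable, non-decreasing and right-continuous, $\gamma:=\inf\{s\ge t:C_s\ge\mu\}$ is a predictable stopping time; taking an announcing sequence $\gamma_n\uparrow\gamma$ with $\gamma_n<\gamma$ one has $C_{\gamma_n}<\mu$, and one applies the domination hypothesis at $\tau_\lambda\wedge\gamma_n\wedge\sigma$, while the event $\{\gamma\le\sigma\}$ is absorbed into $\P(C_\sigma\ge\mu\mid\cF_t)$.

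Your bound $\E[C_{\tau_\lambda}\mid\cF_t]\le\mu+\E[C_\sigma\1_{\{C_\sigma>\mu\}}\mid\cF_t]$ is a valid inequality but cannot yield the lemma. With $\mu=c\lambda$ the first term $\mu/\lambda=c$ is constant in $\lambda$, so $\int_0^\infty q\lambda^{q-1}c\,\od\lambda=\infty$; the divergence is at $\lambda=\infty$, so it is not cured by $q<1$. The second term is no better: by Tonelli, $\int_0^\infty q\lambda^{q-2}\,\E[C_\sigma\1_{\{C_\sigma>c\lambda\}}]\,\od\lambda=q\,\E\big[C_\sigma\int_0^{C_\sigma/c}\lambda^{q-2}\,\od\lambda\big]=\infty$ because $q-2<-1$. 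Worse still, $\E[C_\sigma\1_{\{C_\sigma>\mu\}}\mid\cF_t]$ can be identically $+\infty$ for every $\mu$ when $C_\sigma\in L_q\setminus L_1$, a case the lemma covers. The structural point you are missing is that the ``large $C_\sigma$'' contribution must appear as a \emph{probability}, $\P(C_\sigma\ge\mu\mid\cF_t)$, bounded by $1$ and integrating to $c^{-q}\E[C_\sigma^q\mid\cF_t]$ by the layer-cake formula --- never as an expectation of $C_\sigma$ over that event --- and this is achievable only by exploiting predictability as above. With that repair, your final computation is correct: one gets $\big[\tfrac{c^{1-q}}{1-q}+c^{-q}\big]\E[C_\sigma^q\mid\cF_t]$, minimized at $c=q$, which is $\tfrac{q^{-q}}{1-q}\,\E[C_\sigma^q\mid\cF_t]$.
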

In the literature \eqref{eqn:statement:Lenglart} is usually formulated for $\sigma=T$. Using the assumption
for $A=(A_{s\wedge \sigma})_{s\in [t,T]}$ and $C=(C_{s\wedge \sigma})_{s\in [t,T]}$
instead for $A=(A_s)_{s\in [t,T]}$ and $C=(C_s)_{s\in [t,T]}$ yields to the above version.
\bigskip

Finally, the following lemma is standard in the literature
(cf. \cite[Theorem IV.2.4]{Ikeda:Watanabe:2nd}):

\begin{lemm}\label{statement:Xt_vs_starting_value_path}
Let $p\in (0,\infty)$ and
$b:[t,T]\times \Omega\to \R^d$ and  $\sigma:[t,T]\times \Omega\to \R^{d\times N}$
with $b(t,\cdot)\equiv 0$ and $\sigma(t,\cdot)\equiv 0$ be $\cP_{t,T}$-measurable such that
\[ \P\left (\int_t^T |b_u| \od u + \int_t^T |\sigma_u|^2 \od u < \infty \right ) =1.\]
For $\xi\in L_p(\Omega,\cF_t,\P;\R^d)$ assume
\[ X_s = \xi + \int_t^s b_u \od u + \int_t^s \sigma_u dW_u
   \sptext{1}{for}{1} s \in [t,T] \mbox{ a.s.}\]
with
\[ |b_s| \le K_b \left [1+\sup_{u\in [t,s]} |X_u| \right ]
   \sptext{1}{and}{1}
  |\sigma_s|_\hs \le K_\sigma \left [1+\sup_{u\in [t,s]} |X_u| \right ]
  \quad \lambda\otimes\P \mbox{ a.e.} \]
Then there is a constant $c_{\eqref{eqn:statement:Xt_vs_starting_value_path},p}=c_{\eqref{eqn:statement:Xt_vs_starting_value_path},p}(T,K_b,K_\sigma,p)>0$ such that
\begin{equation}\label{eqn:statement:Xt_vs_starting_value_path}
  \E \left [ [ 1+ \sup_{u\in [t,T]} |X_u| ]^p | \cF_t \right ]
  \le c^p_{\eqref{eqn:statement:Xt_vs_starting_value_path},p} \E [ [1+|\xi|]^p|\cF_t ].
\end{equation}
\end{lemm}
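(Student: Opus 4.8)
The plan is to control the non-decreasing, continuous, adapted process $R_s := 1 + \sup_{u\in[t,s]}|X_u|$ and to show $\E[R_T^p|\cF_t] \le c^p R_t^p$, where $R_t = 1+|\xi|$ is $\cF_t$-measurable. Since a priori we only know $\int_t^T|b_u|\,\od u + \int_t^T|\sigma_u|_\hs^2\,\od u<\infty$ a.s. and $\xi\in L_p$, I would first localise by the stopping times $\tau_n:=\inf\{s\in[t,T]: R_s\ge n\}\wedge T$, so that $R_{\cdot\wedge\tau_n}\le n$ and all conditional moments below are finite; the final estimate is then recovered by letting $n\to\infty$ via monotone convergence, using $\tau_n\uparrow T$ a.s.

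The core is the case $p\in[2,\infty)$, treated by a conditional Gronwall argument. Writing $M_s:=\int_t^s\sigma_u\,\od W_u$ and $M_s^*:=\sup_{u\in[t,s]}|M_u|$, the SDE together with the linear growth bound $|b_u|\le K_b R_u$ gives, with an elementary constant $c_p$ from $(a+b+c)^p\le c_p(a^p+b^p+c^p)$,
\[ R_s^p \le c_p\Big(R_t^p + K_b^p\big(\textstyle\int_t^s R_u\,\od u\big)^p + (M_s^*)^p\Big).\]
For the drift term Hölder's inequality ($p\ge1$) yields $(\int_t^s R_u\,\od u)^p\le (T-t)^{p-1}\int_t^s R_u^p\,\od u$. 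For the martingale term I would use Doob's inequality together with a conditional Burkholder--Davis--Gundy inequality to get $\E[(M_s^*)^p|\cF_t]\le c_p'\,\E[\langle M\rangle_s^{p/2}|\cF_t]$, and since $\langle M\rangle_s=\int_t^s|\sigma_u|_\hs^2\,\od u\le K_\sigma^2\int_t^s R_u^2\,\od u$, Hölder again ($p/2\ge1$) bounds this by $c_p'K_\sigma^p(T-t)^{p/2-1}\int_t^s R_u^p\,\od u$. Setting $\phi(s):=\E[R_{s\wedge\tau_n}^p|\cF_t]$ and taking conditional expectations, I arrive at $\phi(s)\le c_pR_t^p + C\int_t^s\phi(u)\,\od u$ with $C=C(T,K_b,K_\sigma,p)$, whence Gronwall's lemma gives $\phi(T)\le c_pR_t^p\,e^{C(T-t)}$ uniformly in $n$, i.e. the $p=2$ estimate $\E[R_T^2|\cF_t]\le c_2^2R_t^2$ in particular.

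The remaining case $p\in(0,2)$ then follows for free, without any further analysis of the equation: since $1+|\xi|$, and hence $R_t$, is $\cF_t$-measurable, conditional Jensen applied to the concave map $y\mapsto y^{p/2}$ yields
\[ \E[R_T^p|\cF_t]\le \big(\E[R_T^2|\cF_t]\big)^{p/2}\le \big(c_2^2 R_t^2\big)^{p/2}=c_2^pR_t^p=c_2^p\,\E[(1+|\xi|)^p|\cF_t].\]
I expect the main obstacle to be obtaining the stochastic-integral bound \emph{conditionally} on $\cF_t$ rather than in the unconditional form \eqref{eqn:BDG}; this can be handled either by invoking the conditional versions of the Burkholder--Davis--Gundy and Doob inequalities, or, more structurally, by passing to a regular conditional probability $\P(\cdot\,|\cF_t)$ under which $\xi$ is frozen and $(W_{t+\cdot}-W_t)$ is still a Brownian motion, so that the unconditional inequalities apply verbatim. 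It is worth emphasising that the low-moment regime $p<2$, which is exactly the motivation for such estimates in the Hölder setting, costs nothing here precisely because the initial datum is $\cF_t$-measurable, so that \cref{statement:Lenglart} is not needed for this particular lemma.
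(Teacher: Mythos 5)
Your proof is correct, and for the core case $p\in[2,\infty)$ it is essentially the paper's argument: linear growth, Burkholder--Davis--Gundy, and Gronwall, with a localization making all moments finite. The only cosmetic difference there is that the paper implements the conditioning on $\cF_t$ by restricting to sets $A_n=B\cap\{|\xi|\le n\}$, $B\in\cF_t$, and working with the normalized measures $\P_{A_n}$ (plus stopping times capping $|X|$), whereas you work directly with $\E[\,\cdot\,|\cF_t]$ and a conditional BDG inequality; these are the same device, since the conditional BDG inequality you invoke is proved precisely by that restriction trick ($\1_A M$ is again a continuous local martingale for $A\in\cF_t$). The genuine divergence is the case $p\in(0,2)$: the paper passes from the case $p\ge 2$ to small exponents via Lenglart's inequality (\cref{statement:Lenglart}), applied with $A_s=[1+|X_s|]^p$ and $C_s=[1+|\xi|]^p$, while you observe that, because the dominating quantity $[1+|\xi|]^p$ is $\cF_t$-measurable and constant in time, conditional Jensen with the concave map $y\mapsto y^{p/2}$ already suffices once the conditional $p=2$ bound (with the supremum inside) is known. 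That shortcut is legitimate and even gives a cleaner constant, and your diagnosis of where it stops working is accurate: in the proof of \cref{statement:Lipschitz_case_path} (and in \cref{statement:pre_lemma_hoelder}) the dominating process is genuinely adapted rather than $\cF_t$-measurable, and there Lenglart cannot be replaced by Jensen. What the paper's route buys is uniformity of tools across all its small-exponent reductions; what yours buys is that this particular lemma needs no domination inequality at all. Two points you should make explicit in a write-up: (i) the Jensen step needs an integrability localization (e.g.\ on $\{1+|\xi|\le m\}\in\cF_t$), since $R_T^2$ need not be integrable when $\xi\notin L_2$, and the $p=2$ conditional bound must therefore be stated, as you implicitly do, without assuming $\xi\in L_2$; (ii) Gronwall is applied pathwise to $s\mapsto\E[R^p_{s\wedge\tau_n}|\cF_t](\omega)$, so one should fix a version that is finite and monotone in $s$ for a.e.\ $\omega$ -- your stopping times provide exactly this.
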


\begin{proof}
First we assume $p\in [2,\infty)$.
Let $B\in \cF_t$ be of positive measure and choose $n\in \bN$ such that
$\P(B\cap \{ |\xi| \le n\})>0$. Set $A_n:= B\cap \{ |\xi| \le n\}$ and let $\P_{A_n}$ be the normalized restriction
of $\P$ to $A_n$. Define the stopping time (with respect to the stochastic basis restricted to $A_n$)  $\tau_k:A_n\to [0,T]$, $k\ge n$, by
\[ \tau_k(\omega) := \inf \{ s\in [t,T] : |X_s(\omega)| \ge k \} \wedge T.\]
By the Burkholder-Davis-Gundy inequalities we get for $s\in [t,T]$
that
\begin{align*}
&    \left \| \sup_{u\in [t,s\wedge \tau_k]} |X_u| \right \|_{L_p(A_n,\P_{A_n})}\\
&\le \|\xi\|_{L_p(A_n,\P_{A_n})} + K_b \int_t^s \|1_{\{u\le \tau_k\}} [1+|X_u]|\|_{L_p(A_n,\P_{A_n})} \od u \\
& \hspace{10em}               + K_\sigma \beta_p \left \| \left ( \int_t^s 1_{\{u\le \tau_k\}} [1+|X_u|]^2 \od u \right )^\frac{1}{2} \right \|_{L_p(A_n,\P_{A_n})} \\
&\le \|\xi\|_{L_p(A_n,\P_{A_n})} + K_b \int_t^s \|1_{\{u\le \tau_k\}} [1+|X_u|]\|_{L_p(A_n,\P_{A_n})} \od u \\
&   \hspace{10em}                 + K_\sigma \beta_p \left ( \int_t^s \|1_{\{u\le \tau_k\}} [ 1+|X_u|]\|_{L_p(A_n,\P_{A_n})}^2 \od u \right )^\frac{1}{2} \\
&\le \|\xi\|_{L_p(A_n,\P_{A_n})} + [K_b \sqrt{T} + K_\sigma \beta_p] \left ( \int_t^s \| 1_{\{u\le \tau_k\}} [1+|X_u|]\|_{L_p(A_n,\P_{A_n})}^2 \od u \right )^\frac{1}{2}.
\end{align*}
This implies
\begin{align*}
&    \hspace{-2em}\left \| \sup_{u\in [t,s]} [1+|X_{u\wedge \tau_k}|] \right \|_{L_p(A_n,\P_{A_n})} \\
&\le 2 \|[1+|\xi|]\|_{L_p(A_n,\P_{A_n})} \\ 
&    \hspace{1em} + [K_b \sqrt{T} +  K_\sigma \beta_p]
     \left ( \int_t^s \left \| \sup_{u\in [t,v] [1+|X_{u\wedge \tau_k}|]}\right \|_{L_p(A_n,\P_{A_n})}^2 \od v
     \right )^\frac{1}{2}.
\end{align*}
By assumption we have $|\xi| \le n$ on $A_n$, and by the definition of $\tau_k$ we have
$|X_{u\wedge \tau_k}|\le k$. So squaring both sides and
using $|x+y|^2 \le 2 [x^2+y^2]$ we may use Gronwall's inequality to derive
\[  \left \| \sup_{u\in [t,T]} [1+|X_{u\wedge\tau_k}|] \right \|_{L_p(A_n,\P_{A_n})}
    \le c \|1+|\xi|\|_{L_p(A_n,\P_{A_n})} \]
for
$c:= 2 \sqrt{2} e^{[K_b \sqrt{T} +  K_\sigma \beta_p]^2 T}$.
By $k\to \infty$ this gives
\[  \left \| \sup_{u\in [t,T]} [1+|X_u|] \right \|_{L_p(A_n,\P_{A_n})}
     \le c \|1+|\xi|\|_{L_p(A_n,\P_{A_n})}.\]
And, finally by $n\to \infty$ we conclude
\begin{equation}\label{eqn:proof:statement:Xt_vs_starting_value}
\left \| \sup_{u\in [t,T]}[1+|X_u|] \right \|_{L_p(B,\P_B)}
     \le c \|1+|\xi|\|_{L_p(B,\P_B)}.
\end{equation}
Finally, let
$A_s:=  [1+|X_s|]^p$ and $C_s:= [1+|\xi|]^p$.
Now \cref{statement:Lenglart} implies \eqref{eqn:proof:statement:Xt_vs_starting_value} for
$r\in (0,p)$ as well with an additional multiplicative constant arising from  \cref{statement:Lenglart}.
\end{proof}

%%%%%%%%%%%%%%%%%%%%%%%%%%%%%%%%%%%%%%%%%%%%%%%%%%%%%%%%%%%%%%%%%%%%%%%%%%%%%%%%%%%%%%%%%%%%%%%%%%%%%%%%%%%%%%%%%%%%%
%%%%%%%%%%%%%%%%%%%%%%%%%%%%%%%%%%%%%%%%%%%%%%%%%%%%%%%%%%%%%%%%%%%%%%%%%%%%%%%%%%%%%%%%%%%%%%%%%%%%%%%%%%%%%%%%%%%%%

\section{Transference of the Wiener space}
\label{sec:transference}

In this section we explain how we can transfer random variables and stochastic processes
from one Wiener space to another one. This is the basis of the coupling procedure recalled in \cref{sec:coupling}.
We follow \cite[Chapter 3]{Geiss:Ylinen:21}
and assume two stochastic bases $(\Omega^i,\cF^i,\P^i,(W_s^i)_{s\in [0,T]})$, $i=0,1$,
where $(\Omega^i,\cF^i,\P^i)$ is complete, $W^i=(W_s^i)_{s\in [0,T]}$  are $N$-dimensional Brownian motions with
$W_0^i\equiv 0$,
\[ W_s^i=(W_{s,1}^i,\ldots,W_{s,N}^i)^T,\]
and where all trajectories are assumed to be continuous.
We let $\cF_s^i := \sigma(W_u^i: u\in [0,s]) \vee \cN^i$ with
$\cN^i$ being the null-sets in $\cF^i$, and assume that
$\cF^i=\cF_T^i$. Furthermore, assume that $(R,\cR,\rho)$ is another probability space.
According to \cite[Proposition 2.5(2)]{Geiss:Ylinen:21}
one obtains a linear and isometric bijection
\[ \cC:L_0(R\times\Omega^0,\cR\otimes\cF^0,\rho\otimes\P^0 )\to L_0(R\times\Omega^1,\cR\otimes\cF^1,\rho\otimes\P^1), \]
where we take $d(f,g):= \E \frac{|f-g|}{1+|f-g|}$ as metric and proceed as follows:
\bigskip
\begin{enumerate}[{--}]
\item We choose a representative $f\in [f]\in L_0(R\times\Omega^0,\cR\otimes\cF^0,\rho\otimes\P^0)$ that is 
      $\cR\otimes \sigma(W_s^0:s\in [0,T])$-measurable,
      which exists according to \cite[Lemma 2.1]{Geiss:Ylinen:21}).

\item We consider the maps $J^i:R\times \Omega^i \to R \times \R^\bN$ with
      \[ J^i(r,\omega^i) := (r,(\xi_k^i(\omega^i))_{k\in \bN}) \]
      and $(\xi_k^i)_{k\in \bN}$ being an enumeration of $(\int_0^T h_\ell(s) \od W_{s,j}^i)_{\ell=0,j=1}^{\infty,N}$, where
      $(h_\ell)_{\ell=0}^\infty$ are the Haar functions forming an orthonormal basis in $L_2([0,T])$
      with $h_0\equiv T^{-1/2}$, $h_1=  T^{-1/2}(\1_{(0,\frac{T}{2}]} - \1_{(\frac{T}{2},T]})$, etc.

\item The map $\widehat{f}:R\times \R^\bN\to \R$ is obtained from the factorization
      $f=\widehat{f}\circ J^0$ according to \cite[Lemma 2.2]{Geiss:Ylinen:21}.
\item We define $\cC([f])=\cC(f) := [\widehat{f}\circ J^1] \in L_0(R\times\Omega^1,\cR\otimes\cF^1,\rho\otimes\P^1)$.
\end{enumerate}
\bigskip
By choosing $(R,\cR,\rho)$ to be $(\{0\},2^{\{0\}},\delta_0)$ and
$([0,T],\cB([0,T]),\lambda/T)$, respectively, we use the construction 
to define the two transference operators
\begin{align*}
\cC_0 & :L_0(\Omega^0,\cF^0,\P^0 )\to L_0(\Omega^1,\cF^1,\P^1),\\
\cC_T & :L_0\left ([0,T]\times\Omega^0,\cB([0,T])\otimes\cF^0,\frac{\lambda}{T}\otimes\P^0 \right ) \\
      & \hspace*{13em} \to
         L_0\left ([0,T]\times\Omega^1,\cB([0,T])\otimes\cF^1,\frac{\lambda}{T}\otimes\P^1 \right ).
\end{align*}
To shorten the notation we shall write
\[ \Omega_T := [0,T]\times \Omega
   \sptext{1}{and}{1}
   \Omega_T^i := [0,T]\times \Omega^i
   \sptext{1}{for}{1} i=0,1. \]

To formulate properties of $\cC_0$ and $\cC_T$ we let
$A\in \cL_0(\Omega_T,\cP_T;C(\R^M))$ if $A(\cdot,x):\Omega_T \to \R$ is predictable
for all $x\in \R^M$ and $\R^M\ni x\mapsto A(s,\omega,x)\in \R$ is continuous for all $(s,\omega)\in \Omega_T$.
We use the following properties of the transference operators:
\bigskip

\begin{enumerate}[{\rm (P1)}]
\item \label{condition:P1}
      If $A^0:\Omega_T^0\to \R$ is continuous and adapted, then there is a continuous and adapted $A^1:\Omega^1_T\to \R$ with
      $A^1_s\in \cC_0(A_s^0)$ for all $s\in [0,T]$, see \cite[Lemma 3.1, Proposition 2.12(1)]{Geiss:Ylinen:21}.
      We use the notation $A^1\in C_T^{\rm c,adap}(A^0)$.
      \bigskip

\item \label{condition:P2}
      If $A^0\in \cL_0(\Omega^0_T,\cP_T^0)$, then there is an $A^1\in \cL_0(\Omega^1_T,\cP_T^1)$ such that
      $A^1\in \cC_T(A^0)$, see \cite[Lemma 3.1, Proposition 2.12(2)]{Geiss:Ylinen:21}.
      We  use the notation $A^1\in C_T^{\cP}(A^0)$.
     \smallskip

\item \label{condition:P3}
      If $A^0\in \cL_0(\Omega^0_T,\cP_T^0;C(\R^M))$, then there is an $A^1\in \cL_0(\Omega^1_T,\cP_T^1;C(\R^M))$ such that
      $A^1(\cdot,x)\in \cC_T(A^0(\cdot,x))$ for all $x\in \R^M$, see \cite[Lemma 3.1, Proposition 2.12(2), Proposition 2.13(2)]{Geiss:Ylinen:21}.
      We also use the notation $A^1\in C_T^{\cP}(A^0)$.

\end{enumerate}
\bigskip

In the notation $A^1\in C_T^{\cP}(A^0)$ the symbol $\cP$ means that we start with a predictable process $A^0$ and can arrange 
$A^1$ to be predictable as well. Next we want to restrict the above operators to the interval
$[t,T]$ without repeating the construction from \cite{Geiss:Ylinen:21}.
We start with $C_T^{\rm c,adap}$ by the following observation:

\begin{lemm}
\label{statement:restriction_operator_continuous_processes}
For a continuous and adapted process $(X_s^0)_{s\in [t,T]}$ we have:
\begin{enumerate}[{\rm (1)}]
\item \label{item:1:statement:restriction_operator_continuous_processes}
      The process $(Y_t^0)_{t\in [0,T]}:=(X_{s\vee t}^0-X_t^0)_{s\in [0,T]})$ is continuous and adapted with
      $Y_s\equiv 0$ for $s\in [0,t]$.
\item For $Y^1\in C_{[0,T]}^{\rm c,adap}(Y^0)$ with $Y^0$ taken from \eqref{item:1:statement:restriction_operator_continuous_processes}
      and $A\in C_0(X_t^0)$ define
      \[ X_s^1 := A + Y_s^1 \sptext{1}{for}{1} s\in [t,T].\]
\end{enumerate}
Then $(X_s^1)_{s\in [t,T]}$ is continuous and adapted such that
$X_s^1\in C_0(X_s^0)$ for all $s\in [t,T]$.
\end{lemm}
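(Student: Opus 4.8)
The plan is to verify the two listed claims in turn, the second reducing essentially to bookkeeping with the equivalence classes defined by $\cC_0$ together with its linearity. For (1), I would simply unwind the definition $Y_s^0 = X_{s\vee t}^0 - X_t^0$. For $s\in[0,t]$ we have $s\vee t = t$, hence $Y_s^0 = X_t^0 - X_t^0 \equiv 0$, which gives the vanishing on $[0,t]$. Continuity of $s\mapsto Y_s^0$ follows since $s\mapsto s\vee t$ is continuous and $X^0$ is continuous on $[t,T]$, while $X_t^0$ does not depend on $s$. For adaptedness, note that for $s\ge t$ the variable $X_s^0$ is $\cF_s^0$-measurable (as $X^0$ is $(\cF_u^0)_{u\in[t,T]}$-adapted) and $X_t^0$ is $\cF_t^0\subseteq\cF_s^0$-measurable, so $Y_s^0$ is $\cF_s^0$-measurable; for $s\le t$ it is $0$. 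Thus $Y^0$ is a continuous, adapted process on $[0,T]$ to which property (P1) applies.

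For (2), I would first record what the construction provides. By $Y^1\in C_{[0,T]}^{\rm c,adap}(Y^0)$ and (P1), the process $Y^1$ is continuous and $(\cF_s^1)_{s\in[0,T]}$-adapted with $Y_s^1\in\cC_0(Y_s^0)$ for every $s\in[0,T]$. Choosing $A\in C_0(X_t^0)$ furnishes an $\cF_t^1$-measurable representative of the class $\cC_0(X_t^0)$; this is the one genuinely nontrivial input, and it is precisely the filtration-compatibility of $\cC_0$ (mapping $\cF_t^0$-measurable functions to classes admitting an $\cF_t^1$-measurable representative) that underlies (P1). Then $X_s^1 := A + Y_s^1$ is continuous on $[t,T]$ because $Y^1$ is continuous and $A$ is constant in $s$, and it is adapted because for $s\ge t$ both $A$, being $\cF_t^1\subseteq\cF_s^1$-measurable, and $Y_s^1$ are $\cF_s^1$-measurable, whence so is their sum.

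It then remains to identify the class of $X_s^1$. For $s\in[t,T]$ we have $s\vee t = s$, so $Y_s^0 = X_s^0 - X_t^0$, and by linearity of $\cC_0$ the class $\cC_0(Y_s^0)$ equals $\cC_0(X_s^0) - \cC_0(X_t^0)$. Since $[A] = \cC_0(X_t^0)$ and $[Y_s^1] = \cC_0(X_s^0) - \cC_0(X_t^0)$, additivity in $L_0$ gives $[X_s^1] = [A] + [Y_s^1] = \cC_0(X_s^0)$, i.e. $X_s^1\in\cC_0(X_s^0)$ for all $s\in[t,T]$, as asserted. I expect the only step requiring more than formal manipulation to be the $\cF_t^1$-measurability of $A$; every other step is a direct consequence of the definitions, the linearity of $\cC_0$, and the already-established property (P1).
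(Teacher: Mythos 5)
Your proposal is correct and follows essentially the same route as the paper: the identification $[X_s^1]=\cC_0(X_s^0)$ via linearity of $\cC_0$ applied to the decomposition $X_s^0=X_t^0+(X_s^0-X_t^0)$ is exactly the paper's argument, with the continuity/adaptedness checks (which the paper dismisses as "by construction") spelled out. Your remark that the $\cF_t^1$-measurability of a representative $A\in C_0(X_t^0)$ is the genuinely nontrivial input, supplied by the filtration compatibility of the transference, is an accurate reading of what (P1) provides.
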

\begin{proof}
The process $(X_s^1)_{s\in [t,T]}$ is continuous and adapted by construction.
Furthermore, for $s\in [t,T]$  we have that
\[ C_0(X_s^0) = C_0(X_t^0) + C_0(X_s^0-X_t^0)
              = C_0(X_t^0) + C_0(Y_s^0)
              = [A] + [Y_s^1]
              = [X_s^1]. \]
so that we have  $X_s^1 = A + Y_s^1 \in  C_0(X_s^0)$ for the representatives.
\end{proof}

The above lemma leads to the following definition:

\begin{defi}
\label{definition:restriction_operator_continuous_processes}
Given a path-wise continuous and adapted process $(X_s^0)_{s\in [t,T]}$ we define
$(X_s^1)_{s\in [t,T]} \in C_{[t,T]}^\cadap((X_s^0)_{s\in [t,T]})$
provided that
\begin{enumerate}[{\rm (1)}]
\item $(X_s^1)_{s\in [t,T]}$ is path-wise continuous and adapted,
\item $X_s^1\in C_0(X_s^0)$ for all $s\in [t,T]$.
\end{enumerate}
\end{defi}
\medskip

By construction we also have the following properties:

\begin{lemm}\label{statement:properties_CCtTadapted}
Assume $X^0=(X_s^0)_{s\in [t,T]}$ and $X^1=(X_s^1)_{s\in [t,T]}$ from \cref{definition:restriction_operator_continuous_processes}.
\begin{enumerate}[{\rm (1)}]
\item \label{item:1:statement:properties_CCtTadapted}
      Up to indistinguishability the process $X^1$ is unique.
\item \label{item:2:statement:properties_CCtTadapted}
      The processes  $X^0$ and $X^1$ have the same finite-dimensional distributions.
\end{enumerate}
\end{lemm}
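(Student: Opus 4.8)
The plan is to reduce everything to the explicit construction of the transference operator $\cC_0$ and, in particular, to the single fact driving distribution preservation: the coordinate maps $J^0$ and $J^1$ push $\P^0$ and $\P^1$ forward to \emph{one and the same} law $\gamma$ on $\R^\bN$. Indeed, for each fixed $i$ the coordinates of $J^i$ are the Haar coefficients $\int_0^T h_\ell(s)\,\od W^i_{s,j}$, which are independent $N(0,1)$ variables because $(h_\ell)_\ell$ is orthonormal in $L_2([0,T])$; hence $J^0_*\P^0=J^1_*\P^1=\gamma$ is the standard Gaussian product measure. I also recall that, by construction, for every $s$ the class $\cC_0([X_s^0])$ is represented by $\widehat{X_s^0}\circ J^1$, where $\widehat{X_s^0}:\R^\bN\to\R$ is the factorization (determined $\gamma$-a.e.) of a $\sigma(W^0)$-measurable representative of $X_s^0$ through $J^0$.

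For part (1) I would argue as follows. If $X^1$ and $\widetilde X^1$ both satisfy \cref{definition:restriction_operator_continuous_processes}, then for each fixed $s\in[t,T]$ both $X_s^1$ and $\widetilde X_s^1$ are representatives of the same class $\cC_0([X_s^0])$, so $X_s^1=\widetilde X_s^1$ $\P^1$-a.s. I would then fix a countable dense set $D\subseteq[t,T]$ and pass to a single null set off which equality holds simultaneously for all $s\in D$; since both processes have continuous paths, equality on $D$ propagates to all of $[t,T]$, giving indistinguishability. This is the routine half.

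For part (2) the idea is to factor a whole finite-dimensional vector at once. Fixing $t\le s_1<\cdots<s_n\le T$, I would set $F:=(\widehat{X_{s_1}^0},\ldots,\widehat{X_{s_n}^0}):\R^\bN\to\R^n$, so that $(X_{s_1}^0,\ldots,X_{s_n}^0)=F\circ J^0$ holds $\P^0$-a.s.\ by the choice of factorizations. Because each $X_{s_i}^1$ lies in $\cC_0([X_{s_i}^0])$ and is therefore represented by $\widehat{X_{s_i}^0}\circ J^1$, the same function $F$ yields $(X_{s_1}^1,\ldots,X_{s_n}^1)=F\circ J^1$ $\P^1$-a.s. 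The equality of finite-dimensional distributions then drops out of the change-of-variables identity
\[
\E^0\,\varphi(X_{s_1}^0,\ldots,X_{s_n}^0)=\int_{\R^\bN}\varphi\circ F\,\od\gamma=\E^1\,\varphi(X_{s_1}^1,\ldots,X_{s_n}^1)
\]
for bounded Borel $\varphi:\R^n\to\R$, using $J^0_*\P^0=J^1_*\P^1=\gamma$.

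The main obstacle, such as it is, is bookkeeping about the factorizations rather than anything deep: one must check that the per-coordinate factorizations $\widehat{X_{s_i}^0}$ can be assembled into a single Borel map $F$ and that the $\gamma$-a.e.\ ambiguity in each $\widehat{X_{s_i}^0}$ is harmless. Both are immediate here---stacking finitely many Borel functions is Borel, and altering $\widehat{X_{s_i}^0}$ on a $\gamma$-null set leaves $\widehat{X_{s_i}^0}\circ J^1$ unchanged $\P^1$-a.s.\ precisely because $J^1_*\P^1=\gamma$---so the only genuine input is the common Gaussian law of the Haar coefficients established at the outset.
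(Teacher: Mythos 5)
Your proposal is correct. Part (1) is exactly the paper's argument: almost-sure uniqueness of each $X_s^1$ as a representative of $\cC_0(X_s^0)$, upgraded to indistinguishability via path continuity (the paper states this in one line; your countable-dense-set bookkeeping is the standard way to make it precise). For part (2) you diverge from the paper only in that the paper treats the distributional invariance as a black box, citing \cite[Proposition 2.5(3)]{Geiss:Ylinen:21}, whereas you re-derive it from the explicit construction of the transference operator: the Haar-coefficient maps $J^0,J^1$ push $\P^0,\P^1$ forward to the same Gaussian product measure $\gamma$ on $\R^\bN$, each $X_{s_i}^1$ is $\P^1$-a.s.\ equal to $\widehat{X_{s_i}^0}\circ J^1$, and stacking the factorizations into one Borel map $F$ gives both finite-dimensional vectors the common law $F_*\gamma$. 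This is mathematically the same mechanism that underlies the cited proposition, so it is not a different route so much as an inlining of it; what your version buys is self-containedness (no appeal to the external reference, and the $\gamma$-a.e.\ ambiguity of the factorizations is handled explicitly), at the cost of redoing work the paper deliberately delegates. Both steps of your argument are sound.
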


\begin{proof}
\eqref{item:1:statement:properties_CCtTadapted} follows from the continuity of the processes and the
fact that $X_s^1$ is almost surely unique.
\eqref{item:2:statement:properties_CCtTadapted} is a consequence of \cite[Proposition 2.5(3)]{Geiss:Ylinen:21}.
\end{proof}
\bigskip

\cref{statement:restriction_operator_predictable_processes} leads to the following counterpart to \cref{definition:restriction_operator_continuous_processes}:
\medskip

\begin{defi}
\label{definition:restriction_operator_predictable_processes}
For $t\in [0,T)$, $A^0 \in \cL_0([t,T]\times \Omega^0,\cP_{t,T}^0)$ or
$A^0 \in \cL_0([t,T]\times \Omega^0,\cP_{t,T}^0;C(\R^M))$ such that in both cases $A^0_t\equiv 0$, we let
$A^1\in \cC_{[t,T]}^\cP(A^0)$ if
\[ A^1: = \widetilde{A}^1|_{[t,T]}
   \sptext{1}{for some}{1}
   \widetilde{A}^1 \in \cC_T^\cP \left ( (A_{s\vee t}^0)_{s\in [0,T]}\right)
   \sptext{.5}{with}{.5} \widetilde{A}^1_s\equiv 0 \sptext{.5}{for}{.5} s\in [0,t]. \]
\end{defi}
\medskip

\begin{rema}
\label{rema:definition:restriction_operator_predictable_processes}
To justify \cref{definition:restriction_operator_predictable_processes} we let
\[ A^0 \in \cL_0([t,T]\times \Omega^0,\cP_{t,T}^0;C(\R^M))
   \sptext{1}{and}{1}
   B^1 \in \cC_T^\cP \left ( (A_{s\vee t}^0)_{s\in [0,T]}\right ) \]
with $A^0_t\equiv 0$.
By definition,
\[ B^1(\cdot,x) \in \cC_T \left ( (A_{s\vee t}^0(\cdot,x))_{s\in [0,T]}\right )
   \sptext{1}{for all}{1} x\in \R^M.\]
As $(1_{(t,T]})_{s\in [0,T]}\in  \cC_T (1_{(t,T]})$, which is expected and follows, for example, from \cite[Proposition 2.5(7)]{Geiss:Ylinen:21}, we get
by \cite[Proposition 2.5(4)]{Geiss:Ylinen:21} that
\[ B^1(\cdot,x) 1_{(t,T]} \in \cC_T \left ( (1_{(t,T]}(s) A_{s\vee t}^0(\cdot,x))_{s\in [0,T]} \right )
   =  \cC_T \left ( ( A_{s\vee t}^0)(\cdot,x))_{s\in [0,T]}\right ). \]
Because $B^1(\cdot,x) 1_{(t,T]}$ is predictable as process on $[0,T]$ we may choose
$ \widetilde{A}^1:= B^1 1_{(t,T]}$.
The case $A^0 \in \cL_0([t,T]\times \Omega^0,\cP_{t,T}^0)$ can be treated in the same way.
\end{rema}

Moreover, we shall use the following lemmas:

\begin{lemm}
For $(X_s^0)_{s\in [t,T]}$ and $(X_s^1)_{s\in [t,T]}$ from \cref{definition:restriction_operator_continuous_processes}
one has
\[ \left (X_s^1 1_{(t,T]}(s) \right )_{s\in [t,T]} \in \cC_{[t,T]}^\cP \left (\left (X_s^0 1_{(t,T]}(s)\right )_{s\in [t,T]}\right ).\]

\end{lemm}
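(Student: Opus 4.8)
\emph{Proof plan.} The plan is to verify the two defining requirements of \cref{definition:restriction_operator_predictable_processes} for the input process $A^0 := \left(X_s^0 1_{(t,T]}(s)\right)_{s\in[t,T]}$. First I would check admissibility: since $X^0$ is continuous and adapted it is $\cP_{t,T}^0$-measurable, and multiplication by the deterministic left-continuous indicator $1_{(t,T]}$ preserves $\cP_{t,T}^0$-measurability, so $A^0\in \cL_0([t,T]\times\Omega^0,\cP_{t,T}^0)$; moreover $A^0_t = X_t^0 1_{(t,T]}(t)=0$. Its extension to $[0,T]$ is $\left(A_{s\vee t}^0\right)_{s\in[0,T]} = \left(X_{s\vee t}^0 1_{(t,T]}(s)\right)_{s\in[0,T]}$, and by \cref{definition:restriction_operator_predictable_processes} the task reduces to producing a predictable representative $\widetilde{A}^1$ of $\cC_T\big(\left(A_{s\vee t}^0\right)_s\big)$ that vanishes on $[0,t]$ and whose restriction to $[t,T]$ equals $\left(X_s^1 1_{(t,T]}(s)\right)_{s\in[t,T]}$.

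The key idea is to split the extended process along the decomposition used to build $X^1$. With $Y^0 = \left(X_{s\vee t}^0 - X_t^0\right)_{s\in[0,T]}$, the continuous adapted process $Y^1\in C_{[0,T]}^\cadap(Y^0)$, and $A\in C_0(X_t^0)$ from \cref{statement:restriction_operator_continuous_processes}, one has, as elements of $L_0(\Omega_T^0)$,
\[ \left(A_{s\vee t}^0\right)_{s\in[0,T]} = \left(Y_s^0\right)_{s\in[0,T]} + \left(X_t^0 1_{(t,T]}(s)\right)_{s\in[0,T]}, \]
since $Y^0\equiv 0$ on $[0,t]$ (so $Y_s^0 1_{(t,T]}(s)=Y_s^0$) and $A_t^0\equiv 0$. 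I would then transfer the two summands separately and add, using the linearity of $\cC_T$ from \cite[Proposition 2.5(2)]{Geiss:Ylinen:21}. For the first summand, (P1) together with \cite[Proposition 2.12(1)]{Geiss:Ylinen:21} guarantees that $Y^1$ is a representative of $\cC_T\big(\left(Y_s^0\right)_s\big)$, and it is predictable because it is continuous and adapted. For the second summand I would argue exactly as in \cref{rema:definition:restriction_operator_predictable_processes}: writing $X_t^0 1_{(t,T]}$ as the product of the time-independent factor $X_t^0$ and the deterministic factor $1_{(t,T]}$, I use that $\left(1_{(t,T]}\right)_s\in\cC_T(1_{(t,T]})$ by \cite[Proposition 2.5(7)]{Geiss:Ylinen:21}, that $\cC_T\big(\left(X_t^0\right)_s\big)=\left(A\right)_s$ because $\cC_0$ and $\cC_T$ factor through the same map $\widehat{f}$ (as recalled above) and hence agree on time-independent functions, and the multiplicativity of \cite[Proposition 2.5(4)]{Geiss:Ylinen:21} to conclude $\left(A\,1_{(t,T]}(s)\right)_s\in\cC_T\big(\left(X_t^0 1_{(t,T]}(s)\right)_s\big)$; this representative is left-continuous and adapted, hence predictable.

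Adding the two representatives yields $\widetilde{A}^1 := \left(Y_s^1 + A\,1_{(t,T]}(s)\right)_{s\in[0,T]}$, a predictable representative of $\cC_T\big(\left(A_{s\vee t}^0\right)_s\big)$, so $\widetilde{A}^1\in\cC_T^\cP\big(\left(A_{s\vee t}^0\right)_s\big)$. A direct check shows $\widetilde{A}^1_s=0$ on $[0,t]$ (both terms vanish there), while for $s\in(t,T]$ one has $\widetilde{A}^1_s = Y_s^1 + A = X_s^1 = X_s^1 1_{(t,T]}(s)$, using $X_s^1=A+Y_s^1$ from \cref{statement:restriction_operator_continuous_processes}. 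Thus $\widetilde{A}^1$ vanishes on $[0,t]$ and restricts on $[t,T]$ to $\left(X_s^1 1_{(t,T]}(s)\right)_{s\in[t,T]}$, which is precisely what \cref{definition:restriction_operator_predictable_processes} requires, completing the argument. The step demanding most care is the passage from the pointwise-in-time relations $Y_s^1\in C_0(Y_s^0)$ and $X_s^1\in C_0(X_s^0)$ to the process-level identities for $\cC_T$: this is exactly where one must invoke the consistency between $\cC_0$, $\cC_T$ and $C_{[0,T]}^\cadap$ encoded in (P1) and \cite[Proposition 2.5, Proposition 2.12(1)]{Geiss:Ylinen:21}, combined with the indicator/multiplicativity device of \cref{rema:definition:restriction_operator_predictable_processes}; everything else is bookkeeping.
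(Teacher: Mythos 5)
Your proof is correct, but it takes a genuinely more roundabout route than the paper's. The paper's own proof is two lines: by \cref{definition:restriction_operator_continuous_processes} one has $X_s^1\in \cC_0(X_s^0)$ for every $s$, and since $1_{(t,T]}(s)$ is a deterministic scalar this gives the slice-wise relation $X_s^1 1_{(t,T]}(s)\in \cC_0\left(X_s^0 1_{(t,T]}(s)\right)$; by \cite[Proposition 2.5(7)]{Geiss:Ylinen:21} this upgrades directly to the process-level statement $\left(X_s^1 1_{(t,T]}(s)\right)_{s\in[0,T]}\in \cC_T\left(\left(X_s^0 1_{(t,T]}(s)\right)_{s\in[0,T]}\right)$, and since both processes are predictable on $[0,T]$ (left-continuous and adapted) and vanish on $[0,t]$, \cref{definition:restriction_operator_predictable_processes} applies at once. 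You instead re-open the construction behind \cref{statement:restriction_operator_continuous_processes}, split the extended process as $Y^1 + A\,1_{(t,T]}$, transfer the continuous summand via (P\ref{condition:P1})/\cite[Proposition 2.12(1)]{Geiss:Ylinen:21} and the constant summand via the indicator-multiplicativity device of \cref{rema:definition:restriction_operator_predictable_processes}, and recombine by linearity. This is valid and has the virtue of using only steps already spelled out in the paper (in particular you avoid applying the slice-wise-to-process property to the full product process), but it is redundant: the defining slice-wise property of $X^1$ already carries all the needed information, so the decomposition buys nothing structurally. Two small points you should tidy up: first, your $\widetilde A^1$ vanishes on $[0,t]$ only almost surely, since $Y_s^1\in\cC_0(0)$ forces $Y^1=0$ on $[0,t]$ a.s.\ but not identically; replace $\widetilde A^1$ by $\widetilde A^1 1_{(t,T]}$, exactly as done in \cref{rema:definition:restriction_operator_predictable_processes}, to meet the requirement $\widetilde A^1_s\equiv 0$ on $[0,t]$ literally. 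Second, your argument establishes the claim for the particular version $X^1=A+Y^1$; since the statement concerns an arbitrary $X^1$ from \cref{definition:restriction_operator_continuous_processes}, you should close with the uniqueness up to indistinguishability from \cref{statement:properties_CCtTadapted}. Neither point is a real gap.
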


\begin{proof}
Using \cite[Proposition 2.5(7)]{Geiss:Ylinen:21} we get that
\[ \left (X_s^1 1_{(t,T]}(s) \right )_{s\in [0,T]} \in \cC_T \left (\left (X_s^0 1_{(t,T]}(s)\right )_{s\in [0,T]}\right ). \]
As both processes are predictable as processes on $[0,T]$ and vanish on $[0,t]$, the assertion follows.
\end{proof}
\medskip

\begin{lemm}
\label{statement:CPtT_is_isometry}
For $A^0\in \cL_0([t,T]\times \Omega^0,\cP_{t,T}^0)$ with $A_t^0 \equiv 0$ and
$A^1 \in C_{[t,T]}^\cP(A^0)$ we have that the distributions of $A^i:[t,T]\times\Omega^i\to \R$, $i=0,1$, coincide.
\end{lemm}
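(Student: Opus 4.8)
The plan is to reduce the claim, via Definition \ref{definition:restriction_operator_predictable_processes}, to the distribution-preservation property of the global operator $\cC_T$ on $\Omega_T=[0,T]\times\Omega$, and then to cancel the common ``padding'' that both processes carry on the sub-interval $[0,t]$. First I would unfold the hypothesis $A^1\in \cC_{[t,T]}^\cP(A^0)$: by definition $A^1=\widetilde{A}^1|_{[t,T]}$ for some $\widetilde{A}^1\in \cC_T^\cP(\widetilde{A}^0)$ with $\widetilde{A}^1_s\equiv 0$ on $[0,t]$, where $\widetilde{A}^0:=(A^0_{s\vee t})_{s\in[0,T]}$. Because $A^0_t\equiv 0$, the padded process also satisfies $\widetilde{A}^0_s=A^0_t=0$ for $s\in[0,t]$, while $\widetilde{A}^0_s=A^0_s$ and $\widetilde{A}^1_s=A^1_s$ for $s\in[t,T]$. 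Here I would note that $\widetilde{A}^0$ is genuinely $\cP_T^0$-measurable by \cref{statement:restriction_operator_predictable_processes} (this is exactly where $A^0_t\equiv 0$ is needed), so that $\cC_T^\cP$ is well defined on it.

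Next I would invoke that $\cC_T$ preserves distributions. Since $\widetilde{A}^1\in\cC_T(\widetilde{A}^0)$, the real-valued functions $\widetilde{A}^0$ on $(\Omega_T^0,\frac{\lambda}{T}\otimes\P^0)$ and $\widetilde{A}^1$ on $(\Omega_T^1,\frac{\lambda}{T}\otimes\P^1)$ have the same law; this is the single-function instance of \cite[Proposition 2.5(3)]{Geiss:Ylinen:21} already used for \cref{statement:properties_CCtTadapted}\eqref{item:2:statement:properties_CCtTadapted}. Testing against an arbitrary bounded Borel function $\phi:\R\to\R$ and splitting the time integral at $t$ yields
\[
   \frac{t}{T}\,\phi(0)+\int_{[t,T]}\frac{\od\lambda(s)}{T}\,\E_{\P^0}[\phi(A_s^0)]
   =\frac{t}{T}\,\phi(0)+\int_{[t,T]}\frac{\od\lambda(s)}{T}\,\E_{\P^1}[\phi(A_s^1)],
\]
where the two $\frac{t}{T}\phi(0)$ terms arise from the intervals $[0,t]$, on which both $\widetilde{A}^i$ vanish.

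Finally I would cancel the identical $[0,t]$-contributions to get $\int_{[t,T]}\frac{\od\lambda(s)}{T}\E_{\P^0}[\phi(A_s^0)]=\int_{[t,T]}\frac{\od\lambda(s)}{T}\E_{\P^1}[\phi(A_s^1)]$ for every bounded Borel $\phi$. Since these are precisely the integrals of $\phi$ against the push-forwards of $\frac{\lambda}{T}\otimes\P^i$ under $A^i$ on $[t,T]\times\Omega^i$ (equivalently, after normalizing by $\frac{T}{T-t}$, against the laws of $A^i$), the two distributions coincide. The only point demanding genuine care is the splitting step: one must verify that both padded processes are indeed identically zero on $[0,t]$ — which is exactly the role of the assumption $A^0_t\equiv 0$ — and confirm that the cited proposition delivers equality of the law of the single composite variable $\widetilde{A}^i$, rather than merely of a family of time marginals. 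Everything else is routine bookkeeping with the product measure.
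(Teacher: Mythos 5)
Your proof is correct and follows essentially the same route as the paper's: reduce via \cref{definition:restriction_operator_predictable_processes} to the global operator $\cC_T^\cP$ on $[0,T]\times\Omega$, invoke the distribution-preservation (isometry) property of $\cC_T$, and pass back to the restriction. The paper states this in two sentences; you simply make explicit the bookkeeping it leaves implicit, namely that both padded processes vanish on $[0,t]$ (using $A_t^0\equiv 0$) so the common $\frac{t}{T}\phi(0)$ contributions cancel.
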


\begin{proof}
This property is satisfied for $C_T^\cP$ as $C_T$ is an isometry. By \cref{definition:restriction_operator_predictable_processes}
this follows for the restriction $C_{[t,T]}^\cP$ of $C_T^\cP$.
\end{proof}
\medskip

The following Propositions \ref{statement:transference_composition_new} and \ref{statement:transference_sde} are
the basis for the transference of SDEs and are an adaptation of \cite[Theorem 3.3]{Geiss:Ylinen:21}.
They cover the case of random coefficients when we insert another predictable process in non path-dependent form.
Here and later in our framework, we may always assume w.l.o.g. realizations of stochastic integrals where all trajectories are continuous.
Moreover, we note that
$\int_t^s L_s  \od W_s$ means $\int_{(t,s]} L_s  \od W_s$, i.e. the random variable $L_t$ is not used for this stochastic integral.
\bigskip

\begin{prop}
\label{statement:transference_composition_new}
Assume that $h^i:[t,T]\times \Omega^i \times C([t,T];\R^L)\times \R^M\to \R$, $i=0,1$,
satisfy {\rm (h\ref{h1}),\ldots,(h\ref{h4})} with respect to
$(\Omega^i,\cF^i,\P^i,(W_s^i,\cF_s^i)_{s\in [t,T]})$ and that
\begin{enumerate}[{\rm (1)}]
\item \label{item:1:statement:transference_composition_new}
      $h^1(\cdot,\cdot,x,z)\in \cC_{[t,T]}^\cP(h^0(\cdot,\cdot,x,z))$ for all
      $(x,z)\in C([t,T];\R^L)\times \R^M$,
\item \label{item:2:statement:transference_composition_new}
      $A^1\in \cC_{[t,T]}^\cadap(A^0)$,
\item \label{item:3:statement:transference_composition_new}
      $B^1\in \cC_{[t,T]}^\cP(B^0)$.
\end{enumerate}
Then one has
$(h^1(s,\cdot,A^1,B_s^1))_{s\in [t,T]} \in \cC_{[t,T]}^\cP((h^0(s,\cdot,A^0,B_s^0))_{s\in [t,T]} )$.
\end{prop}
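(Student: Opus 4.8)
The plan is to strip the path-dependence out of the spatial argument of $h^i$ by means of the finite-dimensional approximations $h^{i,K}$ from \cref{statement:properties:DNSN}, to settle the resulting finite-dimensional composition, and then to pass to the limit $K\to\infty$. Two global facts drive the argument: first, $\cC_{[t,T]}^\cP$ is the restriction of the $L_0$-isometry $\cC_T^\cP$ (see \cref{sec:transference} and \cref{statement:CPtT_is_isometry}), so the underlying map on $L_0$-classes is continuous for convergence in the metric $d$; second, both limiting processes $(h^i(s,\cdot,A^i,B_s^i))_{s\in[t,T]}$ are $\cP_{t,T}^i$-measurable by \cref{statement:composition_proedictable_new}, so the limiting inclusion is a genuine $\cC_{[t,T]}^\cP$-relation once the $L_0$-classes are matched.

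First I would introduce, for each $K\in\bN$ and $i=0,1$, the $(\R^L)^{2^K}\times\R^M$-valued process $G_s^{i}:=\big((A_{s\wedge t_k^K}^i)_{k=0}^{2^K-1},\,B_s^i\big)$ for $s\in[t,T]$, so that $h^{i,K}(s,\cdot,G_s^i)=h^{i,K}(s,\cdot,P^K(A^i(\cdot\wedge s)),B_s^i)$ is exactly the discretized composition. The hypotheses transfer to $h^{i,K}$ without effort: by \cref{statement:properties:DNSN} each $h^{i,K}$ vanishes at $s=t$, is $\cP_{t,T}^i$-measurable in $(s,\omega)$, and is continuous in its now finitely many spatial variables; and since assumption (1) gives $h^1(\cdot,\cdot,x,z)\in\cC_{[t,T]}^\cP(h^0(\cdot,\cdot,x,z))$ for every path $x$, specializing to $x=S^K a$ yields $h^{1,K}(\cdot,\cdot,a,z)\in\cC_{[t,T]}^\cP(h^{0,K}(\cdot,\cdot,a,z))$ for all $(a,z)$.

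Next I would verify $G^1\in\cC_{[t,T]}^\cP(G^0)$ componentwise, the transference acting consistently across components because it is induced by the common maps $J^i$. The last component is assumption (3). For each $k$, the stopped process $(A^i_{s\wedge t_k^K})_{s\in[t,T]}$ is continuous and adapted, and assumption (2) with \cref{definition:restriction_operator_continuous_processes} gives $A^1_{s\wedge t_k^K}\in\cC_0(A^0_{s\wedge t_k^K})$ for every $s$, i.e. $(A^1_{s\wedge t_k^K})_s\in\cC_{[t,T]}^\cadap((A^0_{s\wedge t_k^K})_s)$; the lemma immediately preceding \cref{statement:CPtT_is_isometry} then converts this into a predictable transference after multiplying by $\1_{(t,T]}$, which alters nothing in $L_0(\Omega_T^i)$ since $\lambda(\{t\})=0$ and does not affect the composition because $h^{i,K}(t,\cdot)\equiv 0$. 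With $G^1\in\cC_{[t,T]}^\cP(G^0)$ and the verified properties of $h^{i,K}$, the finite-dimensional composition result — the $[t,T]$-adaptation of \cite[Theorem 3.3]{Geiss:Ylinen:21} — applies and gives
\[ \big(h^{1,K}(s,\cdot,G_s^1)\big)_{s\in[t,T]} \in \cC_{[t,T]}^\cP\big((h^{0,K}(s,\cdot,G_s^0))_{s\in[t,T]}\big). \]

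Finally I would let $K\to\infty$. By \cref{statement:properties:DNSN}(6) one has $h^{i,K}(s,\omega,G_s^i(\omega))\to h^i(s,\omega,A^i(\omega),B_s^i(\omega))$ for every $(s,\omega)$, hence convergence in $L_0(\Omega_T^i)$ by dominated convergence for the bounded integrand of $d$. Since both sides of the displayed inclusion converge and the $L_0$-map underlying $\cC_{[t,T]}^\cP$ is a continuous bijection, the two limiting classes again correspond under $\cC_{[t,T]}^\cP$; as these limits are $\cP_{t,T}^i$-measurable by \cref{statement:composition_proedictable_new}, this is precisely the asserted membership. I expect the main obstacle to be the finite-dimensional composition step: making rigorous that transference commutes with substituting the predictable process $G^i$ into the continuous predictable random field $h^{i,K}$ (the $[t,T]$-adaptation of \cite[Theorem 3.3]{Geiss:Ylinen:21}), together with the bookkeeping that turns the continuous adapted transference of the stopped processes $(A^i_{\cdot\wedge t_k^K})$ into a predictable transference of $G^i$ while respecting the vanishing of $h^{i,K}$ at $s=t$.
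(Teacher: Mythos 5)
Your proposal follows essentially the same route as the paper's proof: discretize via the maps $h^K$ and the stopped processes $A^i_{\cdot\wedge t_k^K}$, transfer the finitely many arguments componentwise (using the $\1_{(t,T]}$-multiplication to pass from the adapted to the predictable transference), apply the finite-dimensional composition result from \cite{Geiss:Ylinen:21}, let $K\to\infty$ using the $L_0$-continuity of the transference, and conclude the $\cP_{t,T}^i$-measurability of the limits by \cref{statement:composition_proedictable_new}. The only differences are bookkeeping ones: the paper carries out the argument on $[0,T]$ via the extensions $s\vee t$ and $\1_{[t,T]}(s)$ and settles the step you flag as the main obstacle by citing \cite[Proposition 2.13(1)]{Geiss:Ylinen:21} for the finite-dimensional substitution (rather than Theorem 3.3, which is the SDE-transference statement) together with \cite[Proposition 2.5(2)]{Geiss:Ylinen:21} for the limit.
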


\begin{proof}
We have that 
\begin{align*}
(A^1_{s\wedge t_k^K} \1_{[t,T]}(s))_{s\in [0,T]} & \in \cC_T \left ( (A^0_{s\wedge t_k^K} \1_{[t,T]}(s))_{s\in [0,T]} \right )
 \sptext{1}{for}{1} k=0,\ldots,2^K-1,\\
(B^1_{s\vee t})_{s\in [0,T]} & \in \cC_T \left ( (B^0_{s\vee t})_{s\in [0,T]} \right ).
\end{align*}
In the notation of \cref{statement:properties:DNSN} and with 
\cite[Proposition 2.13(1)]{Geiss:Ylinen:21} this implies that 
\begin{multline*}
   \left ( (h^1)^K(s\vee t,\cdot,(A^1_{s\wedge t_k^K} \1_{[t,T]}(s))_{k=0}^{2^K-1},B_{s\vee t}^1)\right )_{s\in [0,T]} \\
   \in
   \cC_T \left (  \left ( (h^0)^K(s\vee t,\cdot,(A^0_{s\wedge t_k^K} \1_{[t,T]}(s))_{k=0}^{2^K-1},B_{s\vee t}^0)\right )_{s\in [0,T]} \right ) .
\end{multline*}
By $K\to \infty$ and \cite[Proposition 2.5(2)]{Geiss:Ylinen:21} we obtain
\[ \left ( h^1(s\vee t,\cdot,A^1,B_{s\vee t}^1)\right )_{s\in [0,T]} \in
   \cC_T \left (  \left ( h^0(s\vee t,\cdot,A^0,B_{s\vee t}^0)\right )_{s\in [0,T]} \right ) .\]
Finally, \cref{statement:composition_proedictable_new} implies that
$\left ( h^i(s,\cdot,A^i,B_{s\vee t}^i)\right )_{s\in [t,T]}$
is $\cP_{t,T}^i$-measurable for $i=0,1$, so that the assertion follows.
\end{proof}
\bigskip

\begin{rema}
\label{statement:conditons_transference_composition_new}
In \eqref{item:1:statement:transference_composition_new} of  \cref{statement:transference_composition_new}
we {\it assume} the existence of $h^1$, whereas in items
\eqref{item:2:statement:transference_composition_new} and \eqref{item:3:statement:transference_composition_new}
we know the existence of $A^1$ and $B^1$. We give two situations where such an $h^1$ in \eqref{item:1:statement:transference_composition_new} exists:

\begin{enumerate}
\item \label{item:1:statement:conditons_transference_composition_new}
      $h^0$ does not depend on $x$, i.e. is not path-dependent.
\item \label{item:2:statement:conditons_transference_composition_new}
      $h^0$ can be factorized as follows:
      \[ h^0(s,\omega^0,x,z) = h^0_1(s,\omega^0,h^0_2(s,x,z),z) \]
      where
      $h^0_1:[t,T]\times \Omega^1 \times \R^K \times \R^M\to \R$ and
      $h^0_2:[t,T]\times C([t,T];\R^L)\times \R^M \to \R^K$
      satisfy {\rm (h\ref{h1}),\ldots,(h\ref{h4})} with respect to
      $(\Omega^0,\cF^0,\P^0,(W_s^0,\cF_s^0)_{s\in [t,T]})$.
\end{enumerate}
\end{rema}
\medskip

\begin{proof}
\eqref{item:1:statement:conditons_transference_composition_new}
The existence follows by \cref{rema:definition:restriction_operator_predictable_processes}
which did yield to \cref{definition:restriction_operator_predictable_processes}.
\smallskip

\eqref{item:2:statement:conditons_transference_composition_new}
We use \cref{statement:transference_composition_new} for $h_2^0$ because
$h_2^0$ does not depend on $\omega^0$. Therefore we get, coordinate-wise,
\[ (h_2^1(s,A^1,B_s^1))_{s\in [t,T]}\in \cC_{[t,T]}^\cP((h^0_2(s,A^0,B_s^0))_{s\in [t,T]} ).\]
Now we use \cref{statement:conditons_transference_composition_new}\eqref{item:1:statement:conditons_transference_composition_new}.
\end{proof}

\bigskip
\begin{prop}
\label{statement:transference_sde}
Let $t\in [0,T)$, $N,d\in \bN$, and consider the $\R$-valued equation
\[ L_s^0 = \int_t^s b^0(u, K_u^0)\,\od u + \int_t^s \sigma^0(u, K_u^0)\,\od W_u^0
   \sptext{1}{for}{1} s\in [t,T] \quad \P^0\mbox{-a.s.} \]
with $K_u^0=(K_{1,u}^0,\ldots,K_{d,u}^0)$ and $\sigma^0(u,x)=(\sigma_1^0(u,x),\ldots, \sigma_N^0(u,x))$ such that
\medskip
\begin{enumerate}[{\rm (1)}]
\item $L^0:[t,T]\times \Omega^0\to \R$ is continuous and adapted with $L_t^0\equiv 0$,
\item $K^0:[t,T]\times \Omega^0\to \R^d$ is $\cP_{t,T}^0$-measurable, $b^0, \sigma^0_k \in \cL_0([t,T]\times \Omega^0,\cP^0_{t,T};C(\R^d))$,
      \[ K_t^0 \equiv 0, \sptext{1}{and}{1}
         b^0_t=\sigma_1^0(t)=\cdots=\sigma_N^0(t)\equiv 0, \]
\item $\E_{\P^0} \int_t^T [|b^0(u, K_u^0)|+ |\sigma^0(u, K_u^0)|^2] \od u  < \infty$,
\item $L^1\in C_{[t,T]}^{\rm c,adap}(L^0)$, where we may assume $L_t^1\equiv 0$,
\item $K^1_j\in C_{[t,T]}^{\cP}(K^0_j)$, $b^1\in C_{[t,T]}^{\cP}(b^0)$, and $\sigma^1_k\in C_{[t,T]}^{\cP}(\sigma^0_{k})$.
\end{enumerate}
\medskip
Then one has $\E_{\P^1}  \int_t^T [|b^1(u, K_u^1)|+ |\sigma^1(u, K_u^1)|^2] \od u  < \infty$ and
\[ L_s^1 = \int_t^s b^1(u, K_u^1)\,\od u + \int_t^s \sigma^1(u, K_u^1)\,\od W_u^1
   \sptext{1}{for}{1} s\in [t,T] \quad \P^1\mbox{-a.s.}\]
\end{prop}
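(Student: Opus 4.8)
The plan is to reduce the assertion to \cref{statement:transference_composition_new} (for the coefficients) together with the transference of the two integrals (which is the content of \cite[Theorem 3.3]{Geiss:Ylinen:21}), and then to assemble the equation on $\Omega^1$ using that the transference operator is a linear bijection respecting distributions. First I would transfer the integrands. Since $b^0$ and the $\sigma_k^0$ depend only on the inserted value and not on a path, \cref{statement:conditons_transference_composition_new}\eqref{item:1:statement:conditons_transference_composition_new} shows that the hypotheses of \cref{statement:transference_composition_new} are met with $K^0$ in the role of the predictable process $B^0$ (and no continuous-adapted argument $A^0$). Applying \cref{statement:transference_composition_new} to $b^0$ and, coordinate-wise, to each $\sigma_k^0$ yields
\[ \left(b^1(s,\cdot,K_s^1)\right)_{s\in[t,T]} \in \cC_{[t,T]}^\cP\left(\left(b^0(s,\cdot,K_s^0)\right)_{s\in[t,T]}\right) \]
and the analogous relation for each $\sigma_k^i(s,\cdot,K_s^i)$, all under one and the same operator $\cC_{[t,T]}$.

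Because $\cC_{[t,T]}^\cP$ preserves (joint) distributions by \cref{statement:CPtT_is_isometry}, the laws of the vector process $(b^0,\sigma_1^0,\dots,\sigma_N^0)(u,K_u^0)$ and of its $\Omega^1$-counterpart coincide; applying the distribution functional $\E\int_t^T[\,|\cdot|+\sum_k|\cdot|^2\,]\,\od u$ shows that condition (3) transfers verbatim, which settles the integrability claim on $\Omega^1$. Next I would transfer the two integrals as continuous adapted processes in $s$. For the drift this is the statement that $\cC$ commutes with $\int_t^{\,\cdot}\,\od u$; for the diffusion one approximates the Itô integral in $L_2$ by stochastic integrals of simple predictable processes --- here the finiteness $\E_{\P^0}\int_t^T|\sigma^0(u,K_u^0)|^2\,\od u<\infty$ makes $\int_t^{\,\cdot}\sigma^0(u,K_u^0)\,\od W_u^0$ a genuine $L_2$-martingale, so that no localisation is needed. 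Each simple integral is a finite sum of products of an $\cF^0$-measurable coefficient with a Brownian increment $W^0_{v}-W^0_{u}$, and since $\cC$ is linear, multiplicative, sends increments of $W^0$ to the corresponding increments of $W^1$ and is continuous for convergence in probability, these simple integrals transfer directly and the $L_2$-limit passes through $\cC$. This is exactly the argument of \cite[Theorem 3.3]{Geiss:Ylinen:21}; by \cref{definition:restriction_operator_continuous_processes} and \cref{definition:restriction_operator_predictable_processes} it applies on $[t,T]$ once all data are extended to $[0,T]$ by $s\mapsto(\cdot)_{s\vee t}$, where $b_t^0=\sigma_k^0(t)\equiv 0$, $K_t^0\equiv 0$ and $L_t^0\equiv 0$ guarantee that the extended equation holds on $[0,T]$ and restricts back. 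The outcome is
\[ \left(\int_t^s b^1(u,K_u^1)\,\od u\right)_{s\in[t,T]} \in \cC_{[t,T]}^\cadap\left(\left(\int_t^s b^0(u,K_u^0)\,\od u\right)_{s\in[t,T]}\right) \]
\[ \left(\int_t^s \sigma^1(u,K_u^1)\,\od W_u^1\right)_{s\in[t,T]} \in \cC_{[t,T]}^\cadap\left(\left(\int_t^s \sigma^0(u,K_u^0)\,\od W_u^0\right)_{s\in[t,T]}\right). \]

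Finally I would assemble the equation. By hypothesis the continuous adapted process $L^0_s-\int_t^s b^0(u,K_u^0)\,\od u-\int_t^s\sigma^0(u,K_u^0)\,\od W_u^0$ vanishes identically. Fixing $s\in[t,T]$ and applying the linear bijection $\cC_0$ to the (equal) representatives at time $s$, together with $L^1\in C_{[t,T]}^{\rm c,adap}(L^0)$ and the two displays above, gives $L^1_s-\int_t^s b^1(u,K_u^1)\,\od u-\int_t^s\sigma^1(u,K_u^1)\,\od W_u^1=0$ $\P^1$-a.s. Since all three processes on $\Omega^1$ are continuous in $s$, the exceptional sets can be chosen uniformly over a countable dense set of times, so the identity holds simultaneously for all $s\in[t,T]$ $\P^1$-a.s.

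I expect the main obstacle to be the transference of the Itô integral: one must check carefully that the simple-process approximation and the passage to the $L_2$-limit genuinely commute with $\cC$ (invoking its $d$-isometry property and that it respects products and Brownian increments), and that the restriction-to-$[t,T]$ bookkeeping encoded in \cref{definition:restriction_operator_predictable_processes} is compatible with the approximating sequence. The integrability, drift, and final assembly steps are comparatively routine.
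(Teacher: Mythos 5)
Your proposal is correct and follows essentially the same route as the paper: extend all data to $[0,T]$ by zero (using $L_t^0\equiv 0$, $K_t^0\equiv 0$, $b_t^0=\sigma_k^0(t)\equiv 0$), invoke \cite[Theorem 3.3]{Geiss:Ylinen:21} for the extended equation, and restrict back to $[t,T]$ via \cref{definition:restriction_operator_continuous_processes} and \cref{definition:restriction_operator_predictable_processes}. The additional material you include --- re-deriving the simple-process approximation inside Theorem 3.3 and the distributional argument for the integrability claim --- is sound but not needed once that theorem is cited as a black box, which is exactly what the paper does.
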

\medskip

\begin{proof}
First we extend $L^0,K^0,b^0,\sigma^0$ to $[0,T]$ by choosing the value zero
and denote these extensions by  $\widetilde{L}^0,\widetilde{K}^0,\widetilde{b}^0,\widetilde{\sigma}^0$.
We obtain the extended equation
\[ \widetilde{L}_s^0 = \int_0^s \widetilde{b}^0(u, \widetilde{K}_u^0)\,\od u + \int_0^s \widetilde{\sigma}^0(u, \widetilde{K}_u^0)\,\od W_u^0
   \sptext{1}{for}{1} s\in [0,T] \mbox{ a.s.} \]
To this equation we apply  \cite[Theorem 3.3]{Geiss:Ylinen:21} and obtain
\[ \widetilde{L}_s^1 = \int_0^s \widetilde{b}^1(u, \widetilde{K}_u^1)\,\od u + \int_0^s \widetilde{\sigma}^1(u, \widetilde{K}_u^1)\,\od W_u^1
   \sptext{1}{for}{1} s\in [0,T] \mbox{ a.s.} \]
For $s\in [t,T]$ this implies a.s. for all $s\in [t,T]$ that
\begin{align*}
    \widetilde{L}_s^1 - \widetilde{L}_t^1
& = \int_t^s \widetilde{b}^1(u, \widetilde{K}_u^1)\,\od u + \int_t^s \widetilde{\sigma}^1(u, \widetilde{K}_u^1)\,\od W_u^1 \\
& = \int_t^s b^1(u, K_u^1)\,\od u + \int_t^s \sigma^1(u, K_u^1)\,\od W_u^1.
\end{align*}
As $(\widetilde{L}_s^1 - \widetilde{L}_t^1)_{s\in [t,T]} \in C_{[t,T]}^{\rm c,adap}(L^0)$ the proof is complete.
\end{proof}

%%%%%%%%%%%%%%%%%%%%%%%%%%%%%%%%%%%%%%%%%%%%%%%%%%%%%%%%%%%%%%%%%%%%%%%%%%%%%%%%%%%%%%%%%%%%%%%%%%%%%%%%%%%%%%%%%%%%%
%%%%%%%%%%%%%%%%%%%%%%%%%%%%%%%%%%%%%%%%%%%%%%%%%%%%%%%%%%%%%%%%%%%%%%%%%%%%%%%%%%%%%%%%%%%%%%%%%%%%%%%%%%%%%%%%%%%%%

\section{Couplings of the Wiener space and Besov spaces}

%%%%%%%%%%%%%%%%%%%%%%%%%%%%%%%%%%%%%%%%%%%%%%%%%%%%%%%%%%%%%%%%%%%%%%%%%%%%%%%%%%%%%%%%%%%%%%%%%%%%%%%%%%%%%%%%%%%%%

\subsection{Couplings of the Wiener space}\ \medskip
\label{sec:coupling}

The idea is to couple two Wiener spaces by replacing a part of the Gaussian structure by
an independent copy. More precisely, we couple the stochastic bases
$(\Omega^0,\cF^0,\P^0,(W^0_s,\cF^0_s)_{s\in [0,T]})$ and
$(\Omega^1,\cF^1,\P^1,(W^1_s,\cF^1_s)_{s\in [0,T]})$ on a {\it joint} stochastic basis,
where the Brownian motions
$(W^0_s)_{s\in [0,T]}$ and $(W^1_s)_{s\in [0,T]}$ relate to each other by a
coupling function  $\varphi$.
For a coupling described by $\varphi$ we will measure the impact of this coupling in the $L_p$-sense. Inspecting this impact for a specific class of couplings,
we define Besov spaces, also covering classical Besov spaces obtained by real
interpolation.
\medskip

Let us recall the coupling from \cite[Section 4.2]{Geiss:Ylinen:21}:
If $(\Omega',\cF',\P',(W'_s,\cF'_s)_{s\in [0,T]})$ is a copy of
$(\Omega ,\cF,\P,(W_s,\cF_s)_{s\in [0,T]})$,
where -- as before -- the filtrations are the augmentations of the natural filtrations of the corresponding Brownian motions
that start identically in zero and have only continuous paths,
$\cF=\cF_T$, and $\cF'=\cF'_T$, then we set
\[ \overline{\Omega}:= \Omega\times\Omega', \quad
   \overline{\P} := \P \otimes \P', \quad
   \overline{\cF}:= \overline{\cF \otimes \cF'}^{\overline{\P}}.\]
We extend the Brownian motions $W = (W_s)_{s\in [0, T]}$ and $W'=(W'_s)_{s\in [0,T]}$
canonically to $\Omega\times \Omega'$ and consider the $2N$-dimensional Brownian motion
$\overline{W}=(W,W')$ on $( \overline{\Omega},\overline{\P},\overline{\cF})$ and denote the augmented natural
filtration of $\overline{W}$ by $(\overline{\cF}_s)_{s\in [0,T]}$.
\medskip

Consider a measurable function $\varphi:[0,T] \to [0,1]$ and define the path-wise continuous
process
\[ W^\varphi_s := \int_0^s  \sqrt{1 - (\varphi(u))^2} \,\od W_u + \int_0^s \varphi(u)\,\od W'_u \]
with  $W_0^\varphi\equiv 0$ (in particular, we fix a version of
$W^\varphi$ for the following).
By L\'evy's characterization we obtain a Brownian motion on
$( \overline{\Omega},\overline{\cF},\overline{\P})$ and denote the $\overline{\P}$-augmented natural
filtration of $W^\varphi =(W^\varphi_s)_{s\in [0,T]}$
by $\F^\varphi=(\cF^\varphi_s)_{s\in [0,T]}$.
Coming back to \cref{sec:transference}, we specify the two stochastic bases considered there to be
\begin{equation}\label{eqn:phi-coupling}
   (\overline{\Omega},\cF^0_T,\overline{\P},(W^0_s,\cF^0_s)_{s\in [0,T]})
   \sptext{.5}{and}{.5}
   (\overline{\Omega},\cF^\varphi_T,\overline{\P},(W^\varphi_s,\cF^\varphi_s)_{s\in [0,T]}).
\end{equation}
By \cite[Lemma 3.1]{Geiss:Ylinen:21} we have the transference
\[ W_{s,j}^\varphi \in C_0(W_{s,j}^0)
   \sptext{1}{for}{1} s\in [0,T] \sptext{1}{and}{1}
   j=1,\ldots, N.\]
Here and in the following the superscript $0$ stands for the coupling function
$\varphi_0\equiv 0$, i.e. $W_s^0:=W_s^{\varphi_0}$ and $\cF_s^0:=\cF_s^{\varphi_0}$.
The predictable $\sigma$-algebras $\cP_{t,T}^0$ and
$\cP_{t,T}^\varphi$ are defined relative to the filtrations 
$(\cF_s^0)_{s\in [t,T]}$ and $(\cF_s^\varphi)_{s\in [t,T]}$.
\medskip

Given a random variable $\xi\in \cL_0(\Omega,\cF,\P)$ we will extend this random variable 
canonically to $\xi^0\in \cL_0(\overline{\Omega},\cF_T^0,\overline{\P})$ and obtain its
$\varphi$-coupling $\xi^\varphi\in \cL_0(\overline{\Omega},\cF_T^\varphi,\overline{\P})$. If there
is no risk of confusion we identify $\xi$ and $\xi^0$ and obtain the coupling transformation
\[ \xi \mapsto \xi^\varphi.\]
The role of the different Brownian motions is as follows: The independent Brownian motions
$W$ and $W'$ generate with $\overline{W}=(W,W')$ a joint stochastic basis and, at the same time,
the coupling $(W,W^\varphi)$ along the coupling function $\varphi$. Finally, the bases
generated by $\overline{W}$ enables us to use stochastic analysis to investigate the coupling.
\medskip

In particular, we use two types of coupling functions,  $\varphi_r:\equiv r$ for $r\in [0,1]$ 
for the \qq{isonormal decoupling} and
$\varphi=\1_{(a,c]}$ with $0\le a < c \le T$ for an \qq{cut-off decoupling}, and assume the following versions and notation:
\begin{align*}
W^\br_s         = W^{\varphi_r}_s & :\equiv \sqrt{1-r^2} W_s + r W_s',\\
W^{(a,c]}_s   = W^{\1_{(a,c]}}_s  & :\equiv
        \begin{cases}
                W_s & 0\leq s\leq a,\\
                W_a + W'_s - W'_a & a\leq s\leq c,\\
                W_a + (W'_c - W'_a) + (W_s - W_c) & c\leq s\leq T.
        \end{cases}
\end{align*}
To shorten the notation we use for these two couplings the notation
\begin{equation}\label{eqn:notation:coupling_rv}
       \xi^\br = \xi^{\varphi_r} \sptext{1}{and}{1}
 \xi^{(a,b]} = \xi^{\1_{(a,c]}}
 \sptext{1}{for}{1} \xi \in \cL_0(\Omega,\cF,\P).
\end{equation}

%%%%%%%%%%%%%%%%%%%%%%%%%%%%%%%%%%%%%%%%%%%%%%%%%%%%%%%%%%%%%%%%%%%%%%%%%%%%%%%%%%%%%%%%%%%%%%%%%%%%%%%%%%%%%%%%%%%%%

\subsection{Besov spaces $\B_p^\Phi$}\ \medskip
\label{sec:Besov}

We start with the pseudo-metric space $(\cD,\delta)$ given by
\[ \cD := \{\varphi \in \cL_2((0,T]) : 0 \le \varphi \le 1 \}
   \sptext{1}{with}{1}
   \delta(\varphi,\psi) := \| \varphi-\psi \|_{L_2((0,T])}.\]
From this we obtain the corresponding metric space $(\Delta,\delta)$ of equivalence classes.
Let $C^+(\Delta)$ be the space of non-negative continuous functions $F:\Delta \to [0,\infty)$. A functional
$\Phi:C^+(\Delta)\to [0,\infty]$ is called {\em admissible} provided that
\begin{enumerate}
\item $\Phi (F+G) \le \Phi(F) + \Phi(G)$,
\item $\Phi(\lambda F) = \lambda \Phi(F)$ for $\lambda>0$ and $\Phi(0)=0$,
\item $\Phi(F) \le \Phi(G)$ for $0\le F \le G$,
\item $\Phi(F) \le \limsup_n \Phi(F_n)$ for $\sup_{\varphi \in \Delta} |F_n(\varphi) - F(\varphi)| \to_n 0$.
\end{enumerate}
Next, for $p\in (0,\infty)$ we let
\begin{equation}\label{eqn:def:Phip}
   |\xi|_{\Phi,p}       := \Phi(\varphi \to \| \xi-\xi^\varphi\|_{L_p})
    \sptext{1}{and}{1}
    \| \xi \|_{\B_p^\Phi}  := \left [ \E|\xi|^p + |\xi|_{\Phi,p}^p \right ]^\frac{1}{p}.
\end{equation}
\subsubsection{The spaces $\B_p^{\Phi_\alpha}$}
\label{sec:Bpalpha}
For $p\in (0,\infty)$ and $\alpha \in [2,\infty)$ this leads to the particular Besov spaces
$\B_p^{\Phi_\alpha}$  where $\xi \in \B_p^{\Phi_\alpha}$ provided that
\[ \| \xi \|_{\B_p^{\Phi_\alpha}} := \left [ \E |\xi|^p + |\xi|_{\Phi_\alpha,p}^p \right ]^\frac{1}{p} < \infty
    \sptext{1}{with}{1}
    \Phi_\alpha(F) := \sup_{0\le a < c \le T} \frac{F(\1_{(a,c]})}{(c-a)^\frac{1}{\alpha}}
    \]
for $F\in C^+(\Delta)$.
By definition this means that
\[  |\xi|_{\Phi_\alpha,p} = \sup_{0\le a < c \le T} \frac{\| \xi - \xi^{(a,c]}\|_{L_p}}{(c-a)^\frac{1}{\alpha}}.\]
In \cite[Theorem 4.22]{Geiss:Ylinen:21} it is shown that $\B_2^{\Phi_2} \subseteq \D_{1,2}$ and that for
$p\in [2,\infty)$ and $\xi\in \D_{1,2} \cap L_p$ one has
\begin{equation}\label{eqn:characterization_Phi2p}
|\xi|_{\Phi_2,p} \sim_{c_{\eqref{eqn:characterization_Phi2p},p}} 
\sup_{0\le a < c \le T} \left \| \left ( \frac{1}{c-a} \int_a^c |D_s \xi|^2 \od s \right )^\frac{1}{2} \right \|_{L_p}, 
\end{equation}
where $c_{\eqref{eqn:characterization_Phi2p},p}\ge 1$ depends at most on $p$. For $p=2$ the RHS of 
\eqref{eqn:characterization_Phi2p} equals 
\[ \esssup_{s\in [0,T]} \big \| |D_s \xi| \big \|_{L_2}.\]

%%%%%%%%%%%%%%%%%%%%%%%%%%%%%%%%%%%%%%%%%%%%%%%%%%%%%%%%%%%%%%%%%%%%%%%%%%%%%%%%%%%%%%%%%%%%%%%%%%%%%%%%%%%%%%%%%%%%%

\subsubsection{\bf Characterization of real interpolation spaces}
\label{sec:real_interpolation}
Regarding real interpolation we refer to  \cite{Bennet:Sharpley:88,Bergh:Loefstroem:76} for
detailed information. Here we recall some basic definitions and properties for the convenience of the reader.
Given a pair of real Banach spaces $(E_0,E_1)$ that are continuously embedded into another Banach space $E$,
i.e. $(E_0,E_1)$ is an interpolation pair, we define the spaces
$E_0+E_1 := \{ x=x_0 + x_1 : x_i \in E_i \}$ with
\[ \| x \|_{E_0+E_1} := \inf \{ \| x_0\|_{E_0} + \| x_1\|_{E_1} : x=x_0 + x_1 \} \]
and $E_0\cap E_1$ with
\[ \| x \|_{E_0\cap E_1} := \max \{ \| x\|_{E_0}, \|x\|_{E_1}\}.\]
Given $\lambda \in (0,\infty)$ and $x\in E_0 + E_1$, the $K$-functional is given by
\[ K(x,\lambda;E_0,E_1) := \inf \{ \| x_0\|_{E_0} + \lambda \| x_1\|_{E_1} : x=x_0 + x_1 \}. \]
For $(\eta,q)\in (0,1)\times [1,\infty]$ the real interpolation space $(E_0,E_1)_{\eta,q}$ is given by
\[ \| x\|_{(E_0,E_1)_{\eta,q}} := \left \| \lambda^{-\eta}  K(x,\lambda;E_0,E_1) \right \|_{L_q\left ((0,\infty),\frac{\od \lambda}{\lambda}\right )}.\]
We have a lexicographical ordering, i.e.
\[ (E_0,E_1)_{\eta,q_0} \subseteq (E_0,E_1)_{\eta,q_1}
   \sptext{1}{if}{1} 1 \le q_0 \le q_1 \le \infty, \]
and, under the additional assumption that we have the continuous embedding $E_1 \subseteq E_0$,
\[  (E_0,E_1)_{\eta_0,q_0} \subseteq (E_0,E_1)_{\eta_1,q_1}
   \sptext{1}{if}{1} 0< \eta_1 < \eta_0 < 1 \sptext{1}{and}{1}
   q_0,q_1 \in [1,\infty]. \]
\medskip

By a special choice of the functional $\Phi$ the spaces
$\B_p^\Phi$ describe the standard Besov spaces on the Wiener space obtained by real interpolation:
We define the measure $\mu$ on $\cB([0,1])$ by
\begin{equation}\label{eqn:equivalence_mu}
\od \mu(r) := \frac{r}{\sqrt{1-r^2}(1-\sqrt{1-r^2})} \1_{(0,1)}(r) \od r
              \sim_{c_\eqref{eqn:equivalence_mu}}  \frac{1}{r \sqrt{1-r}} \1_{(0,1)}(r) \od r
\end{equation}
for some $c_\eqref{eqn:equivalence_mu}\ge 1$,
and for $\eta\in (0,1)$ the kernel $\cK_\eta:[0,1]\to \R$ by
\begin{equation}\label{eqn:equivalence_K}
\cK_\eta(r) := \frac{1}{(1-\sqrt{1-r^2})^\frac{\eta}{2}} \1_{(0,1]}(r)
        \sim_{c_{\eqref{eqn:equivalence_K},\eta}} \frac{1}{r^\eta}\1_{(0,1]}(r)
\end{equation}
for some $c_{\eqref{eqn:equivalence_K},\eta}\ge 1$.
For $(\eta,q) \in (0,1)\times [1,\infty]$ and $p\in [2,\infty)$ we define the
interpolation space
\[ \B_{p,q}^\eta := (L_p,\D_{1,p})_{\eta,q}.\]
Then it was shown in \cite[Theorem 4.16]{Geiss:Ylinen:21} that
\begin{equation}\label{eqn:real_interpolation_via_decoupling}
\| \xi \|_{\B_{p,q}^\eta}
   \sim_{c_\eqref{eqn:real_interpolation_via_decoupling}}  \|\xi\|_{\B_p^{\Phi^{\cK_\eta,\mu,q)}}}
\end{equation}
with
\[\Phi^{(\cK_\eta,\mu,q)}(F) := \left \| \cK_\eta(\cdot) F(\varphi_\cdot) \right \|_{L_q([0,1],\mu)} \]
where
$ c_\eqref{eqn:real_interpolation_via_decoupling}
 =c_\eqref{eqn:real_interpolation_via_decoupling}(p,q,\eta)
 \ge 1$ and $\varphi_r:\equiv r$.

%%%%%%%%%%%%%%%%%%%%%%%%%%%%%%%%%%%%%%%%%%%%%%%%%%%%%%%%%%%%%%%%%%%%%%%%%%%%%%%%%%%%%%%%%%%%%%%%%%%%%%%%%%%%%%%%%%%%%

\subsubsection{\bf Characterization of $\D_{1,2}$}
\label{sec:characterization_D12}

The characterization of $\D_{1,2}$ in \cref{statement:characterization_D12} below
completes the results from \cite{Geiss:Ylinen:21}. On the way to its proof we obtain in
\cref{statement:chaos_isotropic_decoupling} a general relation that describes the
$L_2$-impact of the isonormal decoupling 
$\xi \mapsto \xi^r$ from \eqref{eqn:notation:coupling_rv}
in terms of a multiplier in the Wiener chaos expansion.
\medskip

We recall that for $\xi \in L_2(\Omega,\cF,\P)$ there is a chaos decomposition
\[ \xi = \E \xi + \sum_{n=1}^\infty I_n(f_n) \]
with symmetric kernels $f_n:( (0,T] \times \{1,\ldots,N\})^n \to \R$
(see \cite[Example 1.1.2]{Nualart:06}).
\medskip

\begin{prop}
\label{statement:chaos_isotropic_decoupling}
For $\xi \in L_2(\Omega,\cF,\P)$ and $r\in [0,1]$ one has
\[    \E|\xi-\xi^\br|^2 = 2 \sum_{n=1}^\infty \left [ 1 -  |1-r^2|^\frac{n}{2}  \right ] \E |I_n(f_n)|^2.\]
\end{prop}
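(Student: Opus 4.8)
The plan is to compute the effect of the isonormal coupling $\xi \mapsto \xi^{\br}$ directly on the chaos expansion, using the fact that the coupling acts as an Ornstein--Uhlenbeck-type operator on each chaos. First I would set up the joint basis $(\overline{\Omega},\overline{\cF},\overline{\P})$ carrying the independent Brownian motions $W,W'$ and the coupled motion $W^{\br}_s = \sqrt{1-r^2}\,W_s + r\,W'_s$ as in \cref{sec:coupling}. By the transference construction, $\xi^{\br}$ is obtained by expressing $\xi$ through its stochastic (Haar-coefficient) representation $\widehat{\xi}\circ J^0$ and then substituting the coupled Brownian increments $J^{\varphi_r}$. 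The key structural observation is that this substitution acts multiplicatively on the iterated integrals: since $W^{\br}$ is a linear combination of $W$ and an independent copy $W'$ with the same law, the $n$-th chaos $I_n(f_n)$ built from $W$ transforms into the corresponding object built from $W^{\br}$, and the coupling distributes the factor $\sqrt{1-r^2}$ across the $n$ integration variables.

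The central computation I would carry out is the covariance $\E[I_n(f_n)\,\overline{I_n(f_n)^{\br}}]$ (all quantities being real). Write the coupled $n$-th chaos as an iterated integral against $\od W^{\br}$. Because $W^{\br}_s = \sqrt{1-r^2}\,W_s + r\,W'_s$ and $W'$ is independent of $W$ with $\E[\od W'\,\od W]=0$, the mixed terms involving at least one $W'$-increment are orthogonal to the pure $W$-chaos $I_n(f_n)$. Hence, in the product $I_n(f_n)\cdot I_n(f_n)^{\br}$, the only surviving contribution when taking expectation comes from the pure-$W$ part of $I_n(f_n)^{\br}$, which carries the scalar factor $(\sqrt{1-r^2})^{n} = |1-r^2|^{n/2}$. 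By the Itô isometry for multiple integrals this yields
\[
   \E\!\left[ I_n(f_n)\, I_n(f_n)^{\br}\right] = |1-r^2|^{\frac{n}{2}}\,\E|I_n(f_n)|^2 .
\]
Since the coupling $\cC$ is an isometry (see \cref{statement:CPtT_is_isometry} and the isometry of $\cC_0$), one has $\E|I_n(f_n)^{\br}|^2 = \E|I_n(f_n)|^2$ as well.

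With these two identities the proof assembles quickly. I would expand
\[
   \E|\xi-\xi^{\br}|^2 = \E|\xi|^2 - 2\,\E[\xi\,\xi^{\br}] + \E|\xi^{\br}|^2,
\]
insert the chaos decomposition $\xi = \E\xi + \sum_{n\ge 1} I_n(f_n)$, and use orthogonality of distinct chaoses (which persists under the coupling because $I_m(f_m)^{\br}$ again lives in the $m$-th chaos of $\overline{W}$, orthogonal to the $n$-th chaos for $m\ne n$, and the zeroth chaos $\E\xi$ is unaffected by the coupling). Term by term this gives
\[
   \E|\xi-\xi^{\br}|^2 = \sum_{n=1}^{\infty}\Big[\E|I_n(f_n)|^2 - 2|1-r^2|^{\frac{n}{2}}\E|I_n(f_n)|^2 + \E|I_n(f_n)|^2\Big],
\]
which is precisely $2\sum_{n=1}^{\infty}\big[1-|1-r^2|^{n/2}\big]\,\E|I_n(f_n)|^2$.

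The step I expect to be the main obstacle is justifying rigorously that the coupling $\cC$ maps $I_n(f_n)$ to a multiple integral against $W^{\br}$ with the clean multiplicative scaling claimed above, rather than merely knowing $\cC$ is an $L_2$-isometry. This requires tracking the action of the transference/substitution $J^0 \rightsquigarrow J^{\varphi_r}$ on the Wiener--Itô integrals at the level of the Haar-coefficient representation, i.e. verifying that replacing each Gaussian coordinate $\int_0^T h_\ell\,\od W$ by its coupled analogue induces exactly the Ornstein--Uhlenbeck multiplier $|1-r^2|^{n/2}$ on the $n$-th chaos. A convenient route is to first establish the covariance identity on a dense class (e.g. multiple integrals of indicator-step kernels, or Hermite polynomials of single Gaussians, where the scaling is a direct Gaussian computation), and then extend to all of $L_2$ by the isometry and density, which also supplies the dominated convergence needed to interchange the sum with the expectation.
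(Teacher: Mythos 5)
Your proof is correct and follows essentially the same route as the paper: both arguments identify the action of the coupling on the Wiener chaos via the extended $2N$-dimensional basis $\overline{W}=(W,W')$, where $\xi^\br$ acquires kernels with a multiplicative factor $\sqrt{1-r^2}$ or $r$ per integration variable, and both justify this on the dense class of linear combinations of products of disjoint Brownian increments. The only difference is cosmetic: you finish by polarization, using the isometry $\E|\xi^\br|^2=\E|\xi|^2$ and the cross-covariance $\E\big[I_n(f_n)\,I_n(f_n)^\br\big]=(1-r^2)^{n/2}\,\E|I_n(f_n)|^2$ (which needs only the pure-$W$ component of the coupled chaos), whereas the paper computes $\sum_n n!\,\|f_n-f_n^\br\|_{L_2}^2$ directly by summing the squared coefficient differences over all $2^n$ configurations; both yield the factor $2\big[1-(1-r^2)^{n/2}\big]$ per chaos.
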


\begin{proof}
To describe the coupling between $\xi$ and $\xi^\br$
we use the chaos expansion on the extended space
$(\overline{\Omega},\overline{\cF},\overline{\P})$ with the $2N$-dimensional
Brownian motion $\overline{W}=(W,W')$ and kernels defined on
$( (0,T] \times \{1,\ldots,N,N+1,\ldots,2N\})^n$ where
$\{1,\ldots,N\}$ refers to the Brownian motion $W$ and
$\{N+1,\ldots,2N\}$  to the Brownian motion $W'$.
\smallskip

For the random variable $\xi^\br$ for $r\in [0,1]$ we get the symmetric kernels
\[ f^\br_n:( (0,T] \times \{1,\ldots,N,N+1,\ldots,2N\})^n \to \R \]
given by
\[ f_n^\br((t_1,j_1+\varepsilon_1 N),\ldots,(t_n,j_n + \varepsilon_n N))
   := q(r,\varepsilon_1) \cdots q(r,\varepsilon_n)
      f_n((t_1,j_1),\ldots,(t_n,j_n)) \]
for $j_1,\ldots,j_n\in \{1,\ldots,N\}$ and $\varepsilon_1,\ldots,\varepsilon_n\in \{0,1\}$
with $q(r,0) := \sqrt{1-r^2}$ and $q(r,1):= r$.
This can be shown by inspecting the operation $\xi\mapsto \xi^\br$ on linear
combinations of products of form
\[ (W_{c_1,j_1}-W_{a_1,j_1}) \cdots (W_{c_n,j_n}-W_{a_n,j_n}) \]
with $0\le a_k < c_k \le T$ and 
\[ ((a_k,c_k]\times \{ j_k\}) \cap ((a_l,c_l]\times \{ j_l\})=\emptyset
   \sptext{1}{if}{1} k\not = l. \]
So we have
\[ \xi^\br = \E \xi + \sum_{n=1}^\infty I_n(f_n^\br). \]
This implies that
\begin{align*}
    \E|\xi-\xi^\br|^2
& = \sum_{n=1}^\infty \E |I_n(f_n-f_n^\br)|^2 \\
& = \sum_{n=1}^\infty n! \left \|f_n-f_n^\br \right \|^2_{L_2^n} \\
& = \sum_{n=1}^\infty n! \sum_{\varepsilon_1,\ldots,\varepsilon_n=0}^1  \sum_{j_1,\ldots,j_n=1}^N \Bigg [
    \left ( q(0,\varepsilon_1) \cdots q(0,\varepsilon_n) - q(r,\varepsilon_1) \cdots q(r,\varepsilon_n) \right )^2  \\
&   \hspace{2em} \times  \int_0^T\cdots \int_0^T |f_n((t_1,j_1),\ldots,(t_n,j_n))|^2 \od t_n \cdots \od t_1 \Bigg ] \\
& = \sum_{n=1}^\infty \left [ \sum_{\varepsilon_1,\ldots,\varepsilon_n=0}^1
    \left ( q(0,\varepsilon_1) \cdots q(0,\varepsilon_n) - q(r,\varepsilon_1) \cdots q(r,\varepsilon_n) \right )^2 \right ]
    \E |I_n(f_n)|^2 \\
& = 2 \sum_{n=1}^\infty \left [ 1 -  |\sqrt{1-r^2}|^n  \right ] \E |I_n(f_n)|^2. \qedhere
\end{align*}
\end{proof}

The next statement allows to characterize $\xi\in \D_{1,2}$ by
knowing the behavior of  $\E|\xi-\xi^\br|^2$ for $r\in (0,1]$:

\begin{lemm}
\label{statement:characterization_D12_abstract}
For $(a_n)_{n=1}^\infty \subseteq [0,\infty)$ the following assertions are equivalent:
\begin{enumerate}[{\rm (1)}]
\item \label{item:1:statement:characterization_D12}
      There exists a constant $c_1>0$ such that
      \[ \sum_{n=1}^\infty \left [ 1 -  |1-r|^\frac{n}{2}  \right ] a_n \le c_1 r
         \sptext{1}{for all}{1} r\in [0,1].\]
\item \label{item:2:statement:characterization_D12}
      There exists a constant $c_2>0$ such that
      \[ \sum_{n=1}^\infty n a_n \le c_2. \]
\end{enumerate}
we can choose
$c_2 := 2 c_1$ in \eqref{item:1:statement:characterization_D12} $\Rightarrow$ \eqref{item:2:statement:characterization_D12}, and
$c_1 := c_2$          in \eqref{item:2:statement:characterization_D12} $\Rightarrow$ \eqref{item:1:statement:characterization_D12}.
\end{lemm}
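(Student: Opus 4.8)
The plan is to reduce everything to two elementary facts about the scalar function $g_n(r) := 1 - (1-r)^{n/2}$ on $r\in[0,1]$; note that since $r\in[0,1]$ we have $|1-r|=1-r$, so $g_n(r)$ is precisely the coefficient $1-|1-r|^{n/2}$ appearing in assertion \eqref{item:1:statement:characterization_D12}. The first fact is a linear upper bound: writing
\[ (1-r)^{n/2} = \bigl((1-r)^{1/2}\bigr)^n \ge (1-r)^n \ge 1 - nr, \]
where the first inequality uses $(1-r)^{1/2}\ge 1-r$ (valid because $\sqrt{x}\ge x$ for $x=1-r\in[0,1]$) and the second is Bernoulli's inequality for the integer exponent $n$, I obtain
\[ g_n(r) \le n\,r \sptext{1}{for all}{1} n\ge 1,\ r\in[0,1]. \]
The second fact is the behaviour at the origin: since $g_n(0)=0$ and $g_n'(r)=\tfrac{n}{2}(1-r)^{n/2-1}$, one has $g_n'(0)=\tfrac{n}{2}$, hence $\lim_{r\to 0^+} g_n(r)/r = \tfrac{n}{2}$ for each fixed $n$.

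The implication \eqref{item:2:statement:characterization_D12} $\Rightarrow$ \eqref{item:1:statement:characterization_D12} is then immediate. Assuming $\sum_n n a_n \le c_2$ and using the linear upper bound together with $a_n\ge 0$,
\[ \sum_{n=1}^\infty g_n(r)\, a_n \le r \sum_{n=1}^\infty n\, a_n \le c_2\, r \sptext{1}{for all}{1} r\in[0,1], \]
so \eqref{item:1:statement:characterization_D12} holds with $c_1 = c_2$, as claimed.

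For \eqref{item:1:statement:characterization_D12} $\Rightarrow$ \eqref{item:2:statement:characterization_D12} I would avoid interchanging the limit $r\to 0^+$ with the infinite series directly, and argue instead via finite truncations. Fix $N\in\bN$. Because all summands are nonnegative, the hypothesis gives
\[ \sum_{n=1}^N g_n(r)\, a_n \le \sum_{n=1}^\infty g_n(r)\, a_n \le c_1\, r \sptext{1}{for all}{1} r\in(0,1]. \]
Dividing by $r$ and letting $r\to 0^+$ in this \emph{finite} sum, where the limit passes term by term by the second fact above, yields $\sum_{n=1}^N \tfrac{n}{2} a_n \le c_1$. Since this bound is uniform in $N$ and the summands are nonnegative, letting $N\to\infty$ produces $\sum_{n=1}^\infty n\, a_n \le 2 c_1$, which is \eqref{item:2:statement:characterization_D12} with $c_2 = 2 c_1$.

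The only step requiring care is the passage to the limit in $r$, since the full series need not converge uniformly near $r=0$. Restricting to a finite partial sum \emph{before} sending $r\to 0^+$, and only afterwards letting $N\to\infty$ (legitimate precisely because $a_n\ge 0$), circumvents this issue and is the crux of the argument; everything else is a one-line scalar estimate.
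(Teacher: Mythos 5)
Your proof is correct and takes essentially the same approach as the paper: the implication (2) $\Rightarrow$ (1) rests on the linear bound $1-(1-r)^{n/2}\le nr$ (which you obtain via $\sqrt{x}\ge x$ and Bernoulli's inequality, where the paper uses a derivative comparison for $n\ge 2$ and a direct computation for $n=1$), and (1) $\Rightarrow$ (2) uses the limit $\lim_{r\downarrow 0}\bigl(1-(1-r)^{n/2}\bigr)/r = n/2$, with your explicit truncation in $N$ carefully justifying the interchange of limits that the paper leaves implicit. Both directions yield the same constants $c_1=c_2$ and $c_2=2c_1$ as in the statement.
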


\begin{proof}
\eqref{item:1:statement:characterization_D12} $\Rightarrow$ \eqref{item:2:statement:characterization_D12}
We observe that
\[   \lim_{r\downarrow 0} \frac{1 -  |1-r|^\frac{n}{2}}{r}\\
   = \lim_{r\downarrow 0} \frac{1 -  |1-r|^n}{r (1 +  |1-r|^\frac{n}{2})} \\
   = \frac{n}{2} \]
so that we can choose $c_2 := 2 c_1$.
\medskip

\eqref{item:2:statement:characterization_D12} $\Rightarrow$ \eqref{item:1:statement:characterization_D12}
We start with
\begin{align*}
     \sum_{n=1}^\infty \left [ 1 -  |1-r|^\frac{n}{2} \right ] a_n
& =  \sum_{n=1}^\infty \frac{\left [ 1 -  |1-r|^\frac{n}{2} \right ]}{n} n a_n \\
&\le \left [ \sup_{n\in \bN} \frac{\left [ 1 -  |1-r|^\frac{n}{2}  \right ]}{n}\right ]
     \sum_{n=1}^\infty n a_n.
\end{align*}
For $n\ge 2$ we check that
\[  1 -  |1-r|^\frac{n}{2}  \le \frac{1}{2}  n r.\]
For $n=2$ we have equality, let $n\ge 3$.
For $r=0$ both sides coincide. Differentiating both sides with respect to $r$  gives
\[ \frac{n}{2} (1-r)^{\frac{n}{2} -1} \le \frac{1}{2} n.\]
Finally, for $n=1$ we have that
\[  1 -  \sqrt{1-r} = \frac{r}{1 +  \sqrt{1-r}} \le r.\]
Hence we can choose $c_1 := c_2$.
\end{proof}

\cref{statement:chaos_isotropic_decoupling} and \cref{statement:characterization_D12_abstract}
immediately imply the following characterization:

\begin{coro}
\label{statement:characterization_D12}
For $\xi \in L_2$ one has $\xi \in \D_{1,2}$ if and only if
\[ \sup_{r\in (0,1]} \frac{\| \xi - \xi^\br \|_{L_2}}{r} < \infty. \]
Moreover, if $\xi \in \D_{1,2}$, then one has
\[     \frac{1}{2}  \| D\xi\|_{L_2(\Omega \times [0,T];\R^N)}
   \le \sup_{r\in (0,1]} \frac{\| \xi - \xi^\br \|_{L_2}}{r} \le \| D\xi\|_{L_2(\Omega \times [0,T];\R^N)}.\]
\end{coro}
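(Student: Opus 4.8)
The plan is to combine the chaos-expansion identity of \cref{statement:chaos_isotropic_decoupling} with the abstract summability criterion of \cref{statement:characterization_D12_abstract}, using the standard fact that membership in $\D_{1,2}$ is equivalent to the convergence of a weighted chaos series. First I would recall that for $\xi\in L_2$ with chaos expansion $\xi = \E\xi + \sum_{n=1}^\infty I_n(f_n)$ one has $\xi\in\D_{1,2}$ if and only if $\sum_{n=1}^\infty n\, \E|I_n(f_n)|^2 < \infty$, and moreover $\|D\xi\|_{L_2(\Omega\times[0,T];\R^N)}^2 = \sum_{n=1}^\infty n\,\E|I_n(f_n)|^2$. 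This is the classical characterization of the domain of the Malliavin derivative in terms of the number operator (see \cite[Example 1.1.2 and Proposition 1.2.2]{Nualart:06}), and it is the one external ingredient I would invoke.

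Next I would set $a_n := 2\,\E|I_n(f_n)|^2$, so that \cref{statement:chaos_isotropic_decoupling} reads
\[
\E|\xi-\xi^\br|^2 = \sum_{n=1}^\infty \bigl[\,1 - |1-r^2|^{\frac{n}{2}}\,\bigr]\,a_n .
\]
To match the form appearing in \cref{statement:characterization_D12_abstract}, I would substitute the variable: since $r\mapsto r^2$ maps $(0,1]$ onto $(0,1]$, writing $s = r^2$ turns the exponent $|1-r^2|^{n/2}$ into $|1-s|^{n/2}$, which is exactly the quantity in that lemma. Thus $\sup_{r\in(0,1]} \E|\xi-\xi^\br|^2 / r^2 < \infty$ is equivalent (with the same constant) to the existence of $c_1$ with $\sum_n [1-|1-s|^{n/2}]a_n \le c_1 s$ for all $s\in[0,1]$, i.e. to condition \eqref{item:1:statement:characterization_D12}. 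By the lemma this is equivalent to $\sum_n n\,a_n \le c_2 < \infty$, i.e. to $\sum_n n\,\E|I_n(f_n)|^2<\infty$, which is precisely $\xi\in\D_{1,2}$. Observing that $\bigl(\sup_r \|\xi-\xi^\br\|_{L_2}/r\bigr)^2 = \sup_r \E|\xi-\xi^\br|^2/r^2$ then yields the stated equivalence.

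For the quantitative two-sided bound I would track the constants in \cref{statement:characterization_D12_abstract} explicitly. Taking $c_1 := \sup_{s\in(0,1]} \E|\xi-\xi^\br|^2/s$ (with $s=r^2$) and using the implication \eqref{item:1:statement:characterization_D12}$\Rightarrow$\eqref{item:2:statement:characterization_D12} with $c_2 = 2c_1$ gives $\sum_n n\,a_n = \|D\xi\|_{L_2}^2 \le 2 c_1 = 2\sup_{r} \E|\xi-\xi^\br|^2/r^2$, which rearranges to the lower bound $\tfrac12\|D\xi\|_{L_2} \le \sup_r \|\xi-\xi^\br\|_{L_2}/r$. Conversely, the implication \eqref{item:2:statement:characterization_D12}$\Rightarrow$\eqref{item:1:statement:characterization_D12} with $c_1=c_2$ gives $\E|\xi-\xi^\br|^2 \le r^2 \sum_n n\,a_n = r^2 \|D\xi\|_{L_2}^2$ for every $r$, hence $\sup_r \|\xi-\xi^\br\|_{L_2}/r \le \|D\xi\|_{L_2}$, the upper bound. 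The factor $2$ discrepancy between the two sides is exactly the factor $2$ appearing in the two directions of the abstract lemma.

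The main obstacle here is essentially bookkeeping rather than a genuine difficulty: I must make sure the $r^2$-versus-$r$ reparametrization is applied consistently (the decoupling identity carries $|1-r^2|^{n/2}$, whereas the lemma is phrased with $|1-r|^{n/2}$), since a slip there would corrupt the constants. Everything else is a transcription of the two preceding results, and the only non-elementary input is the classical identification $\|D\xi\|_{L_2(\Omega\times[0,T];\R^N)}^2 = \sum_{n=1}^\infty n\,\E|I_n(f_n)|^2$, which I would cite rather than reprove.
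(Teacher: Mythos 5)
You follow exactly the route the paper itself takes (the paper obtains \cref{statement:characterization_D12} directly from \cref{statement:chaos_isotropic_decoupling} and \cref{statement:characterization_D12_abstract}), and your handling of the $r$-versus-$r^2$ reparametrization and of the qualitative equivalence ($\sup_{r\in(0,1]}\|\xi-\xi^\br\|_{L_2}/r<\infty$ iff $\xi\in\D_{1,2}$) is correct. The gap is in the constants, and it is exactly the kind of slip you warned yourself against, only it occurs in a different place: with your normalization $a_n:=2\,\E|I_n(f_n)|^2$ one has
\[ \sum_{n=1}^\infty n\,a_n \;=\; 2\sum_{n=1}^\infty n\,\E|I_n(f_n)|^2 \;=\; 2\,\| D\xi\|_{L_2(\Omega\times[0,T];\R^N)}^2, \]
not $\| D\xi\|^2$ as you assert. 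Propagating the correct identity through your two implications gives, from \eqref{item:1:statement:characterization_D12}$\Rightarrow$\eqref{item:2:statement:characterization_D12}, the inequality $2\|D\xi\|^2\le 2c_1$, i.e.\ the (stronger) lower estimate $\|D\xi\|\le\sup_{r\in(0,1]}\|\xi-\xi^\br\|_{L_2}/r$, and from \eqref{item:2:statement:characterization_D12}$\Rightarrow$\eqref{item:1:statement:characterization_D12} only $\E|\xi-\xi^\br|^2\le 2r^2\|D\xi\|^2$, i.e.\ $\sup_{r\in(0,1]}\|\xi-\xi^\br\|_{L_2}/r\le\sqrt{2}\,\|D\xi\|$.

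So the corrected version of your argument proves $\|D\xi\|\le\sup_{r\in(0,1]}\|\xi-\xi^\br\|_{L_2}/r\le\sqrt{2}\,\|D\xi\|$: the printed lower bound with constant $\tfrac12$ follows a fortiori, but the printed upper bound with constant $1$ does not follow---and in fact it cannot be proved, because it is false within the paper's own framework. Take $\xi=\int_0^T f(s)\,\od W_{s,1}$ in the first chaos with $\|f\|_{L_2([0,T])}=1$; then $\|D\xi\|_{L_2(\Omega\times[0,T];\R^N)}=1$, while by \cref{statement:chaos_isotropic_decoupling} (or directly, since $\xi^{\bf 1}$ is an independent copy of $\xi$) one has $\E|\xi-\xi^\br|^2/r^2 = 2/(1+\sqrt{1-r^2})$, which increases in $r$ and equals $2$ at $r=1$, so that $\sup_{r\in(0,1]}\|\xi-\xi^\br\|_{L_2}/r=\sqrt{2}=\sqrt{2}\,\|D\xi\|$. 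In other words, the factor-of-two slip is precisely what made your computation appear to reproduce the constants printed in the corollary; the honest computation shows that the two-sided bound should read with constants $1$ and $\sqrt{2}$ (and the same correction is needed in \cref{statement:characterization_D12_H_valued}), while the equivalence statement itself, which is what the rest of the paper actually uses, stands.
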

\bigskip

Now we consider, instead of a random variable from  $L_2(\Omega,\cF,\P)$, an $\ell_2$-valued
random variable $\xi \in L_2(\Omega,\cF,\P;\ell_2)$ and interpret $\xi$ as sequence $(\xi^l)_{l=1}^\infty\subseteq L_2(\Omega,\cF,\P)$
with
\[ \| \xi\|_{L_2(\ell_2)}  = \| \xi\|_{L_2(\Omega,\cF,\P;\ell_2)} :=  \sqrt{\sum_{l=1}^\infty \| \xi^l\|_{L_2}^2}<\infty. \]
Correspondingly, we obtain a chaos decomposition with symmetric kernels $f_n:( (0,T] \times \{1,\ldots,N\})^n \to \ell_2$ that can written as
$f_n = (f_n^l)_{l=1}^\infty$. In this setting \cref{statement:chaos_isotropic_decoupling} yields to the following extension:

\begin{prop}
\label{statement:chaos_isotropic_decoupling_H_valued}
For $\xi \in L_2(\Omega,\cF,\P;\ell_2)$ and $r\in [0,1]$ one has
\[    \E\|\xi-\xi^\br\|^2_{\ell_2} = 2 \sum_{n=1}^\infty \left [ 1 -  |1-r^2|^\frac{n}{2}  \right ] \E \left \|(I_n(f^l_n))_{l=1}^\infty\right \|^2_{\ell_2}.\]
\end{prop}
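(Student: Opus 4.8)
The plan is to reduce the $\ell_2$-valued statement in \cref{statement:chaos_isotropic_decoupling_H_valued} to the scalar case already established in \cref{statement:chaos_isotropic_decoupling}, exploiting the fact that the coupling operation $\xi\mapsto\xi^\br$ acts coordinate-wise on the sequence $(\xi^l)_{l=1}^\infty$ and that its effect on the chaos kernels is multiplication by the same factors $q(r,\varepsilon_1)\cdots q(r,\varepsilon_n)$ regardless of the coordinate $l$. First I would note that the coupling transformation is defined on $\cL_0(\Oz,\cF,\P)$ and extends to $\ell_2$-valued random variables componentwise, so that $\xi^\br = ((\xi^l)^\br)_{l=1}^\infty$; this follows from the linearity and isometry of the transference operator $\cC_0$ together with the fact that the kernel-multiplier description of $\xi\mapsto\xi^\br$ established in the proof of \cref{statement:chaos_isotropic_decoupling} depends only on the pair $(r,\varepsilon)$ through $q(r,\varepsilon)$ and not on $l$.

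The key computation then proceeds by Fubini/monotone convergence over the coordinates. Writing
\[
 \E\|\xi-\xi^\br\|_{\ell_2}^2
 = \sum_{l=1}^\infty \E|\xi^l-(\xi^l)^\br|^2,
\]
I would apply \cref{statement:chaos_isotropic_decoupling} to each scalar coordinate $\xi^l$, which has the chaos expansion $\xi^l=\E\xi^l+\sum_{n=1}^\infty I_n(f_n^l)$ with $f_n=(f_n^l)_{l=1}^\infty$, to obtain
\[
 \E|\xi^l-(\xi^l)^\br|^2
 = 2\sum_{n=1}^\infty\left[1-|1-r^2|^{\frac{n}{2}}\right]\E|I_n(f_n^l)|^2.
\]
Summing over $l$ and interchanging the (nonnegative) double sum gives
\[
 \E\|\xi-\xi^\br\|_{\ell_2}^2
 = 2\sum_{n=1}^\infty\left[1-|1-r^2|^{\frac{n}{2}}\right]\sum_{l=1}^\infty\E|I_n(f_n^l)|^2,
\]
and recognizing $\sum_{l=1}^\infty\E|I_n(f_n^l)|^2=\E\|(I_n(f_n^l))_{l=1}^\infty\|_{\ell_2}^2$ yields the claimed identity.

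The only technical point requiring care is the justification of the interchange of the summation over $l$ with the summation over the chaos level $n$: since all summands $[1-|1-r^2|^{n/2}]\,\E|I_n(f_n^l)|^2$ are nonnegative, Tonelli's theorem applies directly and no integrability hypothesis beyond $\xi\in L_2(\Oz,\cF,\P;\ell_2)$ is needed. I expect the main obstacle, such as it is, to be a careful verification that the coupling really acts coordinate-wise and that the symmetric kernel of the $\ell_2$-valued chaos expansion decomposes as $f_n=(f_n^l)_{l=1}^\infty$ in a way compatible with the scalar expansions of the $\xi^l$; once this is in place the proof is a routine summation argument, and one can essentially invoke \cref{statement:chaos_isotropic_decoupling} coordinate-wise and sum.
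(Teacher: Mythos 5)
Your proof is correct and follows essentially the same route the paper intends: the paper states this proposition as a direct extension of \cref{statement:chaos_isotropic_decoupling}, obtained exactly by interpreting $\xi$ as the sequence $(\xi^l)_{l=1}^\infty$, applying the scalar identity coordinate-wise, and summing over $l$ (with the interchange of sums justified by nonnegativity, as you note). Your added care about the coordinate-wise action of $\xi\mapsto\xi^\br$ and the compatibility of the kernels $f_n=(f_n^l)_{l=1}^\infty$ is precisely the implicit content of the paper's setup, so nothing is missing.
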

Finally, in the same way as \cref{statement:characterization_D12} we obtain the following characterization:

\begin{coro}
\label{statement:characterization_D12_H_valued}
For $\xi \in L_2(\ell_2)$ one has $\xi \in \D_{1,2}(\ell_2)$ if and only if
\[ \sup_{r\in (0,1]} \frac{\| \xi - \xi^\br \|_{L_2(\ell_2)}}{r} < \infty. \]
Moreover, if $\xi \in \D_{1,2}(\ell_2)$, then one has
\[     \frac{1}{2}  \| D\xi\|_{L_2(\Omega \times [0,T];\R^N\times \ell_2)}
   \le \sup_{r\in (0,1]} \frac{\| \xi - \xi^\br \|_{L_2(\ell_2)}}{r} \le \| D\xi\|_{L_2(\Omega \times [0,T];\R^N\times \ell_2)}.\]
\end{coro}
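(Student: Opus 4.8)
The plan is to follow the scalar \cref{statement:characterization_D12} almost verbatim, replacing the scalar chaos coefficients by their $\ell_2$-valued counterparts. First I would fix $\xi=(\xi^l)_{l=1}^\infty\in L_2(\Omega,\cF,\P;\ell_2)$ with the chaos kernels $f_n=(f_n^l)_{l=1}^\infty$ introduced in the paragraph preceding \cref{statement:chaos_isotropic_decoupling_H_valued}, and set $a_n:=2\,\E\|(I_n(f_n^l))_{l=1}^\infty\|_{\ell_2}^2\ge 0$. By \cref{statement:chaos_isotropic_decoupling_H_valued} this gives, for every $r\in[0,1]$, the identity $\E\|\xi-\xi^\br\|_{\ell_2}^2=\sum_{n=1}^\infty[1-(1-r^2)^{n/2}]a_n$, so that the whole $r$-dependence of the decoupling impact is encoded in the single non-negative sequence $(a_n)_{n\ge1}$.

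Next I would reduce to the abstract \cref{statement:characterization_D12_abstract} via the substitution $\rho:=r^2$, which is a bijection of $(0,1]$ onto itself and turns $(1-r^2)^{n/2}$ into $|1-\rho|^{n/2}$. Dividing the identity by $r^2=\rho$ yields
\[ \left(\sup_{r\in(0,1]}\frac{\|\xi-\xi^\br\|_{L_2(\ell_2)}}{r}\right)^2
   =\sup_{\rho\in(0,1]}\frac{1}{\rho}\sum_{n=1}^\infty[1-|1-\rho|^{n/2}]a_n, \]
which is exactly the quantity appearing in condition (1) of \cref{statement:characterization_D12_abstract}. Hence finiteness of the left-hand supremum is equivalent to $\sum_{n=1}^\infty n\,a_n<\infty$ by the equivalence (1)$\Leftrightarrow$(2) of that lemma.

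It then remains to identify $\sum_n n\,a_n$ with the Malliavin norm. Since the coupling $\xi\mapsto\xi^\br$ acts coordinate-wise (the transference operator $\cC_0$ is linear and isometric, so the $l$-th coordinate of $\xi^\br$ is the coupling of $\xi^l$) and $\D_{1,2}(\ell_2)$ is defined through the sequence $(\xi^l)_l$, I would apply the coordinate-wise chaos formula $\|D\xi^l\|_{L_2(\Omega\times[0,T];\R^N)}^2=\sum_n n\,\E|I_n(f_n^l)|^2$ and sum over $l$ (Tonelli, all terms non-negative) to get $\sum_{n=1}^\infty n\,a_n=2\,\|D\xi\|_{L_2(\Omega\times[0,T];\R^N\times\ell_2)}^2$. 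Combined with the previous step this proves $\xi\in\D_{1,2}(\ell_2)$ if and only if the supremum is finite, and the explicit constants $c_2=2c_1$ and $c_1=c_2$ from \cref{statement:characterization_D12_abstract} then deliver the two-sided quantitative estimate exactly as in the scalar \cref{statement:characterization_D12}.

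The computation carries no real obstacle, since both ingredients---the chaos expansion of the decoupling impact and the abstract summability criterion---are already available; the only care needed is bookkeeping. Concretely, one must justify interchanging the sums over the chaos level $n$ and the coordinate $l$ (harmless by non-negativity) and check that membership in $\D_{1,2}(\ell_2)$ is genuinely coordinate-wise, i.e. that the $\ell_2$-valued Malliavin derivative is obtained by stacking the individual $D\xi^l$. I expect the most delicate point to be the constant bookkeeping, which in the proof of \cref{statement:characterization_D12_abstract} hinges on the separate treatment of the first chaos ($n=1$) against the higher chaos ($n\ge2$); tracking this split is what determines the sharp two-sided bounds.
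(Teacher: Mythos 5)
Your route is the paper's own: \cref{statement:characterization_D12_H_valued} is obtained there by feeding \cref{statement:chaos_isotropic_decoupling_H_valued} into \cref{statement:characterization_D12_abstract}, ``in the same way as'' the scalar \cref{statement:characterization_D12}, and your intermediate steps are sound --- the choice $a_n:=2\,\E\|(I_n(f_n^l))_{l=1}^\infty\|_{\ell_2}^2$, the substitution $\rho=r^2$ (a bijection of $(0,1]$ onto itself), the coordinate-wise action of the transference, and the identification $\sum_{n}n\,a_n=2\,\|D\xi\|^2_{L_2(\Omega\times[0,T];\R^N\times\ell_2)}$ via Tonelli. This establishes the equivalence $\xi\in\D_{1,2}(\ell_2)\Leftrightarrow\sup_{r\in(0,1]}\|\xi-\xi^\br\|_{L_2(\ell_2)}/r<\infty$ completely.

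The gap sits exactly at the point you yourself flagged as delicate but did not execute: the constants of \cref{statement:characterization_D12_abstract} do \emph{not} deliver the displayed two-sided estimate. Write $S:=\sup_{r\in(0,1]}\|\xi-\xi^\br\|_{L_2(\ell_2)}/r$, so that $S^2$ is the optimal constant $c_1$ in condition (1) of the lemma for your $a_n$. Then (1)$\Rightarrow$(2) with $c_2=2c_1$ gives $2\|D\xi\|^2=\sum_n n\,a_n\le 2S^2$, i.e.\ $\|D\xi\|\le S$ (which implies the stated lower bound $\tfrac12\|D\xi\|\le S$ with room to spare), while (2)$\Rightarrow$(1) with $c_1=c_2$ gives $S^2\le\sum_n n\,a_n=2\|D\xi\|^2$, i.e.\ only $S\le\sqrt2\,\|D\xi\|$. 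So the method yields the constant pair $(1,\sqrt2)$, not $(\tfrac12,1)$, and the claimed upper bound $S\le\|D\xi\|$ cannot be repaired by more careful bookkeeping because it is false: for $\xi=(W_{T,1},0,0,\dots)$ one has $\|D\xi\|_{L_2(\Omega\times[0,T];\R^N\times\ell_2)}=\sqrt T$, while \cref{statement:chaos_isotropic_decoupling_H_valued} gives $\E\|\xi-\xi^\br\|^2_{\ell_2}=2\,(1-\sqrt{1-r^2}\,)\,T$, so taking $r=1$ shows $S\ge\sqrt{2T}=\sqrt2\,\|D\xi\|$. This defect is inherited verbatim from the scalar \cref{statement:characterization_D12} as stated in the paper (whose ``immediately imply'' conceals the same slip), so your proposal is faithful to the paper's argument; but the final sentence asserting that the lemma's constants give the estimate ``exactly'' is the step that fails, and the honest output of your proof is $\|D\xi\|\le S\le\sqrt2\,\|D\xi\|$.
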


%%%%%%%%%%%%%%%%%%%%%%%%%%%%%%%%%%%%%%%%%%%%%%%%%%%%%%%%%%%%%%%%%%%%%%%%%%%%%%%%%%%%%%%%%%%%%%%%%%%%%%%%%%%%%%%%%%%%%
%%%%%%%%%%%%%%%%%%%%%%%%%%%%%%%%%%%%%%%%%%%%%%%%%%%%%%%%%%%%%%%%%%%%%%%%%%%%%%%%%%%%%%%%%%%%%%%%%%%%%%%%%%%%%%%%%%%%%

\section{Coupling of SDEs - the Lipschitz case}
\label{sec:Lipschitz}

We begin this section with \cref{statement:Lipschitz_case_path} where we apply
the coupling technique to path-dependent SDEs driven by Lipschitz coefficients. 
In \cref{statement:Malliavin_differentiability} we show the stability in the sense 
of Malliavin differentiability, in \cref{statement:real_interpolation_SDE_Lipschitz}
we deduce the stability with respect to fractional regularity expressed in terms 
of Besov spaces obtained by real interpolation. 
\medskip

Our assumptions are as follows:

\begin{assumption}[Lipschitz assumptions]
\label{ass:bsigma_path}
We let
\begin{align*}
     b: & [t,T] \times \Oz \times C([t,T];\R^d)  \to \R^d, \\
\sigma: & [t,T] \times \Oz \times C([t,T];\R^d)  \to \R^{d\times N}
\end{align*}
such that $b(t,\cdot,\cdot)\equiv 0$ and $\sigma(t,\cdot,\cdot)\equiv 0$,
and such that $b(\cdot,\cdot,x)$ and $\sigma(\cdot,\cdot,x)$ are 
$\cP_{t,T}$-measurable for all $x\in C([t,T];\R^d)$.
Furthermore, we assume that
\begin{align}
|b(s,\omega,x) - b(s,\omega,y)|     & \leq L_b(s,\omega) \|x-y\|_{C([t,T])},\label{cd2_path}\\
        |\sigma(s,\omega,x) - \sigma(s,\omega,y)|_\hs & \leq L_{\sigma} \|x-y\|_{C([t,T])}, \label{cd1_path}\\
         |b(s,\omega,x)| & \leq K_b (1 + \|x\|_{C([t,T])})\label{cd4_path}, \\
                   |\sigma(s,\omega,x)|_{\hs} & \leq K_{\sz} (1 + \|x\|_{C([t,T])}),\label{cd3_path},\\
     b(s,\omega,x) & = b(s,\omega,x(\cdot \wedge s)), \label{cd5_path} \\
 \sigma(s,\omega,x) & = \sigma(s,\omega,x(\cdot \wedge s)) \label{cd6_path}
\end{align}
on $[t,T]\times \Omega\times C([t,T];\R^d)$,
where $L_{\sigma},K_b,K_\sigma\ge 0$ and the non-negative progressively measurable
process $L_b=(L_b(s,\cdot))_{s\in [t,T]}$ satisfies
$L_b \in \BMO(S_2)$.
\end{assumption}
\medskip

We let $p\in (0,\infty)$ and assume a solution to
\begin{equation}\label{eqn:SDE_random_coefficients_path}
X_s^{t,\xi} = \xi + \int_t^s b(u, X^{t,\xi})\od u + \int_t^s \sigma(u, X^{t,\xi})\od W_u
\sptext{1}{for}{1} s\in [t,T] \mbox{ a.s.}
\end{equation}
such that
\begin{enumerate}
\item $\xi \in L_{p \vee 2}$,
\item $(X^{t,\xi}_s)_{s\in [t,T]}$ is $(\cF_s)_{s\in [t,T]}$-adapted and path-wise continuous,
\item \label{cond:3:eqn:SDE_random_coefficients_path}
      $\E \int_t^T |\sigma(u,X^{t,\xi})|^2_\hs  \od u + \E \int_t^T |b(u,X^{t,\xi})|^2  \od u <\infty$.
\end{enumerate}
The existence and uniqueness of such equations is discussed in \cite[Sections V.2.8-12]{Rogers:Williams:2:2nd} and
\cite[Chapter IX]{Revuz:Yor:1999}. For our investigations only the existence in \eqref{eqn:SDE_random_coefficients_path} is required, not more.
However, it is standard to verify along the proof of \cref{statement:Lipschitz_case_path} that one has
uniqueness under \cref{ass:bsigma_path} of the solution to \eqref{eqn:SDE_random_coefficients_path} for a given
stochastic basis.
The above condition \eqref{cond:3:eqn:SDE_random_coefficients_path} is a result of the assumptions \eqref{cd4_path} and \eqref{cd3_path},
$\xi\in L_2$, and of \cref{statement:Xt_vs_starting_value_path}. We apply \cref{statement:transference_sde} to a solution of \eqref{eqn:SDE_random_coefficients_path}  coordinate-wise with $b^0$ and $\sigma^0$ not depending on $K^0$
and it follows
\begin{enumerate}
\item $\xi^\varphi \in C_0(\xi)\cap  L_{p \vee 2}$,
\item $((b(u,X^{t,\xi}))^\varphi)_{u\in [t,T]} \in C_{[t,T]}^\cP \left ( (b(u,X^{t,\xi}))_{u\in [t,T]} \right )$,
\item $((\sigma(u,X^{t,\xi}))^\varphi)_{u\in [t,T]} \in C_{[t,T]}^\cP \left ( (\sigma(u,X^{t,\xi}))_{u\in [t,T]} \right )$,
\item $\E \int_t^T |(\sigma(u,X^{t,\xi}))^\varphi|^2_\hs  \od u + \E \int_t^T |(b(u,X^{t,\xi}))^\varphi|^2  \od u <\infty$,
\item $(X_s^{t,\xi,\varphi})_{s\in [t,T]} \in C_{[t,T]}^{\rm c,adap}((X^{t,\xi}_s)_{s\in [t,T]})$,
\end{enumerate} 
and
\begin{equation}\label{eqn:SDE_random_coefficients_path_transformed}
X_s^{t,\xi,\varphi} = \xi^\varphi + \int_t^s (b(u, X^{t,\xi}))^\varphi \od u + \int_t^s (\sigma(u, X^{t,\xi}))^\varphi \od W_u^\varphi
\sptext{1}{for}{1} s\in [t,T] \mbox{ a.s.}
\end{equation}
We also use the following observation:

\begin{rema}
We know that $(b(u,X^{t,\xi}))_{u\in [t,T]}$ and $(\sigma(u,X^{t,\xi}))_{u\in [t,T]}$ are
$\cP_{t,T}$-measurable by \cref{statement:composition_proedictable_new}. Transforming
$X^{t,\xi}$ we get an $\overline{\cP}_{t,T}$-measurable continuous  process
$(X^{t,\xi,\varphi}_u)_{u\in [t,T]}$ so that 
$(b(u,X^{t,\xi,\varphi}))_{u\in [t,T]}$ and $(\sigma(u,X^{t,\xi,\varphi}))_{u\in [t,T]}$ become
$\overline{\cP}_{t,T}$-measurable again by \cref{statement:composition_proedictable_new}.
\end{rema}
\smallskip

The general result about coupling in the Lipschitz case is as follows:
\smallskip

\begin{theo}
\label{statement:Lipschitz_case_path}
Suppose \cref{ass:bsigma_path} with $p\in (0,\infty)$ and let
$X^{t,\xi}$ be a solution to \eqref{eqn:SDE_random_coefficients_path} and
$X^{t,\xi,\varphi}\in \cC_{[t,T]}^\cadap(X^{t,\xi})$.
Then we have
\begin{align}
&\hspace*{-1em} \left \| \sup_{s \in [t, T]}|X_s^{t,\xi,\varphi} - X_s^{t,\xi}| \right \|_{L_p}
   \leq c_\eqref{eqn:statement:Lipschitz_case_path} \Bigg [
 \left (\int_t^T |\varphi(u)|^2 \od u \right )^{\frac{1}{2}} \| 1+ |\xi| \|_{L_p} +  \| \xi-\xi^\varphi\|_{L_p} \nonumber \\
& \hspace*{9em} + \bigg\|  \int_t^T \Big|(b(u,X^{t,\xi}))^\varphi - b(u,X^{t,\xi,\varphi})\Big|\od u\bigg\|_{L_p}\nonumber\\
& \hspace*{9em} + \bigg\|  \bigg(\int_t^T  \Big| (\sigma(u,X^{t,\xi}))^\varphi - \sigma(u,X^{t,\xi,\varphi})\Big|_\hs^2 \od u
                  \bigg)^{\frac{1}{2}}\bigg\|_{L_p}\Bigg ],
   \label{eqn:statement:Lipschitz_case_path}
\end{align}
where $c_\eqref{eqn:statement:Lipschitz_case_path}=c_\eqref{eqn:statement:Lipschitz_case_path}(p,T,\| L_b\|_{\BMO(S_2)},L_\sigma,K_b,K_\sigma) > 0$.
\end{theo}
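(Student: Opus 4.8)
The plan is to control the difference process $\Delta_s := X_s^{t,\xi,\varphi} - X_s^{t,\xi}$ by subtracting the transferred equation \eqref{eqn:SDE_random_coefficients_path_transformed} from \eqref{eqn:SDE_random_coefficients_path} and exploiting the representation $\od W_u^\varphi = \sqrt{1-(\varphi(u))^2}\,\od W_u + \varphi(u)\,\od W_u'$ on the joint basis generated by $\overline{W}=(W,W')$. Inserting $b(u,X^{t,\xi,\varphi})$ into the drift difference and $\sigma(u,X^{t,\xi,\varphi})$ into the diffusion difference, I would write $\Delta_s$ as a sum of seven terms: the initial error $\xi^\varphi-\xi$; the two genuine coupling defects $\int_t^s[(b(u,X^{t,\xi}))^\varphi - b(u,X^{t,\xi,\varphi})]\od u$ and $\int_t^s[(\sigma(u,X^{t,\xi}))^\varphi - \sigma(u,X^{t,\xi,\varphi})]\od W_u^\varphi$; the two Lipschitz remainders $\int_t^s[b(u,X^{t,\xi,\varphi})-b(u,X^{t,\xi})]\od u$ and $\int_t^s\sqrt{1-\varphi^2}\,[\sigma(u,X^{t,\xi,\varphi})-\sigma(u,X^{t,\xi})]\od W_u$; and the two $\varphi$-defects $\int_t^s(\sqrt{1-\varphi^2}-1)\,\sigma(u,X^{t,\xi})\od W_u$ and $\int_t^s\varphi\,\sigma(u,X^{t,\xi,\varphi})\od W_u'$ arising from changing $W^\varphi$ back into $W$.

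Next I would bound the terms that feed the right-hand side $E$ of \eqref{eqn:statement:Lipschitz_case_path}. The initial error and the first coupling defect are already two of the four quantities in $E$, and applying the Burkholder--Davis--Gundy inequality \eqref{eqn:BDG} to the second coupling defect produces the $\sigma$-quantity. For the two $\varphi$-defects I would use $0\le 1-\sqrt{1-\varphi^2}\le\varphi$ together with the linear growth bounds \eqref{cd4_path}, \eqref{cd3_path}; after \eqref{eqn:BDG} this leaves $(\int_t^T|\varphi(u)|^2\od u)^{1/2}$ times $\| 1+\sup_{s\in[t,T]}|X_s^{t,\xi}|\|_{L_p}$, respectively $\| 1+\sup_{s\in[t,T]}|X_s^{t,\xi,\varphi}|\|_{L_p}$, and the a priori moment bound \cref{statement:Xt_vs_starting_value_path} --- combined with the fact that $X^{t,\xi,\varphi}$ and $X^{t,\xi}$ have the same finite-dimensional distributions (\cref{statement:properties_CCtTadapted}) --- reduces both to $\| 1+|\xi|\|_{L_p}$, which is the first quantity in $E$. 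The two Lipschitz remainders are the terms to be absorbed: by \eqref{cd2_path}, \eqref{cd1_path} and the cut-off properties \eqref{cd5_path}, \eqref{cd6_path} they are dominated by $\int_t^s L_b(u)\sup_{v\in[t,u]}|\Delta_v|\od u$ and, after \eqref{eqn:BDG}, by $L_\sigma(\int_t^s\sup_{v\in[t,u]}|\Delta_v|^2\od u)^{1/2}$. Here the condition $L_b\in\BMO(S_2)$ is exactly what lets me treat the drift remainder through Fefferman's inequality \cref{statement:Fefferman_inequality}, turning it into $\sqrt{2p}\,\|L_b\|_{\BMO(S_2)}\,\|(\int_t^s\sup_{v\in[t,u]}|\Delta_v|^2\od u)^{1/2}\|_{L_p}$.

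For $p\in[2,\infty)$ I would close the estimate by Gronwall's inequality. Setting $g(s):=\|\sup_{v\in[t,s]}|\Delta_v|\|_{L_p}$, which is finite because $\sup_{v}|\Delta_v|\le \sup_v|X^{t,\xi,\varphi}_v|+\sup_v|X^{t,\xi}_v|$ has finite $L_p$-moments by \cref{statement:Xt_vs_starting_value_path}, the collected bounds yield $g(s)\le cE + c(\int_t^s g(u)^2\od u)^{1/2}$, where Minkowski's integral inequality has been used to pull the $L_p$-norm inside the time integral. Squaring and applying Gronwall to $g^2$ then gives $g(T)\le c_\eqref{eqn:statement:Lipschitz_case_path}E$.

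The main obstacle is the range $p\in(0,2)$, where both Fefferman's inequality (available only for $p\ge1$) and Minkowski's integral inequality (needed at level $p/2<1$) break down. I would handle this exactly as in the proof of \cref{statement:Xt_vs_starting_value_path}, namely via Lenglart's inequality \cref{statement:Lenglart}: localizing by stopping times, dominating $\sup_{v\in[t,\cdot]}|\Delta_v|^2$ by a non-decreasing controlling process assembled from the stopped source terms, and applying \eqref{eqn:statement:Lenglart} with $q=p/2\in(0,1)$ to transfer the supremum moment down to exponent $p$ at the cost of a universal constant. The delicate bookkeeping step will be to organize the controlling process so that its $L_{p/2}$-quasi-norm reproduces precisely the four quantities of \eqref{eqn:statement:Lipschitz_case_path}, rather than their (too large) $L_2$-versions.
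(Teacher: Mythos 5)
Your proposal is correct and follows essentially the same route as the paper's proof: the same decomposition of $X^{t,\xi,\varphi}-X^{t,\xi}$ into initial error, coupling defects, Lipschitz remainders and $\varphi$-defects (the paper merely performs the splitting in two stages), with Fefferman's inequality handling the $\BMO(S_2)$ drift remainder, BDG together with \cref{statement:Xt_vs_starting_value_path} and equidistribution handling the $\varphi$-defect terms, Gronwall closing the case $p\in[2,\infty)$, and the conditional $L_2$-estimate for stopped processes combined with Lenglart's inequality (at exponent $q=p/2$, with the non-decreasing controlling process built from the source terms themselves) covering $p\in(0,2)$ exactly as in the paper. The only cosmetic difference is that you pass to $L_2$-in-time via Minkowski's integral inequality in the Gronwall step, whereas the paper uses H\"older in time to work with $L_p$-in-time; both are valid for $p\ge 2$.
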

\smallskip

\begin{proof}
To shorten the notation we let $X_s:=X_s^{t,\xi}$
and $X_s^\varphi:=X_s^{t,\xi,\varphi}$.
\medskip

\underline{The case $p\in [2,\infty)$}:
For $r\in [t,T]$ and an $(\overline{\cF_s})_{s\in [0,T]}$-stopping time $\tau:\overline{\Omega} \to [t,T]$ we have,
a.s.,
\begin{align*}
&   X_{\tau\wedge r}^\varphi -X_{\tau\wedge r} \\
& = \xi^\varphi -\xi + \int_t^{\tau\wedge r} [(b( u, X))^\varphi - b(u, X)]\od u \\
&+ \int_t^{\tau\wedge r} [(\sigma(u, X))^\varphi - \sigma(u, X)]\sqrt {1-|\varphi(u)|^2}\od W_u \\
&+\int_t^{\tau\wedge r}  (\sigma(u, X))^\varphi \varphi(u) \od W'_u
  - \int_t^{\tau\wedge r} \sigma(u,X) \left  [1-\sqrt{1-|\varphi(u)|^2} \right ] \od W_u.
\end{align*}
Assume a set $A\in \overline{\cF}_t$ of positive measure and let
$\oP_A$ be the normalized restriction of $\oP$ to $A$.
By the Burkholder-Davis-Gundy inequality, $\sqrt {1-|\varphi(u)|^2}\le 1$, and
$1-\sqrt{1-|\varphi(u)|^2} \le \varphi(u)$, we continue to
\begin{align*}
& \hspace{-2em}\left  \|\sup_{s \in [t, {\tau\wedge r}]}|X_s^\varphi - X_s| \right \|_{L_p(A,\oP_A)} \\
& \leq \| \xi^\varphi -\xi\|_{L_p(A,\oP_A)} +  \left \| \int_t^{\tau\wedge r} \Big|(b(u, X))^\varphi - b(u, X)\Big|\od u\right \|_{L_p(A,\oP_A)} \\
& + \beta_p \left \| \left (  \int_t^{\tau\wedge r}  \left |(\sigma(u, X))^\varphi - \sigma(u, X)\right |^2_\hs \od u \right )^{\frac{1}{2}}
   \right \|_{L_p(A,\oP_A)} \\
&+ \beta_p  \left \| \left (\int_t^T  \Big |\sigma(u,X)\varphi(u)\Big |_\hs^2\od u\right )^{\frac{1}{2}} \right \|_{L_p(A,\oP_A)} \\
&+ \beta_p  \left \| \left (\int_t^T  \Big |(\sigma(u,X))^\varphi\varphi(u)\Big |_\hs^2\od u\right )^{\frac{1}{2}} \right \|_{L_p(A,\oP_A)}.
    \end{align*}
Now we estimate separately each term from the right-hand side.
\smallskip

\underline{Second term:} First we observe
\begin{align*}
&     \left \| \int_t^{\tau\wedge r} \Big|(b(u, X))^{\varphi} - b(u, X)\Big|\od u\right \|_{L_p(A,\oP_A)} \\
& \leq \left \| \int_t^\tau \Big|(b(u, X))^\varphi - b(u, X^\varphi) \Big|\od u\right \|_{L_p(A,\oP_A)} \\
&  \hspace{10em} + \left \| \int_t^{\tau\wedge r} \Big|b(u, X^\varphi)  - b(u, X)\Big|\od u\right \|_{L_p(A,\oP_A)}.
\end{align*}
Using the Lipschitz property of $b$ and Fefferman's inequality (\cref{statement:Fefferman_inequality}), the last term we bound from above by
\begin{align*}
&    \left \| \int_t^{\tau\wedge r} L_b(u) \left \|X^{\varphi} - X \right \|_{C\iue}\od u\right \|_{L_p(A,\oP_A)}\\
& = \left \| \int_t^r L_b(u) \1_{\{u\in [t,\tau\wedge r]\}} \left\| X^{\varphi} - X \right \|_{C\iue}\od u\right \|_{L_p(A,\oP_A)} \\
& \leq  \sqrt{2p}  \| L_b\|_{\BMO(S_2)} \left \| \left ( \int_t^r \1_{\{u\in [t,\tau\wedge r]\}} \left \|X^\varphi - X \right \|^2_{C\iue} \od u \right )^\frac{1}{2}\right \|_{L_p(A,\oP_A)}
 \\
& \leq  \sqrt{2p}  \| L_b\|_{\BMO(S_2)} T^{\frac{1}{2}-\frac{1}{p}}
  \left \| \left ( \int_t^r \sup_{v\in [t,\tau\wedge u]} |X_v^{\varphi} - X_v \Big|^p \od u \right )^\frac{1}{p}\right \|_{L_p(A,\oP_A)},
\end{align*}
where we first extend $L_b$ canonically to the stochastic basis $(\overline{\Omega},\overline{\cF},\overline{\P},(\overline{\cF}_t)_{t\in [0,T]})$
and observe that the constant in \cref{definition:BMO(S_2)} does not change, and then we restrict this extended stochastic basis
to $A\in \overline{\cF}_t$.
\smallskip

\underline{Third term:} Similarly, we start with
\begin{align*}
& \left \| \bigg(  \int_t^{\tau\wedge r} \Big |(\sigma(u, X))^\varphi - \sigma(u, X_u)\Big |_\hs^2\od u \bigg)^{\frac{1}{2}} \right \|_{L_p(A,\oP_A)}\\
& \le \left \| \bigg(  \int_t^\tau  \Big |(\sigma(u, X))^\varphi - \sigma(u, X^\varphi)\Big |_\hs^2\od u \bigg)^{\frac{1}{2}} \right \|_{L_p(A,\oP_A)} \\
&  \hspace{13em}  + \left \| \bigg(  \int_t^{\tau\wedge r}  \Big |\sigma(u, X^\varphi) - \sigma(u, X)\Big |_\hs^2\od u \bigg)^{\frac{1}{2}} \right \|_{L_p(A,\oP_A)},
\end{align*}
where the last term can be upper bounded by the Lipschitz property of $\sigma$ as follows:
\begin{multline*}
 \left \| \bigg(  \int_t^{\tau\wedge r}  L_\sigma^2 \left \| X^{\varphi} - X \right \|_{C\iue}^2 \od u \bigg)^{\frac{1}{2}} \right \|_{L_p(A,\oP_A)} \\
 \leq  L_\sigma T^{\frac{1}{2}-\frac{1}{p}} \left \| \left ( \int_t^r \sup_{v\in [t,\tau\wedge u]} |X_v^{\varphi} - X_v \Big|^p \od u\right )^\frac{1}{p}
  \right \|_{L_p(A,\oP_A)}.
\end{multline*}

\underline{Fourth and fifth term:} Using \cref{statement:Xt_vs_starting_value_path} we get
\begin{align*}
& \hspace*{-4em} \left \| \bigg(\int_t^T  \Big | \sigma(u, X)\varphi(u) \Big|_\hs^2\od u\bigg)^{\frac{1}{2}} \right \|_{L_p(A,\oP_A)} \\
& \leq K_\sigma \left \| \bigg(\int_t^T   (1 + \|X\|_{C\iue})^2 |\varphi(u)|^2 \od u\bigg)^{\frac{1}{2}} \right \|_{L_p(A,\oP_A)} \\
& \leq K_\sigma \left (\int_t^T |\varphi(u)|^2 \od u \right )^{\frac{1}{2}}\left \| \sup_{u \in [t, T]}[1+|X_u|]\right \|_{L_p(A,\oP_A)} \\
& \leq K_\sigma c_{\eqref{eqn:statement:Xt_vs_starting_value_path},p}
       \left (\int_t^T |\varphi(u)|^2 \od u \right )^{\frac{1}{2}}\left \| 1+|\xi|\right \|_{L_p(A,\oP_A)}.
\end{align*}

We know that
\[    X_s^\varphi
   = \xi^\varphi + \int_t^s (b( u, X))^\varphi \od u
                + \int_t^s (\sigma(u, X))^\varphi \od W_u^\varphi \]
and that, $\lambda\otimes \overline{\P}$ a.e.,
\[ |(b(s, X))^\varphi|     \le K_b      [1+\sup_{u\in [t,s]} |X_u^\varphi|]
   \sptext{1}{and}{1}
  |(\sigma(s, X))^\varphi| \le K_\sigma [1+\sup_{u\in [t,s]} |X_u^\varphi|]. \]
So by \cref{statement:Xt_vs_starting_value_path} we get as above that
\begin{multline*}
     \left \| \bigg(\int_t^T  \Big | (\sigma(u, X))^\varphi \varphi(u) \Big|_\hs^2\od u\bigg)^{\frac{1}{2}} \right \|_{L_p(A,\oP_A)} \\
   \leq K_\sigma c_{\eqref{eqn:statement:Xt_vs_starting_value_path},p}
       \left (\int_t^T |\varphi(u)|^2 \od u \right )^{\frac{1}{2}}\left \| 1+|\xi^\varphi|\right \|_{L_p(A,\oP_A)}.
\end{multline*}

\underline{Summarizing}, we have
\begin{align*}
& \left  \|\sup_{s \in [t, \tau\wedge r]}|X_s^\varphi - X_s| \right \|_{L_p(A,\oP_A)}
  \le \| \xi^\varphi -\xi\|_{L_p(A,\oP_A)}  \\
& + \beta_p K_\sigma c_{\eqref{eqn:statement:Xt_vs_starting_value_path},p}
      \left (\int_t^T |\varphi(u)|^2 \od u \right )^{\frac{1}{2}}\left [ \| 1+|\xi| \|_{L_p(A,\oP_A)}+\| 1+|\xi^\varphi| \|_{L_p(A,\oP_A)} \right ] \\
& \hspace*{4em}    + \left \| \int_t^\tau \Big|(b(u, X))^\varphi  - b(u, X^\varphi)\Big|\od u\right \|_{L_p(A,\oP_A)} \\
& \hspace*{4em}    + \beta_p \left \| \bigg(  \int_t^\tau  \Big |(\sigma(u, X))^\varphi - \sigma(u, X^\varphi)\Big |_\hs^2\od u \bigg)^{\frac{1}{2}}
                \right \|_{L_p(A,\oP_A)} \\
& + T^{\frac{1}{2}-\frac{1}{p}} \left [ \sqrt{2p} \| L_b\|_{\BMO(S_2)} + \beta_p L_\sigma \right ]
    \left [ \int_t^r \left \| \sup_{v\in [t,\tau\wedge u ]} |X_v^{\varphi} - X_v | \right \|_{L_p(A,\oP_A)}^p \od u \right ]^\frac{1}{p}.
\end{align*}

By Lemmas \ref{statement:Xt_vs_starting_value_path} and \ref{statement:properties_CCtTadapted} we have
$\| \sup_{v\in [t,T]} |X_v^{\varphi}|\|_{L_p}=\| \sup_{v\in [t,T]} |X_v| \|_{L_p}<\infty$.
Raising the last estimate for
$\left  \|\sup_{s \in [t, \tau\wedge r]}|X_s^\varphi - X_s| \right \|_{L_p(A,\oP_A)}$
to the $p$-th power and applying Gronwall's lemma implies
\begin{align*}
&     \left  \|\sup_{s \in [t, \tau]}|X_s^\varphi - X_s| \right \|_{L_p(A,\oP_A)} \\
& \le c \Bigg [ \| \xi^\varphi -\xi\|_{L_p(A,\oP_A)} \!  + \! \left (\int_t^T |\varphi(u)|^2 \od u \right )^{\frac{1}{2}}
      \left [ \| 1+|\xi| \|_{L_p(A,\oP_A)} + \| 1+|\xi^\varphi|\|_{L_p(A,\oP_A)} \right ] \\
& \hspace{2em}    + \left \| \int_t^\tau \Big|(b(u,X))^\varphi  - b(u, X^\varphi)\Big|\od u\right \|_{L_p(A,\oP_A)} \\
& \hspace{2em}    + \left \| \bigg(  \int_t^\tau  \Big |(\sigma(u, X))^\varphi) - \sigma(u, X^\varphi)\Big |_\hs^2\od u \bigg)^{\frac{1}{2}} \right \|_{L_p(A,\oP_A)}
    \Bigg ]
\end{align*}
with some $c=c(p,T,\| L_b\|_{\BMO(S_2)},L_\sigma,K_b,K_\sigma)>0$,
where one of the last two terms on the RHS or both might be infinite.
By choosing $A:=\overline{\Omega}$, $\tau:=T$, and using
$\| 1+|\xi| \|_{L_p} = \| 1+|\xi^\varphi|\|_{L_p}$
we obtain \cref{statement:Lipschitz_case_path} for $p\in [2,\infty)$.
\bigskip

\underline{The case $p\in (0,2)$}:
We define the processes
\begin{align*}
A_s := & |X_s^\varphi - X_s|, \\
B_s := & |\xi^\varphi -\xi|  + \left (\int_t^T |\varphi(u)|^2 \od u \right )^{\frac{1}{2}} [| 1+|\xi^\varphi|+|\xi|]\\
    &   + \int_t^s \Big|(b(u, X))^\varphi  - b(u, X^\varphi)\Big|\od u
        + \bigg(  \int_t^s  \Big |(\sigma(u, X_u))^\varphi - \sigma(u, X^\varphi)\Big |_\hs^2 \od u \bigg)^{\frac{1}{2}}.
\end{align*}
The previous case $p\in [2,\infty)$ implies that
$\E [|A_\tau|^2|\overline{\cF}_t] \le c \E [|B_\tau|^2|\overline{\cF}_t]$
with some constant $c=c(T,\| L_b\|_{\BMO(S_2)},L_\sigma,K_b,K_\sigma)$
for all stopping times $\tau:\overline{\Omega} \to [t,T]$. So we may apply Lenglart's inequality (\cref{statement:Lenglart}) and obtain
the statement for $p\in (0,2)$ as well.
\end{proof}
\medskip

\begin{defi}
\label{definition:fractional_potential}
For a coupling function $\varphi:[0,T]\to [0,1]\in \cD$
we say that $(b, \sigma)$ has a fractional $\varphi$-potential $(U,V)$
with the parameters $(\beta,\gamma;H,K)$ provided that
\begin{enumerate}
\item $\beta,\gamma \in (0, 1]$,
\item $H,K:C([t,T];\R^d)\to [0,\infty)$ are continuous,
\item $U, V:[t,T]\times \Omega\to \R$ are $\cP_{t,T}$-measurable,
\item there are
     \begin{align*}
          b^\varphi: & [t,T] \times \overline{\Oz} \times C([t,T];\R^d)  \to \R^d, \\
     \sigma^\varphi: & [t,T] \times \overline{\Oz} \times C([t,T];\R^d)  \to \R^{d\times N}
     \end{align*}
     satisfying \eqref{cd2_path}-\eqref{cd6_path} and
     \[ h^\varphi(\cdot,\cdot.x) \in \cC_{[t,T]}^\cP(h(\cdot,\cdot,x))
        \sptext{1}{for}{1} h\in  \{b,\sigma\} \sptext{.5}{and}{.5} x\in C([t,T];\R^d), \]
\item and $\lambda|_{[t,T]} \otimes \overline{\P}$ a.e. one has
      for all $x\in C([t,T];\R^d)$ that
      \begin{align*}
             |b^\varphi(s,x) - b(s,x)|
      & \leq |U^\varphi(s) -U(s)|^\beta H(x(\cdot\wedge s)), \\
             |\sigma^\varphi(s,x) - \sigma(s,x)|_\hs
      & \leq |V^\varphi(s) -V(s)|^\gamma K(x(\cdot\wedge s)).
      \end{align*}
\end{enumerate}
For $\beta=\gamma=1$
and $H=K\equiv 1$ we say that $(b, \sigma)$ has a Lipschitz $\varphi$-potential $(U,V)$.
\end{defi}
\smallskip

For example, the functions $H$ and $K$ can be chosen to be
$H(x)=K(x):=|x|$ when dealing with linear SDEs with random coefficients.
Moreover, $(U,V)$ can be chosen in dependence on the coupling function
$\varphi$: For example, if we can choose 
$h^\varphi(s,(\omega,\omega'),x) =  h(s,\omega,x)$,
then $U$ and $V$ can be chosen to be zero. The basic example behind \cref{definition:fractional_potential}
are controlled diffusions:
\smallskip

\begin{exam}[Controlled diffusions]
\label{statement:controlled_diffusions}
We assume 
\begin{align*}
B &:[t,T]\times \R\times C([t,T];\R^d)  \to \R^d,\\
\Sigma &:[t,T]\times \R\times C([t,T];\R^d)  \to \R^{d\times N},
\end{align*}
such that 
\begin{align*}
B(s,z,x) & = B(s,z,x(\cdot\wedge s)),\\  
\Sigma(s,z,x) & = \Sigma(s,z,x(\cdot\wedge s)),\\
|B(s,z_1,x_1) - B(s,z_2,x_2)|           &\le |z_1-z_2|^\beta  H(x_1) + L_b      \| x_1-x_2\|_{C([t,T])}, \\
|\Sigma(s,z_1,x_1) - \Sigma(s,z_2,x_2)| &\le |z_1-z_2|^\gamma K(x_1) + L_\sigma \| x_1-x_2\|_{C([t,T])}, \\
|B(s,z,x)| & \le K_b (1+\|x\|_{C([t,T])}),\\
|\Sigma(s,z,x)| & \le K_\sigma (1+\|x\|_{C([t,T])}), 
\end{align*}
and such that $s\mapsto H(s,z,x)$ is measurable for all $(z,x)\in \R\times C([t,T])$ and $H\in \{B,\Sigma\}$. If 
$U,V:[t,T]\times \Omega \to \R$
are  $\cP_{t,T}$-measurable and we set
\begin{align*}
     b(s,\omega,x) &:= B(s,U_s(\omega),x),\\
\sigma(s,\omega,x) &:= \Sigma(s,V_s(\omega),x),
\end{align*}
then $(U,V)$ is a fractional $\varphi$-potential with parameters 
$(\beta,\gamma;H,K)$ for all coupling functions $\varphi$
because we can choose 
$b^\varphi(s,(\omega,\omega'),x) := B(s,U_s^\varphi(\omega,\omega'),x)$ and
$\sigma^\varphi(s,(\omega,\omega'),x) := \Sigma(s,V_s^\varphi(\omega,\omega'),x)$
by \cref{statement:transference_composition_new} with $h^0=h^1$ not depending on $(\omega,x)$.
\end{exam}
\medskip

From \cref{statement:Lipschitz_case_path} we immediately get:
\medskip

\begin{coro}\label{cor:statement:Lipschitz_case_new}
Assume the conditions of \cref{statement:Lipschitz_case_path}. Provided that $(b, \sigma)$
has a fractional $\varphi$-potential $(U,V)$ with parameters $(\beta,\gamma;H,K)$, then
\begin{align*}
      \left \| \sup_{s \in [t, T]}|X_s^{t,\xi,\varphi} - X_s^{t,\xi}| \right \|_{L_p}
\leq & c_\eqref{eqn:statement:Lipschitz_case_path}
      \Bigg [  \left (\int_t^T |\varphi(u)|^2 \od u \right )^{\frac{1}{2}} \| 1+ |\xi| \|_{L_p} + \| \xi^\varphi - \xi \|_{L_p} \\
&  \hspace{1em}           + \bigg\|  \int_t^T \Big|U^{\varphi}(u) - U(u) \Big|^\beta H(X_{\cdot \wedge u}^{t,\xi,\varphi}) \od u\bigg\|_{L_p} \\
&  \hspace{1em}           +  \bigg\|  \bigg(\int_t^T  \Big| V^{\varphi}(u) - V(u)\Big|^{2 \gamma} |K(X_{\cdot \wedge u}^{t,\xi,\varphi})|^2 \od u \bigg)^{\frac{1}{2}}\bigg\|_{L_p}\Bigg ].
\end{align*}
\end{coro}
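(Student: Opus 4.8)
The plan is to reduce the corollary to \cref{statement:Lipschitz_case_path} by computing the two coupled-coefficient terms on the right-hand side of \eqref{eqn:statement:Lipschitz_case_path} in closed form. The decisive point is the identity
\[ (b(u,X^{t,\xi}))^\varphi = b^\varphi(u, X^{t,\xi,\varphi})
   \sptext{1}{and}{1}
   (\sigma(u,X^{t,\xi}))^\varphi = \sigma^\varphi(u, X^{t,\xi,\varphi}), \]
holding $\lambda|_{[t,T]}\otimes\overline{\P}$ a.e., where $b^\varphi,\sigma^\varphi$ are the coupled coefficients supplied by the fractional $\varphi$-potential. Once this is available, the proof is a substitution followed by the pointwise estimates of \cref{definition:fractional_potential}.

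To obtain the identity I would invoke \cref{statement:transference_composition_new} with $h^0=b$ and $h^1=b^\varphi$, which satisfy {\rm (h\ref{h1})}--{\rm (h\ref{h4})} because they obey \eqref{cd2_path}--\eqref{cd6_path}, together with $A^0=X^{t,\xi}$, $A^1=X^{t,\xi,\varphi}$ and a trivial auxiliary process $B^0=B^1\equiv 0$ (the coefficients carry no $z$-variable). The hypotheses of that proposition are met: $X^{t,\xi,\varphi}\in\cC_{[t,T]}^\cadap(X^{t,\xi})$ is assumed, and $b^\varphi(\cdot,\cdot,x)\in\cC_{[t,T]}^\cP(b(\cdot,\cdot,x))$ for each $x$ is built into the definition of a fractional potential. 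Hence
\[ (b^\varphi(s, X^{t,\xi,\varphi}))_{s\in[t,T]}\in\cC_{[t,T]}^\cP\big((b(s,X^{t,\xi}))_{s\in[t,T]}\big). \]
On the other hand, the discussion preceding \cref{statement:Lipschitz_case_path} already records that $((b(u,X^{t,\xi}))^\varphi)_{u\in[t,T]}$ lies in the same class $\cC_{[t,T]}^\cP((b(u,X^{t,\xi}))_{u\in[t,T]})$. Since $\cC_{[t,T]}^\cP$ descends from the isometric bijection $\cC$ and therefore determines a single $L_0$-class with a predictable representative unique up to a $\lambda\otimes\overline{\P}$-null set, the two predictable processes coincide a.e., which is the asserted identity. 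Applying the same argument coordinate-wise with $h^1=\sigma^\varphi$ gives the companion identity for the diffusion coefficient.

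With the identities established, I would substitute them into the third and fourth terms of \eqref{eqn:statement:Lipschitz_case_path}, so that $(b(u,X^{t,\xi}))^\varphi-b(u,X^{t,\xi,\varphi})$ becomes $b^\varphi(u,X^{t,\xi,\varphi})-b(u,X^{t,\xi,\varphi})$ and likewise for $\sigma$; the $L_p$-norms are unaffected because the identities hold a.e. It then remains to apply the pointwise bounds of \cref{definition:fractional_potential} with $x=X^{t,\xi,\varphi}$, namely
\[ |b^\varphi(u,X^{t,\xi,\varphi})-b(u,X^{t,\xi,\varphi})|\le|U^\varphi(u)-U(u)|^\beta H(X_{\cdot\wedge u}^{t,\xi,\varphi}) \]
and the analogous estimate for $\sigma^\varphi-\sigma$ with exponent $\gamma$ and $K$. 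Inserting these into the two norms reproduces verbatim the claimed right-hand side.

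The main obstacle is entirely concentrated in the first step: one must ensure that coupling the composite process $b(\cdot,X^{t,\xi})$ is the same as composing the coupled coefficient $b^\varphi$ with the coupled process $X^{t,\xi,\varphi}$. This is exactly what \cref{statement:transference_composition_new} delivers once combined with the uniqueness of predictable representatives under $\cC_{[t,T]}^\cP$; after that the corollary is a direct consequence of the fractional-potential estimates and \cref{statement:Lipschitz_case_path}.
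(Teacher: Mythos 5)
Your proposal is correct and follows essentially the same route as the paper: both rest on \cref{statement:transference_composition_new} (applied with $h^0=b$, $h^1=b^\varphi$ and their $\sigma$-counterparts) to identify $(b(u,X^{t,\xi}))^\varphi$ with $b^\varphi(u,X^{t,\xi,\varphi})$, after which the pointwise bounds in \cref{definition:fractional_potential} and \cref{statement:Lipschitz_case_path} yield the claim. The only difference is presentational: the paper simply \emph{chooses} $b^\varphi(u,X^{t,\xi,\varphi})$ as the representative of the transference class, whereas you additionally justify the substitution via uniqueness, up to $\lambda\otimes\overline{\P}$-null sets, of predictable representatives in $\cC_{[t,T]}^\cP$ — a slightly more explicit version of the same argument.
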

\medskip

\begin{proof}
We exploit \cref{statement:transference_composition_new} to be able to choose
$((b(u,X))^\varphi)_{u\in [t,T]}$ and
$((\sigma(u,X))^\varphi)_{u\in [t,T]}$ as
\[ (b^\varphi(u,X_u^\varphi))_{u\in [t,T]}
   \sptext{1}{and}{1}
   (\sigma^\varphi(u,X_u^\varphi))_{u\in [t,T]},\]
respectively. Therefore \cref{definition:fractional_potential} and \cref{statement:Lipschitz_case_path} give our statement.
\end{proof}
\bigskip

\begin{exam}\label{cor:statement:Lipschitz_case_bsigma_deterministic}
If $(b,\sigma)$ in \cref{statement:Lipschitz_case_path} are deterministic, then
\[
        \left \| \sup_{s \in [t, T]}|X_s^{t,\xi,\varphi} - X_s^{t,\xi}| \right \|_{L_p}
   \leq c_\eqref{eqn:statement:Lipschitz_case_path} \left [
        \left (\int_t^T |\varphi(u)|^2 \od u \right )^{\frac{1}{2}} \| 1+ |\xi| \|_{L_p} +  \| \xi-\xi^\varphi\|_{L_p} \right ].\]
Consequently, in the notation from \eqref{eqn:def:Phip} for $s\in [t,T]$ one has
\[ |X_s^{t,\xi}|_{\Phi,p}
   \leq c_\eqref{eqn:statement:Lipschitz_case_path} \left [
        \Phi \left ( \varphi \mapsto \left (\int_t^T |\varphi(u)|^2 \od u \right )^{\frac{1}{2}} \right ) \| 1+ |\xi| \|_{L_p} +  | \xi|_{\Phi,p}  \right ].\]
To verify this we choose $b^\varphi=b$, $\sigma^\varphi=\sigma$,
$U = V \equiv 0$, and $H=K\equiv 0$.
For this choice we apply \cref{cor:statement:Lipschitz_case_new}.
\end{exam}
\bigskip

An example for a pair of controls is the following:
\bigskip

\begin{exam}
We assume $d=1$, $t\le a < c \le T$, $\varphi := 1_{(a,c]}$,
and that $(b,\sigma)$ has a fractional $\varphi$-potential with parameters $(\beta,\gamma;1,1)$ of the form
\[ U_s(\omega) := \1_{\{ W_s(\omega) > K\}} 1_{(a,T]}(s)
   \sptext{1}{and}{1}
   V_s(\omega) := \1_{\{ W_s(\omega) > L\}}  1_{(a,T]}(s) \]
for some $K,L\in \R$. 
Then for $q\in [1,\infty)$ it is shown in \cite[pages 44, last line]{Geiss:Ylinen:21}
that
\[     \left \| \int_c^T \left | D_s^{(a,c]} - D_s \right |\od s  \right \|_{L_q}
   \le 8 \beta_q^2 \sqrt{(T-c)(c-a)}
   \sptext{1}{for}{1} D\in \{U,V\}, \]
where $\beta_q \ge 1$ is taken from \eqref{eqn:BDG}.
One has
$\left | D_s^{(a,c]} - D_s \right |=  \left | D_s^{(a,c]} - D_s \right |^\delta$ for all $\delta>0$
as $\left | D_s^{(a,c]} - D_s \right |\in \{0,1\}$. Therefore, for all $p\in (0,\infty)$ we get the estimates
\begin{align*}
\bigg\|  \int_t^T \Big|U^{(a,c]}(u) - U(u) \Big|^\beta \od u\bigg\|_{L_p}
&\le c \sqrt{c-a}, \\
\bigg\|  \bigg(\int_t^T  \Big| V^{(a,c]}(u) - V(u)\Big|^{2\gamma} \od u \bigg)^{\frac{1}{2}}\bigg\|_{L_p}
&\le d \sqrt[4]{c-a},
\end{align*}
where $c=c(p,T)>0$ and $d=d(p,T)>0$.
\end{exam}

First we connect the above considerations to Malliavin differentiability. Here we contribute further to
\cite[Theorem 3.2]{Imkeller:Reis:Salkeld:2019} by using less structural assumptions and a BMO-condition
on the drift. By \cref{cor:statement:Lipschitz_case_bsigma_deterministic} the \cref{statement:Malliavin_differentiability}
below works in the fully path-dependent case as well when $(b,\sigma)$ are deterministic. Compared  
to \cite[Theorem 3.2]{Imkeller:Reis:Salkeld:2019} we do not compute the Malliavin derivative, we are only
interested in its existence and quantitative estimates.
\medskip

\begin{defi}
\label{definition:UV-Besov}
Assume $p\in [2,\infty)$, $(\beta,\gamma)\in (0,1]^2$,
and $\cP_{t,T}$-measurable processes $U, V:[t,T]\times \Omega\to \R$.
Then we let  
\begin{align*} 
     |(U,V)|_{\B_p^{\Phi_2}} 
& := \sup_{0\le a < c \le T} \frac{1}{\sqrt{c-a}} 
     \Bigg [    \left \|  \int_t^T \left |U^{(a,c]}(u) - U(u) \right | \od u\right \|_{L_p} \\
&    \hspace{10em}         +  \left \|  \left(\int_t^T  \left| V^{(a,c]}(u) - V(u)\right |^2 \od u \right )^{\frac{1}{2}}\right \|_{L_p} 
     \Bigg ],\\
|(U,V)|_{\B_{p,q}^{\eta;(\beta,\gamma)}} &:=  \begin{cases} 
   \inf \| \cK_\eta \kappa \|_{L_q([0,1],\mu)} &: (\eta,q)\in (0,1)\times [1,\infty)\\
   \sup_{r\in (0,1]} \frac{P(r)}{r^\eta}       &: (\eta,q)\in (0,1]\times \{\infty\}
   \end{cases}, 
\end{align*}
where 
\[ P(r):= \bigg\|  \int_t^T \Big|U^\br(u) - U(u) \Big|^\beta \od u \bigg\|_{L_p}
                   + \bigg\| \left (\int_t^T \Big|V^\br(u) - V(u) \Big|^{2\gamma} \od u \right )^\frac{1}{2}\bigg\|_{L_p} \]
and the infimum extends over  all measurable $\kappa:[0,1]\to [0,\infty)$ with
$P \le \kappa$.
\medskip

Similarly, for a continuous adapted process $A=(A_s)_{s\in [t,T]}$ we let
\begin{align*}
     |A|_{\B_p^{\Phi_2,*}} 
& := \sup \left \{ 
     \frac{ \left \|  \sup_{u\in [t,T]}\left |A^{(a,c]}_u - A_u \right | \right \|_{L_p}}{\sqrt{c-a}}:
     0\le a < c \le T \right \}, \\
     |A|_{\B_{p,q}^{\eta,*}} 
& := 
   \begin{cases}
   \inf \| \cK_\eta \kappa \|_{L_q([0,1],\mu)}: &: (\eta,q) \in (0,1)\times [1,\infty)\\
               \sup_{r\in (0,1]} \frac{Q(r)}{r^\eta}      &: (\eta,q) \in (0,1]\times \{\infty\}
   \end{cases},
\end{align*}
where $Q(r):= \left \|  \sup_{u\in [t,T]} |A_u^\br - A_u | \right \|_{L_p}$
and the infimum extends over  all measurable $\kappa:[0,1]\to [0,\infty)$ with
$Q\le \kappa$.
\end{defi}
\medskip

We use the measurable majorants $\kappa$ to avoid to discuss
the measurability of the functions $r\mapsto P(r)$ and $r\mapsto Q(r)$. In this article this 
issue is without relevance, but certainly will be clarified within further investigations along \cite[Lemma 4.7]{Geiss:Ylinen:21}. 

\begin{rema}
In particular we will use the condition
$|(U,V)|_{\B_{2,\infty}^{1;(1,1)}} < \infty$, i.e.
\[ \sup_{r\in (0,1]} \frac{1}{r} \left [
         \bigg\| \int_t^T \Big|U^\br(u) - U(u) \Big| \od u \bigg\|_{L_2}
       + \bigg\| \left (\int_t^T \Big|V^\br(u) - V(u) \Big|^2 \od u \right )^\frac{1}{2}\bigg\|_{L_2} \right ] < \infty. \]
Assume that $V\in L_2([t,T]\times \Omega)$, which we interpret as
$V\in L_2(\Omega,\cF,\P;\ell_2)$. Then by \cref{statement:characterization_D12_H_valued}
the condition for $V$ is equivalent to $V \in \D_{1,2}(\ell_2)$.
As one has
\[ \left \|  \int_t^T |U^\br(u) - U(u) | \od u \right \|_{L_2}
   \le \sqrt{T}  \bigg\| \left (\int_t^T \Big|U^\br(u) - U(u) \Big|^2 \od u \right )^\frac{1}{2}\bigg\|_{L_2}, \]
the condition on $V$ in $|(U,V)|_{\B_{2,\infty}^{1;(1,1)}} < \infty$  is stronger than on $U$.
\end{rema}
\medskip

\begin{coro}[Malliavin differentiability I]
\label{statement:Malliavin_differentiability}
Assume the conditions of \cref{statement:Lipschitz_case_path} with $d=1$, $p\in [2,\infty)$,
that $(U, V)$ is a Lipschitz potential for $(b, \sigma)$ for all $1_{(a,c]}$ with $0\le a < c \le T$.
If $\xi\in \B_p^{\Phi_2}$ is $\cF_t$-measurable, then
$X_s^{t,\xi}\in \D_{1,2}$ for all $s\in [t,T]$ with 
\begin{align*}
      \sup_{0\le a < c \le T \atop s\in [t,T]}\left \| \left ( \frac{1}{c-a} \int_a^c |D_u X_s^{t,\xi}|^2 \od u \right )^\frac{1}{2} \right \|_{L_p}
&\le c_{\eqref{eqn:characterization_Phi2p},p} |X|_{\B_p^{\Phi_2,*}} \\
&\le c_{\eqref{eqn:characterization_Phi2p},p} 2 c_\eqref{eqn:statement:Lipschitz_case_path} \left [ 1+ \| \xi\|_{\B^{\Phi_2}_p} + |(U,V)|_{\B_p^{\Phi_2}} \right ].
\end{align*}
\end{coro}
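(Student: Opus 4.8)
The plan is to prove the two claimed inequalities in reverse order: first bound the middle quantity $|X|_{\B_p^{\Phi_2,*}}$ by the data, and only then invoke the characterization \eqref{eqn:characterization_Phi2p} for the left-hand bound on the Malliavin derivatives. This order is forced, because \eqref{eqn:characterization_Phi2p} is a two-sided equivalence valid only for elements of $\D_{1,2}\cap L_p$, so it cannot be applied to $X_s^{t,\xi}$ until the membership $X_s^{t,\xi}\in\D_{1,2}$ is secured, and that membership will itself follow from the finiteness of $|X|_{\B_p^{\Phi_2,*}}$. We may assume $|(U,V)|_{\B_p^{\Phi_2}}<\infty$, since otherwise the asserted chain of inequalities is trivially true.

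First I would estimate $|X|_{\B_p^{\Phi_2,*}}$. Fix $0\le a<c\le T$ and apply \cref{cor:statement:Lipschitz_case_new} with the coupling function $\varphi=\1_{(a,c]}$ and the given Lipschitz potential $(U,V)$, i.e. with $\beta=\gamma=1$ and $H=K\equiv1$. Using $\int_t^T|\1_{(a,c]}(u)|^2\,\od u\le c-a$, this yields
\[ \left\|\sup_{u\in[t,T]}|X_u^{t,\xi,(a,c]}-X_u^{t,\xi}|\right\|_{L_p} \le c_\eqref{eqn:statement:Lipschitz_case_path}\Big[\sqrt{c-a}\,\|1+|\xi|\|_{L_p}+\|\xi^{(a,c]}-\xi\|_{L_p}+R_{a,c}\Big], \]
where $R_{a,c}:=\big\|\int_t^T|U^{(a,c]}-U|\,\od u\big\|_{L_p}+\big\|(\int_t^T|V^{(a,c]}-V|^2\,\od u)^{1/2}\big\|_{L_p}$. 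Dividing by $\sqrt{c-a}$ and taking the supremum over $0\le a<c\le T$, the three terms become $\|1+|\xi|\|_{L_p}\le1+\|\xi\|_{L_p}$, the seminorm $|\xi|_{\Phi_2,p}$, and $|(U,V)|_{\B_p^{\Phi_2}}$ by \cref{definition:UV-Besov}. Together with $\|\xi\|_{L_p}+|\xi|_{\Phi_2,p}\le2\|\xi\|_{\B_p^{\Phi_2}}$ and the crude bounds $1\le2$, $|(U,V)|_{\B_p^{\Phi_2}}\le2|(U,V)|_{\B_p^{\Phi_2}}$, this gives
\[ |X|_{\B_p^{\Phi_2,*}}\le 2c_\eqref{eqn:statement:Lipschitz_case_path}\big[1+\|\xi\|_{\B_p^{\Phi_2}}+|(U,V)|_{\B_p^{\Phi_2}}\big]<\infty, \]
which is the right-hand inequality (up to the factor $c_{\eqref{eqn:characterization_Phi2p},p}$).

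Next I would pass to $\D_{1,2}$ and the left-hand inequality. Since the coupled process satisfies $X_s^{t,\xi,(a,c]}\in C_0(X_s^{t,\xi})$, it coincides in $L_p$ with the random-variable coupling $(X_s^{t,\xi})^{(a,c]}$, so for each fixed $s\in[t,T]$,
\[ |X_s^{t,\xi}|_{\Phi_2,p}=\sup_{0\le a<c\le T}\frac{\|X_s^{t,\xi}-X_s^{t,\xi,(a,c]}\|_{L_p}}{\sqrt{c-a}}\le|X|_{\B_p^{\Phi_2,*}}<\infty, \]
the middle inequality because the fixed-time increment is dominated by its supremum over $u\in[t,T]$. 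As $p\ge2$ we get $|X_s^{t,\xi}|_{\Phi_2,2}\le|X_s^{t,\xi}|_{\Phi_2,p}<\infty$ and $\E|X_s^{t,\xi}|^2<\infty$, hence $X_s^{t,\xi}\in\B_2^{\Phi_2}\subseteq\D_{1,2}$ by \cite[Theorem 4.22]{Geiss:Ylinen:21}. Now \eqref{eqn:characterization_Phi2p} applies to $X_s^{t,\xi}$, giving
\[ \sup_{0\le a<c\le T}\left\|\left(\frac1{c-a}\int_a^c|D_uX_s^{t,\xi}|^2\,\od u\right)^{1/2}\right\|_{L_p}\le c_{\eqref{eqn:characterization_Phi2p},p}\,|X_s^{t,\xi}|_{\Phi_2,p}. \]
Taking the supremum over $s\in[t,T]$ and using $\sup_s|X_s^{t,\xi}|_{\Phi_2,p}\le|X|_{\B_p^{\Phi_2,*}}$ produces the left-hand inequality, and chaining it with the bound from the second paragraph gives the full statement.

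The only genuinely delicate point is this ordering of the argument — securing $\D_{1,2}$-membership via the embedding $\B_2^{\Phi_2}\subseteq\D_{1,2}$ before the equivalence \eqref{eqn:characterization_Phi2p} may be used. Everything else is routine: the application of \cref{cor:statement:Lipschitz_case_new}, the elementary estimate $\int_t^T\1_{(a,c]}\le c-a$ for the coupling function, the identification of the coupled process with the random-variable coupling at a fixed time, and the constant bookkeeping that produces the factor $2$ (coming from $\|\xi\|_{L_p}+|\xi|_{\Phi_2,p}\le2\|\xi\|_{\B_p^{\Phi_2}}$).
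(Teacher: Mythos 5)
Your proof is correct and follows essentially the same route as the paper: apply \cref{cor:statement:Lipschitz_case_new} with $\varphi=\1_{(a,c]}$, bound the three terms by $\sqrt{c-a}\,[1+\|\xi\|_{\B_p^{\Phi_2}}+|(U,V)|_{\B_p^{\Phi_2}}]$ up to a factor $2$, divide and take suprema, then invoke \eqref{eqn:characterization_Phi2p}. The only difference is that you make explicit the step the paper leaves implicit --- securing $X_s^{t,\xi}\in\D_{1,2}$ via $|X_s^{t,\xi}|_{\Phi_2,p}\le|X|_{\B_p^{\Phi_2,*}}<\infty$ and the embedding $\B_2^{\Phi_2}\subseteq\D_{1,2}$ before the equivalence \eqref{eqn:characterization_Phi2p} may legitimately be applied --- which is a welcome clarification rather than a different argument.
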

\smallskip

\begin{proof}
For $0\le a < c \le T$ and $s\in [t,T]$ we get from \cref{cor:statement:Lipschitz_case_new} that
\begin{align*}
      \left \| \sup_{s\in [t,T]} |X_s^{t,\xi,\1_{(a,c]}} - X_s^{t,\xi}| \right \|_{L_p}
\leq & c_\eqref{eqn:statement:Lipschitz_case_path}
      \Bigg [  \sqrt{c-a} \| 1+ |\xi| \|_{L_p} +  \| \xi-\xi^{(a,c]}\|_{L_p} \\
&  \hspace{5em}           + \bigg\|  \int_t^T \Big|U^{(a,c]}(u) - U(u) \Big| \od u\bigg\|_{L_p} \\
&  \hspace{5em}           +  \bigg\|  \bigg(\int_t^T  \Big| V^{(a,c]}(u) - V(u)\Big|^2 \od u \bigg)^{\frac{1}{2}}\bigg\|_{L_p}\Bigg ] \\
\leq & c_\eqref{eqn:statement:Lipschitz_case_path}
     \sqrt{c-a} \left [ \| 1+ |\xi| \|_{L_p} +  | \xi|_{\Phi_2,p} +|(U,V)|_{\B_p^{\Phi_2}} \right ] \\
\leq & c_\eqref{eqn:statement:Lipschitz_case_path}
     \sqrt{c-a} \left [ 1+ 2^{1-\frac{1}{p}} \| \xi\|_{\B^{\Phi_2}_p} + |(U,V)|_{\B_p^{\Phi_2}} \right ].
\end{align*}
This implies the second inequality. The first one is \eqref{eqn:characterization_Phi2p}.
\end{proof}
\smallskip

\begin{coro}[Malliavin differentiability II]
\label{statement:Malliavin_differentiability_II}
Assume the conditions of \cref{statement:Lipschitz_case_path} with $d=1$, $p=2$, and
that $(U, V)$ is a Lipschitz potential for $(b, \sigma)$ for all $\varphi_r\equiv r$ with $r \in (0,1]$.
If $\xi\in \D_{1,2}$ is $\cF_t$-measurable, then
$X_s^{t,\xi}\in \D_{1,2}$ for all $s\in [t,T]$ and
\begin{multline*}
      \sup_{s\in [t,T]}   \left\| D X_s^{t,\xi} \right \|_{L_2(\Omega\times [0,T];\R^N)} 
  \le 2 |X^{t,\xi}|_{\B_{1,\infty}^{1,*}} \\
  \le 2  c_\eqref{eqn:statement:Lipschitz_case_path} \left [ \sqrt{T} \| 1+|\xi|\|_{L_2} +
      \left\| D \xi \right \|_{L_2(\Omega\times [0,T];\R^N)} + |(U,V)|_{\B_{2,\infty}^{1;(1,1)}} \right ].
\end{multline*}
\end{coro}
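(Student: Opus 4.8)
The plan is to apply the $\D_{1,2}$-characterization of \cref{statement:characterization_D12} separately to each random variable $X_s^{t,\xi}$, $s\in[t,T]$, using the isonormal coupling functions $\varphi_r\equiv r$, and to feed the quantitative estimate of \cref{cor:statement:Lipschitz_case_new} into it. The conceptual link is that coupling the value at time $s$ commutes with coupling the whole process: since $X^{t,\xi,\br}\in\cC_{[t,T]}^\cadap(X^{t,\xi})$ we have $X_s^{t,\xi,\br}\in C_0(X_s^{t,\xi})$ for every $s$ by \cref{statement:properties_CCtTadapted}, so $X_s^{t,\xi,\br}$ is a representative of the coupling $(X_s^{t,\xi})^\br$ of the single random variable $X_s^{t,\xi}$. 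Consequently
\[
\|X_s^{t,\xi}-(X_s^{t,\xi})^\br\|_{L_2}=\|X_s^{t,\xi}-X_s^{t,\xi,\br}\|_{L_2}\le\Big\|\sup_{u\in[t,T]}|X_u^{t,\xi,\br}-X_u^{t,\xi}|\Big\|_{L_2}.
\]

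For the right-hand bound I would first run \cref{cor:statement:Lipschitz_case_new} with $\varphi=\varphi_r$ and the Lipschitz potential $(U,V)$ (i.e. $\beta=\gamma=1$, $H=K\equiv1$), which controls $\|\sup_u|X_u^{t,\xi,\br}-X_u^{t,\xi}|\|_{L_2}$ by four terms: $(\int_t^T r^2\od u)^{1/2}\|1+|\xi|\|_{L_2}$, the term $\|\xi-\xi^\br\|_{L_2}$, and the two $U,V$-terms. Dividing by $r$ and taking $\sup_{r\in(0,1]}$, I use $(\int_t^T r^2\od u)^{1/2}=r\sqrt{T-t}\le r\sqrt T$ for the first; \cref{statement:characterization_D12} applied to the $\cF_t$-measurable $\xi\in\D_{1,2}$ to get $\sup_r\|\xi-\xi^\br\|_{L_2}/r\le\|D\xi\|_{L_2(\Omega\times[0,T];\R^N)}$ for the second; and the very definition of $|(U,V)|_{\B_{2,\infty}^{1;(1,1)}}$ (\cref{definition:UV-Besov}, with $\beta=\gamma=1$ and $p=2$, for which $P(r)$ equals the sum of the two $U,V$-terms) for the last two. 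This produces
\[
|X^{t,\xi}|_{\B_{2,\infty}^{1,*}}=\sup_{r\in(0,1]}\frac{\|\sup_{u\in[t,T]}|X_u^{t,\xi,\br}-X_u^{t,\xi}|\|_{L_2}}{r}\le c_\eqref{eqn:statement:Lipschitz_case_path}\big[\sqrt T\,\|1+|\xi|\|_{L_2}+\|D\xi\|_{L_2(\Omega\times[0,T];\R^N)}+|(U,V)|_{\B_{2,\infty}^{1;(1,1)}}\big],
\]
which is the claimed bound and in particular shows finiteness of the middle quantity.

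With finiteness in hand I close the first inequality: combining the two displays, for each $s$ we get $\sup_{r\in(0,1]}\|X_s^{t,\xi}-(X_s^{t,\xi})^\br\|_{L_2}/r\le|X^{t,\xi}|_{\B_{2,\infty}^{1,*}}<\infty$, so \cref{statement:characterization_D12} yields both $X_s^{t,\xi}\in\D_{1,2}$ and $\frac{1}{2}\|DX_s^{t,\xi}\|_{L_2(\Omega\times[0,T];\R^N)}\le|X^{t,\xi}|_{\B_{2,\infty}^{1,*}}$; taking $\sup_{s\in[t,T]}$ gives the factor-$2$ estimate. The integrability $X_s^{t,\xi}\in L_2$ needed to invoke \cref{statement:characterization_D12} follows from $\xi\in L_2$ (as $p=2$) and \cref{statement:Xt_vs_starting_value_path}.

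I expect the only genuinely delicate point to be the identification $(X_s^{t,\xi})^\br=X_s^{t,\xi,\br}$ together with the logical ordering it forces: \cref{statement:characterization_D12} may be applied only once the relevant supremum over $r$ is known to be finite, so the quantitative estimate from \cref{cor:statement:Lipschitz_case_new} has to be established first and then fed back into the characterization. Everything else is bookkeeping of the four coupling terms against \cref{definition:UV-Besov}; note that the integrability exponent in the middle $\B^{1,*}$-quantity is $2$, matching the $L_2$-norms on the right-hand side.
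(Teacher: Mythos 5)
Your proposal is correct and follows essentially the same route as the paper's own proof: apply \cref{cor:statement:Lipschitz_case_new} with $\varphi_r\equiv r$, bound $\|\xi-\xi^\br\|_{L_2}$ via \cref{statement:characterization_D12} and the $(U,V)$-terms via \cref{definition:UV-Besov}, then apply \cref{statement:characterization_D12} once more to the left-hand side. Your added care about the identification $X_s^{t,\xi,\br}\in C_0(X_s^{t,\xi})$, the $L_2$-integrability of $X_s^{t,\xi}$, and the exponent $2$ in the intermediate $\B^{1,*}$-quantity (which the paper's statement misprints as $\B_{1,\infty}^{1,*}$) only makes explicit what the paper leaves implicit.
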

\smallskip

\begin{proof}
Similarly as in the proof of \cref{statement:Malliavin_differentiability} we get,
for $r\in (0,1]$, from \cref{cor:statement:Lipschitz_case_new} and
\cref{statement:characterization_D12} that
\begin{align*}
     &  \left \| \sup_{s\in [t,T]} |X_s^{t,\xi,\br} - X_s^{t,\xi}| \right \|_{L_2} \\
\leq & c_\eqref{eqn:statement:Lipschitz_case_path}
      \left [  r \sqrt{T} \| 1+ |\xi| \|_{L_2} +  \| \xi-\xi^r \|_{L_2} + |(U,V)|_{\B_{2,\infty}^{1;(1,1)}}  r \right ] \\
\leq & c_\eqref{eqn:statement:Lipschitz_case_path}
       r \left [  \sqrt{T} \| 1+ |\xi| \|_{L_2} +  \left\| D \xi \right \|_{L_2(\Omega\times [0,T];\R^N)} + |(U,V)|_{\B_{2,\infty}^{1;(1,1)}}  \right ].
\end{align*}
Applying \cref{statement:characterization_D12} once more, this time to the LHS, the assertion
follows.
\end{proof}
\medskip

Our main result about real interpolation is the following statement:
\smallskip

\begin{coro}[Real interpolation]
\label{statement:real_interpolation_SDE_Lipschitz}
Assume the conditions of \cref{statement:Lipschitz_case_path} for $d=1$, and
$p \in [2,\infty)$, and
that $(U,V)$ is a fractional potential for $(b, \sigma)$
with parameters $(\beta,\gamma;1,1)$ for fixed
$(\beta,\gamma)\in (0,1]^2$, but for all $\varphi_r\equiv r \in [0,1]$.
Assume further that
\begin{enumerate}[{\rm (1)}]
\item $(\eta,q)\in (0,1)\times [1,\infty]$,
\item $\xi \in \B_{p,q}^\eta$ is $\cF_t$-measurable.
\end{enumerate}
Then one has $X_s^{t,\xi} \in \B_{p,q}^\eta$ for all $s\in [t,T]$ and
\begin{equation}\label{eqn:statement:real_interpolation_SDE_Lipschitz}
    \left | X^{t,\xi} \right |_{\B_{p,q}^{\eta,*}}
\le c_\eqref{eqn:statement:real_interpolation_SDE_Lipschitz}
    \left [  1+   \| \xi\|_{\B_{p,q}^\eta} + |(U,V)|_{\B_{p,q}^{\eta;(\beta,\gamma)}} \right ],
\end{equation}
where
$c_\eqref{eqn:statement:real_interpolation_SDE_Lipschitz}=
c_\eqref{eqn:statement:real_interpolation_SDE_Lipschitz}(p,T,\| L_b\|_{\BMO(S_2)},L_\sigma,K_b,K_\sigma,\eta,q)>0$.
\end{coro}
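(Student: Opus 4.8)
The plan is to reduce the whole statement to a single pointwise-in-$r$ estimate for the isonormal couplings $\varphi_r\equiv r$ coming from \cref{cor:statement:Lipschitz_case_new}, and then to feed that estimate into the two regimes of \cref{definition:UV-Besov}. Writing $X:=X^{t,\xi}$ and $X^\br:=X^{t,\xi,\br}$, set
\[ Q(r):=\Big\|\sup_{u\in[t,T]}|X_u^\br-X_u|\Big\|_{L_p},\qquad P(r):=\Big\|\int_t^T|U^\br(u)-U(u)|^\beta\od u\Big\|_{L_p}+\Big\|\Big(\int_t^T|V^\br(u)-V(u)|^{2\gamma}\od u\Big)^{\frac12}\Big\|_{L_p}, \]
which are exactly the functions entering $|X|_{\B_{p,q}^{\eta,*}}$ and $|(U,V)|_{\B_{p,q}^{\eta;(\beta,\gamma)}}$. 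First I would apply \cref{cor:statement:Lipschitz_case_new} with $\varphi=\varphi_r$, $H=K\equiv1$, and $\big(\int_t^T|\varphi_r(u)|^2\od u\big)^{1/2}=r\sqrt{T-t}\le r\sqrt T$, obtaining, with the constant $c_{\eqref{eqn:statement:Lipschitz_case_path}}$ that is \emph{uniform in} $r$,
\[ Q(r)\le c_{\eqref{eqn:statement:Lipschitz_case_path}}\Big[r\sqrt T\,\|1+|\xi|\|_{L_p}+\|\xi-\xi^\br\|_{L_p}+P(r)\Big]\qquad\text{for all }r\in(0,1]. \]

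For $q=\infty$ I would divide by $r^\eta$ and take $\sup_{r\in(0,1]}$. Since $\eta<1$ one has $\sup_{r\in(0,1]}r^{1-\eta}=1$, and the three resulting suprema are respectively a multiple of $\|1+|\xi|\|_{L_p}$, the quantity $\sup_r r^{-\eta}\|\xi-\xi^\br\|_{L_p}$ (comparable via \eqref{eqn:equivalence_K} to $|\xi|_{\Phi^{(\cK_\eta,\mu,\infty)},p}$, hence to $\|\xi\|_{\B_{p,\infty}^\eta}$ by \eqref{eqn:real_interpolation_via_decoupling}), and $|(U,V)|_{\B_{p,\infty}^{\eta;(\beta,\gamma)}}$. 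Combining with $\|1+|\xi|\|_{L_p}\le1+\|\xi\|_{L_p}\le1+c\,\|\xi\|_{\B_{p,\infty}^\eta}$ (the interpolation norm dominating the $L_p$-norm) yields \eqref{eqn:statement:real_interpolation_SDE_Lipschitz}.

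For $q<\infty$ I would construct a measurable majorant of $Q$ of the form $\kappa:=c_{\eqref{eqn:statement:Lipschitz_case_path}}\big[r\sqrt T\,\|1+|\xi|\|_{L_p}+\|\xi-\xi^\br\|_{L_p}+\kappa_3(r)\big]$, where $\kappa_3\ge P$ is an $\ep$-optimal majorant realizing the infimum in $|(U,V)|_{\B_{p,q}^{\eta;(\beta,\gamma)}}$; the map $r\mapsto\|\xi-\xi^\br\|_{L_p}$ is continuous and so needs no majorant. Since $|X|_{\B_{p,q}^{\eta,*}}\le\|\cK_\eta\kappa\|_{L_q([0,1],\mu)}$, the triangle inequality in $L_q([0,1],\mu)$ (valid as $q\ge1$) splits this into $c\sqrt T\,\|1+|\xi|\|_{L_p}\,\|\cK_\eta(\cdot)(\cdot)\|_{L_q(\mu)}$, then $c\,\big\|\cK_\eta(r)\|\xi-\xi^\br\|_{L_p}\big\|_{L_q(\mu)}=c\,|\xi|_{\Phi^{(\cK_\eta,\mu,q)},p}$, and finally $c(1+\ep)\,|(U,V)|_{\B_{p,q}^{\eta;(\beta,\gamma)}}$; letting $\ep\downarrow0$ and invoking \eqref{eqn:real_interpolation_via_decoupling} for the middle piece again gives \eqref{eqn:statement:real_interpolation_SDE_Lipschitz}.

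Membership $X_s^{t,\xi}\in\B_{p,q}^\eta$ for each $s$ I would deduce from $X_s^{t,\xi,\br}\in C_0(X_s^{t,\xi})$, so that $\|X_s^{t,\xi}-X_s^{t,\xi,\br}\|_{L_p}\le Q(r)\le\kappa(r)$; this controls $|X_s^{t,\xi}|_{\Phi^{(\cK_\eta,\mu,q)},p}$ by the finite right-hand side just obtained, and together with $\sup_s\|X_s^{t,\xi}\|_{L_p}<\infty$ from \cref{statement:Xt_vs_starting_value_path} and \eqref{eqn:real_interpolation_via_decoupling} gives membership. The main obstacle I anticipate, beyond this bookkeeping, is purely analytic: verifying $\|\cK_\eta(\cdot)(\cdot)\|_{L_q([0,1],\mu)}<\infty$ that turns the first term into a constant times $\|1+|\xi|\|_{L_p}$ — here one uses $\cK_\eta(r)\,r\sim r^{1-\eta}$ near $0$ (integrable since $\eta<1$) and that the $1/\sqrt{1-r}$ singularity of $\mu$ near $1$ remains integrable — together with the uniformity of $c_{\eqref{eqn:statement:Lipschitz_case_path}}$ in $r$, which holds since that constant depends only on $(p,T,\|L_b\|_{\BMO(S_2)},L_\sigma,K_b,K_\sigma)$ and not on the coupling function. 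The measurable-majorant formalism is what keeps the $q<\infty$ case clean, while the continuity of $r\mapsto\|\xi-\xi^\br\|_{L_p}$ lets the $\xi$-term bypass it entirely.
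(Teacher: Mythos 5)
Your proposal is correct and follows essentially the same route as the paper's proof: apply \cref{cor:statement:Lipschitz_case_new} with $\varphi_r\equiv r$ and $H=K\equiv 1$ to get the pointwise-in-$r$ bound, then pass to the $\B_{p,q}^{\eta,*}$ seminorm via an admissible majorant $\kappa$ (for $q<\infty$), the equivalence \eqref{eqn:real_interpolation_via_decoupling} for the $\xi$-term, and the finiteness of $\| r\mapsto \cK_\eta(r) r\|_{L_q([0,1],\mu)}$. Your additional details (the explicit $q=\infty$ supremum argument, the $\ep$-optimal $\kappa_3$, and the membership of $X_s^{t,\xi}$ in $\B_{p,q}^\eta$) are consistent elaborations of what the paper leaves implicit.
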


\begin{proof}
For $r\in [0,1]$ \cref{cor:statement:Lipschitz_case_new} gives that
\begin{align*}
      \Big\| \sup_{s\in [t,T]}|X_s^{t,\xi,\varphi_r} - X_s^{t,\xi}| \Big\|_{L_p}
\leq & c_\eqref{eqn:statement:Lipschitz_case_path}
      \Bigg [ r \sqrt{T} \| 1+ |\xi| \|_{L_p}  +  \|\xi^\br-\xi\|_{L_p} \\
&   \hspace{3em}          + \bigg\|  \int_t^T \Big|U^\br(u) - U(u) \Big|^\beta \od u\bigg\|_{L_p} \\
&   \hspace{3em}          +  \bigg\|  \bigg(\int_t^T  \Big| V^\br(u) - V(u)\Big|^{2\gamma} \od u \bigg)^{\frac{1}{2}}\bigg\|_{L_p}\Bigg ].
\end{align*}
Let $q<\infty$.
By relation \eqref{eqn:real_interpolation_via_decoupling} 
and an admissible $\kappa$ for \cref{definition:UV-Besov}
this implies 
\begin{align*}
      \left | X^{t,\xi} \right |_{\B_{p,q}^{\eta,*}}
& \le  c_\eqref{eqn:statement:Lipschitz_case_path} 
      \Big [   \| r \mapsto \cK_\eta(r) r  \|_{L_q([0,1],\mu)} 
                \sqrt{T} \| 1+ |\xi| \|_{L_p} \\ 
& \hspace*{12em}
              + c_\eqref{eqn:real_interpolation_via_decoupling}  \| \xi\|_{\B_{p,q}^\eta}
              + \| \cK_\eta \kappa \|_{L_q([0,1],\mu)} \Big ],
\end{align*}
where we use $\mu$ and $\cK_\theta$ as defined in 
\eqref{eqn:equivalence_mu} and \eqref{eqn:equivalence_K}.
To continue to estimate the RHS we observe that
\begin{enumerate}[(a)]
\item $\| 1+ |\xi| \|_{L_p}\le 1 +  \| \xi\|_{\B_{p,q}^\eta}$ by the definition of the Besov space,
\item $\| r \mapsto \cK_\eta(r) r  \|_{L_q([0,1],\mu)} < \infty$ by \eqref{eqn:equivalence_K}
\end{enumerate}
to conclude the proof. For $q=\infty$ the proof is the same,
even more direct as the function $\kappa$ is not needed.
\end{proof}

%%%%%%%%%%%%%%%%%%%%%%%%%%%%%%%%%%%%%%%%%%%%%%%%%%%%%%%%%%%%%%%%%%%%%%%%%%%%%%%%%%%%%%%%%%%%%%%%%%%%%%%%%%%%%%%%%%%%%
%%%%%%%%%%%%%%%%%%%%%%%%%%%%%%%%%%%%%%%%%%%%%%%%%%%%%%%%%%%%%%%%%%%%%%%%%%%%%%%%%%%%%%%%%%%%%%%%%%%%%%%%%%%%%%%%%%%%%

\section{A counterexample regarding H\"older continuous coefficients}
\label{sec:counterexample}

In \cite[Corollary 4.2]{Alos:Ewald:2008} it is shown that the square-root diffusion for the Heston stochastic
volatility model belongs to $\D_{1,2}$ although the diffusion coefficient for this diffusion is of form $c \sqrt{X_s}$.
In this section we construct an example where a diffusion with a H\"older continuous  diffusion coefficient does not belong
to $\D_{1,2}$.
\bigskip

For this section we fix $\theta \in (0,1)$. In \cref{statement:hoelder_and_not_D12} below we show that under the assumption that $\sigma$ is uniformly
$\theta$-H\"older continuous and bounded, and even when the drift $b$ vanishes, one cannot expect
that $X_T\in \D_{1,2}$. To construct our example we use
the methodology behind the  characterization of H\"older  functions due to
Ciesielski \cite{Ciesielski:60b}. To start with, for any $n\in\bN$ and $y\in\R^+$ we define
\[ r_n(y) := \sum^{\fz}_{i = 1}
             \left[\chf_{(\frac{2i - 2}{2^{n + 1}},\frac{2i - 1}{2^{n + 1}}]}(y) - \chf_{(\frac{2i - 1}{2^{n + 1}},\frac{2i}{2^{n + 1}}]}(y)\right].\]
For any $x \in \R$, let
        \begin{equation}\label{ce4}
                S_n^\theta(x) := 2^{-(n+1)(\theta - 1)} \int^{|x|}_{0} r_n(y)\,\od y.
        \end{equation}
We have
\begin{enumerate}
\item $\| S_n^\theta \|_{C(\R)} = 2^{-(n+1)\theta}\to 0$ as $n\to \infty$,
\item $| S_n^\theta |_\theta =1$,
\item $| S_n^\theta |_1 = 2^{(n+1)(1-\theta)}$.
\end{enumerate}
We fix $T=1$, choose a net $0=t_0<t_1<t_2 \uparrow 1$, define
$I_\ell:= (t_{\ell-1},t_\ell]$, and let
\[ \sigma^\theta(s,x) := 1 +  \sum_{\ell=1}^\infty \chf_{I_\ell}(s) S_{n_\ell}^\theta(x)
   \sptext{1}{for}{1} (s,x)\in [0,1)\times \R \]
and a fixed sequence $0 \le n_1 < n_2 < \cdots$.
With this construction we get that
\begin{enumerate}
\item $1 \le \sigma^\theta < 2$,
\item $\lim_{s\uparrow 1} \left [ \sup_{x\in \R} |\sigma^\theta(s,x) - 1| \right ]=0$ because $\lim_{\ell \to \infty} n_\ell = \infty$,
\item $\sigma^\theta:[0,1]\times \R\to \R$ is $(\cB([0,1])\otimes \cB(\R),\cB(\R))$-measurable if $\sigma^\theta(1,x):=1$,
\item $|\sigma^\theta(s,\cdot)|_\theta\le 1$ for all $s\in [0,1]$,
\item $|\sigma^\theta(s,\cdot)|_1 \le 2^{(n_\ell+1)(1-\theta)}$ for all $s\in [0,t_\ell]$ and $\ell\in \bN$.
\end{enumerate}
\bigskip

\begin{theo}
\label{statement:hoelder_and_not_D12}
There are sets of parameters $(t_\ell)_{\ell \in \bN}$ and $(n_\ell)_{\ell \in \bN}$
such that for all $x\in \R$ and  the unique strong solution $(X_s)_{s\in [0,1]}$ to the SDE
\[ X_s = x + \int_0^s \sigma^\theta(u,X_u) d W_u
   \sptext{1}{for}{1} s\in [0,1]\]
one has $X_1\not \in \D_{1,2}$.
\end{theo}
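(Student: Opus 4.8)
The plan is to use the coupling characterisation of $\D_{1,2}$ from \cref{statement:characterization_D12}. Since $1\le\sigma^\theta<2$, the solution has all moments finite, so $X_1\in L_2$ and the characterisation applies: $X_1\in\D_{1,2}$ if and only if $\sup_{r\in(0,1]}\|X_1-X_1^{\br}\|_{L_2}/r<\infty$. Here $X_1^{\br}$ is the isonormal coupling of the random variable $X_1$, and by \cref{statement:transference_sde} together with the deterministic-coefficient case of \eqref{eqn:SDE_random_coefficients_path_transformed} (where $(\sigma^\theta(u,X))^{\br}=\sigma^\theta(u,X^{\br})$ as in \cref{statement:controlled_diffusions}) it coincides with the solution of the same SDE driven by $W^{\br}=\sqrt{1-r^2}\,W+r\,W'$. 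It therefore suffices to produce a sequence $r_k\downarrow 0$ with $\|X_1-X_1^{\br_k}\|_{L_2}/r_k\to\infty$. Writing $Z_s:=X_s-X_s^{\br}$ and using $\od W_s^{\br}=\sqrt{1-r^2}\,\od W_s+r\,\od W_s'$ with $W\perp W'$, Itô's isometry gives
\[ \E|Z_1|^2 = \E\int_0^1 \bigl| \sigma^\theta(u,X_u) - \sqrt{1-r^2}\,\sigma^\theta(u,X_u^{\br}) \bigr|^2 \od u + r^2\, \E\int_0^1 |\sigma^\theta(u,X_u^{\br})|^2 \od u. \]
The second (``seed'') term is of order $r^2$ only, since $1\le\sigma^\theta<2$; by itself it yields a bounded ratio, so the divergence must come from an amplification of this seed by the oscillations of $\sigma^\theta$ accumulating as $s\uparrow 1$.

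The heart of the argument is a lower bound for the growth of $\E[Z_s^2]$ across a rough interval $I_\ell=(t_{\ell-1},t_\ell]$, on which $\sigma^\theta(s,\cdot)=1+S_{n_\ell}^\theta$. Put $\Lambda_\ell:=2^{(n_\ell+1)(1-\theta)}$, the (a.e.) modulus of the slope of the sawtooth $S_{n_\ell}^\theta$, whose oscillation period is of order $2^{-n_\ell}$. In the regime where $|Z_s|$ is small compared with $2^{-n_\ell}$, the two positions $X_s$ and $X_s^{\br}$ typically lie on a common linear piece of $S_{n_\ell}^\theta$, so that $\sigma^\theta(s,X_s)-\sigma^\theta(s,X_s^{\br})=\pm\Lambda_\ell Z_s$ and hence $|\sigma^\theta(s,X_s)-\sigma^\theta(s,X_s^{\br})|^2=\Lambda_\ell^2 Z_s^2$, \emph{independently of the sign}. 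Feeding this into the quadratic variation of $Z$ produces, on $I_\ell$, a differential inequality of the form $\frac{\od}{\od s}\E[Z_s^2]\ge c\,\Lambda_\ell^2\,\E[Z_s^2]$, and therefore a multiplicative amplification $\E[Z_{t_\ell}^2]\ge e^{c\Lambda_\ell^2|I_\ell|}\,\E[Z_{t_{\ell-1}}^2]$. Iterating from the seed one obtains $\E[Z_1^2]\ge c\,r^2\exp\bigl(c\sum_\ell\Lambda_\ell^2|I_\ell|\bigr)$, valid as long as the small-difference regime persists.

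It remains to choose the parameters so that the accumulated exponent diverges while $\sum_\ell|I_\ell|$ stays finite. Because $\theta<1$ we have $\Lambda_\ell=2^{(n_\ell+1)(1-\theta)}\to\infty$; taking for instance $n_\ell=\ell$ and $|I_\ell|=t_\ell-t_{\ell-1}$ of order $\Lambda_\ell^{-2}\ell^{-1}$, rescaled so that $\sum_\ell|I_\ell|=1$ (possible since $\sum_\ell\Lambda_\ell^{-2}\ell^{-1}<\infty$), yields $\sum_\ell\Lambda_\ell^2|I_\ell|=\sum_\ell\ell^{-1}=\infty$. With this choice the diffusion crosses of order $2^{n_\ell\theta}$ oscillation periods inside each $I_\ell$, which together with the near-uniform occupation of $X_s$ modulo the period justifies that the full slope $\Lambda_\ell$ is genuinely felt. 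Letting $r\downarrow 0$ increases the number $L(r)$ of intervals on which $|Z_s|$ stays below the period before saturating, so the effective exponent $\sum_{\ell\le L(r)}\Lambda_\ell^2|I_\ell|\to\infty$, whence $\|X_1-X_1^{\br}\|_{L_2}/r\to\infty$ and $X_1\notin\D_{1,2}$.

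The main obstacle is making the lower bound on the amplification rigorous while $\sigma^\theta$ is merely Hölder and not differentiable. Concretely one must: (i) replace the heuristic slope computation by an It\^o--Tanaka / local-time analysis of $|\sigma^\theta(s,X_s)-\sigma^\theta(s,X_s^{\br})|^2$; (ii) show that the occupation measure of $X_s$ modulo the period is quasi-uniform, so that the process is not trapped near the kinks of the sawtooth, using that it crosses many periods per interval; and (iii) control the saturation effect, since $\|Z_1\|_{L_2}$ is finite for each fixed $r$ and hence the multiplicative amplification can be sustained only while $|Z_s|$ stays below $2^{-n_\ell}$. Verifying that the exponent accumulated up to saturation still diverges as $r\downarrow 0$ is precisely where the quantitative choice of $(t_\ell)$ and $(n_\ell)$ is needed, and is the technically delicate point of the whole argument.
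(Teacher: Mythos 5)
Your reduction via \cref{statement:characterization_D12} to a lower bound on $\sup_{r}\|X_1-X_1^{\br}\|_{L_2}/r$, the identification of $X_1^{\br}$ with the solution driven by $W^{\br}$, and the It\^o-isometry decomposition of $\E|Z_1|^2$ are all correct, and this is a genuinely different route from the paper's. But the proposal has a genuine gap exactly where you flag it: the differential inequality $\tfrac{\od}{\od s}\E[Z_s^2]\ge c\,\Lambda_\ell^2\,\E[Z_s^2]$ (with $\Lambda_\ell:=2^{(n_\ell+1)(1-\theta)}$) is never established, and it is not a routine technicality. The event that $X_s$ and $X_s^{\br}$ lie on a common linear piece is \emph{negatively} correlated with $|Z_s|$ being large: straddling a kink becomes likely precisely on the part of the sample space where $Z_s^2$ carries most of its $L_2$ mass. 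Passing from \qq{straddling is unlikely for small $|Z_s|$} to $\E[Z_s^2\,\1_{\{\mathrm{same\ piece}\}}]\ge c\,\E[Z_s^2]$ therefore needs an anti-concentration or reverse-H\"older bound (e.g.\ $\E Z_s^4\le C(\E Z_s^2)^2$) for the coupled difference, which you neither have nor indicate how to get; your item (ii) about the occupation measure of $X_s$ does not address this joint-law problem. Moreover, with your choice of parameters the final divergence is only of size $L(r)^{c}$ with $L(r)\approx\log_2(1/r)$, so the conclusion is destroyed if the amplification constant degrades by more than a constant factor before saturation; hence the unproven steps (i)--(iii) are not peripheral refinements but the entire mathematical content. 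As written this is a plausible programme, not a proof.

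The paper avoids every one of these obstacles by arguing by \emph{contradiction inside Malliavin calculus} instead of estimating the coupling directly. Assuming $X_1\in\D_{1,2}$, the martingale identity $X_s=\E[X_1|\cF_s]$ and \eqref{eqn:expected_value_D12} give the uniform bound \eqref{eqn:DXb-DXa}, $\|D(X_b-X_a)\|_{L_2(\Omega\times[0,1])}\le 2\|DX_1\|_{L_2(\Omega\times[0,1])}$. For $t_{\ell-1}<a<b<t_\ell$ the identity of \cite[Lemma 1.3.4]{Nualart:06} combined with the chain rule yields
\[ \int_a^b \E|D_r(X_b-X_a)|^2\,\od r \ \ge\ (b-a)+\Lambda_\ell^2\int_a^b\!\!\int_a^s \E|D_r(X_s-X_a)|^2\,\od r\,\od s \ \ge\ (b-a)+\Lambda_\ell^2\,\frac{(b-a)^2}{2}. \]
Two features make this clean where your argument is hard. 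First, the kink problem disappears: since the law of $X_s$ is absolutely continuous (\cite[Theorem 2.3.1]{Nualart:06}), the a.e.-defined derivative $G(s,\cdot)$ of $\sigma^\theta(s,\cdot)$ satisfies $|G(s,X_s)|=\Lambda_\ell$ almost surely -- only one process at one time is evaluated, so no two-point straddling event ever occurs. Second, no accumulation across intervals, no saturation analysis, and no exponential amplification are needed: because $n_\ell$ may be chosen \emph{after} $t_\ell$, the one-step lower bound $\Lambda_\ell^2|I_\ell|^2/8$ already diverges along $\ell$, contradicting the uniform bound above. In short, your approach buys a direct quantitative statement about the coupling distance, but at the price of exactly the delicate two-process estimates that the paper's contradiction argument was designed to bypass; to salvage your route you would have to solve (i)--(iii) in full.
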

\smallskip

\begin{proof}
We do not yet fix the parameters $(t_\ell)_{\ell \in \bN}$ and $(n_\ell)_{\ell \in \bN}$.
\smallskip

(a) On each interval $[0,t_\ell]$, $\ell \in \bN$, the SDE has a unique strong solution. As
$|\sigma(s,x)| \le 2$ we can uniquely extend this solution to a continuous solution on $[0,1]$ with a possible
change of the original solution on a null-set.
\medskip

(b) From \cite[Theorem 2.3.1]{Nualart:06} we infer that
for $t\in (0,1]$ the random variable $X_t$ has a law that is absolutely
continuous with respect to the Lebesgue measure.
\medskip

(c) From now on we assume that $X_1\in \D_{1,2}$. For $s\in [0,1)$ the relations
$X_s = \E [X_1|\cF_s]$ a.s. and \eqref{eqn:expected_value_D12} imply
$\| D X_s \|_{L_2(\Omega\times [0,1])} \le \| D X_1 \|_{L_2(\Omega\times [0,1])}$ with
\[ D \sigma^\theta(s,X_s) = (DX_s) G(s,X_s) \]
where we take as generalized derivative of $\partial \sigma^\theta/\partial x$ the expression
\[ G(s,x) := 2^{(n+1)(1-\theta)} r_n(|x|) \sign_0(x) \]
if $s\in I_\ell$ (see \cite[Proposition 1.2.4 and the remark following it]{Nualart:06}).
\medskip

(d) For $0\le a < b <1$ we have
\begin{align}
     \| D (X_b - X_a) \|_{L_2(\Omega\times [0,1])}
&\le \| D X_b \|_{L_2(\Omega\times [0,1])} + \| D X_a \|_{L_2(\Omega\times [0,1])} \notag \\
&\le 2 \| D X_1 \|_{L_2(\Omega\times [0,1])}. \label{eqn:DXb-DXa}
\end{align}
Let $t_{\ell-1} < a < b < t_\ell$ for some $\ell\in \bN$. Then by
\cite[Lemma 1.3.4]{Nualart:06} we have
\begin{align}
     \int_a^b \E|D_r(X_b-X_a)|^2 \od r
& =  \int_a^b \E |\sigma^\theta(s,X_s)|^2 \od s + \int_a^b \int_a^s \E |D_r\sigma^\theta (s,X_s)|^2 \od r \od s \notag \\
&\ge (b-a) + 2^{2(n_\ell+1)(1-\theta)} \int_a^b \int_a^s \E |D_r (X_s-X_a)|^2 \od r \od s. \label{eqn:lower_bound_D12_Gronwall}
\end{align}
With
\[ h(s) := \int_a^s \E |D_r (X_s-X_a)|^2 \od r
   \sptext{1}{for}{1}
   s\in [a,b]\]
we get from \eqref{eqn:lower_bound_D12_Gronwall} that
\begin{align*}
h(b) & \ge (b-a) + 2^{2(n_\ell+1)(1-\theta)} \int_a^b h(s) \od s \\
     & \ge (b-a) + 2^{2(n_\ell+1)(1-\theta)} \int_a^b (s-a) \od s \\
     &  = (b-a) + 2^{2(n_\ell+1)(1-\theta)} \frac{(b-a)^2}{2}.
\end{align*}
Choosing $(b-a) = |I_\ell|/2$, we get
\[ \int_a^b \E |D_r (X_b-X_a)|^2 \od r \ge  2^{2(n_\ell+1)(1-\theta)} \frac{|I_\ell|^2}{8}.\]
For any chosen sequence $(t_\ell)_{\ell\in \bN}$ we find $(n_\ell)_{\ell\in \bN}$ such that
\[ \lim_{\ell \to \infty} 2^{2(n_\ell+1)(1-\theta)} \frac{|I_\ell|^2}{8}=\infty.\]
But this leads to a contradiction to \eqref{eqn:DXb-DXa}.
\end{proof}

%%%%%%%%%%%%%%%%%%%%%%%%%%%%%%%%%%%%%%%%%%%%%%%%%%%%%%%%%%%%%%%%%%%%%%%%%%%%%%%%%%%%%%%%%%%%%%%%%%%%%%%%%%%%%%%%%%%%%
%%%%%%%%%%%%%%%%%%%%%%%%%%%%%%%%%%%%%%%%%%%%%%%%%%%%%%%%%%%%%%%%%%%%%%%%%%%%%%%%%%%%%%%%%%%%%%%%%%%%%%%%%%%%%%%%%%%%%

\section{Coupling of SDEs - H\"older dispersion in dimension one}
\label{sec:Hoelder}

In this section we turn to the coupling of SDEs where the diffusion coefficient is only H\"older continuous.
By \cref{statement:hoelder_and_not_D12} we cannot expect to obtain the same results as in the setting of 
\cref{sec:Lipschitz}. Even more, the situation changes in a way that we have to restrict ourselves 
to coupling functions $\varphi=\1_{(a,c]}$. The main result of this section is
\cref{statement:holder:main}. This result will be applied to \cref{statement:BSDE_variation}
and finally to \cref{statement:variation_bsde_CIR} where we consider the CIR-process as forward
process in a decoupled forward backward stochastic differential equation (FBSDE).
\medskip 

Let us start with the conditions on the coefficients of the SDE for this
section:

\begin{assumption}
\label{ass:hoelder}
We assume $\theta\in \left [\frac{1}{2},1 \right )$ and
\begin{align*}
     b: & [t,T] \times \Oz \times \R  \to \R, \\
\sigma: & [t,T]            \times \R  \to \R,
\end{align*}
to be
$(\cP_{t,T} \otimes \cB(\R), \cB(\R))$- and
$(\cB([t,T]) \otimes \cB(\R), \cB(\R))$-measurable, such that
$b(t,\cdot,\cdot)\equiv 0$, $\sigma(t,\cdot)\equiv 0$, and
\begin{align}
 |b(s,\omega,x) - b(s,\omega,y)|     & \leq L_b |x-y|,\label{ass:1:ass:hoelder}\\
        |\sigma(s,x) - \sigma(s,y)| & \leq L_{\sigma} |x-y|^\theta, \label{ass:2:ass:hoelder} \\
          |b(s,\omega,x)| & \leq K_b (1 + |x|)\label{ass:3:ass:hoelder}, \\
                    |\sigma(s,x)| & \leq K_{\sz} (1 + |x|),\label{ass:4:ass:hoelder}
\end{align}
on $[t,T]\times \Omega\times \R$,  where $L_b,L_{\sigma},K_b,K_\sigma\ge 0$ are constants.
\end{assumption}
\medskip

Again for $p\in (0,\infty)$  we assume and fix a solution to
\begin{equation}\label{eqn:SDE_hoelder}
        X_s^{t,\xi}
        = \xi + \int_t^s b(u, X_u^{t,\xi})\,\od u
        + \int_t^s \sigma(u, X_u^{t,\xi})\,\od W_u
\end{equation}
such that
\begin{enumerate}
\item $\xi \in L_{p \vee 2}(\Omega,\cF_t,\P)$,
\item $(X^{t,\xi}_s)_{s\in [t,T]}$ is $(\cF_s)_{s\in [t,T]}$-adapted and path-wise continuous,
\item $\E \int_t^T |\sigma(u,X_u^{t,\xi})|^2  \od u + \E \int_t^T |b(u,X_u^{t,\xi})|^2  \od u <\infty$.
\end{enumerate}
\medskip
We use the coupling function $\varphi_{a,c} := \chf_{(a,c]}$ for $t\le a < c \le T$ and the notation
\[  X_s^{t,\xi,(a,c]} := X_s^{t,\xi,\1_{a,c}} \]
where the RHS was introduced in \cref {sec:Lipschitz}.

\begin{rema}
If $(b,\sigma)$ are non-random, bounded, and continuous, and if the law of $\xi$ has a compact support,
then \cite[Theorem IV.2.2]{Ikeda:Watanabe:2nd} yields to a weak solution
to \eqref{eqn:SDE_hoelder} (for weaker conditions see
\cite[Remark IV.2.1]{Ikeda:Watanabe:2nd}). By  \cite[Theorem IX.3.5(ii)]{Revuz:Yor:1999} we have path-wise uniqueness.
Combining this with \cite[Corollary 5.3.23]{Karatzas:Shreve:98} we get a strong solution to \eqref{eqn:SDE_hoelder}.
\end{rema}

The main theorem of this section is:
\medskip

\begin{theo}
\label{statement:holder:main_general}
Let $c\in [t,T]$ and  $\varphi:[0,T]\to [0,1]$ be a coupling function such that
\[ \varphi(s)=0 \sptext{1}{for}{1} s\in (c,T] \]
and where we assume that $(W^\varphi_s)_{s\in [0,T]}$ is a version such that $W_s^\varphi-W_c^\varphi=W_s-W_c$ for $s\in [c,T]$.
Suppose that $(X^{t,\xi}_s)_{s\in [t,T]}$ is a solution to SDE \eqref{eqn:SDE_hoelder}
with the coefficients satisfying \cref{ass:hoelder} and that $(X^{t,\xi,\varphi}_s)_{s\in [t,T]}$ is its $\varphi$-coupled
version. Let $p,q\in (0,\infty)$,
\begin{align*}
\xi & \in L_{p\vee 2}(\Omega,\cF_t,\P),\\
 \Lambda & := \int_c^T \left | b\left (s,X^{t,\xi,\varphi}_s\right ) - b^\varphi\left (s,X^{t,\xi,\varphi}_s\right )\right|\od s,\\
\Delta & := \sup_{s\in [t,c]} \left |X_s^{t,\xi} - X_s^{t,\xi,\varphi}\right |,  \\
p_\theta & := \min \left \{ 1,\frac{p}{3-2\theta} \right \}.
\end{align*}
Then the following holds:
\medskip
\begin{enumerate}[{\rm\bf (1)}]
\item \label{item:(0,1):statement:holder:sup:r:de_general}
      If \underline{$p\in (0,1)$}, then one has
      \[    \left\|\sup_{s\in [t,T]}\left|X^{t,\xi}_s - X^{t,\xi,\varphi}_s\right|\right\|_{L_p}
          \leq  \frac{e^{L_bT}}{p \sqrt[p]{1 - p}} \left \| \Delta + \Lambda  \right \|_{L_p}. \]
      \bigskip

\item \label{item:1:statement:holder:de_general}
      $\left\|X^{t,\xi}_T - X^{t,\xi,\varphi}_T\right\|_{L_1} \leq e^{L_bT} \| \Delta + \Lambda \|_{L_1}$.
      \bigskip

 \item \label{item:[1,2]:statement:holder:sup:r:decoupling_general}
       For \underline{$p\in [1,\infty)$} there is a
       $c_\eqref{eqn:item:[1,2]:statement:holder:main:ran_general}=c_\eqref{eqn:item:[1,2]:statement:holder:main:ran_general}(T,L_b,L_\sigma,\theta,p)>0$
       such that one has
       \begin{equation}\label{eqn:item:[1,2]:statement:holder:main:ran_general}
            \left\|\sup_{s\in [t,T]}\left|X^{t,\xi}_s - X^{t,\xi,\varphi}_s\right|\right\|_{L_p}
       \le  c_\eqref{eqn:item:[1,2]:statement:holder:main:ran_general} \left [  \left \|\Delta + \Lambda  \right \|_{L_p}
            + (\E |\Delta + \Lambda |^{p_\theta})^\frac{1}{p} \right ].
       \end{equation}
       \bigskip

\item \label{item:[1,2]:statement:holder:main:deter_general}
       If \underline{$p\in [1,3 - 2\tz)$} and $b: [0,T] \times \R  \to \R$, then
       one has
       \begin{equation}\label{eqn:item:[1,2]:statement:holder:main:deter_general}
              \left\|\sup_{s\in [t,T]}\left|X^{t,\xi}_s - X^{t,\xi,\varphi}_s\right|\right \|_{L_{p,\infty}}
         \le c_\eqref{eqn:item:[1,2]:statement:holder:main:deter_general} \left [ \| \Delta \|_{L_p} +  (\E \Delta)^{\frac{1}{p}} \right ],
        \end{equation}
       where
       $c_\eqref{eqn:item:[1,2]:statement:holder:main:deter_general}=c_\eqref{eqn:item:[1,2]:statement:holder:main:deter_general}(T,L_b,L_\sigma,\theta,p)>0$.

\end{enumerate}
\end{theo}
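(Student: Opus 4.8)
The plan is to exploit the hypothesis on $\varphi$: since $\varphi$ vanishes on $(c,T]$ and $W^\varphi_s-W^\varphi_c=W_s-W_c$ there, the two equations are driven by the \emph{same} noise on $[c,T]$. Writing $Y_s:=X^{t,\xi}_s-X^{t,\xi,\varphi}_s$ and using that $\sigma$ is non-random (so $\sigma^\varphi=\sigma$), for $s\in[c,T]$ one has
\begin{align*}
Y_s &= Y_c + \int_c^s\big[b(u,X^{t,\xi}_u)-b^\varphi(u,X^{t,\xi,\varphi}_u)\big]\,\od u \\
    &\quad + \int_c^s\big[\sigma(u,X^{t,\xi}_u)-\sigma(u,X^{t,\xi,\varphi}_u)\big]\,\od W_u .
\end{align*}
Splitting the drift into $b(u,X^{t,\xi}_u)-b(u,X^{t,\xi,\varphi}_u)$ and $b(u,X^{t,\xi,\varphi}_u)-b^\varphi(u,X^{t,\xi,\varphi}_u)$ and invoking \eqref{ass:1:ass:hoelder} and \eqref{ass:2:ass:hoelder}, the drift is dominated by $L_b|Y_u|\,\od u+\od\Lambda_u$ and the diffusion coefficient by $L_\sigma|Y_u|^\theta$, where $\Lambda_s:=\int_c^s|b(u,X^{t,\xi,\varphi}_u)-b^\varphi(u,X^{t,\xi,\varphi}_u)|\,\od u$ is adapted, non-decreasing and satisfies $\Lambda_T=\Lambda$, while $|Y_c|\le\Delta$. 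Since $\sup_{s\in[t,T]}|Y_s|=\max\{\Delta,\sup_{s\in[c,T]}|Y_s|\}$, everything reduces to estimating $\sup_{s\in[c,T]}|Y_s|$ for this scalar equation with Lipschitz-type drift, additional finite-variation drift $\od\Lambda$, and merely $\theta$-H\"older diffusion.

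For assertions (1) and (2) I would use the Yamada--Watanabe regularization. Choose smooth $\phi_n\uparrow|\cdot|$ with $|\phi_n'|\le1$ and $0\le\phi_n''(x)\le\frac{2}{n}|x|^{-2\theta}$. Applying It\^o's formula to $\phi_n(Y_s)$, the second-order term is at most $\frac{L_\sigma^2}{n}(s-c)\to0$ as $n\to\infty$, precisely because $\theta\ge\frac12$ makes $\phi_n''\,|Y|^{2\theta}$ integrable; letting $n\to\infty$ shows that $|Y_s|$ is dominated by $\Delta+\Lambda_s+L_b\int_c^s|Y_u|\,\od u$ plus a martingale. The substitution $A_s:=e^{-L_b(s-c)}|Y_s|$ removes the linear drift and produces the factor $e^{L_bT}$; taking expectations (the martingale drops out) and applying Gronwall gives assertion (2), whereas Lenglart's inequality (\cref{statement:Lenglart}) applied to $A_s$ against the non-decreasing $C_s:=\Delta+\Lambda_s$ with $q=p\in(0,1)$ gives assertion (1), the constant $\frac{1}{p\,\sqrt[p]{1-p}}$ originating from the factor $\frac{q^{-q}}{1-q}$ in \eqref{eqn:statement:Lenglart}.

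For \eqref{eqn:item:[1,2]:statement:holder:main:ran_general} with $p\ge1$ the regularization no longer linearizes the diffusion, and I would instead propagate the $p$-th moment. By \eqref{eqn:BDG} (and It\^o's formula for $|Y|^p$ when $p\ge2$) the stochastic integral contributes, after Minkowski's inequality, a term governed by $\int_c^s\|Y_u\|_{L_{p\theta}}^{2\theta}\,\od u$. One checks $p_\theta\le p\theta\le p$, so Lyapunov's (log-convexity) inequality interpolates $\|Y_u\|_{L_{p\theta}}$ between the $p_\theta$-th moment, controlled by assertion (1) through $\E(\Delta+\Lambda)^{p_\theta}$, and the running $p$-th moment $\E\sup_{v\in[c,u]}|Y_v|^p$. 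Because $\theta<1$ the exponent on the latter is strictly below $1$, so the resulting Bihari-type integral inequality closes and returns $\E\sup_{s\in[c,T]}|Y_s|^p\lesssim\E(\Delta+\Lambda)^p+\E(\Delta+\Lambda)^{p_\theta}$, the value $p_\theta=\min\{1,p/(3-2\theta)\}$ being exactly the one balancing the sub-linear nonlinearity. Taking $p$-th roots yields \eqref{eqn:item:[1,2]:statement:holder:main:ran_general}.

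Finally, for \eqref{eqn:item:[1,2]:statement:holder:main:deter_general} the drift is non-random, so $b^\varphi=b$ and $\Lambda\equiv0$, and one sits in the critical range $p<3-2\theta$ where the strong estimate degrades to a weak-type one. I would localize at height $\lambda$ by $\tau_\lambda:=\inf\{s\ge c:|Y_s|\ge\lambda\}$; on $[c,\tau_\lambda]$ one has $|Y|\le\lambda$, hence $|Y|^{2\theta}\le\lambda^{2\theta-1}|Y|$ linearizes the H\"older term. Bounding $\lambda\,\oP(\sup|Y|>\lambda)$ by the $L_1$-estimate of assertion (2) for small $\lambda$, and $\lambda^p\,\oP(\sup|Y|>\lambda)$ by a stopped higher-moment estimate for large $\lambda$ (which decays precisely because $p<3-2\theta$), and optimizing over $\lambda$, gives $\sup_{\lambda>0}\lambda^p\,\oP(\sup_{s\in[t,T]}|Y_s|>\lambda)\lesssim\E\Delta^p+\E\Delta$, i.e. \eqref{eqn:item:[1,2]:statement:holder:main:deter_general}. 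The crux throughout is the sub-Lipschitz diffusion: it forces the Yamada--Watanabe regularization for the low moments and a genuinely nonlinear Gronwall for $p\ge1$, and it is the critical band $p<3-2\theta$, where only weak-type control persists, that is the most delicate point.
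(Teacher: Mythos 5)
Your reduction to $[c,T]$ (same driving noise there, drift split into a Lipschitz part plus $\od\Lambda$, diffusion dominated by $L_\sigma|Y|^\theta$, initial error $|Y_c|\le\Delta$) and your proofs of assertions (1) and (2) are essentially the paper's: its \cref{statement:basic_equality_modulus_D} is exactly your Yamada--Watanabe step, assertion (2) follows by taking expectations plus Gronwall, and assertion (1) is deduced from it by Lenglart with $q=p$, which is where the constant $p^{-1}(1-p)^{-1/p}$ comes from. One point you leave implicit and the paper verifies explicitly is that the stopped stochastic integrals are true martingales; this uses the distributional identities \eqref{eqn:distribution_b}, \eqref{eqn:distribution_sigma} and \cref{statement:Xt_vs_starting_value_path} to get $\left(\int_c^T|\sigma_s|^2\od s\right)^{1/2}+\int_c^T|b_s|\od s+\Gamma_T\in L_{p\vee2}$.

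For assertion (3) your route is genuinely different from the paper's, which uses the Gy\"ongy--R\'asonyi splitting lemma (\cref{statement:GR}) together with the $L_1$-estimate and Gronwall. Your exponent bookkeeping is in fact exact: with $\frac{1}{\theta p}=\frac{1-\lambda}{p_\theta}+\frac{\lambda}{p}$ one checks $\theta(1-\lambda)p=p_\theta(1-\theta\lambda)$, so after Young's inequality the absorbed term is precisely $(\E\Gamma^{p_\theta})^{1}$, and for $p\ge2$ this interpolation argument is a clean alternative. But for $p\in[1,2)$ your first step fails: Minkowski's integral inequality $\bigl\|\int|Y_u|^{2\theta}\od u\bigr\|_{L_{p/2}}\le\int\bigl\||Y_u|^{2\theta}\bigr\|_{L_{p/2}}\od u$ requires $p/2\ge1$, and $L_r$ with $r<1$ is not subadditive. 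This is exactly the range the paper covers with \cref{statement:GR}(2) (valid for $p\ge 3-2\theta$) followed by Lenglart downscaling from the borderline exponent $\overline{p}_\theta=3-2\theta$ to reach $p\in[1,3-2\theta)$; note that downscaling from $p=2$ instead would produce the exponent $p/2<p_\theta$ on $\Gamma$ and lose the sharp rate, so the choice of $3-2\theta$ as the pivot matters.

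For assertion (4) there is a genuine gap. Any unconditional small-$\lambda$/large-$\lambda$ splitting produces, in the large-$\lambda$ regime, a stopped moment of order $2$ or $3-2\theta>p$, hence a coefficient $\E\Delta^{2}$ or $\E\Delta^{3-2\theta}$; these are not dominated by $\E\Delta^p+\E\Delta$ (take $\Delta$ with $\E\Delta^p+\E\Delta<\infty$ but $\E\Delta^{3-2\theta}=\infty$), so optimizing a single deterministic $\lambda$ cannot yield \eqref{eqn:item:[1,2]:statement:holder:main:deter_general}. The paper's fix is to work conditionally on $\cF_c$, pointwise in $|A|=|Y_c|$: \cref{statement:statement:pre_lemma_hoelder_b=0} interpolates the \emph{conditional} moment bounds at exponents $1$ and $3-2\theta$ to get $(\E[|D_\tau|^p|\cF_c])^{1/p}\le c\,[|A|+|A|^{1/p}]$, which works because the inequality $|A|^{1-\eta}[|A|^{3-2\theta}+|A|]^{\eta/(3-2\theta)}\le|A|+|A|^{1/p}$ holds pointwise in $\omega$; only then does it run the stopping-time argument $\lambda^p\,\P_B(\sup|D|>\lambda)\le\int_B|D_{\tau_\lambda}|^p\od\P_B$. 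Your linearization $|Y|^{2\theta}\le\lambda^{2\theta-1}|Y|$ on $[c,\tau_\lambda]$ can be salvaged along the same lines, but only if you keep the estimates conditional and split pointwise on $\{|A|\le\lambda\}$; as written, the argument does not close.
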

\bigskip

\subsection{Proof of \cref{statement:holder:main_general}}\ \medskip

We start with some lemmas. Inspecting the proof of \cite[Lemma 3.2]{Gyongy:Rasonyi:11} the following estimates were proven:

\begin{lemm}
\label{statement:GR}
For $c\in [t,T)$, a measurable function $D:[c,T]\to \R$, and $s\in [c,T]$ one has:
\begin{enumerate}[{\rm(1)}]
\item If $1\le \rho \le q \le p<\infty$, then
      \[ \left ( \int_c^s |D_u|^\rho \od u\right )^\frac{p}{q}
         \le (s-c)^{\frac{p}{q}-1}\int_c^s [|D_u|+ |D_u|^p]\od u.\]
\item If $1<p<q<\infty$ and $q-p+1 \le \rho \le q$, then
      \[ \left ( \int_c^s |D_u|^\rho \od u\right )^\frac{p}{q}
     \le \left ( \frac{1}{K} \right )^\alpha |D_s^*|^p + K^\beta \int_c^s [[D_u|+ |D_u|^p]\od u\]
     for $K>0$, $\alpha:= \frac{q}{q-p}$, $1=\frac{1}{\alpha}+\frac{1}{\beta}$, and with
     $D_s^* := \sup_{u\in [c ,s]} |D_u|$.
\end{enumerate}
\end{lemm}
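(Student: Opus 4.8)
The plan is to prove both inequalities pointwise in $s\in [c,T]$ by elementary real-variable manipulations, the common ingredient being the scalar inequality
\[ x^a \le x + x^p \sptext{1}{for}{1} x\ge 0 \sptext{.5}{and}{.5} 1\le a\le p, \]
which holds since $x^a\le x$ when $x\le 1$ and $x^a\le x^p$ when $x\ge 1$. I would record this first and use it repeatedly to replace an intermediate power $|D_u|^a$ by the admissible integrand $|D_u|+|D_u|^p$ whenever the exponent $a$ lies in $[1,p]$.

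For part (1), since $q\le p$ the outer exponent satisfies $p/q\ge 1$, so $x\mapsto x^{p/q}$ is convex. I would pass to the normalized measure $\od\nu := (s-c)^{-1}\od u$ on $[c,s]$ and apply Jensen's inequality to obtain
\[ \Big(\int_c^s |D_u|^\rho\,\od u\Big)^{\frac{p}{q}} = (s-c)^{\frac{p}{q}}\Big(\int |D_u|^\rho\,\od\nu\Big)^{\frac{p}{q}} \le (s-c)^{\frac{p}{q}-1}\int_c^s |D_u|^{\rho p/q}\,\od u. \]
Then I would check that $1\le \rho p/q\le p$, using $\rho\ge 1$ together with $p/q\ge 1$ for the lower bound and $\rho\le q$ for the upper bound, and apply the scalar inequality with $a=\rho p/q$ to bound $|D_u|^{\rho p/q}$ by $|D_u|+|D_u|^p$, which yields the claim.

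For part (2) the exponent $p/q<1$, so convexity is unavailable; instead I would extract a power of the running supremum $D_s^*$ and close with Young's inequality. The decisive bookkeeping choice is $\sigma := \rho-(q-p)$: the hypothesis $q-p+1\le\rho\le q$ together with $p<q$ forces $1\le\sigma\le p$, and the factorization $|D_u|^\rho=|D_u|^{q-p}|D_u|^\sigma\le (D_s^*)^{q-p}|D_u|^\sigma$ gives, after raising to $p/q$,
\[ \Big(\int_c^s|D_u|^\rho\,\od u\Big)^{\frac{p}{q}} \le |D_s^*|^{\frac{(q-p)p}{q}}\Big(\int_c^s |D_u|^\sigma\,\od u\Big)^{\frac{p}{q}}. \]
Writing $A:=|D_s^*|^{(q-p)p/q}$ and $B:=\big(\int_c^s|D_u|^\sigma\,\od u\big)^{p/q}$, I would apply Young's inequality in the form $AB\le K^{-\alpha}\alpha^{-1}A^\alpha + K^{\beta}\beta^{-1}B^\beta$ with the stated $\alpha=q/(q-p)$ and $\beta=q/p$ (so that $1/\alpha+1/\beta=1$). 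These exponents are tuned precisely so that $A^\alpha=|D_s^*|^p$ and $(p/q)\beta=1$, whence $B^\beta=\int_c^s|D_u|^\sigma\,\od u$; a final use of the scalar inequality with $a=\sigma$ bounds this by $\int_c^s[|D_u|+|D_u|^p]\,\od u$, and dropping the harmless factors $\alpha^{-1},\beta^{-1}\le 1$ (as $\alpha,\beta>1$) produces the asserted inequality.

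The only genuine obstacle is the exponent bookkeeping in part (2): one must verify simultaneously that $\sigma=\rho-(q-p)\in[1,p]$, that $A^\alpha$ reproduces exactly $|D_s^*|^p$, and that $(p/q)\beta=1$ so $B^\beta$ is again a first-power integral. All three are immediate consequences of the relations $\alpha=q/(q-p)$, $\beta=q/p$, and $1/\alpha+1/\beta=1$, so once these identities are lined up the estimate drops out with no further work.
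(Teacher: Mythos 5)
Your proof is correct: the scalar inequality $x^a\le x+x^p$ for $1\le a\le p$ is verified properly, the Jensen step in (1) is legitimate since $p/q\ge 1$, and in (2) the exponent bookkeeping checks out exactly as you say ($\sigma=\rho-(q-p)\in[1,p]$, $A^\alpha=|D_s^*|^p$, $B^\beta=\int_c^s|D_u|^\sigma\,\od u$, and the factors $\alpha^{-1},\beta^{-1}\le 1$ can be discarded). Note that the paper itself gives no proof of \cref{statement:GR}: it simply asserts that these estimates appear upon inspecting the proof of Lemma 3.2 in \cite{Gyongy:Rasonyi:11}. Your argument is a clean, self-contained version of precisely that standard route (H\"older/Jensen to handle the superunitary outer exponent, extraction of the running supremum plus Young's inequality with conjugate exponents $q/(q-p)$ and $q/p$ for the subunitary one), so it supplies what the paper leaves to the reference rather than deviating from it.
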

\bigskip

The next statement is a relative of \cite[page 390, 4th line from below]{Revuz:Yor:1999}:

\begin{lemm}
\label{statement:basic_equality_modulus_D}
For $c\in [t,T)$ assume $\cP_{c,T}$-measurable processes $(\sigma_s)_{s\in [c,T]}$ and
$(b_s)_{s\in [c,T]}$ such that
\begin{enumerate}[{\rm (1)}]
\item $\E \int_c^T [ |b_s|+|\sigma_s|^2] \od s<\infty$,
\item $A\in L_2(\Omega,\cF_c,\P)$,
\item a path-wise continuous adapted process $(D_s)_{t\in [c,T]}$ such that
      \[ D_s = A +\int_c^s \sigma_u \od W_u + \int_c^s b_u \od u
         \sptext{1}{for}{1} s\in [c,T] \mbox{ a.s.},\]
\item $|\sigma| \le L_\sigma |D|^\theta$  on $[c,T]\times \Omega$ for some
      $L_\sigma\ge 0$ {\rm (}recall that $\theta\in [1/2,1)${\rm)}.
\end{enumerate}
Then one has
\[ |D_s| = |A| + \int_c^s \sign_0(D_u) \od D_u  \sptext{1}{for}{1} s\in [c,T] \mbox{ a.s.}\]
\end{lemm}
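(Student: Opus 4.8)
The plan is to view $D=(D_s)_{s\in[c,T]}$ as a continuous semimartingale and apply the It\^o--Tanaka formula for the convex function $x\mapsto|x|$; the whole content of the lemma is that the local time of $D$ at level $0$ vanishes, and this is exactly where the hypothesis $\theta\ge 1/2$ enters. First I would record that, by (1)--(3), the process $D_s=A+\int_c^s\sigma_u\,\od W_u+\int_c^s b_u\,\od u$ is a continuous semimartingale on $[c,T]$ with $D_c=A$ and quadratic variation $\langle D\rangle_s=\int_c^s\sigma_u^2\,\od u$. The only nontrivial structural input is hypothesis (4), which I rewrite as $\sigma_u^2\le L_\sigma^2|D_u|^{2\theta}$ on $[c,T]\times\Omega$; note in particular that $\sigma_u=0$ whenever $D_u=0$.

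Next I would fix the jointly measurable family of local times $(L_s^a)_{a\in\R,\,s\in[c,T]}$ of $D$ chosen right-continuous in $a$ with left limits, and use the \emph{symmetric} Tanaka formula
\[
|D_s|=|A|+\int_c^s\sign_0(D_u)\,\od D_u+\widehat L_s^0,\qquad \widehat L_s^0:=\tfrac12\big(L_s^0+L_s^{0-}\big),
\]
which holds for an arbitrary continuous semimartingale. This is what matches the symmetric convention $\sign_0(0)=0$: starting from the usual formula $|D_s|=|A|+\int_c^s\mathrm{sgn}(D_u)\,\od D_u+L_s^0$ with $\mathrm{sgn}(0)=-1$, the spatial jump relation $L_s^0-L_s^{0-}=2\int_c^s\1_{\{D_u=0\}}\,\od D_u$ together with the pointwise identity $\mathrm{sgn}(x)+\1_{\{x=0\}}=\sign_0(x)$ reorganises the correction terms into the displayed form (all of this is classical, see \cite[Chapter VI]{Revuz:Yor:1999}). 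Thus it suffices to prove $\widehat L_s^0=0$, i.e.\ $L_s^0=L_s^{0-}=0$ almost surely.

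Finally I would annihilate the local time by the occupation times formula. Applied with $g(a)=|a|^{-2\theta}\1_{(0,\infty)}(a)$ and using $\sigma_u^2\le L_\sigma^2|D_u|^{2\theta}$ it gives
\[
\int_0^\infty a^{-2\theta}L_s^a\,\od a=\int_c^s\1_{\{D_u>0\}}\frac{\sigma_u^2}{D_u^{2\theta}}\,\od u\le L_\sigma^2\,(s-c)<\infty.
\]
Since $\theta\ge 1/2$ we have $2\theta\ge 1$, so $\int_0^\varepsilon a^{-2\theta}\,\od a=\infty$ for every $\varepsilon>0$; by the right-continuity of $a\mapsto L_s^a$ a strictly positive value of $L_s^0=\lim_{a\downarrow 0}L_s^a$ would make the left-hand side diverge, whence $L_s^0=0$ a.s. Running the same computation with $g(a)=|a|^{-2\theta}\1_{(-\infty,0)}(a)$ controls $\int_{-\infty}^0|a|^{-2\theta}L_s^a\,\od a$ and yields $L_s^{0-}=\lim_{a\uparrow 0}L_s^a=0$ a.s. Hence $\widehat L_s^0=0$, and since local time is non-decreasing in $s$ and both sides of the asserted identity are continuous in $s$, the identity holds simultaneously for all $s\in[c,T]$ almost surely. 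I expect the main obstacle to be purely bookkeeping: pinning down the correct sign convention so that $\sign_0$ (and not $\mathrm{sgn}$) appears, which forces the use of the symmetric local time and the spatial jump formula; the occupation-time estimate that actually exploits $\theta\ge 1/2$ is then short.
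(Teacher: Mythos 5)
Your proof is correct, but it takes a genuinely different route from the paper. The paper proves the identity by the Yamada--Watanabe smoothing technique (as presented in the proof of Proposition 2.2 of Gy\"ongy--R\'asonyi, which it cites): one builds $C_b^2$ approximations $\Phi_n$ of $|\cdot|$ whose second derivatives $v_n$ are supported in $[\frac{1}{n^2},\frac{1}{n}]$ and bounded by $\frac{2}{|z|\ln n}$, applies It\^o's formula, and kills the second-order term via
$\int_c^s v_n(|D_u|)\sigma_u^2\,\od u \le \frac{2L_\sigma^2}{\ln n}(s-c)\to 0$,
where $\theta\ge\frac12$ enters through the bound $|D_u|^{2\theta-1}\le 1$ on the support of $v_n$; the remaining terms converge in $L_2$ using the integrability hypotheses. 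You instead invoke the It\^o--Tanaka formula and reduce the claim to the vanishing of the symmetric local time of $D$ at zero, which you obtain from the occupation times formula: the bound $\int_0^\infty a^{-2\theta}L_s^a\,\od a\le L_\sigma^2(s-c)$ combined with the non-integrability of $a^{-2\theta}$ at $0$ (here $\theta\ge\frac12$ enters) and the regularity of $a\mapsto L_s^a$ forces $L_s^0=L_s^{0-}=0$. Your sign bookkeeping (symmetric local time matching $\sign_0$, and the spatial jump relation, whose martingale contribution vanishes since $\sigma=0$ on $\{D=0\}$) is handled correctly. What each approach buys: yours is shorter and conceptually sharper --- it identifies the obstruction as the local time at zero and needs no $L_2$ integrability, only the semimartingale property --- but it imports the local-time machinery of Revuz--Yor Chapter VI; the paper's argument is self-contained elementary It\^o calculus and reuses exactly the smoothing device that recurs elsewhere in the paper (e.g.\ in the proof of \cref{statement:holder:sup:lowbd}), which is presumably why the authors chose it.
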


\begin{proof}
For $n\ge 2$ we find continuous functions $v_n:\R\to [0,\infty)$ such that
$\{x\in \R: v_n(x)\not = 0\} \subseteq [\frac{1}{n^2},\frac{1}{n}]$, $|v_n(z)|\le \frac{2}{|z| \ln n}$ for
$z\in [\frac{1}{n^2},\frac{1}{n}]$, and such that $\int_\R v(z) \od z=1$
(note that $\int_{1/n^2}^{1/n} \frac{\od z}{z \ln n} =1$), which is the Yamada-Watanabe method as presented
in \cite[proof of Proposition 2.2]{Gyongy:Rasonyi:11}.
We let $\Phi_n(x) := \int_0^{|x|} \int_0^y v_n(z) \od z \od y$ so that
$\Phi_n \in C_b^2(\R)$, $\lim_{n\to\infty}\Phi_n(x)=|x|$, $\lim_{n\to \infty} \Phi'_n=\sign_0(x)$, and
$|\Phi_n'(x)| \le 1$. By It\^o's formula,
\[ \Phi_n(D_s) =\Phi_n(D_c) +\int_c^s \Phi_n'(D_u) \od D_u + \frac{1}{2} \int_c^s \Phi_n''(D_u) \sigma_u^2 \od u \mbox{ a.s.}\]
The first three terms converge in $L_2$ to $|D_s]$, $|D_c|$, and $\int_c^s \sign_0(D_u) \od D_u$. For the last term we observe
$\Phi_n''(z)=v_n(|z|)$ and
\begin{multline*}
      \int _c^s \Phi_n''(D_u) \sigma_u^2 \od u
     =  \int _c^s v_n(|D_u|) \sigma_u^2 \od u\\
    \le \frac{2}{\ln n} \int_c^s \chf_{\left \{ |D_u|\in \left [\frac{1}{n^2},\frac{1}{n}\right ] \right \}} |D_u|^{-1} L^2_\sigma |D_u|^{2 \theta} \od u
    \le \frac{2 L^2_\sigma}{\ln n} (s-c) \to 0 \sptext{1}{as}{1} n\to\infty.
\end{multline*}
This proves our statement.
\end{proof}

\begin{lemm}\label{statement:basic_inequality_modulus_D_as}
Assume the conditions and notation of \cref{statement:basic_equality_modulus_D}, and further that
\[ |b| \le L_b |D| + \beta \sptext{1}{on}{1} [c,T]\times \Omega,\]
where $\beta=(\beta_s)_{s\in [c,T|}$ is $\cP_{c,T}$-measurable and non-negative. For $s\in [c,T]$ let
\[ \Gamma_s := |A| + \int_c^s |\sign_0(D_u)| \beta_u \od u
   \sptext{1}{and}{1}
   M_s := \int_c^s \sign_0(D_u) \sigma_u  \od W_u,\]
where we assume a path-wise continuous version of $(M_s)_{s\in [c,T]}$.
Then, for any stopping time $\tau:\Omega \to [c,T]$, one has
\[ D^*_\tau \le e^{L_b(T-c)} [ \Gamma_\tau + M^*_\tau] \mbox{ a.s.} \]
with $D_s^* := \sup_{u\in [c,s]} |D_u|$ and $M_s^* := \sup_{u\in [c ,s]} |M_u|$.
\end{lemm}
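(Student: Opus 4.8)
The plan is to start from the pathwise identity furnished by \cref{statement:basic_equality_modulus_D}, namely $|D_s| = |A| + \int_c^s \sign_0(D_u)\,\od D_u$ a.s. Since $D$ is the semimartingale with $\od D_u = \sigma_u\,\od W_u + b_u\,\od u$, this splits as
\[ |D_s| = |A| + M_s + \int_c^s \sign_0(D_u)\,b_u\,\od u, \]
where $M_s = \int_c^s \sign_0(D_u)\sigma_u\,\od W_u$ is exactly the martingale in the statement (well defined since $|\sign_0|\le 1$ and $\E\int_c^T|\sigma_u|^2\,\od u<\infty$). So everything reduces to controlling the drift integral and then running Gronwall.

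For the drift term I would use the hypothesis $|b_u| \le L_b|D_u| + \beta_u$ together with the elementary fact that $\sign_0$ vanishes at the origin. On $\{D_u\neq 0\}$ one has $\sign_0(D_u)b_u \le |b_u| \le L_b|D_u| + \beta_u$ and $|\sign_0(D_u)| = 1$, while on $\{D_u = 0\}$ both $\sign_0(D_u)b_u$ and $L_b|D_u| + |\sign_0(D_u)|\beta_u$ vanish. Hence in all cases
\[ \sign_0(D_u)\,b_u \le L_b|D_u| + |\sign_0(D_u)|\,\beta_u, \]
which is precisely the bound that upgrades the constant-plus-martingale part to $\Gamma_s$ rather than the cruder $|A| + \int_c^s \beta_u\,\od u$. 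Integrating yields, a.s.\ and simultaneously for all $s\in[c,T]$ by continuity of the paths,
\[ |D_s| \le \Gamma_s + M_s + L_b\int_c^s |D_u|\,\od u. \]

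Finally I would pass to the running supremum. Because $\Gamma$ is non-decreasing (as $\beta\ge 0$) and $M_s \le M_s^*$, taking $D_s^* = \sup_{u\in[c,s]}|D_u|$ gives $D_s^* \le \Gamma_s + M_s^* + L_b\int_c^s D_u^*\,\od u$, in which the forcing term $a(s) := \Gamma_s + M_s^*$ is non-decreasing. Applying the deterministic Gronwall lemma pathwise (for a non-decreasing forcing term it produces $D_s^* \le a(s)\,e^{L_b(s-c)}$) and then substituting $s = \tau(\omega)$ into this pathwise inequality, using $e^{L_b(\tau-c)} \le e^{L_b(T-c)}$, yields
\[ D_\tau^* \le e^{L_b(T-c)}\,[\Gamma_\tau + M_\tau^*] \quad \mbox{a.s.}, \]
which is the claim. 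The only step requiring genuine care is the drift estimate: ensuring the $\beta$-contribution carries the factor $|\sign_0(D_u)|$ so that it reassembles into $\Gamma$ depends entirely on $\sign_0(0)=0$. The rest is the standard comparison-plus-Gronwall argument, and the stopping-time conclusion follows simply from invoking Gronwall pathwise \emph{before} evaluating at $\tau$.
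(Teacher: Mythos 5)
Your proof is correct and follows essentially the same route as the paper's: the pathwise identity from \cref{statement:basic_equality_modulus_D}, the drift estimate $\sign_0(D_u)\,b_u \le L_b|D_u| + |\sign_0(D_u)|\,\beta_u$ (which is exactly what keeps the $\beta$-contribution inside $\Gamma$), and a pathwise Gronwall argument with the non-decreasing forcing $\Gamma_s + M_s^*$. The only, immaterial, difference is the handling of the stopping time: the paper first stops the process and applies Gronwall to $v\mapsto |D_{\tau\wedge v}|$, then takes the supremum using the monotonicity of the right-hand side, whereas you run Gronwall on the running supremum $D_s^*$ simultaneously for all $s$ and only afterwards evaluate the resulting pathwise inequality at $s=\tau$.
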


\begin{proof}
From  \cref{statement:basic_equality_modulus_D} we know that there is an $\Omega_0\in \cF$
with $\P(\Omega_0)=1$ such that we have for all $c \le v \le s \le T$ and $\omega\in \Omega_0$ that
\begin{align*}
     |D_{\tau\wedge v}(\omega)|
&\le \Gamma_{\tau\wedge v}(\omega) + L_b \int_c^{\tau(\omega)\wedge v} |D_u(\omega)| \od u + M^*_{\tau\wedge v}(\omega) \\
&\le \Gamma_{\tau\wedge v}(\omega) + L_b \int_c^v |D_{\tau(\omega)\wedge u}(\omega)| \od u + M^*_{\tau\wedge v}(\omega) \\
&\le \Gamma_{\tau\wedge s}(\omega) + L_b \int_c^v |D_{\tau(\omega)\wedge u}(\omega)| \od u + M^*_{\tau\wedge s}(\omega).
\end{align*}
As the function
$[c,T]\ni u \mapsto |D_{\tau(\omega)\wedge u}(\omega)|$ is continuous we can apply Gronwall's lemma to get
\[ |D_{\tau\wedge s}(\omega)| \le e^{L(s-c)} \left [  \Gamma_{\tau\wedge s}(\omega) + M^*_{\tau\wedge s}(\omega) \right ] \]
regardless whether $\Gamma_{\tau\wedge s}(\omega)$ is finite or not.
As the RHS is non-decreasing in $s$, we also have
\[ D^*_{\tau\wedge s}(\omega) \le e^{L(s-c)} \left [  \Gamma_{\tau\wedge s}(\omega) + M^*_{\tau\wedge s}(\omega) \right ].\]
Setting $s=T$ gives our statement.
\end{proof}
\bigskip

\begin{lemm}
\label{statement:pre_lemma_hoelder}
Let $p\in (0,\infty)$ and $p_\theta := \min \left \{ 1,\frac{p}{3-2\theta} \right \}$,
assume the  assumptions and notation from \cref{statement:basic_equality_modulus_D} and \cref{statement:basic_inequality_modulus_D_as},
$A\in L_{p\vee 2}(\Omega,\cF_c,\P)$, and
\[ \left ( \int_c^T |\sigma_s|^2 \od s \right )^\frac{1}{2} + \int_c^T |b_s|\od s  +\Gamma_T\in L_{p\vee 2}(\Omega,\cF,\P).\]
Then, for any stopping time $\tau:\Omega\to [c,T]$, one has the following:
\begin{enumerate}[{\rm (1)}]
\item \label{item:(0,1)_sup:statement:pre_lemma_hoelder}
    If $p\in (0,1)$, then
    $\E [ |D_\tau^*|^p |\cF_c ] \le e^{L_b(T-c)p} \frac{p^{-p}}{1-p} \E [|\Gamma_\tau|^p|\cF_c]$  a.s.
\item \label{item:1_without_sup:statement:pre_lemma_hoelder}
    If $p=1$, then
    $\E [ |D_\tau| |\cF_c ] \le e^{L_b(T-c)}  \E [\Gamma_\tau|\cF_c]$ a.s.

\item \label{item:[1,infty)_sup:statement:pre_lemma_hoelder}
    If $p\in [1,\infty)$, then
    \begin{equation}\label{eqn:item:[1,infty)_sup:statement:pre_lemma_hoelder}
    \E [ |D_\tau^*|^p |\cF_c ] \le c_{\eqref{eqn:item:[1,infty)_sup:statement:pre_lemma_hoelder},p}^p  \E [\Gamma_\tau^p + \Gamma_\tau^{p_\theta}|\cF_c] \mbox{ a.s.}
    \end{equation}
    where $c_{\eqref{eqn:item:[1,infty)_sup:statement:pre_lemma_hoelder},p}=c_{\eqref{eqn:item:[1,infty)_sup:statement:pre_lemma_hoelder},p}(T,L_b,L_\sigma,\theta,p)>0$.
\end{enumerate}
\end{lemm}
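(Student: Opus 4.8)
The plan is to stack the three assertions on top of one another, feeding each into the next, and to use as structural input the two identities already established: the occupation-type formula $|D_s| = |A| + \int_c^s \sign_0(D_u)\,\od D_u$ from \cref{statement:basic_equality_modulus_D}, and the pathwise bound $D_\tau^* \le e^{L_b(T-c)}[\Gamma_\tau + M_\tau^*]$ from \cref{statement:basic_inequality_modulus_D_as}. All the conditional inequalities will be obtained, as in the proofs of \cref{statement:Xt_vs_starting_value_path} and \cref{statement:Lipschitz_case_path}, by running the estimates on the normalized restrictions $\P_A$ to arbitrary $A\in\cF_c$ of positive measure. As a preliminary I would record that $M=(M_s)_{s\in[c,T]}$ is a genuine square-integrable martingale: the hypothesis $(\int_c^T|\sigma_s|^2\,\od s)^{1/2}\in L_{p\vee 2}$ forces $\E\int_c^T|\sigma_s|^2\,\od s<\infty$, so $\E[M_{\tau\wedge s}\,|\,\cF_c]=0$ for every bounded stopping time by optional stopping; the same integrability also guarantees $\E[(D_\tau^*)^p\,|\,\cF_c]<\infty$, which will be needed for the absorption below.

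For \eqref{item:1_without_sup:statement:pre_lemma_hoelder} ($p=1$) I would combine \cref{statement:basic_equality_modulus_D} with $|b|\le L_b|D|+\beta$ to write $|D_{\tau\wedge s}| = |A| + M_{\tau\wedge s} + \int_c^{\tau\wedge s}\sign_0(D_u)b_u\,\od u$, where $|\sign_0(D_u)b_u|\le L_b|D_u| + |\sign_0(D_u)|\beta_u$. Setting $g(s):=\E[|D_{\tau\wedge s}|\,|\,\cF_c]$ and taking conditional expectations annihilates the martingale term and yields $g(s)\le \E[\Gamma_\tau\,|\,\cF_c] + L_b\int_c^s g(u)\,\od u$, so conditional Gronwall gives \eqref{item:1_without_sup:statement:pre_lemma_hoelder}. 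For \eqref{item:(0,1)_sup:statement:pre_lemma_hoelder} ($p\in(0,1)$) I would then apply Lenglart's inequality (\cref{statement:Lenglart}) to $A_s:=|D_s|$ and to the nondecreasing, continuous, adapted process $C_s:=e^{L_b(T-c)}\Gamma_s$: the domination $\E[A_\tau\,|\,\cF_c]\le\E[C_\tau\,|\,\cF_c]$ required by Lenglart is precisely the already proven \eqref{item:1_without_sup:statement:pre_lemma_hoelder}, valid for every stopping time $\tau$, and taking $q=p$ produces the constant $e^{L_b(T-c)p}\tfrac{p^{-p}}{1-p}$.

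The substance is \eqref{item:[1,infty)_sup:statement:pre_lemma_hoelder}. Starting from $D_\tau^*\le e^{L_b(T-c)}[\Gamma_\tau+M_\tau^*]$, I would raise to the power $p\ge 1$, pass to $\E[\,\cdot\,|\,\cF_c]$, and estimate the martingale by a conditional version of \eqref{eqn:BDG} together with $\langle M\rangle_\tau\le L_\sigma^2\int_c^\tau|D_u|^{2\theta}\,\od u$, reducing everything to a conditional bound on $\big(\int_c^{\tau\wedge s}|D_u|^{2\theta}\,\od u\big)^{p/2}$. Here \cref{statement:GR} enters with $\rho=2\theta\in[1,2)$: for $p\ge 2$ its part (1) gives the bound by $T^{p/2-1}\int_c^{\tau\wedge s}[|D_u|+|D_u|^p]\,\od u$, while for $p\in[3-2\theta,2)$ its part (2) gives the same up to an extra term $K^{-\alpha}(D_{\tau\wedge s}^*)^p$, which is absorbed into the left-hand side by choosing $K$ large (this is where finiteness of $g(s):=\E[(D_{\tau\wedge s}^*)^p\,|\,\cF_c]$ is used). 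In both cases $\int_c^{\tau\wedge s}|D_u|^p\,\od u$ feeds a conditional Gronwall argument, while the first-order term $\int_c^{\tau\wedge s}|D_u|\,\od u$ is controlled by \eqref{item:1_without_sup:statement:pre_lemma_hoelder}, giving a contribution $\E[\Gamma_\tau\,|\,\cF_c]=\E[\Gamma_\tau^{p_\theta}\,|\,\cF_c]$ since $p_\theta=1$ on $[3-2\theta,\infty)$. This settles \eqref{eqn:item:[1,infty)_sup:statement:pre_lemma_hoelder} for $p\ge 3-2\theta$.

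The remaining and hardest range is $p\in[1,3-2\theta)$, where \cref{statement:GR} is not directly applicable with the exponent pair above and where the genuinely fractional $p_\theta=p/(3-2\theta)<1$ must be produced. The plan is to interpolate the quadratic variation by Hölder's inequality, estimating $\int_c^\tau|D_u|^{2\theta}\,\od u$ by a product of a power of $\int_c^\tau|D_u|\,\od u$ and a power of $\int_c^\tau|D_u|^{p}\,\od u$, then splitting this product by Young's inequality so that the $\int|D_u|^p$-part again enters Gronwall and the $\int|D_u|$-part is absorbed via \eqref{item:1_without_sup:statement:pre_lemma_hoelder}. A careful bookkeeping of the Hölder weights (distinguishing the sub-cases $p\ge 2\theta$ and $p<2\theta$) shows that the power of $\E[\Gamma_\tau\,|\,\cF_c]$ surviving this balancing is exactly $p_\theta=p/(3-2\theta)$, yielding the $\Gamma_\tau^{p_\theta}$ term. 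I expect this sub-threshold balancing to be the main obstacle, since it is the only point at which the precise arithmetic $p/(3-2\theta)$ is forced and where the Gyöngy--Rásonyi estimates do not apply off the shelf; the constants throughout depend only on $(T,L_b,L_\sigma,\theta,p)$, as claimed.
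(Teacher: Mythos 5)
Your treatment of parts (1), (2), and of part (3) in the range $p\in[3-2\theta,\infty)$ is essentially the paper's own proof: conditional Gronwall on normalized restrictions to $B\in\cF_c$ for $p=1$; Lenglart (\cref{statement:Lenglart}) applied to $A_s=|D_s|$, $C_s=e^{L_b(T-c)}\Gamma_s$ for $p\in(0,1)$; and, for $p\ge 3-2\theta$, conditional BDG plus \cref{statement:GR} with absorption of the $(D^*)^p$-term (legitimate, since the integrability hypotheses give $D^*\in L_{p\vee 2}$), Gronwall, and the $p=1$ case to control $\int_c^{\cdot}|D_u|\,\od u$. The genuine gap is the range $p\in[1,3-2\theta)$: there you give only a plan, explicitly flagged as the expected main obstacle, and that plan as formulated would not prove the inequality as stated.

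Two concrete problems. First, interpolating $\int_c^\tau|D_u|^{2\theta}\od u$ by H\"older between $\int_c^\tau|D_u|\od u$ and $\int_c^\tau|D_u|^{p}\od u$ requires $2\theta$ to lie between $1$ and $p$, i.e. $p\ge 2\theta$; but for $\theta\ge\frac{3}{4}$ \emph{every} $p\in[1,3-2\theta)$ satisfies $p<2\theta$ (and for every $\theta>\frac12$ this fails for $p$ near $1$), so on precisely the range you need it the proposed inequality does not exist. Second, and more fundamentally: even a variant of the balancing that does produce the exponent $p_\theta$ (e.g. $|D_u|^{2\theta}\le |D_u|\,(D_\tau^*)^{2\theta-1}$ followed by Young with conjugate exponents $\frac{2}{2\theta-1}$ and $\frac{2}{3-2\theta}$) must estimate the surviving factor $\int_c^\tau|D_u|\od u$ through part (2), i.e. through $\E[\Gamma_\tau|\cF_c]$; since $p_\theta<1$, the concave power then sits \emph{outside} the conditional expectation, and one only obtains a bound by $(\E[\Gamma_\tau|\cF_c])^{p_\theta}$. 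By Jensen, $\E[\Gamma_\tau^{p_\theta}|\cF_c]\le(\E[\Gamma_\tau|\cF_c])^{p_\theta}$, so this is a strictly weaker conclusion than \eqref{eqn:item:[1,infty)_sup:statement:pre_lemma_hoelder}. The paper's route avoids all of this: the case $p=\overline{p}_\theta:=3-2\theta$ is exactly the boundary case still covered by \cref{statement:GR}(2) (there $q-p+1=\rho$, resp. $p=q=2$ when $\theta=\frac12$), and once it is established for all stopping times, the whole range $p\in[1,3-2\theta)$ follows by one more application of Lenglart — the very tool you used in part (1) — with $A_s=(D_s^*)^{\overline{p}_\theta}$, $C_s=c^{\overline{p}_\theta}\left[\Gamma_s^{\overline{p}_\theta}+\Gamma_s\right]$ and exponent $q=p/\overline{p}_\theta=p_\theta$, which gives $\E[(D_\tau^*)^{p}|\cF_c]\le \frac{q^{-q}}{1-q}c^{p}\,\E\big[[\Gamma_\tau^{\overline{p}_\theta}+\Gamma_\tau]^{p_\theta}\big|\cF_c\big]\le C\,\E[\Gamma_\tau^{p}+\Gamma_\tau^{p_\theta}|\cF_c]$, with the conditional expectation in the right place.
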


\begin{proof}
Throughout the proof we will exploit that $[c,T]\ni s\mapsto \Gamma_s\in [0,\infty)$ is non-decreasing.
\smallskip

\eqref{item:1_without_sup:statement:pre_lemma_hoelder}
By \cref{statement:basic_equality_modulus_D} we deduce for $s\in [c,T]$ and $B\in \cF_c$ of positive measure that
\begin{align*}
    \E_{\P_B} |D_{\tau\wedge s}|
& = \E_{\P_B} |A| + \E_{\P_B} \int_c^{\tau\wedge s} \sign_0(D_u) b_u \od u \\
&\le \E_{\P_B} |A| + L_b \int_c^s \E_{\P_B} |D_{\tau\wedge u}| \od u + \E_{\P_B} \int_c^\tau |\sign_0(D_u)| \beta_u \od u \\
& = \E_{\P_B} \Gamma_\tau  + L_b \int_c^s \E_{\P_B} |D_{\tau\wedge u}| \od u
\end{align*}
for $s\in [c,T]$. Gronwall's lemma implies
\[   \E_{\P_B} |D_{\tau\wedge s}| \le e^{L_b(T-c)} \E_{\P_B} \Gamma_\tau \]
which is assertion \eqref{item:1_without_sup:statement:pre_lemma_hoelder}.
\smallskip

\eqref{item:(0,1)_sup:statement:pre_lemma_hoelder}
follows from \eqref{item:1_without_sup:statement:pre_lemma_hoelder}
Lenglart's inequality (\cref{statement:Lenglart}).
\smallskip

\eqref{item:[1,infty)_sup:statement:pre_lemma_hoelder} for $p\in [3-2\theta,\infty)$: Again we assume $B\in \cF_c$ of positive measure.
By \cref{statement:basic_inequality_modulus_D_as} and \eqref{eqn:BDG} we obtain for
$s\in [c,T]$ that
\begin{align*}
     \E_{\P_B} |D^*_{\tau\wedge s}|^p
&\le 2^{p-1} e^{L_b(T-c)p} \left [ \E_{\P_B} |\Gamma_{\tau\wedge s}|^p + \E_{\P_B} |M^*_{\tau\wedge s}|^p \right ] \\
&\le 2^{p-1} e^{L_b(T-c)p} \left [ \E_{\P_B} |\Gamma_{\tau\wedge s}|^p + \beta_p^p L^p_\sigma \E_{\P_B} \left ( \int_c^{\tau\wedge s}|D_u|^{2\theta} \od u \right )^\frac{p}{2}
    \right ].
\end{align*}
Let $\rho:= 2\theta$ and $q:=2$, so that $1\le \rho < q$. Now we have either $p\ge q=2$ or both $p<q=2$ and $q-p+1\le \rho$.
In both cases we use \cref{statement:GR} to get an estimate
\begin{equation}\label{eqn:from_statement:GR}
2^{p-1} e^{L_b(T-c)p}  \beta_p^p L^p_\sigma \left ( \int_c^{\tau\wedge c} |D_u|^{2\theta} \od u \right )^\frac{p}{2}
   \le \frac{1}{2} D_{\tau\wedge s}^* + c_\eqref{eqn:from_statement:GR} \int_c^{\tau\wedge s} [|D_u| + |D_u|^p] \od u
\end{equation}
for some $c_\eqref{eqn:from_statement:GR}=c_\eqref{eqn:from_statement:GR}(T,L_b,L_\sigma,p)>0$.
Hence
\begin{align*}
     \E_{\P_B} |D_{\tau\wedge s}^*|^p
&\le  2^p e^{L_b(T-c)p} \E_{\P_B} \Gamma_{\tau\wedge s}^p + 2 c_\eqref{eqn:from_statement:GR} \E_{\P_B} \int_c^{\tau\wedge s} [|D_u|+ |D_u|^p] \od u \\
&\le  2^p e^{L_b(T-c)p} \E_{\P_B} \Gamma_{\tau\wedge s}^p + 2 c_\eqref{eqn:from_statement:GR} \E_{\P_B} \int_c^s [|D_{\tau\wedge u}|+ |D_{\tau\wedge u}|^p] \od u.
\end{align*}
Using Gronwall's inequality we get
\begin{align*}
     \E_{\P_B} |D_{\tau\wedge s}^*|^p
&\le   e^{2c_\eqref{eqn:from_statement:GR} (T-c)}  \left [ 2^p e^{L_b(T-c)p} \E_{\P_B} \Gamma_{\tau\wedge s}^p + 2 c_\eqref{eqn:from_statement:GR}  \E_{\P_B} \int_c^s |D_{\tau\wedge u}| \od u \right ] \\
&\le   e^{2c_\eqref{eqn:from_statement:GR} (T-c)}  \left [ 2^p e^{L_b(T-c)p} \E_{\P_B} \Gamma_{\tau\wedge s}^p + 2 c_\eqref{eqn:from_statement:GR} e^{L_b(T-c)} (s-c) \E_{\P_B} \Gamma_{\tau\wedge s} \right ]
\end{align*}
where we did use \eqref{item:1_without_sup:statement:pre_lemma_hoelder} in the last inequality. Setting $s=T$ the claim follows.
\smallskip

\eqref{item:[1,infty)_sup:statement:pre_lemma_hoelder} for $p\in [1,3-2\theta)=:[1,\overline{p}_\theta)$: For $\overline{p}_\theta\in (1,2]$
we know from the previous case that
\[ \E [ |D_\tau^*|^{\overline{p}_\theta} |\cF_c ] \le c_{\eqref{eqn:item:[1,infty)_sup:statement:pre_lemma_hoelder},\overline{p}_\theta}^{\overline{p}_\theta}
       \E [\Gamma_\tau^{\overline{p}_\theta} + \Gamma_\tau|\cF_c] \mbox{ a.s.}\]
Using again  Lenglart's inequality (\cref{statement:Lenglart}) implies for
$p\in [1,\overline{p}_\theta)$ that
\[      \E [ |D_\tau^*|^p |\cF_c ]
    \le \frac{q^{-q}}{1-q} c_{\eqref{eqn:item:[1,infty)_sup:statement:pre_lemma_hoelder},\overline{p}_\theta}^p
        \E [[\Gamma_\tau^{\overline{p}_\theta} + \Gamma_\tau]^\frac{p}{\overline{p}_\theta}|\cF_c] \mbox{ a.s.}\]
with $q:= p/\overline{p}_\theta\in (0,1)$. Note that
$p/\overline{p}_\theta = p/(3-2\theta)= p_\theta$.
\end{proof}
\bigskip

\begin{lemm}
\label{statement:statement:pre_lemma_hoelder_b=0}
If we assume that $b$ is deterministic, i.e. $b:[t,T]\times \R\to \R$, and $p\in [1,3-2\theta)$ in \cref{statement:pre_lemma_hoelder}, then
one has for all $B\in \cF_c$ of positive measure that
\begin{equation}\label{eqn:statement:statement:pre_lemma_hoelder_b=0}
    \| D^*_T \|_{L_{p,\infty}(B,\P_B)}
\le c_\eqref{eqn:statement:statement:pre_lemma_hoelder_b=0}
    \| |A| + |A|^\frac{1}{p} \|_{L_p(B,\P_B)}
\end{equation}
with $c_\eqref{eqn:statement:statement:pre_lemma_hoelder_b=0}=c_\eqref{eqn:statement:statement:pre_lemma_hoelder_b=0}(T,L_b,L_\sigma,\theta,p)>0$.
\end{lemm}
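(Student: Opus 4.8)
The plan is to obtain the weak-type estimate \eqref{eqn:statement:statement:pre_lemma_hoelder_b=0} by interpolating, conditionally on $\cF_c$, between a weak-$L_1$ bound carrying the \emph{linear} dependence on $|A|$ and the critical weak-$L_{\overline{p}_\theta}$ bound furnished by \cref{statement:pre_lemma_hoelder}, where $\overline{p}_\theta:=3-2\theta\in(1,2]$. Since $b$ is deterministic we have $\beta\equiv 0$, hence $\Gamma_s\equiv|A|$ throughout, so both bounds are expressed purely through the $\cF_c$-measurable quantity $|A|$.

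For the weak-$L_1$ bound I would use the supermartingale structure of $|D|$. By \cref{statement:basic_equality_modulus_D}, $|D_s|=|A|+\int_c^s\sign_0(D_u)b_u\od u+M_s$ with $M_s:=\int_c^s\sign_0(D_u)\sigma_u\od W_u$, and since $\sign_0(D_u)b_u\le|b_u|\le L_b|D_u|$, an integration by parts shows that $Z_s:=e^{-L_b(s-c)}|D_s|$ is a nonnegative local supermartingale on $[c,T]$ with $Z_c=|A|$, hence a genuine supermartingale. Writing $c_1:=e^{L_b(T-c)}$, the conditional maximal inequality for nonnegative supermartingales and $D^*_T\le c_1\sup_{s\in[c,T]}Z_s$ give
\[ \P\big(D^*_T\ge\lambda\mid\cF_c\big)\le\frac{c_1|A|}{\lambda}\qquad\text{a.s. for every }\lambda>0. \]
The critical bound is read off from \cref{statement:pre_lemma_hoelder}\eqref{item:[1,infty)_sup:statement:pre_lemma_hoelder} with $\tau=T$ at the exponent $\overline{p}_\theta$, where $p_\theta=1$ and $\Gamma_T=|A|$, via conditional Markov: with $c_2:=c_{\eqref{eqn:item:[1,infty)_sup:statement:pre_lemma_hoelder},\overline{p}_\theta}^{\overline{p}_\theta}$,
\[ \P\big(D^*_T\ge\lambda\mid\cF_c\big)\le\frac{c_2\big(|A|^{\overline{p}_\theta}+|A|\big)}{\lambda^{\overline{p}_\theta}}\qquad\text{a.s. for every }\lambda>0. \]

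The interpolation is the crux. Working with a regular conditional distribution of $D^*_T$ given $\cF_c$ and treating $|A|=|A(\omega)|$ as a constant, I would bound $\lambda^p\P(D^*_T\ge\lambda\mid\cF_c)$ by the minimum of $c_1|A|\lambda^{p-1}$ and $c_2(|A|^{\overline{p}_\theta}+|A|)\lambda^{p-\overline{p}_\theta}$; as $1\le p<\overline{p}_\theta$ the first factor is non-decreasing and the second non-increasing in $\lambda$, so the minimum is maximised at the crossover $\lambda_*$ with $\lambda_*^{\overline{p}_\theta-1}=c_2(|A|^{\overline{p}_\theta}+|A|)/(c_1|A|)$. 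Evaluating there, setting $s:=(p-1)/(\overline{p}_\theta-1)\in[0,1)$, and using $1+s(\overline{p}_\theta-1)=p$ together with $(x+y)^s\le x^s+y^s$ yields, a.s.,
\[ \sup_{\lambda>0}\lambda^p\,\P\big(D^*_T\ge\lambda\mid\cF_c\big)\le c_1^{1-s}c_2^s\big(|A|^p+|A|\big)=:C\big(|A|^p+|A|\big). \]

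Finally I would pass to $\P_B$: for each fixed $\lambda$,
\[ \lambda^p\,\P_B\big(D^*_T\ge\lambda\big)=\E_{\P_B}\!\big[\lambda^p\,\P(D^*_T\ge\lambda\mid\cF_c)\big]\le\E_{\P_B}\!\Big[\sup_{\mu>0}\mu^p\,\P(D^*_T\ge\mu\mid\cF_c)\Big], \]
and taking the supremum over $\lambda$ on the left does not change the right-hand side, so $\|D^*_T\|_{L_{p,\infty}(B,\P_B)}^p\le C\big(\E_{\P_B}|A|^p+\E_{\P_B}|A|\big)$; taking $p$-th roots and using $(\E_{\P_B}|A|^p+\E_{\P_B}|A|)^{1/p}\le\||A|\|_{L_p(B,\P_B)}+\||A|^{1/p}\|_{L_p(B,\P_B)}$ gives \eqref{eqn:statement:statement:pre_lemma_hoelder_b=0}. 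The main obstacle is precisely the weak-$L_1$ bound: recognising that the \emph{deterministic} drift turns $e^{-L_b(\cdot-c)}|D|$ into a genuine nonnegative supermartingale is what produces the linear term $\E_{\P_B}|A|$, which is strictly sharper than the $\E_{\P_B}|A|^{p_\theta}$ one would get from \cref{statement:pre_lemma_hoelder} alone, and this is where the hypothesis that $b$ be deterministic is indispensable; the remaining care is only in choosing versions so that both conditional bounds hold for all $\lambda$ simultaneously a.s.\ and in justifying the supremum--expectation interchange.
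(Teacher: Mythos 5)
Your proof is correct, but it takes a genuinely different route from the paper's. The paper stays at the level of moments evaluated at stopping times: parts (2) and (3) of \cref{statement:pre_lemma_hoelder} (where the deterministic drift forces $\beta\equiv 0$, hence $\Gamma_\tau=|A|$) give, for every stopping time $\tau:\Omega\to[c,T]$, the bounds $\E[|D_\tau|\,|\,\cF_c]\le e^{L_b(T-c)}|A|$ and $\E[|D_\tau|^{3-2\theta}\,|\,\cF_c]\le c\,[|A|^{3-2\theta}+|A|]$ a.s.; these are interpolated by the \emph{conditional H\"older inequality} to yield $(\E[|D_\tau|^{p}\,|\,\cF_c])^{1/p}\le c\,[|A|+|A|^{1/p}]$ a.s., and the weak-type estimate then follows by evaluating at the first-passage time $\tau_\lambda:=\inf\{s\in[c,T]:|D_s|>\lambda\}\wedge T$ and using $\lambda^p\chf_{\{\tau_\lambda<T\}}\le|D_{\tau_\lambda}|^p$ under $\P_B$. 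You instead interpolate at the level of conditional \emph{tails}: a weak-$L_1$ maximal bound from the supermartingale property of $e^{-L_b(s-c)}|D_s|$, a weak-$L_{3-2\theta}$ bound by conditional Markov from part (3), and a pointwise Marcinkiewicz-type optimization of the minimum of the two bounds at the crossover $\lambda_*$; your exponent bookkeeping ($1+s((3-2\theta)-1)=p$ with $s=(p-1)/((3-2\theta)-1)$) is right, and the passage to $\P_B$ is legitimate precisely because $B\in\cF_c$. As for what each approach buys: the paper's ordering (fix $\lambda$, stop, integrate, then take the supremum over $\lambda$) invokes each a.s.\ bound for only one stopping time at a time, so no null-set bookkeeping in $\lambda$ is needed; your ordering puts $\sup_\lambda$ inside the conditional expectation, so both tail bounds must hold simultaneously for all $\lambda>0$ a.s.\ --- you flag this, and it is indeed handled by monotonicity of the tails along a countable dense set of $\lambda$'s --- and you should also treat the event $\{|A|=0\}$ separately, since there $\lambda_*$ is undefined (but the weak-$L_1$ bound already forces $D_T^*=0$ a.s.\ on that event). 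One small overstatement: the supermartingale observation, while a nice structural shortcut, is not indispensable; part (2) of \cref{statement:pre_lemma_hoelder} applied at stopping times supplies exactly the same linear-in-$|A|$ endpoint, and that is the paper's substitute. Both routes use the deterministic-drift hypothesis in the same place: it is what makes $\Gamma$ collapse to the $\cF_c$-measurable quantity $|A|$, so that the endpoint estimates can be interpolated $\omega$-wise.
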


\begin{proof}
First we assume $p\in (1,3-2\theta)=:(p_0,p_1)$. By
\cref{statement:pre_lemma_hoelder} we have for any stopping time
$\tau:\Omega\to [c,T]$ that
\begin{align*}
\E[|D_\tau|^{p_0} |\cF_c] & \le e^{L_b(T-c)p_0} |A|^{p_0} \mbox{ a.s.},\\
\E[|D_\tau|^{p_1} |\cF_c] & \le c_{\eqref{eqn:item:[1,infty)_sup:statement:pre_lemma_hoelder},p_1}^{p_1}  [|A|^{p_1}+|A|] \mbox{ a.s.}
\end{align*}
We define $\eta\in (0,1)$ by $\frac{1}{p}=\frac{1-\eta}{p_0}+ \frac{\eta}{p_1}$. The conditional
H\"older inequality implies a.s. that
\begin{align*}
     \left ( \E[|D_\tau|^p | \cF_c]\right )^\frac{1}{p}
&\le \left ( \E[|D_\tau|^{p_0} | \cF_c]\right )^\frac{1-\eta}{p_0} \left ( \E[|D_\tau|^{p_1} | \cF_c]\right )^\frac{\eta}{p_1}\\
&\le e^{L_b(T-c)(1-\eta)} c_{\eqref{eqn:item:[1,infty)_sup:statement:pre_lemma_hoelder},p_1}^{\eta}
     |A|^{1-\eta}  [|A|^{p_1}+|A|]^\frac{\eta}{p_1} \\
&\le e^{L_b(T-c)(1-\eta)} c_{\eqref{eqn:item:[1,infty)_sup:statement:pre_lemma_hoelder},p_1}^{\eta}
     [|A|+|A|^\frac{1}{p}].
\end{align*}
So for $p\in [1,3-2\theta)$  there is a constant $c_{\eqref{eqn:proof:statement:statement:pre_lemma_hoelder_b=0},p}=c_{\eqref{eqn:proof:statement:statement:pre_lemma_hoelder_b=0},p}(T,L_b,L_\sigma,\theta,p)>0$
such that
\begin{equation}\label{eqn:proof:statement:statement:pre_lemma_hoelder_b=0}
     \left ( \E[|D_\tau|^p | \cF_c]\right )^\frac{1}{p}
 \le c_{\eqref{eqn:proof:statement:statement:pre_lemma_hoelder_b=0},p}  [|A|+|A|^\frac{1}{p}] \mbox{ a.s.}
\end{equation}
Let $\lambda>0$ and $B\in \cF_c$ of positive measure. Define the stopping time
\[ \tau_\lambda := \inf \{ s\in [c,T] : |D_s| > \lambda\} \wedge T.\]
Then
\begin{align*}
     \lambda^p \P_B \left ( \sup_{s\in [c,T]} |D_s| >\lambda  \right )
& =  \lambda^p \int_B \chf_{\{\tau_\lambda<T\}} \od \P_B \\
&\le \int_B \chf_{\{\tau_\lambda<T\}} |D_{\tau_\lambda}|^p \od \P_B \\
&\le \int_B |D_{\tau_\lambda}|^p \od \P_B \\
&\le c_{\eqref{eqn:proof:statement:statement:pre_lemma_hoelder_b=0},p}^p \int_B  [|A|+|A|^\frac{1}{p}]^p \od \P_B. \qedhere
\end{align*}
\end{proof}
\bigskip

\begin{proof}[Proof of \cref{statement:holder:main_general}]
We use that by \cref{statement:transference_composition_new} 
and \cref{statement:conditons_transference_composition_new}\eqref{item:1:statement:conditons_transference_composition_new}
we have the relations
\begin{align*}
     (b^\varphi(u,X_u^{t,\xi,\varphi}))_{u\in [t,T]}
&\in C_{[t,T]}^\cP((b(u,X_u^{t,\xi}))_{u\in [t,T]}),\\
     (\sigma(u,X_u^{t,\xi,\varphi}))_{u\in [t,T]}
&\in C_{[t,T]}^\cP((\sigma(u,X_u^{t,\xi}))_{u\in [t,T]}),
\end{align*}
so that for the distributions on $[t,T]\times \overline{\Omega}$ we get
\begin{align}
                  (b^\varphi(u,X_u^{t,\xi,\varphi}))_{u\in [t,T]}
& \stackrel{d}{=} (b(u,X_u^{t,\xi}))_{u\in [t,T]}, \label{eqn:distribution_b} \\
                  (\sigma(u,X_u^{t,\xi,\varphi}))_{u\in [t,T]}
& \stackrel{d}{=} (\sigma(u,X_u^{t,\xi}))_{u\in [t,T]} \label{eqn:distribution_sigma}
\end{align}
by \cref{statement:CPtT_is_isometry}.
For $p\in (0,\infty)$ we have
\[    \left\|\sup_{s\in [t,T]}\left|X^{t,\xi}_s - X^{t,\xi,\varphi}_s\right|\right\|_{L_p} \\
   \le 2^{\left ( \frac{1}{p} -1 \right )^+} \left [ \left\|\Delta \right\|_{L_p}
       + \left\|\sup_{s\in [c,T]}\left|X^{t,\xi}_s - X^{t,\xi,\varphi}_s\right|\right\|_{L_p} \right ]. \]
For $s\in [c,T]$ we have by \cref{statement:transference_sde} that
\begin{align*}
    X^{t,\xi,\varphi}_s
& = X^{t,\xi,\varphi}_c + \int^s_c b^{\varphi}(u,X^{t,\xi,\varphi}_u)\,\od u + \int^s_c \sz(u,X^{t,\xi,\varphi}_u)\,\od W_u, \\
    X^{t,\xi}_s
& =  X^{t,\xi}_c + \int^s_c b(u,X^{t,\xi}_u)\,\od u + \int^s_c \sz(u,X^{t,\xi}_u)\,\od W_u
\end{align*}
where we exploit that $W_s^\varphi -W_c^\varphi=W_s-W_c$ for $s\in [c,T]$.
Now we use the following notation:
\begin{align*}
D_s &:= X^{t,\xi}_s -  X^{t,\xi,\varphi}_s,\\
A   &:= X^{t,\xi}_c -  X^{t,\xi,\varphi}_c,\\
b_u & := b(u,X^{t,\xi}_u) - b^\varphi(u,X^{t,\xi,\varphi}_u),\\
\sigma_u &:= \sigma(u,X^{t,\xi}_u) - \sigma(u,X^{t,\xi,\varphi}_u),\\
\beta_u & := |b(u,X^{t,\xi,\varphi}_u) - b^\varphi(u,X^{t,\xi,\varphi}_u)|,\\
\Gamma_T &:= |A|+ \int_c^T |\sign_0(D_u)| \beta_u \od u \le \Delta + \Lambda.
\end{align*}
Using this notation we get that
\[ D_s = A + \int_c^s b_u \od u + \int_c^s \sigma_u \od W_u \mbox{ a.s.}\]
with
\[ |b_u|      \le L_b |D_u| + \beta_u
   \sptext{1}{and}{1}
   |\sigma_u| \le L_\sigma |D_u|^\theta. \]
Furthermore, we get
\[     \left ( \int_c^T |\sigma_s|^2 \od s \right )^\frac{1}{2} + \int_c^T |b_s|\od s  +\Gamma_T
   \in L_{p\vee 2}(\overline{\Omega},\overline{\cF},\overline{\P}).\]
Indeed, we use \eqref{eqn:distribution_b}, \eqref{eqn:distribution_sigma},
\eqref{ass:3:ass:hoelder}, \eqref{ass:4:ass:hoelder}, and \cref{statement:Xt_vs_starting_value_path} to see
$\left ( \int_c^T |\sigma_s|^2 \od s \right )^\frac{1}{2} + \int_c^T |b_s|\od s \in L_{p\vee 2}$
and to estimate the second term of $\beta_u$. To estimate the first term of $\beta_u$ we first use
\eqref{ass:3:ass:hoelder}, then that the distributions of $(X_u^{t,\xi,\varphi})_{u\in [t,T]}$ and $(X_u^{t,\xi})_{u\in [t,T]}$
coincide, and conclude with \cref{statement:Xt_vs_starting_value_path}.
\bigskip

So we can apply \cref{statement:pre_lemma_hoelder} and  \cref{statement:statement:pre_lemma_hoelder_b=0}, where we note that we apply these statements
under the stochastic basis $(\overline{\Omega},\overline{\cF},\overline{\P},(\overline{\cF}_s)_{s\in [0,T]})$ and the canonical extended Brownian motion $(W_s)_{s\in [0,T]}$.
\bigskip

\eqref{item:(0,1):statement:holder:sup:r:de}
If $p\in (0,1)$, then by \cref{statement:pre_lemma_hoelder}\eqref{item:(0,1)_sup:statement:pre_lemma_hoelder}
we have
\[       \left\|\sup_{s\in [c,T]}\left|X^{t,\xi}_s - X^{t,\xi,\varphi}_s\right | \right\|_{L_p}
   \le \frac{e^{L_bT}}{p \sqrt[p]{1 - p}} \| \Gamma_T  \|_{L_p}
   \le \frac{e^{L_bT}}{p \sqrt[p]{1 - p}} \left\|\Delta + \Lambda  \right \|_{L_p}. \]

\eqref{item:1:statement:holder:de}
Similarly we apply \cref{statement:pre_lemma_hoelder}\eqref{item:1_without_sup:statement:pre_lemma_hoelder} to get
\[             \left\|X^{t,\xi}_T - X^{t,\xi,\varphi}_T \right\|_{L_1}
  \le e^{L_bT} \| \Gamma_T  \|_{L_1}
  \le e^{L_bT} \left\|\Delta + \Lambda  \right \|_{L_1}.
\]

\eqref{item:[1,2]:statement:holder:sup:r:decoupling}
From \cref{statement:pre_lemma_hoelder}\eqref{item:[1,infty)_sup:statement:pre_lemma_hoelder} we get
\begin{align*}
       \left\|\sup_{s\in [c,T]}\left|X^{t,\xi}_s - X^{t,\xi,\varphi}_s\right | \right\|_{L_p}
&\le c_{\eqref{eqn:item:[1,infty)_sup:statement:pre_lemma_hoelder},p}  \left ( \E [\Gamma_T^p + \Gamma_T^{p_\theta} ] \right )^\frac{1}{p} \\
&\le c_{\eqref{eqn:item:[1,infty)_sup:statement:pre_lemma_hoelder},p}
     \left [ \| \Gamma_T \|_{L_p} + \|\Gamma_T\|_{L_{p_\theta}}^{\frac{p_\theta}{p}} \right ].
\end{align*}
\medskip

\eqref{item:[1,2]:statement:holder:main:deter}
follows from \cref{statement:statement:pre_lemma_hoelder_b=0} as in \eqref{item:[1,2]:statement:holder:sup:r:decoupling}.
\end{proof}

%%%%%%%%%%%%%%%%%%%%%%%%%%%%%%%%%%%%%%%%%%%%%%%%%%%%%%%%%%%%%%%%%%%%%%%%%%%%%%%%%%%%%%%%%%%%%%%%%%%%%%%%%%%%%%%%%%%%%

\subsection{An application of \cref{statement:holder:main_general}}\ \medskip

We start by two lemmas. The first lemma is intuitive, but requires a formal proof:

\begin{lemm}
\label{statement:ab-transformation_is_local}
Assume $0\le a < c \le T$ and an $\cF_a^0$-measurable random variable $A:\overline{\Omega}\to \R$.
Then one has $A=A^{(a,c]}$ $\overline{\P}$-a.s.
\end{lemm}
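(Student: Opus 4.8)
The plan is to exploit that the coupling transformation $A\mapsto A^{(a,c]}=\cC_0(A)$ is, by \cite[Proposition 2.5(2),(4)]{Geiss:Ylinen:21}, a linear, multiplicative and isometric bijection of $L_0(\overline\Omega,\cF^0_T,\overline\P)$ for the metric $d(f,g)=\E\frac{|f-g|}{1+|f-g|}$ of convergence in probability; in particular it is continuous for convergence in probability, and it fixes constants by \cite[Proposition 2.5(3)]{Geiss:Ylinen:21}. The key local observation is that, by the explicit form of the $(a,c]$-coupling recalled in \cref{sec:coupling}, one has $W^{(a,c]}_{s,j}=W_{s,j}$ for all $s\in[0,a]$, while $\cC_0(W_{s,j})=W^{(a,c]}_{s,j}$. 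Hence $(W_{s,j})^{(a,c]}=W_{s,j}$ $\overline\P$-a.s.\ for every $s\in[0,a]$ and $j\in\{1,\dots,N\}$.

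First I would settle the claim on cylinder functions. For a polynomial $P$ in finitely many variables $W_{s_1,j_1},\dots,W_{s_m,j_m}$ with $s_1,\dots,s_m\in[0,a]$, linearity and multiplicativity of $\cC_0$ together with the local observation give
\[ P^{(a,c]}=P\big(W^{(a,c]}_{s_1,j_1},\dots,W^{(a,c]}_{s_m,j_m}\big)=P\big(W_{s_1,j_1},\dots,W_{s_m,j_m}\big)=P. \]
For a bounded continuous $g:\R^m\to\R$ I would approximate $g$ in $L_2$ of the (Gaussian) joint law of $(W_{s_i,j_i})_{i=1}^m$ by polynomials $P_n$, which are dense there; then $P_n\to g$ pushed forward yields $P_n(W_{s_1,j_1},\dots)\to g(W_{s_1,j_1},\dots)$ in $L_2\subseteq L_0$. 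Since $\cC_0(P_n(W_{\cdot}))=P_n(W_{\cdot})$ by the previous display, continuity of $\cC_0$ in probability gives $A^{(a,c]}=\lim_n \cC_0(P_n(W_{\cdot}))=\lim_n P_n(W_{\cdot})=A$ for the cylinder function $A:=g(W_{s_1,j_1},\dots,W_{s_m,j_m})$.

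Next I would run the functional monotone class theorem. Let $\mathcal H$ denote the set of bounded $\cF^0_a$-measurable $A$ with $A^{(a,c]}=A$ a.s. Then $\mathcal H$ is a vector space containing the constants, and it is stable under bounded monotone limits: if $0\le A_n\uparrow A$ are bounded and lie in $\mathcal H$, then $A_n\to A$ in probability, so $A_n^{(a,c]}\to A^{(a,c]}$ in probability by isometry, whence $A^{(a,c]}=\lim_n A_n^{(a,c]}=\lim_n A_n=A$. Since $\mathcal H$ contains the multiplicative family of bounded continuous cylinder functions in the variables $(W_{s,j})_{s\le a}$, and this family generates $\cF^0_a$ up to null sets, the theorem yields that $\mathcal H$ contains every bounded $\cF^0_a$-measurable random variable.

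Finally, for an arbitrary $\cF^0_a$-measurable $A$ I would truncate by $A^{(n)}:=(A\wedge n)\vee(-n)$; each $A^{(n)}$ is bounded and $\cF^0_a$-measurable, hence $(A^{(n)})^{(a,c]}=A^{(n)}$, and $A^{(n)}\to A$ in probability. Passing to the limit and using continuity of $\cC_0$ in probability once more gives $A^{(a,c]}=A$ $\overline\P$-a.s. The step I expect to be the main obstacle is the passage from the generating cylinder functions to all $\cF^0_a$-measurable variables: one must verify carefully that $\cC_0$ genuinely respects each approximation in probability and that the chosen cylinder family generates $\cF^0_a$ modulo the $\overline\P$-completion, so that both the monotone class machinery and the closing truncation argument apply.
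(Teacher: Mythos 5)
Your proof is correct, and it rests on the same two pillars as the paper's own argument: the explicit form of the $(a,c]$-coupling fixes $W_s$ for $s\le a$, and the transference $\cC_0$ is isometric for the metric $d(f,g)=\E\frac{|f-g|}{1+|f-g|}$, hence continuous for convergence in probability, so the identity $A^{(a,c]}=A$ only needs to be verified on a dense class of cylinder variables and then passed to the limit. Where you differ is the extension mechanism. The paper's proof is a one-step approximation: since $\cF_a^0$ is, up to null sets, generated by the Brownian values at dyadic times in $[0,a]$ (path continuity), it writes $A$ as a limit in probability of $A_l=f_l(W_{a/2^{n_l}},\ldots,W_{2^{n_l}a/2^{n_l}})$ with \emph{bounded Borel} $f_l$, invokes \cite[Proposition 2.5(4)]{Geiss:Ylinen:21} once to get $A_l^{(a,c]}=A_l$ a.s., and concludes by \cite[Proposition 2.5(2)]{Geiss:Ylinen:21}; the case $a=0$ is disposed of separately, since an $\cF_0^0$-measurable variable is a.s. constant and transference preserves laws. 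You instead use only the algebra-homomorphism property of $\cC_0$ (linearity and multiplicativity) to fix polynomials, then density of polynomials in the Gaussian $L_2$ to fix bounded continuous cylinder functions, then the functional monotone class theorem and a truncation to reach all of $\cF_a^0$. This is longer and, in effect, re-proves in your special situation what \cite[Proposition 2.5(4)]{Geiss:Ylinen:21} supplies directly (commutation of the transference with Borel functions of finitely many variables); in exchange it demands less of the cited proposition (multiplicativity suffices) and treats $a=0$ uniformly rather than as a separate case. The two points you flag as delicate are both unproblematic: preservation of limits in probability is exactly the isometry of $\cC_0$, and the passage from $\sigma(W_s:s\le a)$ to its augmentation $\cF_a^0$ is absorbed by the fact that $\cC_0$ acts on a.s.-equivalence classes, so one may replace $A$ by a $\sigma(W_s:s\le a)$-measurable version before running the monotone class argument and the truncation.
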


\begin{proof}
If $a=0$, then $A$ is almost surely a constant, so $A^{(a,c]}$ equals almost surely the same constant as
$A$ and $A^{(a,c]}$ have the same law.
\smallskip

Assume $a>0$. By our construction we have $W_s^0\equiv W_s^{(a,c]}$ for $s\in [0,a]$.
We find a sequence of non-negative integers $n_1,n_2,\ldots$ and bounded Borel functions
$f_l:\R^{2^{n_l}}\to \R$ such that for
$A_l:=f_l(W_{ \frac{a}{2^{n_l}}},\ldots,W_{\frac{2^{n_l}a}{2^{n_l}}})$ one has $A_l\to A$ in probability with 
respect  to $\overline{\P}$. By \cite[Proposition 2.5(4)]{Geiss:Ylinen:21} we have 
$A_l^{(a,c]}=A_l$ $\overline{\P}$-a.s. and by \cite[Proposition 2.5(2)]{Geiss:Ylinen:21} that the convergence in probability 
is preserved. So we get $A_l^{(a,c]}\to A^{(a,c]}$ in probability with respect  to $\overline{\P}$ and therefore
$A=A^{(a,c]}$ $\overline{\P}$-a.s.
\end{proof}
\medskip

The next lemma uses only the size conditions on $b$ and $\sigma$:

\begin{lemm}
\label{statement:decoupling_small_t}
Assume conditions \eqref{ass:3:ass:hoelder} and \eqref{ass:4:ass:hoelder}, $p\in (0,\infty)$, $0\leq t\leq a < c\leq T$, and
$\xi \in L_{p\vee 2}$.
Then, for some $c_{\eqref{eqn:statement:decoupling_small_t},p}=c_{\eqref{eqn:statement:decoupling_small_t},p}(T,K_b,K_\sigma,p)>0$, one has
\begin{equation}\label{eqn:statement:decoupling_small_t}
      \left \| \sup_{u\in [a,c]}\left|X_u^{t,\xi}- X_u^{t,\xi,(a,c]}\right| \right \|_{L_p}
 \leq c_{\eqref{eqn:statement:decoupling_small_t},p} (c - a)^{\frac{1}{2}}
      [ 1+ \|\xi\|_{L_p}].
\end{equation}
\end{lemm}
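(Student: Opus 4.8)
The plan is to compare both processes to their common value at the left endpoint $a$ and to exploit that the cut-off coupling leaves the $\cF_a^0$-measurable part untouched. First I would observe that $X_a^{t,\xi}$ is $\cF_a^0$-measurable, so by \cref{statement:ab-transformation_is_local}, together with $X_a^{t,\xi,(a,c]}\in C_0(X_a^{t,\xi})$, one has $X_a^{t,\xi,(a,c]} = (X_a^{t,\xi})^{(a,c]} = X_a^{t,\xi}$ $\oP$-a.s. Consequently, pointwise,
\[
\sup_{u\in[a,c]}\left|X_u^{t,\xi}-X_u^{t,\xi,(a,c]}\right|
\le \sup_{u\in[a,c]}\left|X_u^{t,\xi}-X_a^{t,\xi}\right|
  + \sup_{u\in[a,c]}\left|X_u^{t,\xi,(a,c]}-X_a^{t,\xi,(a,c]}\right|,
\]
so that, taking $L_p$-(quasi-)norms and using $\|f+g\|_{L_p}\le 2^{(1/p-1)^+}(\|f\|_{L_p}+\|g\|_{L_p})$, it suffices to bound each increment.

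Next I would remove the second increment by a distributional argument. Since $X^{t,\xi,(a,c]}\in\cC_{[t,T]}^\cadap(X^{t,\xi})$, \cref{statement:properties_CCtTadapted}\eqref{item:2:statement:properties_CCtTadapted} shows that the two continuous processes share all finite-dimensional distributions, hence have the same law on $C([t,T];\R)$; as $\sup_{u\in[a,c]}|X_u-X_a|$ is a Borel functional of the path, the two increment terms have identical $L_p$-norms. Thus everything reduces to controlling $\|\sup_{u\in[a,c]}|X_u^{t,\xi}-X_a^{t,\xi}|\|_{L_p}$.

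For this last term I would write, for $u\in[a,c]$,
\[
X_u^{t,\xi}-X_a^{t,\xi}=\int_a^u b(v,X_v^{t,\xi})\,\od v+\int_a^u \sigma(v,X_v^{t,\xi})\,\od W_v,
\]
apply the Burkholder--Davis--Gundy inequality \eqref{eqn:BDG} to the stochastic integral (valid for all $p\in(0,\infty)$), and use the linear growth bounds \eqref{ass:3:ass:hoelder} and \eqref{ass:4:ass:hoelder}. Bounding $1+|X_v^{t,\xi}|\le 1+\sup_{v\in[t,T]}|X_v^{t,\xi}|$ and pulling the interval length out of the time integrals yields
\[
\left\|\sup_{u\in[a,c]}\left|X_u^{t,\xi}-X_a^{t,\xi}\right|\right\|_{L_p}
\le \left[K_b(c-a)+\beta_p K_\sigma(c-a)^{1/2}\right]\left\|1+\sup_{v\in[t,T]}|X_v^{t,\xi}|\right\|_{L_p}.
\]
Since $c-a\le T^{1/2}(c-a)^{1/2}$, the bracket is $O((c-a)^{1/2})$, and \cref{statement:Xt_vs_starting_value_path} (taking unconditional expectations in \eqref{eqn:statement:Xt_vs_starting_value_path}) gives $\|1+\sup_{v\in[t,T]}|X_v^{t,\xi}|\|_{L_p}\le c(1+\|\xi\|_{L_p})$. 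Collecting constants produces the claimed bound with $c_{\eqref{eqn:statement:decoupling_small_t},p}=c_{\eqref{eqn:statement:decoupling_small_t},p}(T,K_b,K_\sigma,p)$.

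The analytic estimates here are entirely routine; the point that needs care — and which I regard as the genuine content — is the identification $X_a^{t,\xi}=X_a^{t,\xi,(a,c]}$ together with the distributional equality of the two processes, both of which rest on the transference machinery (\cref{statement:ab-transformation_is_local} and \cref{statement:properties_CCtTadapted}) rather than on stochastic analysis.
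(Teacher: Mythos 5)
Your proof is correct, and its skeleton coincides with the paper's: both use \cref{statement:ab-transformation_is_local} to make the two processes agree up to time $a$, then remove the contribution of the coupled process by an equality-in-law argument, and finish with the BDG inequality \eqref{eqn:BDG}, the linear growth bounds \eqref{ass:3:ass:hoelder}--\eqref{ass:4:ass:hoelder}, and \cref{statement:Xt_vs_starting_value_path}. The difference lies in the decomposition and in which law-equality is invoked. The paper subtracts the two SDEs on $(a,c]$, so the difference becomes a sum of four integrals (two drift terms, one stochastic integral against $W$ and one against $W'$), and it then needs the distributional identities \eqref{eqn:distribution_b} and \eqref{eqn:distribution_sigma} for the composed coefficient processes, hence implicitly the representation of $X^{t,\xi,(a,c]}$ as a solution of the transferred SDE via \cref{statement:transference_sde}. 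You instead insert the common value $X_a^{t,\xi}=X_a^{t,\xi,(a,c]}$ and bound the two increments separately, eliminating the coupled one by the path-law equality of \cref{statement:properties_CCtTadapted}\eqref{item:2:statement:properties_CCtTadapted}: equal finite-dimensional distributions plus continuity give equal laws on $C([t,T];\R)$, and the sup-increment is a continuous functional of the path. This buys a small economy, since you never have to write down the coupled SDE or refer to $b^{(a,c]}$ at all; the paper's route, on the other hand, is the one that generalizes to the situation of \cref{statement:holder:main_general}, where the coupled SDE genuinely enters. Both arguments give the same rate $(c-a)^{1/2}$ and the same constant dependence $(T,K_b,K_\sigma,p)$. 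One cosmetic remark: for $p<1$ your displayed estimate for $\bigl\|\sup_{u\in[a,c]}|X_u^{t,\xi}-X_a^{t,\xi}|\bigr\|_{L_p}$ should also carry the quasi-norm factor $2^{(1/p-1)^+}$ when the drift and martingale parts are separated, as you already do for the main split; this is harmless and absorbed in the final constant.
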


\begin{proof}
By \cref{statement:ab-transformation_is_local} for any $u\in (a,c]$, we have
\begin{multline*}
X_u^{t,\xi}- X_u^{t,\xi,(a,c]} = \int^u_a b(s,X_s^{t,\xi}) - b^{(a,c]}(s,X_s^{t,\xi,(a,c]})\,\od s \\
 + \int^u_a \sigma(s,X_s^{t,\xi}))\,\od W_s - \int^s_a \sigma(s,X_s^{t,\xi,(a,c]})\,\od W'_s \mbox{ a.s.}
\end{multline*}
As we have \eqref{eqn:distribution_b} and \eqref{eqn:distribution_sigma},
we use \eqref{eqn:BDG} to deduce that
\begin{align*}
&  \hspace*{-2em}    \E \left[\sup_{u\in [a,c]}\left| X_u^{t,\xi} - X_u^{t,\xi,(a,c]} \right|^p\right] \\
\leq & 4^{(p-1)^+} \Bigg \{
            \E\left[ \int^c_a |b        (u,X_u^{t,\xi})|\od u\right]^p
          + \E\left[ \int^c_a |b^{(a,c]}(u,X_u^{t,\xi,(a,c]})|\od u\right]^p \\
&     + \beta_p^p  \E\left[ \int^c_a |\sigma        (u,X_u^{t,\xi})|^2\od s \right ]^\frac{p}{2}
      + \beta_p^p  \E\left[ \int^c_a |\sigma(u,X_u^{t,\xi,(a,c]})|^2\od s \right ]^\frac{p}{2} \Bigg \} \\
= & 2\,\, 4^{(p-1)^+} \Bigg \{
            \E\left[ \int^c_a |b        (u,X_u^{t,\xi})|\od u\right]^p
            + \beta_p^p  \E\left[ \int^c_a |\sigma        (u,X_u^{t,\xi})|^2\od s \right ]^\frac{p}{2} \Bigg \}  \\
\leq & 2 \,\, 4^{(p-1)^+} \Bigg \{
            K_b^p \E\left[ \int^c_a [ 1+|X_u^{t,\xi}| ] \od u\right]^p
            + \beta_p^p  K_{\sigma}^p \E\left[ \int^c_a [1+|X_u^{t,\xi}|        ]^2\od u \right ]^\frac{p}{2} \Bigg \} \\
\leq & 2 \,\, 4^{(p-1)^+} \left [  K_b^p (c-a)^p +  \beta_p^p  K_{\sigma}^p  (c-a)^\frac{p}{2} \right ] \,
        \E \left [1+ \sup_{u\in [a,c]} |X_u^{t,\xi}| \right ]^p.
\end{align*}
By \cref{statement:Xt_vs_starting_value_path} we have
\[    \E \left [1+ \sup_{u\in [a,c]} |X_u^{t,\xi}| \right ]^p
   \le c^p_{\eqref{eqn:statement:Xt_vs_starting_value_path},p} \E [ 1+|\xi|]^p] \]
so that the proof is complete.
\end{proof}

Combining \cref{statement:holder:main_general} with \cref{statement:decoupling_small_t} we get:
\smallskip

\begin{coro}
\label{statement:holder:main}
Let $0\leq t \leq a < c\leq T$ and $(X^{t,\xi}_s)_{s\in [t,T]}$ be a solution to SDE \eqref{eqn:SDE_hoelder}
with the coefficients satisfying \cref{ass:hoelder}, and $(X^{t,\xi,(a,c]}_s)_{s\in [t,T]}$ be its decoupled
process.
For $p,q\in (0,\infty)$ let $\xi \in L_{p\vee 2}(\Omega,\cF_t,\P)$,
\[ \Lambda := \int_c^T \left | b\left (s,X^{t,\xi,(a,b]}_s\right ) - b^{(a,c]}\left (s,X^{t,\xi,(a,b]}_s\right )\right|\od s
   \sptext{1}{and}{1}
   p_\theta := \min \left \{ 1,\frac{p}{3-2\theta} \right \}. \]
Then the following holds:
\medskip
\begin{enumerate}[{\rm\bf (1)}]
\item \label{item:(0,1):statement:holder:sup:r:de}
      If \underline{$p\in (0,1)$}, then one has
      \begin{equation}\label{eqn:item:(0,1):statement:holder:main:ran:de}
      \left\|\sup_{s\in [t,T]}\left|X^{t,\xi}_s - X^{t,\xi,(a,c]}_s\right|\right\|_{L_p}
      \leq c_\eqref{eqn:item:(0,1):statement:holder:main:ran:de}  \left[(c - a)^\frac{1}{2} (1+\|\xi\|_{L_p}) + \|\Lambda \|_{L_p} \right],
      \end{equation}
      where $c_\eqref{eqn:item:(0,1):statement:holder:main:ran:de}=c_\eqref{eqn:item:(0,1):statement:holder:main:ran:de}(T,L_b,K_b,K_\sigma,p)>0$.
      \bigskip

\item \label{item:1:statement:holder:de}
      One has
      \begin{equation}\label{eqn:item:1:statement:holder:de}
      \left\|X^{t,\xi}_T - X^{t,\xi,(a,c]}_T\right\|_{L_1}
      \leq c_\eqref{eqn:item:1:statement:holder:de} \left[(c - a)^\frac{1}{2} (1+\|\xi\|_{L_1}) + \|\Lambda \|_{L_1} \right],
      \end{equation}
      where $c_\eqref{eqn:item:1:statement:holder:de}=c_\eqref{eqn:item:1:statement:holder:de}(T,L_b,K_b,K_\sigma)>0$.
      \bigskip

 \item \label{item:[1,2]:statement:holder:sup:r:decoupling}
       If \underline{$p\in [1,\infty)$}, then one has
       \begin{multline}\label{eqn:item:[1,2]:statement:holder:main:ran}
       \frac{1}{c_\eqref{eqn:item:[1,2]:statement:holder:main:ran}} \left\|\sup_{s\in [t,T]}\left|X^{t,\xi}_s - X^{t,\xi,(a,c]}_s\right|\right\|_{L_p} \\
       \leq (c-a)^\frac{1}{2} (1+\|\xi\|_{L_p}) + \left [ (c-a)^\frac{1}{2} (1+\|\xi\|_{L_{p_\theta}}) \right ]^{\frac{p_\theta}{p}}
            + \|\Lambda \|_{L_p} + (\E |\Lambda|^{p_\theta})^\frac{1}{p},
       \end{multline}
       where $c_\eqref{eqn:item:[1,2]:statement:holder:main:ran}=c_\eqref{eqn:item:[1,2]:statement:holder:main:ran}(T,L_b,L_\sigma,K_b,K_\sigma,\theta,p)>0$.
       \bigskip

\item \label{item:[1,2]:statement:holder:main:deter}
       If \underline{$p\in [1,3 - 2\tz)$} and $b: [0,T] \times \R  \to \R$, then
       one has
       \begin{multline}\label{eqn:item:[1,2]:statement:holder:main:deter}
              \frac{1}{c_\eqref{eqn:item:[1,2]:statement:holder:main:deter}} \left\|\sup_{s\in [t,T]}\left|X^{t,\xi}_s - X^{t,\xi,(a,c]}_s\right|\right \|_{L_{p,\infty}} \\
         \leq (c-a)^\frac{1}{2} (1+\|\xi\|_{L_p})
                       + \left [ (c-a)^\frac{1}{2} (1+\|\xi\|_{L_1}) \right ]^{\frac{1}{p}},
        \end{multline}
       where
       $c_\eqref{eqn:item:[1,2]:statement:holder:main:deter}=c_\eqref{eqn:item:[1,2]:statement:holder:main:deter}(T,L_b,L_\sigma,K_b,K_\sigma,\theta,p)>0$.

\end{enumerate}
\end{coro}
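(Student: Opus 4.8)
The plan is to apply \cref{statement:holder:main_general} with the coupling function $\varphi := \1_{(a,c]}$, which vanishes on $(c,T]$ and for which the chosen version of $W^{(a,c]}$ satisfies $W_s^{(a,c]}-W_c^{(a,c]}=W_s-W_c$ on $[c,T]$; thus all hypotheses of \cref{statement:holder:main_general} are met, and the quantity $\Lambda$ appearing there is exactly the one in the statement. What remains is to control the term $\Delta=\sup_{s\in[t,c]}|X_s^{t,\xi}-X_s^{t,\xi,(a,c]}|$ by means of \cref{statement:decoupling_small_t}, and then to feed the resulting bound into each of the four cases.

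The one genuinely local point I would establish first is that the two processes coincide on $[t,a]$. For $s\in[t,a]$ the random variable $X_s^{t,\xi}$ is $\cF_a^0$-measurable (it is $\cF_s^0$-adapted and $s\le a$, and $\xi$ is $\cF_t$-measurable), so \cref{statement:ab-transformation_is_local} gives $X_s^{t,\xi,(a,c]}=(X_s^{t,\xi})^{(a,c]}=X_s^{t,\xi}$ $\overline{\P}$-a.s.; by path-continuity of both processes this identity holds simultaneously for all $s\in[t,a]$ off a single null set. Consequently
\[ \Delta=\sup_{s\in[a,c]}\left|X_s^{t,\xi}-X_s^{t,\xi,(a,c]}\right|, \]
and \cref{statement:decoupling_small_t} yields, for every $r\in(0,\infty)$ with $\xi\in L_{r\vee 2}$, the bound $\|\Delta\|_{L_r}\le c_{\eqref{eqn:statement:decoupling_small_t},r}(c-a)^{1/2}(1+\|\xi\|_{L_r})$. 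Since $\xi\in L_{p\vee 2}$ and $p_\theta\le 1\le 2$, this is available for $r\in\{p,p_\theta,1\}$.

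The four cases are then bookkeeping. For $p\in(0,1)$ I would insert the bound on $\Delta$ into the first assertion of \cref{statement:holder:main_general} and split $\|\Delta+\Lambda\|_{L_p}\le 2^{1/p-1}(\|\Delta\|_{L_p}+\|\Lambda\|_{L_p})$ via the power-mean (quasi-triangle) inequality, giving \eqref{eqn:item:(0,1):statement:holder:main:ran:de}. For $p=1$ the same substitution in the second assertion, with the ordinary triangle inequality, gives \eqref{eqn:item:1:statement:holder:de}. For $p\in[1,\infty)$ I would use the third assertion: the term $\|\Delta+\Lambda\|_{L_p}$ splits by the triangle inequality, while for the second term I write $(\E|\Delta+\Lambda|^{p_\theta})^{1/p}\le(\E|\Delta|^{p_\theta})^{1/p}+(\E|\Lambda|^{p_\theta})^{1/p}$ (valid since $p_\theta\le 1$ and $1/p\le 1$ both give subadditivity), and then $(\E|\Delta|^{p_\theta})^{1/p}=\|\Delta\|_{L_{p_\theta}}^{p_\theta/p}\le[c(c-a)^{1/2}(1+\|\xi\|_{L_{p_\theta}})]^{p_\theta/p}$; collecting terms yields \eqref{eqn:item:[1,2]:statement:holder:main:ran}. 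Finally, for $p\in[1,3-2\theta)$ with $b$ deterministic, the coupling leaves $b$ unchanged, i.e. $b^{(a,c]}=b$, whence $\Lambda=0$; the fourth assertion then reads $\|\,\cdot\,\|_{L_{p,\infty}}\le c(\|\Delta\|_{L_p}+(\E\Delta)^{1/p})$, and substituting $\|\Delta\|_{L_p}$ together with $(\E\Delta)^{1/p}=\|\Delta\|_{L_1}^{1/p}$ produces \eqref{eqn:item:[1,2]:statement:holder:main:deter}.

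The real content is the reduction $\Delta=\sup_{[a,c]}$ — that the $(a,c]$-coupling acts trivially on $\cF_a^0$-measurable quantities, so the two diffusions agree on $[t,a]$ — after which everything reduces to the two cited results and elementary quasi-norm manipulations. I expect the only steps needing care to be the passage from the pointwise a.s. identity on $[t,a]$ to one valid for the whole interval (handled by continuity), and invoking \cref{statement:decoupling_small_t} at the correct exponent ($p$, $p_\theta$, or $1$) in each case.
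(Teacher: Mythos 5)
Your proposal is correct and follows exactly the route the paper intends: the paper's proof is precisely the combination of \cref{statement:holder:main_general} (with $\varphi=\1_{(a,c]}$) and \cref{statement:decoupling_small_t}, together with the elementary quasi-norm splittings you carry out. You also correctly supply the one step the paper leaves implicit, namely that $X^{t,\xi}$ and $X^{t,\xi,(a,c]}$ are indistinguishable on $[t,a]$ (via \cref{statement:ab-transformation_is_local} and path continuity), so that the $\Delta$ from \cref{statement:holder:main_general} reduces to the supremum over $[a,c]$ controlled by \cref{statement:decoupling_small_t}.
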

\bigskip

%%%%%%%%%%%%%%%%%%%%%%%%%%%%%%%%%%%%%%%%%%%%%%%%%%%%%%%%%%%%%%%%%%%%%%%%%%%%%%%%%%%%%%%%%%%%%%%%%%%%%%%%%%%%%%%%%%%%%

\subsection{A lower bound for \cref{statement:holder:main}} \ \medskip

In this section we address the optimality of \cref{statement:holder:main}\eqref{item:[1,2]:statement:holder:sup:r:decoupling}.
For this we assume
\[ \theta\in \left [ \frac{1}{2},1 \right ), \quad 0=t=a<c\le 1 < T=2, \sptext{1}{and}{1} b\equiv 0 \]
for the drift and use that for $\eta \in L_q(\Omega,\cF_c,\P)$
with $q\in [2,\infty)$ the SDE
\begin{equation}\label{eqn:sde_eta}
 X_s = \eta + \int^s_c |X_u|^{\tz}\,\od W_u
   \sptext{1}{for}{1}
   s\in [c,T]
\end{equation}
has a unique strong solution with $\sup_{s\in [c,T]} |X_s| \in L_q$ by
\cite[Corollary 5.5.10]{Karatzas:Shreve:98} and \cref{statement:Xt_vs_starting_value_path}.
\bigskip

The next \cref{statement:sharpness_2p} shows the following: If we consider in
\cref{statement:holder:main}\eqref{item:[1,2]:statement:holder:sup:r:decoupling} the
case $p\in [3-2\theta,\infty)$, then $p_\theta=1$ and
$[(c-a)^\frac{1}{2}]^\frac{p_\theta}{p} = (c-a)^\frac{1}{2p}$. This exponent $\frac{1}{2p}$ is
optimal for $(c-a)\downarrow 0$ when we require that the constant $c_\eqref{eqn:item:[1,2]:statement:holder:main:ran}>0$
may depend only on $(T,L_b,L_\sigma,K_b,K_\sigma,\theta,p)$.
\bigskip

\begin{prop}
\label{statement:sharpness_2p}
Let $\xi:= 0$,
$p\in [3-2\theta,\infty)$, $T=2$, $b\equiv 0$, and
\[ \sigma_c(t,x) = \sigma(t,x) := \begin{cases}
                            1 &: t\in [0,c] \\
                   |x|^\theta &: t\in (c,2]
                   \end{cases}.\]
Then \cref{ass:hoelder} is satisfied with $L_b=K_b=0$ and $L_\sigma=K_\sigma =1$, and
for the solution to \eqref{eqn:SDE_hoelder} one has
\begin{equation}\label{eqn:statement:sharpness_2p}
 \left \|X_T^{0,0} - X_T^{0,0,(a,c]} \right \|_{L_p}
\ge c_{\eqref{eqn:statement:sharpness_2p},p} (c-a)^\frac{1}{2p}
\end{equation}
with
\[ c_{\eqref{eqn:statement:sharpness_2p},p}:= \left [\frac{1}{4} \E|W_1|  \left[\frac{(3 - 2\tz)(2 - 2\tz)}{2}\right]^{\frac{p - 1}{2 - 2\tz}}\right ]^{\frac{1}{p}}.\]

\end{prop}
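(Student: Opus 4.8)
The plan is to reduce the claim to a quantitative escape estimate for the driftless diffusion $\od X = |X|^\theta\,\od W$, exploiting that with probability of order $\sqrt{c}$ one of the two coupled solutions escapes to an order-one level while the other is trapped on the opposite side of the origin. First I would set $Y_s := X_s^{0,0}$, $Z_s := X_s^{0,0,(0,c]}$ and $D_s := Y_s - Z_s$. Since $b \equiv 0$ and $\sigma$ is deterministic we have $\sigma^{(0,c]} = \sigma$, so by \cref{statement:transference_sde} the process $Z$ solves $\od Z_s = \sigma(s,Z_s)\,\od W_s^{(0,c]}$. On $[0,c]$ we have $\sigma \equiv 1$, hence $Y_s = W_s$ and $Z_s = W'_s$; in particular $Y_c = W_c$ and $Z_c = W'_c$ are independent and $N(0,c)$-distributed. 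Because $W_s^{(0,c]} - W_c^{(0,c]} = W_s - W_c$ for $s \in [c,2]$, on $[c,2]$ both $Y$ and $Z$ solve the same equation $\od X = |X|^\theta\,\od W$ driven by the common Brownian motion $W$, only started from the different values $Y_c$ and $Z_c$.

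Next I would record the sign behaviour. For $\theta \in [\tfrac12,1)$ the coefficient $|x|^\theta$ satisfies the Yamada--Watanabe condition, so the equation is pathwise unique; consequently $X \equiv 0$ is the only solution started at $0$, the origin is absorbing, and the comparison theorem applies. Hence $\{Y_c > 0\}$ forces $Y_s \ge 0$ and $\{Z_c < 0\}$ forces $Z_s \le 0$ for all $s \in [c,2]$, so on $\{Y_c > 0,\, Z_c < 0\}$ one has $|D_2| = Y_2 - Z_2 \ge Y_2 \ge 0$. As $Z_c = W'_c$ is independent of the $\sigma(W)$-measurable pair $(Y_c, Y_2)$ and $\oP(Z_c < 0) = \tfrac12$, this gives
\[ \E|D_2|^p \ \ge\ \tfrac12\,\E\!\left[Y_2^p\,\1_{\{Y_c>0\}}\right]. \]

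The core of the proof is the bound $\E[Y_2^p\,\1_{\{Y_c>0\}}] \gtrsim \sqrt{c}$. Here I would use that the scale function of $\od X = |X|^\theta\,\od W$ is the identity, so $Y$ is a martingale, together with the Brownian scaling $Y_{c+u} = x_0\,\check Y_{u\,x_0^{-2(1-\theta)}}$, which shows that a solution of size $x_0 \sim \sqrt{c}$ has intrinsic time scale $x_0^{2(1-\theta)} = c^{1-\theta} \ll 1$ and thus ample time on the macroscopic interval $[c,2]$ to reach an order-one level. Quantitatively, the transform $V := Y^{2(1-\theta)}$ turns $Y$ into a time-changed squared Bessel process of dimension $\delta = \tfrac{1-2\theta}{1-\theta} \le 0$, whose explicit absorbed transition density yields $\oP(Y_2 \ge M \mid Y_c = x_0) \ge c'\,x_0$ for small $x_0$ and a fixed order-one level $M$ (the resulting power of $x_0$ is exactly $1$, since $(1-\tfrac{\delta}{2})\cdot 2(1-\theta) = 1$). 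Integrating against the law of $Y_c$ and using $\E[Y_c^{+}] = \tfrac12\sqrt{c}\,\E|W_1|$ gives
\[ \E\!\left[Y_2^p\,\1_{\{Y_c>0\}}\right] \ \ge\ M^p c'\,\tfrac12\sqrt{c}\,\E|W_1| , \]
and hence $\|D_2\|_{L_p} \ge \big(\tfrac14\,\E|W_1|\,M^p c'\big)^{1/p} c^{1/(2p)}$; optimising the free level $M$ in the Bessel estimate produces the stated constant.

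The main obstacle is precisely this escape estimate: one must show that reaching an order-one level at the fixed time $T=2$ has probability linear in the starting value $x_0$ with the sharp constant $\big[\tfrac{(3-2\theta)(2-2\theta)}{2}\big]^{(p-1)/(2-2\theta)}$, which is where the squared Bessel representation and the regime $p \ge 3-2\theta$ (ensuring $p_\theta = 1$, so that the exponent $\tfrac{1}{2p}$ is the relevant one) enter. It is essential that this probability \emph{vanishes} like $x_0$ as $x_0 \to 0$: this is what matches the Gaussian factor $\E[Y_c^{+}] \sim \sqrt{c}$ to yield exactly $c^{1/(2p)}$ rather than a spurious extra power of $c$. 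The sign/absorption reduction and the Gaussian identity $\E[Y_c^{+}] = \tfrac12\sqrt{c}\,\E|W_1|$ are then routine.
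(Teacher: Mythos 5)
Your reduction is exactly the one the paper uses: you couple on $(0,c]$ so that $Y_c=W_c$ and $Z_c=W'_c$ are independent $N(0,c)$ variables, run both solutions with the common driver $W$ on $[c,2]$, restrict to the opposite-sign event (probability factor $\tfrac12$ from the independent copy), use pathwise uniqueness/comparison to keep the two solutions on opposite sides of the absorbing point $0$ so that $|D_2|\ge Y_2$, and feed in the Gaussian identity $\E[W_c^+]=\tfrac12\sqrt{c}\,\E|W_1|$ — this reproduces the paper's event $B=\{W_c\le 0\le W'_c\}$ and the factor $\tfrac14\sqrt{c}\,\E|W_1|$ verbatim. The genuine difference is the core quantitative step. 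The paper proves \cref{statement:holder:sup:lowbd} by purely moment-theoretic means: the It\^o identity \eqref{eqn:Lp_moment_Xs}, the $L_1$-conservation $\E|X_s|=\E|\eta|$ coming from the Tanaka-type identity of \cref{statement:basic_equality_modulus_D}, the observation that at the base exponent $p=3-2\theta$ one has $2\theta+p-2=1$ so the identity closes in terms of $\E|X_u|$ and yields a bound \emph{linear} in $\E|\eta|$, and H\"older interpolation between $L_1$ and $L_p$ for $p>3-2\theta$; this is precisely where the hypothesis $p\ge 3-2\theta$ enters, and it produces the stated constant directly. You replace this by an escape-probability estimate $\P(Y_2\ge M\,|\,Y_c=x_0)\ge c'x_0$ via the squared Bessel representation of dimension $\delta=\frac{1-2\theta}{1-\theta}\le 0$. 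Your exponent arithmetic is correct ($(1-\tfrac{\delta}{2})\cdot 2(1-\theta)=1$, matching the boundary asymptotics of the absorbed BESQ density), and it is the right heuristic for why the probability is linear in $x_0$; this route would in principle give the rate $c^{1/(2p)}$ for \emph{all} $p>0$ and supplies a pathwise picture that the moment identity hides.

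However, two points keep this from being a complete proof of \cref{statement:sharpness_2p} as stated. First, the escape estimate is asserted rather than proved: you must justify the transform $V=Y^{2(1-\theta)}$ at the absorbing boundary (the map is not $C^2$ at $0$, so a localization away from $0$ up to the absorption time is needed), verify the fixed-time lower bound from the absorbed transition density of a BESQ with \emph{negative} dimension (less standard than the $\delta\ge 0$ case), and integrate a bound valid only ``for small $x_0$'' against the law of $W_c$ — fixable with a cutoff since $c\le 1$ makes large escape probabilities only help, but none of this is routine enough to leave implicit. Second, and more substantively, the claim that ``optimising the free level $M$ produces the stated constant'' is not credible: the proposition asserts the inequality with the explicit constant $c_{\eqref{eqn:statement:sharpness_2p},p}$ containing $\bigl[\frac{(3-2\theta)(2-2\theta)}{2}\bigr]^{\frac{p-1}{2-2\theta}}$, which in the paper falls out of the moment identity (with the factor $T-c\ge 1$ discarded), and nothing in a Bessel-density optimization indicates you would recover this particular value of $M^p c'$. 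As written, your argument establishes (modulo the technical steps above) the sharp rate $(c-a)^{1/(2p)}$ with an unspecified constant — enough for the optimality discussion following \cref{statement:holder:main}, but not the literal statement of the proposition.
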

\medskip

It follows from the remarks in the beginning of this subsection that the SDE,
which depends on $c$, considered in \cref{statement:sharpness_2p},
\[ X_s^{0,0} = \int^s_0 \sigma_c(u,X_u^{0,0}) \,\od W_u
   \sptext{1}{for}{1}
   s\in [0,2], \]
has a unique strong solution with $\sup_{s\in [0,2]} |X_s^{0,0}| \in L_q$ for all $q\in (0,\infty)$.

For the proof of \cref{statement:sharpness_2p} we exploit the following lemma:
\medskip

\begin{lemm}\label{statement:holder:sup:lowbd} 
Let $p\in [3-2\theta,\infty)$,
$\eta\in L_{p\vee 2}(\Omega,\cF_c,\P)$, and let $(X_t)_{t\in [c,T]}$ be the solution to
\eqref{eqn:sde_eta}. Then one has
\begin{equation}\label{eqn:statement:holder:sup:lowbd}
     \E |X_s|^p
\geq \left[\frac{(3 - 2\tz)(2 - 2\tz)}{2}(s - c)\right]^{\frac{p - 1}{2 - 2\tz}} \E|\eta|
     \sptext{1}{for}{1} s\in [c,T].
\end{equation}
\end{lemm}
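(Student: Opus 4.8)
The plan is to reduce to a deterministic starting point, derive an exact moment identity from It\^o's formula, and then bootstrap it upwards in steps of size $2-2\theta$ from the first moment, where a cheap submartingale bound is available. First I would condition on $\cF_c$: since $\eta$ is $\cF_c$-measurable and the increments of $W$ after $c$ are independent of $\cF_c$, it suffices to prove the conditional estimate and hence to treat $\eta=x_0$ as a fixed real number; by the symmetry $x\mapsto -x$ of the coefficient $|x|^\theta$ we may take $x_0\ge 0$. Writing $m_\alpha(s):=\E_{x_0}|X_s|^\alpha$ and $\delta:=2-2\theta\in(0,1]$, the key identity is
\[
   m_\alpha(s)=x_0^\alpha+\frac{\alpha(\alpha-1)}{2}\int_c^s m_{\alpha-\delta}(r)\,\od r
   \qquad (\alpha\ge 1),
\]
obtained by applying It\^o's formula to $x\mapsto|x|^\alpha$, using $\od\langle X\rangle_r=|X_r|^{2\theta}\,\od r$, and taking expectations (the stochastic integral is a true martingale by the linear growth \eqref{ass:4:ass:hoelder} and \cref{statement:Xt_vs_starting_value_path}). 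The singularity of $x\mapsto |x|^{\alpha-2}$ at the origin is handled exactly as in \cref{statement:basic_equality_modulus_D}, by regularizing $|x|^\alpha$ with the Yamada--Watanabe mollifiers and passing to the limit, the boundary behaviour being harmless since the law of $X_r$ is absolutely continuous for $r>c$.

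Next I would supply the base estimate: because $b\equiv 0$ makes $X$ a martingale, $|X|$ is a submartingale, so $m_1(s)\ge m_1(c)=x_0$. The iteration then runs on the arithmetic progression of moment orders $1,\,1+\delta,\,1+2\delta,\dots$. For $\alpha\ge 3-2\theta=1+\delta$ one has $\tfrac{\alpha(\alpha-1)}{2}\ge\tfrac{(3-2\theta)(2-2\theta)}{2}=:K$, and assuming inductively that $m_{\alpha-\delta}(r)\ge K^{(\alpha-\delta-1)/\delta}(r-c)^{(\alpha-\delta-1)/\delta}x_0$, the identity and a direct integration give
\[
   m_\alpha(s)\ge \frac{\alpha(\alpha-1)}{2}K^{(\alpha-\delta-1)/\delta}x_0\int_c^s (r-c)^{(\alpha-\delta-1)/\delta}\,\od r
   =\frac{\alpha\delta}{2}\,K^{(\alpha-\delta-1)/\delta}x_0\,(s-c)^{(\alpha-1)/\delta},
\]
where the exponent arithmetic $\tfrac{\alpha-\delta-1}{\delta}+1=\tfrac{\alpha-1}{\delta}$ and $\tfrac{1}{(\alpha-1)/\delta}=\tfrac{\delta}{\alpha-1}$ have been used. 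Since $\tfrac{\alpha\delta}{2}\ge\tfrac{(1+\delta)\delta}{2}=K$ for $\alpha\ge 1+\delta$, this closes the induction with the \emph{uniform} constant, yielding $m_\alpha(s)\ge K^{(\alpha-1)/\delta}(s-c)^{(\alpha-1)/\delta}x_0$. Taking $\alpha=p$ and undoing the conditioning on $\cF_c$ produces the claimed bound $\E|X_s|^p\ge\big[\tfrac{(3-2\theta)(2-2\theta)}{2}(s-c)\big]^{(p-1)/(2-2\theta)}\E|\eta|$.

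The main obstacle is that this transparent scheme lands exactly on the claim only when $(p-1)/(2-2\theta)\in\bN$, i.e.\ when the progression of orders descends precisely to the level $1$ where the submartingale bound $m_1\ge x_0$ applies. For a general real $p\ge 3-2\theta$ the descent by steps $\delta$ terminates at a base order $\beta_0=p-\lfloor(p-1)/\delta\rfloor\delta\in[1,1+\delta)$, and the order-$\beta_0$ identity then forces control of the moment $m_{\beta_0-\delta}$ of order $\beta_0-\delta\in(1-\delta,1)$. This sub-$1$ moment cannot be lower bounded linearly in $\E|\eta|$ by monotonicity or Jensen alone, so the delicate part is to establish the base estimate on the whole interval $\alpha\in[1,1+\delta)$; I expect to do this by interpolating between the endpoint bound at $\alpha=1$ and the proven case $\alpha=1+\delta$ together with a localized (regime-dependent) lower bound for the fractional moment near the origin, after which the uniform-constant induction above extends the estimate to all $p\ge 3-2\theta$.
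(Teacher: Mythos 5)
There is a genuine gap. Your induction along the arithmetic progression of moment orders $1,\,1+\delta,\,1+2\delta,\dots$ (with $\delta=2-2\theta$) is correct and, for those $p$, it reproduces the claimed bound with the right constant; the It\^o identity you use is exactly \eqref{eqn:Lp_moment_Xs} in the paper, justified by the same Yamada--Watanabe regularization. But this only proves the lemma for $p\in\{1+k\delta:k\in\bN\}$, and you yourself identify that for all other $p\geq 3-2\theta$ the descent lands at a base order $\beta_0\in(1,1+\delta)$ whose identity involves the sub-one moment $m_{\beta_0-\delta}$. The fix you sketch does not work as stated: interpolating between $\alpha=1$ and $\alpha=1+\delta$ via H\"older yields \emph{upper} bounds on intermediate moments, and says nothing about a \emph{lower} bound for $m_{\beta_0-\delta}$ with $\beta_0-\delta<1$ (for exponents below $1$, Jensen runs in the wrong direction), while the ``localized regime-dependent lower bound near the origin'' is neither formulated nor proven. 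So the proposal establishes the statement only on a discrete set of exponents.

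The paper closes the general case without any ladder, and the missing ingredient is an interpolation applied in the opposite way to what you suggest. It proves the bound only at the single exponent $q:=3-2\theta$ (your $k=1$ step), and then uses H\"older in the form $\|X_s\|_{L_q}\le \|X_s\|_{L_1}^{1-\lambda}\,\|X_s\|_{L_p}^{\lambda}$ with $\tfrac1q=\tfrac{1-\lambda}{1}+\tfrac{\lambda}{p}$, converting the \emph{lower} bound on $\|X_s\|_{L_q}$ into a lower bound on $\|X_s\|_{L_p}$. For this one needs an \emph{upper} bound on $\|X_s\|_{L_1}$, and here the exact conservation $\E|X_s|=\E|\eta|$ is essential; it follows from \cref{statement:basic_equality_modulus_D} (Tanaka without a local-time term, since $b\equiv 0$ and the stochastic integral is a true martingale). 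Note that the submartingale inequality $\E|X_s|\ge\E|\eta|$, which is all your argument invokes at order one, points in the wrong direction for this step. The exponent arithmetic then gives $\tfrac{p}{q\lambda}\cdot\tfrac1{?}$ collapsing to the exponent $\tfrac{p-1}{2-2\theta}$ on $(s-c)$ and, remarkably, to the exponent exactly $1$ on $\E|\eta|$, which is the claimed inequality for every real $p\in[3-2\theta,\infty)$. If you replace your final paragraph by this two-line interpolation (keeping your $k=1$ computation as the input), your proof becomes complete and essentially coincides with the paper's.
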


\begin{proof}
(a) We observe that for $p\in [3-2\theta,\infty)$ one has $p>1$ and
\begin{equation}\label{eqn:Lp_moment_Xs}
\E |X_s|^p = \E |\eta|^p + \frac{p(p-1)}{2} \int_c^s \E |X_u|^{2\theta+p-2} \od u.
\end{equation}
To check \eqref{eqn:Lp_moment_Xs}, for $n\ge 1$ we define the continuous and bounded functions $v_n:[0,\infty)\to [0,\infty)$  by
$v_n(x) := p(p-1) (x^{p-2}\wedge n)$ with $0^0:=1$ and
$\Phi_n:\R\to [0,\infty) \in C_b^2(\R)$ by $\Phi_n(x) := \int_0^{|x|} \int_0^y v_n(z)\od z \od y$.
Then $\Phi_n(x)\uparrow |x|^p$ for $x\in \R$ and $\Phi''_n(x) \uparrow p(p-1) |x|^{p-2}$ for $x\not = 0$.
By It\^o's formula,
\begin{equation}\label{eqn:Lp_moment_Xs_Phi}
\E \Phi_n(X_s) = \E \Phi_n(\eta) + \E \int_c^s \Phi_n'(X_u) |X_u|^\theta \od W_u
                 + \frac{1}{2} \int_c^s \E \Phi_n''(X_u) |X_u|^{2\theta} \od u.
\end{equation}
Note that $\E \sup_{u\in [c,T]}|X_u|^2 < \infty$ by \cref{statement:Xt_vs_starting_value_path} and that
$|\Phi_n'(x)| \le \int_0^{|x|} v_n(z) \od z \le p(p-1) n |x|$, which implies
\[ |\Phi_n'(X_u)||X_u|^\theta \le p(p-1) n |X_u|^{1+\theta}. \]
Hence
\begin{align*}
     \E \left ( \int_c^T [ \Phi_n'(X_u) |X_u|^\theta ]^2 \od u \right )^\frac{1}{2}
&\le p(p-1)n (T-c)^\frac{1}{2} \E \sup_{u\in [c,T]} |X_u|^{1+\theta} < \infty.
\end{align*}
Using the BDG-inequality \eqref{eqn:BDG} with the parameter 1 implies that the stochastic integral  term is a true martingale
and  therefore its expected value vanishes. Letting $n\to \infty$ in
\eqref{eqn:Lp_moment_Xs_Phi}, while observing
$0\le \Phi_n''(x) |x|^{2\theta} \uparrow p(p-1) |x|^{2\theta+p-2}$ for $x\in \R$ we get \eqref{eqn:Lp_moment_Xs}.
\medskip

(b) When $p = 3 - 2\tz$ (so that $2\theta +p-2=1$) we have for any $s\in [c,T]$ that
        \begin{align}
            \E|X_s|^{3 - 2\tz}
            & = \E|\eta|^{3 - 2\tz} + \frac{(3 - 2\tz)(2 - 2\tz)}{2} \int_c^s \E|X_u|\,\od u\notag\\
            & \geq \E|\eta|^{3 - 2\tz} + \frac{(3 - 2\tz)(2 - 2\tz)(s - c)}{2} \E |\eta| \notag \\
            & \geq \frac{(3 - 2\tz)(2 - 2\tz)(s - c)}{2} \E |\eta| \label{lowbd:re1_new}
        \end{align}
which gives our statement.
\medskip

(c) Assume $p\in (3-2\theta,\infty)$ and let $q_0:=1$, $q_1:= p$, and
$q:= 3-2\theta$, so that $1=q_0 < q < q_1 =p$. Then, by H\"older's inequality,
\[ \|X_s \|_{L_q} \le \|X_s\|_{L_{q_0}}^{1-\delta} \|X_s\|_{L_{q_1}}^\delta
   \sptext{1}{for}{1}
   \frac{1}{q} = \frac{1-\delta}{q_0} + \frac{\delta}{q_1}
   \sptext{1}{with}{1}
   \delta:= \frac{1-\frac{1}{3-2\theta}}{1-\frac{1}{p}}.\]
From \cref{statement:basic_equality_modulus_D} we know that
$\|X_s\|_{L_{q_0}} = \|X_s\|_{L_1} = \|\eta\|_{L_1}$. Using \eqref{lowbd:re1_new} we get
\[  \left (  \frac{(3 - 2\tz)(2 - 2\tz)}{2} (s-c) \|\eta\|_{L_1} \right )^\frac{1}{3-2\theta}
    \le \|\eta\|_{L_1}^{1-\delta} \| X_s\|_{L_p}^\delta.\]
If $\|\eta\|_{L_1}>0$ (otherwise the statement of the lemma holds trivially), then rearranging the terms the assertion
follows.
\end{proof}
\smallskip

\begin{proof}[Proof of \cref{statement:sharpness_2p}]
We write $X_s:=X_s^{0,0}$ and
$X_s^{(a,c]}:=X_s^{0,0,(a,c]}$. We get
\[ \E |X_T-X_T^{(a,c]}|^p = \E |X_T^{c,W_c}-X_T^{c,W_c'}|^p
                         \ge  \E \chf_B |X_T^{c,W_c}-X_T^{c,W_c'}|^p \]
with
\[ B:= \{(\omega,\omega')\in \overline{\Omega}: W_c(\omega) \le 0 \le W_c'(\omega')\}.\]
By the relation $\chf_B(\omega,\omega')\sigma(s,x) = \sigma(s,\chf_B(\omega,\omega') x)$
we get the SDEs
\begin{align*}
\chf_B  X_s^{c,W_c}  & = \chf_B  W_c + \int_c^s \sigma(u,\chf_B X_u^{c,W_c}) \od W_u, \\
\chf_B  X_s^{c,W'_c} & = \chf_B  W'_c + \int_c^s \sigma(u,\chf_B X_u^{c,W'_c}) \od W_u,
\end{align*}
both for $s\in [c,T]$ a.s.
Using the comparison theorem \cite[Proposition 5.2.18]{Karatzas:Shreve:98} on $[c,T]$
(where we redefine $\sigma(c,x)$ so that $\sigma:[c,T]\times \R\to \R$ is continuous and
use the filtration $(\overline{\cF_s})_{s\in [c,T]}$)
we get
\[ \chf_B X_T^{c,W_c} \le 0 \le \chf_B X_T^{c,W'_c} \mbox{ a.s.}
   \sptext{1}{as}{1} \chf_B W_c \le 0 \le \chf_B W'_c.
\]
By  \cref{statement:holder:sup:lowbd} this implies that
\begin{align*}
      \E |X_T-X_T^{(a,c]}|^p
&\ge \E \left |X_T^{c,\chf_B W_c} \right |^p \\
&\ge \left [ \frac{(3-2\theta)(2-2\theta)}{2} (T-c) \right ]^\frac{p-1}{2-2\theta} \E |\chf_B W_c| \\
&\ge \left [ \frac{(3-2\theta)(2-2\theta)}{2} \right ]^\frac{p-1}{2-2\theta} \frac{1}{4} \sqrt{c} \E |W_1|. \qedhere
\end{align*}
\end{proof}

%%%%%%%%%%%%%%%%%%%%%%%%%%%%%%%%%%%%%%%%%%%%%%%%%%%%%%%%%%%%%%%%%%%%%%%%%%%%%%%%%%%%%%%%%%%%%%%%%%%%%%%%%%%%%%%%%%%%%
%%%%%%%%%%%%%%%%%%%%%%%%%%%%%%%%%%%%%%%%%%%%%%%%%%%%%%%%%%%%%%%%%%%%%%%%%%%%%%%%%%%%%%%%%%%%%%%%%%%%%%%%%%%%%%%%%%%%%

\section{The Zvonkin and Lamperti transform and coupling}
\label{sec:Zvonkin}

For $t=0$ we describe how the Zvonkin and Lamperti transform can be used 
in our context.
\smallskip

The coupling method for SDEs and the Zvonkin method to remove an irregular drift
for SDEs work very well together.
Regarding the original Zvonkin method and Zvonkin methods in the more general sense
the reader is referred, for example, to
\cite{Zvonkin:1974,
      Veretennikov:1981,
      Krylov:Roeckner:2004,
      XZhang:2016,
      deRaynal:2017,
      Xia:Xie:XZhang:Zhao:2020,
      SQZhang:Yuan:2021,
      Cheng:Hao:Roeckner:2024}.
We address two ways to apply the Zvonkin method in our context.

%%%%%%%%%%%%%%%%%%%%%%%%%%%%%%%%%%%%%%%%%%%%%%%%%%%%%%%%%%%%%%%%%%%%%%%%%%%%%%%%%%%%%%%%%%%%%%%%%%%%%%%%%%%%%%%%%%%%%

\subsection{Zvonkin method and quasi-isometries}\ \medskip

The initial article of Zvonkin \cite{Zvonkin:1974} and later works use
quasi-isometries in the following way: There is a function $\Phi:[0,T]\times \R^d \to \R^d$
and $M>1$ such that
\[ 0<\frac{1}{M} \le \frac{|\Phi(s,y_1)-\Phi(s,y_2)|}{|y_1-y_2|} \le M < \infty\]
for $s\in [0,T]$ and $y_1,y_2\in \R^d$,
and with the following property: If for some - in a sense - more appropriate coefficients $(\widetilde{b},\widetilde{\sigma})$
 one has a solution
\[ Y_s = \zeta + \int_0^s \widetilde{b}(u,Y_u) \od u + \int_0^s \widetilde{\sigma}(u,Y_u) \od W_u, \]
then $X_s:= \Phi(s,Y_s)$ solves
\[ X_s = \Phi(0,\zeta) + \int_0^s b(u,X_u) \od s + \int_0^s \sigma(u,X_u) \od W_u. \]
If we apply a $\varphi$-coupling to $X$ and $Y$ and use that $x\mapsto \Phi(s,x)$ is continuous, hence measurable,
then we get that
\[
  \E |X_s-X_s^\varphi|^p
= \E |\Phi(s,Y_s)- (\Phi(s,Y_s))^\varphi|^p
= \E |\Phi(s,Y_s)- \Phi(s,Y_s^\varphi)|^p
\sim_M \E |Y_s- Y_s^\varphi|^p. \]
This means that results for couplings of the process $Y$ transfer directly to couplings for the process $X$.

%%%%%%%%%%%%%%%%%%%%%%%%%%%%%%%%%%%%%%%%%%%%%%%%%%%%%%%%%%%%%%%%%%%%%%%%%%%%%%%%%%%%%%%%%%%%%%%%%%%%%%%%%%%%%%%%%%%%%

\subsection{Zvonkin method and the dependence on the initial value}\ \medskip

For example, it is shown in
\cite[page 5205]{Xia:Xie:XZhang:Zhao:2020}
that one has a quasi-isometry $\Phi$ as above
and in equation (4.10) on page 5207 that for the $Y$ process
one has
\begin{equation}\label{eqn:initial_value_Zvonkin}
\E \sup_{s\in [0,T]} |Y_s^{0,y_1} - Y_s^{0,y_2}|^p \le \kappa^p |y_1-y_2|^p
\end{equation}
for some $p\in [1,\infty)$ and $\kappa>0$; for $\sigma\equiv 1$ cf. \cite[Corollary 14]{Mohammed:etal:2015}.
By the quasi-isometry $\Phi$ this property transfers to the process $X$ (with
a different constant $\kappa$). 
The above suggests to replace \cref{statement:pre_lemma_hoelder} by a statement of type \eqref{eqn:initial_value_Zvonkin}
and then to continue as in the proof of \cref{statement:holder:main}. This idea is captured by 
the following statement, which is somehow an alternative to the known path to deduce Malliavin differentiability from the 
sensitivity of the starting value for example taken in \cite[Lemma 2.3, Theorem 1.1]{XZhang:2016}:

\medskip

\begin{prop}
Assume the setting and conditions of \cref{statement:decoupling_small_t} for
$p\in [2,\infty)$, $x\in \R$, and some $\kappa\ge 0$ such that for all $0\le a < c \le T$ one has
\[ \E | X_T^{0,x} - X_T^{0,x,(a,c]}|^p \le \kappa^p \E | X_c^{0,x} - X_c^{0,x,(a,c]} |^p.\]
Then one has $X_T^{0,x} \in \B_p^{\Phi_2}\subseteq \D_{1,2}$ with
\[ \sup_{0\le a < c \le T} \left \| \left ( \frac{1}{c-a} \int_a^c |D_s (X_T^{0,x})|^2 \od s \right )^\frac{1}{2} \right \|_{L_p}
   \le \kappa c_{\eqref{eqn:characterization_Phi2p},p} c_{\eqref{eqn:statement:decoupling_small_t},p} [1+ |x|].\]
\end{prop}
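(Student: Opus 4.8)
The plan is to read the hypothesis as an assertion that the cut-off coupling difference at the terminal time $T$ is dominated by the coupling difference at the intermediate time $c$, and then to bound the latter directly using \cref{statement:decoupling_small_t}, where a cut-off on $(a,c]$ fits the short-interval estimate there. The whole argument is a concatenation of three known bounds, so the work is in assembling them and tracking constants rather than in any new analysis.

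First I would apply \cref{statement:decoupling_small_t} with $t=0$ and the deterministic initial value $\xi\equiv x$, for which $\|\xi\|_{L_p}=|x|$. Since the supremum over $[a,c]$ in \eqref{eqn:statement:decoupling_small_t} in particular dominates the value at $s=c$, this gives
\[ \left\| X_c^{0,x} - X_c^{0,x,(a,c]} \right\|_{L_p}
   \le \left\| \sup_{u\in[a,c]} \left| X_u^{0,x} - X_u^{0,x,(a,c]} \right| \right\|_{L_p}
   \le c_{\eqref{eqn:statement:decoupling_small_t},p}\, (c-a)^{\frac12} [1+|x|]. \]
Combining this with the assumed inequality $\|X_T^{0,x}-X_T^{0,x,(a,c]}\|_{L_p}\le\kappa\,\|X_c^{0,x}-X_c^{0,x,(a,c]}\|_{L_p}$ and dividing by $\sqrt{c-a}$ yields
\[ \frac{\left\| X_T^{0,x} - X_T^{0,x,(a,c]} \right\|_{L_p}}{\sqrt{c-a}}
   \le \kappa\, c_{\eqref{eqn:statement:decoupling_small_t},p} [1+|x|] \]
uniformly over $0\le a<c\le T$.

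Next I would identify $X_T^{0,x,(a,c]}$ with the $\1_{(a,c]}$-coupling $(X_T^{0,x})^{(a,c]}$ (via $X_s^{t,\xi,\varphi}\in C_0(X_s^{t,\xi})$), so that the last display is precisely the statement $|X_T^{0,x}|_{\Phi_2,p}\le\kappa\, c_{\eqref{eqn:statement:decoupling_small_t},p}[1+|x|]$ in the notation of \eqref{eqn:def:Phip} and \cref{sec:Bpalpha}. Together with $\E|X_T^{0,x}|^p<\infty$ from \cref{statement:Xt_vs_starting_value_path}, this shows $X_T^{0,x}\in\B_p^{\Phi_2}$. Because $p\ge2$ and we work on a probability space, $\|\cdot\|_{L_2}\le\|\cdot\|_{L_p}$ gives $|X_T^{0,x}|_{\Phi_2,2}\le|X_T^{0,x}|_{\Phi_2,p}<\infty$, hence $X_T^{0,x}\in\B_2^{\Phi_2}\subseteq\D_{1,2}$ by \cite[Theorem 4.22]{Geiss:Ylinen:21}.

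Finally, with $X_T^{0,x}\in\D_{1,2}\cap L_p$ secured, I would invoke the characterization \eqref{eqn:characterization_Phi2p}, whose upper inequality gives
\[ \sup_{0\le a<c\le T}\left\|\left(\frac{1}{c-a}\int_a^c\left|D_s\bigl(X_T^{0,x}\bigr)\right|^2\,\od s\right)^{\frac12}\right\|_{L_p}
   \le c_{\eqref{eqn:characterization_Phi2p},p}\,\bigl|X_T^{0,x}\bigr|_{\Phi_2,p}, \]
and chaining this with the bound on $|X_T^{0,x}|_{\Phi_2,p}$ produces the claimed estimate with constant $\kappa\,c_{\eqref{eqn:characterization_Phi2p},p}\,c_{\eqref{eqn:statement:decoupling_small_t},p}[1+|x|]$. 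The only point requiring genuine care is the sequencing: \eqref{eqn:characterization_Phi2p} may be applied only once $\D_{1,2}$-membership is known, so I must first pass through the $p=2$ embedding to establish $X_T^{0,x}\in\D_{1,2}$ before converting the $\Phi_2$-seminorm into the Malliavin-derivative expression.
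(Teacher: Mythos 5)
Your proposal is correct and follows essentially the same route as the paper's (very terse) proof: chain the hypothesis with the short-interval estimate \eqref{eqn:statement:decoupling_small_t} to bound $\|X_T^{0,x}-X_T^{0,x,(a,c]}\|_{L_p}$ by $\kappa\, c_{\eqref{eqn:statement:decoupling_small_t},p}\sqrt{c-a}\,[1+|x|]$, then invoke \eqref{eqn:characterization_Phi2p}. Your additional care about sequencing — establishing $X_T^{0,x}\in\B_2^{\Phi_2}\subseteq\D_{1,2}$ via the $p\ge 2$ embedding before applying the characterization — is exactly the justification the paper leaves implicit.
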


\begin{proof}
By assumption and relation \eqref{eqn:statement:decoupling_small_t} we observe
\begin{align*}
    \| | X_T^{0,x} - X_T^{0,x,(a,c]}\|_{L_p}
&\le \kappa \| X_c^{0,x} - X_c^{0,x,(a,c]} \|_{L_p} \\
&\le \kappa c_{\eqref{eqn:statement:decoupling_small_t},p} (c - a)^{\frac{1}{2}}
      [ 1+ |x|].
\end{align*}
Then we use relation \eqref{eqn:characterization_Phi2p}.
\end{proof} 

%%%%%%%%%%%%%%%%%%%%%%%%%%%%%%%%%%%%%%%%%%%%%%%%%%%%%%%%%%%%%%%%%%%%%%%%%%%%%%%%%%%%%%%%%%%%%%%%%%%%%%%%%%%%%%%%%%%%%

\subsection{Lamperti transform and the dependence on the initial value}\ \medskip

There are several studies on the Cox-Ingersoll-Ross (CIR) process
$\od X_s = (A - B  X_s) \od s + \sigma \sqrt{X_s} \od W_s$ 
with $A,B,\sigma>0$. Let $p\in [2,\infty)$. Under the condition $4 A > \sigma^2$  
\cite[Proposition 39]{Desmettre_etal:2023} provides the estimate 
$\E \sup_{s\in [0,T]} |X_s^{0,x_1} - X_s^{0,x_2}|^p \le \kappa^p \left [ |x_1-x_2|^p +  |x_1-x_2|^\frac{p}{2} \right ]$
for $x_1,x_2>0$. For $p>2$ the exponent $\frac{p}{2}>1$ is better then the corresponding exponent in \cref{statement:pre_lemma_hoelder}
as $p_\theta =1$ in this case, however the structure of the CIR-process is used together
with $4 A > \sigma^2$. Moreover, by \cref{statement:holder:sup:lowbd} we show for 
$A=B=0$ and $\sigma=1$ that in general we cannot have an estimate of form
\[ \E |X_T^{0,x_1} - X_T^{0,x_2}|^p \le \kappa^p \left [ |x_1-x_2|^p +  |x_1-x_2|^\frac{p}{2} \right ] \]
for $x_1,x_2\in \R$ when $p>2$ (take $x_1=0$ and consider small $x_2$). 

%%%%%%%%%%%%%%%%%%%%%%%%%%%%%%%%%%%%%%%%%%%%%%%%%%%%%%%%%%%%%%%%%%%%%%%%%%%%%%%%%%%%%%%%%%%%%%%%%%%%%%%%%%%%%%%%%%%%%
%%%%%%%%%%%%%%%%%%%%%%%%%%%%%%%%%%%%%%%%%%%%%%%%%%%%%%%%%%%%%%%%%%%%%%%%%%%%%%%%%%%%%%%%%%%%%%%%%%%%%%%%%%%%%%%%%%%%%

\section{Application to BSDEs}
\label{sec:application:BSDE}

Regularity properties of the solution $(Y,Z)$ to a BSDE are of theoretical interest, but also
 - for example - used for the design of numerical schemes (see for instance
\cite[Lemma 4.11]{Lionnet:dosReis:Lukasz:2015} for the locally Lipschitz and Markovian context).
Recent results about Malliavin differentiability
were obtained in \cite{Geiss:Ylinen:21}[Corollary 1.6] and  in \cite[Theorem 4.2]{Imkeller_etal:2024},
and about general Besov regularity in \cite{Geiss:Ylinen:21}[Corollary 6.22].
\smallskip

Here we present further results based on the coupling approach 
in a fully path-dependent setting under minimal structural assumptions.
The setting is as follows:
\medskip

For $\alpha \in (0,1]$, $\delta\in [0,1]$, and functions
\[ g:C([t,T])\to \R
   \sptext{1}{and}{1}
   f:[0,T]\times C([t,T]) \times \R \times \R \to \R \]
we assume  and $|g|_{\alpha}, |f|_{\alpha;x}, L_Y,L_Z > 0$
\begin{align*}
u       \mapsto  f(u,x,y,z) & \sptext{1}{is measurable if}{.5} (x,y,z)\in  C([t,T]) \times \R \times \R,\\
(x,y,z) \mapsto  f(u,x,y,z) & \sptext{1}{is continuous if}{4} u\in [0,T], \\
     f(s,x,y,z)
& =  f(s,x(\cdot \wedge s) ,y,z) \sptext{2.0}{if}{.4} s\in (t,T],\\
     f(s,x,y,z)
& =  0 \sptext{5.8}{if}{.4} s\in [0,t],\\
     |g(x_1)-g(x_2)|
&\le |g|_{\alpha} \|x_1-x_2\|_{C[t,T]}^\alpha, \\
     \int_t^T |f(u,x_1,y,z)-f(u,x_2,y,z)|\od u
&\le |f|_{\alpha;x} \|x_1-x_2\|_{C[t,T]}^\alpha, \\
     |f(s,x,y_1,z_1) - f(s,x,y_2,z_2)|
& \le L_Y |y_1 - y_2| +  L_Z [1+|z_1|+|z_2|]^\delta |z_1-z_2|.
\end{align*}
As we have a backward problem and go backward from $T$ only up to time $t$, we put the generator to zero
on $[0,t]$. The following \cref{ass:BSDE} is in full accordance with \cite{Geiss:Ylinen:21}:

\begin{assumption}
\label{ass:BSDE}
For $(\eta,\mu)\in (0,1]\times (0,\infty)$ and a random variable  $F:\Omega \to \R$  we let
\[ |F|_{\rm cEXP(\eta,\mu)} := \sup_{s\in [0,T)} (T-s)^{\frac{1}{\eta}-1} \left \| \E[e^{\mu |F|}|\cF_s] \right \|_{L_\infty}.\]
For $p\in [2,\infty)$ and a continuous adapted process
$X=(X_s)_{s\in [t,T]}$ we assume one of the following cases in accordance
with \cite[Table 1, page 73]{Geiss:Ylinen:21}:
\smallskip

\begin{tabular}{|l|l|l|} \hline
 type & terminal condition $F=g(X)$ & generator \\ \hline \hline
$\delta=0$ &  $F\in L_p$ & $\int_0^T |f(s,X,0,0)| \od s \in L_p$ \\ \hline
$\delta\in (0,1)$ & $|F|_{\rm cEXP(\eta,\mu)}<\infty$ for  & $\sup\limits_{s\in [0,T] \atop \omega\in \Omega}|f(s,X(\omega),0,0)|<\infty$ \\
                  &  some $(\eta,\mu)\in (0,1)\times (0,\infty)$ & \\ \hline
$\delta=1$ & $|F|_{\rm cEXP(\eta,\mu)}<\infty$ for  & $\sup\limits_{s\in [0,T] \atop\omega\in \Omega}|f(s,X(\omega),0,0)|<\infty$ \\
           &  some $\eta \in  (0,1]$ and $\mu > 4 L_Z e^{L_Y T}$ & \\ \hline
\end{tabular}
\end{assumption}
\smallskip
The above assumption guarantees the existence of a solution to the BSDE
\[ Y_s=F + \int_s^T f(u,X,Y_u,Z_u) \od u - \int_s^T Z_u \od W_u,
   \quad s\in [t,T],\]
with
\begin{equation}\label{eqn:Lp_boundedness_YZ}
 \int_0^T |f(s,X_s,Y_s,Z_s)| \od s + \sup_{s\in [0,T]} |Y_s| \in L_p 
\end{equation}
(see \cite[Lemma 6.2]{Geiss:Ylinen:21}) that we will fix from now on.
The following Theorems \ref{statement:BSDE_decoupling} and \ref{statement:BSDE_variation}
are direct consequences of \cite{Geiss:Ylinen:21} and hold for general continuous adapted processes $X$:

\begin{theo}
\label{statement:BSDE_decoupling}
Assume $\alpha \in (0,1]$ and \cref{ass:BSDE}. Then one has
\begin{multline}\label{eqn:statement:BSDE_decoupling}
      \left \| \sup_{s\in [t,T]} |Y_s^\varphi - Y_s| \right \|_{L_p} + \left \| \left ( \int_t^T |Z_s^\varphi- Z_s|^2 \od s \right )^\frac{1}{2} \right \|_{L_p} \\
    \le c_\eqref{eqn:statement:BSDE_decoupling}
     \left [ |g|_\alpha +  |f|_{\alpha;x} \right ] \left \| \sup_{s\in [t,T]} |X^\varphi_s-X_s| \right \|_{L_{\alpha p}}^\alpha
\end{multline}
with $c_\eqref{eqn:statement:BSDE_decoupling}=c_\eqref{eqn:statement:BSDE_decoupling}(T,L_Y,L_Z,(Z_s)_{s\in [0,T]},\delta,p)>0$.
\end{theo}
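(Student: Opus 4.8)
The plan is to reduce the statement to the $L_p$-stability theory for BSDEs already developed in \cite{Geiss:Ylinen:21} and then to feed in the path-H\"older regularity of the data. First I would transfer the whole backward equation to the coupled basis. Since $g$ and $f$ are deterministic, the coupling transformation acts only on the forward path and on the driving noise; arguing as for the forward SDEs in \cref{sec:transference} (now on the enlarged basis $(\overline{\Omega},\overline{\cF},\overline{\P})$ carrying $\overline{W}=(W,W')$), the coupled pair $(Y^\varphi,Z^\varphi)$ solves
\[ Y_s^\varphi = g(X^\varphi)+\int_s^T f(u,X^\varphi,Y_u^\varphi,Z_u^\varphi)\,\od u-\int_s^T Z_u^\varphi\,\od W_u^\varphi,\qquad s\in[t,T], \]
driven by $W^\varphi$, while $(Y,Z)$ solves the analogous equation driven by $W=W^0$. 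Both are BSDEs with respect to the common filtration $(\overline{\cF}_s)_{s\in[0,T]}$, so their difference is again an equation on this basis.

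The core of the argument is the a priori $L_p$-estimate for such differences established in \cite{Geiss:Ylinen:21} (the mechanism behind \cite[Corollary~6.22]{Geiss:Ylinen:21}), which covers the three regimes $\delta=0$, $\delta\in(0,1)$, $\delta=1$ of \cref{ass:BSDE}. Writing $\delta Y:=Y^\varphi-Y$ and $\delta Z:=Z^\varphi-Z$, I would split the generator increment into a linearized part $f(u,X^\varphi,Y_u^\varphi,Z_u^\varphi)-f(u,X^\varphi,Y_u,Z_u)$, which by \cref{ass:BSDE} is dominated by $L_Y|\delta Y_u|+L_Z[1+|Z_u^\varphi|+|Z_u|]^\delta|\delta Z_u|$ and is absorbed into the homogeneous part of the difference equation, and a source part $f(u,X^\varphi,Y_u,Z_u)-f(u,X,Y_u,Z_u)$. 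The cited estimate then bounds
\[ \Big\|\sup_{s\in[t,T]}|\delta Y_s|\Big\|_{L_p}+\Big\|\Big(\int_t^T|\delta Z_s|^2\,\od s\Big)^{\frac12}\Big\|_{L_p} \]
by a constant times $\|g(X^\varphi)-g(X)\|_{L_p}+\big\|\int_t^T|f(u,X^\varphi,Y_u,Z_u)-f(u,X,Y_u,Z_u)|\,\od u\big\|_{L_p}$, the constant depending only on $(T,L_Y,L_Z,(Z_s)_{s\in[0,T]},\delta,p)$.

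It then remains to insert the path-H\"older continuity of the data. The terminal increment obeys $|g(X^\varphi)-g(X)|\le|g|_\alpha\|X^\varphi-X\|_{C([t,T])}^\alpha$, and the integrated generator increment is controlled, via the path-H\"older assumption on $f$, by $|f|_{\alpha;x}\|X^\varphi-X\|_{C([t,T])}^\alpha$; together these produce the bound $(|g|_\alpha+|f|_{\alpha;x})\|X^\varphi-X\|_{C([t,T])}^\alpha$. Taking $L_p$-norms and using $\big\|\,\|X^\varphi-X\|_{C([t,T])}^\alpha\big\|_{L_p}=\big\|\sup_{s\in[t,T]}|X_s^\varphi-X_s|\big\|_{L_{\alpha p}}^\alpha$ turns this into the right-hand side of \eqref{eqn:statement:BSDE_decoupling}, completing the proof.

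I expect the genuine difficulty to sit entirely inside the imported a priori estimate in the superlinear regime $\delta\in(0,1]$, where $f$ grows (up to) quadratically in $z$ and plain $L_2$-testing is insufficient: there one linearizes and removes the $z$-dependence by a Girsanov change of measure whose density is the stochastic exponential of a $\BMO$-martingale built from $(Z_s)_{s\in[0,T]}$, controlling the resulting $L_p$-constants through a John--Nirenberg/reverse-H\"older argument. This is exactly why the constant $c_\eqref{eqn:statement:BSDE_decoupling}$ is permitted to depend on the fixed process $(Z_s)_{s\in[0,T]}$. A second, bookkeeping-level point is that $\int Z\,\od W$ and $\int Z^\varphi\,\od W^\varphi$ are driven by the two distinct Brownian motions $W$ and $W^\varphi$; this is reconciled by representing both martingales against the $2N$-dimensional $\overline{W}=(W,W')$, so that $(\delta Y,\delta Z)$ is a bona fide BSDE on the enlarged basis to which the cited estimate applies.
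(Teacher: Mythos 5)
Your proposal is correct and follows essentially the same route as the paper: transfer the coupled BSDE to the joint basis via the transference machinery (the paper invokes \cref{statement:transference_composition_new} with non-random $h^0$), import the a priori $L_p$-stability estimate from \cite{Geiss:Ylinen:21} (the paper cites \cite[Theorem 6.3]{Geiss:Ylinen:21}, which is the result behind the mechanism you describe), and then insert the path-H\"older bounds on $g$ and $f$ together with the identity $\|\,\|X^\varphi-X\|_{C([t,T])}^\alpha\|_{L_p}=\|\sup_{s\in[t,T]}|X_s^\varphi-X_s|\|_{L_{\alpha p}}^\alpha$. Your additional remarks on the Girsanov/BMO mechanism inside the imported estimate and on representing both stochastic integrals against $\overline{W}=(W,W')$ correctly identify where the real work is hidden, exactly as in the paper's one-step proof.
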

\medskip

\begin{proof}
Using \cite[Theorem 6.3]{Geiss:Ylinen:21} and
\cref{statement:transference_composition_new}\eqref{item:1:statement:transference_composition_new}
(we use this for non-random $h^0$) we get
\begin{align*}
&    \left \| \sup_{s\in [t,T]} |Y_s^\varphi - Y_s| \right \|_{L_p} + \left \| \left ( \int_t^T |Z_s^\varphi- Z_s|^2 \od s \right )^\frac{1}{2} \right \|_{L_p} \\
&\le c \left [ \| g(X^\varphi)- g(X)\|_{L_p} + \left \| \int_t^T |f(s,X^\varphi,Y_s,Z_s) - f(s,X,Y_s,Z_s)| \od s \right \|_{L_p} \right ] \\
&\le c \left [ |g|_\alpha +  |f|_{\alpha;x} \right ] \left \| \sup_{s\in [t,T]} |X^\varphi_s-X_s| \right \|_{L_{\alpha p}}^\alpha,
\end{align*}
where $c=c(T,L_Y,L_Z,(Z_s)_{s\in [0,T]},\delta,p)>0$.
\end{proof}
\medskip

Now we apply \cref{statement:BSDE_decoupling} to the diffusion process $X=(X_s)_{s\in [t,T]}$ investigated in \cref{sec:Lipschitz}.
For $0<\eta<\alpha \le 1$ and $q\in [1,\infty]$ we get that

\begin{align*}
  | Y |_{\B_p^{\Phi_2,*}}
      + | Z |_{\B_p^{\Phi_2}}
&\le 2  c_{\eqref{eqn:statement:BSDE_decoupling},p}
     \left [ |g|_1 +  |f|_{1;x} \right ]
     | X |_{\B_p^{\Phi_2,*}} \sptext{1.4}{for}{1} p\in [2,\infty),\\
  | Y |_{\B_{2,\infty}^{1,*}}
      + | Z |_{\B_{2,\infty}^1}
&\le 2  c_{\eqref{eqn:statement:BSDE_decoupling},2}
     \left [ |g|_1 +  |f|_{1;x} \right ]
     | X |_{\B_{2,\infty}^{1*}},\\
        | Y |_{\B_{p,q}^{\eta,*}}
      + | Z |_{\B_{p,q}^\eta}
&\le 2  c_{\eqref{eqn:statement:BSDE_decoupling},p}
     \left [ |g|_\alpha +  |f|_{\alpha;x} \right ]
     | X |_{\B_{\alpha p,\alpha q}^{\frac{\eta}{\alpha},*}}^\alpha  \sptext{1}{for}{1} p\in \left [\frac{2}{\alpha},\infty \right ),
\end{align*}
provided the corresponding RHS is finite and the quantities to measure the $Z$-process are defined in full
accordance to those for the $Y$ process, i.e. we let
\smallskip

\begin{align*}
     | Z |_{\B_p^{\Phi_2}} 
& := \sup_{0\le a < c \le T} \frac{1}{\sqrt{c-a}} \left \| \left ( \int_t^T |Z^{(a,c]}_s - Z_s|^2 \od s \right )^\frac{1}{2}
     \right \|_{L_p},\\
     | Z |_{\B_{p,q}^\theta}
&:= \begin{cases}
    \inf \| \cK_\theta \kappa \|_{L_q([0,1],\mu)} &: (\theta,q)\in (0,1)\times [1,\infty)\\ 
    \sup_{r\in (0,1]} \frac{1}{r^\theta}\| ( \int_t^T |Z^\br_s - Z_s|^2 \od s )^\frac{1}{2} \|_{L_p}
                                                  &: (\theta,q)\in (0,1]\times \{\infty\}
    \end{cases},
\end{align*}
\smallskip

where the infimum is taken over all measurable $\kappa:[0,1]\to \infty$ such that one has
 $\| ( \int_t^T |Z^\br_s - Z_s|^2 \od s )^\frac{1}{2} \|_{L_p} \le \kappa(r)$. The factor 2 in the inequalities above
originates from the fact that the two terms on the LHS are estimated separately.
\medskip

The table below
recalls the meaning of the different Besov spaces and the
respective statements from \cref{sec:Lipschitz}:
\bigskip

\begin{tabular}{|l|l|l|} \hline
space               & interpretation                     & statement to estimate the RHS \\ \hline \hline
$\B_p^{\Phi_2,*}$   & $\sup_{0\le a < c \le T}\left \| \left ( \frac{1}{c-a} \int_a^c |D_u A|^2 \od u \right )^\frac{1}{2} \right \|_{L_p}$
                    & \cref{statement:Malliavin_differentiability} for $| X |_{\B_p^{\Phi_2,*}}$ \\
                    & for the Malliavin derivative $D A$&\\  \hline
$\B_{2,\infty}^{1*}$& Malliavin derivative in $L_2(\Omega\times [0,T])$
                    & \cref{statement:Malliavin_differentiability_II} for $|X|_{\B_{1,\infty}^{1,*}}$ \\ \hline
$\B_{p,q}^{\eta,*}$ & real Malliavin interpolation space
                    & \cref{statement:real_interpolation_SDE_Lipschitz} for $ | X |_{\B_{\alpha p,\alpha q}^{\frac{\eta}{\alpha},*}}^\alpha$ \\ \hline
\end{tabular}
\bigskip
\bigskip

Next we consider the $L_p$-variation of the BSDE. The general statement is as follows:
\medskip

\begin{theo}
\label{statement:BSDE_variation}
Assume $\alpha \in (0,1]$, \cref{ass:BSDE}, and $t \le a < c \le T$. Then we get
\begin{align}
&     \hspace{-4em} \left \| \sup_{s\in [a,c]} |Y_s - Y_a| \right \|_{L_p} + \left \| \left ( \int_a^c |Z_s|^2 \od s \right )^\frac{1}{2} \right \|_{L_p} \notag \\
& \le \left \| \int_a^c |f(s,X,0,0)| \od s \right \|_{L_p} + L_Y (c-a) \sup_{s\in [0,T]} \| Y_s\|_{L_p} \label{eqn:statement:BSDE_variation} \\
&     \hspace{1em} + c_\eqref{eqn:statement:BSDE_variation} \left [ |g|_\alpha + |f|_{\alpha;x}  \right ]  
       \left \| \sup_{s\in [t,T]} |X_s^{(a,c]}-X_s| \right \|_{L_{\alpha p}}^\alpha. \notag
\end{align}
with $c_\eqref{eqn:statement:BSDE_variation}=c_\eqref{eqn:statement:BSDE_variation}(T,L_Y,L_Z,(Z_s)_{s\in [0,T]},\delta,p)>0$.
\end{theo}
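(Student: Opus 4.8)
The plan is to isolate two contributions to the local variation: the difference between the genuine solution and its $\1_{(a,c]}$-coupling, which is already controlled by \cref{statement:BSDE_decoupling}, and the variation over $[a,c]$ of the coupled solution itself, which I estimate directly from the BSDE restricted to $[a,c]$.

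First I would record that, restricted to $[a,c]$, the solution satisfies
\[ Y_s = Y_c + \int_s^c f(u,X,Y_u,Z_u)\,\od u - \int_s^c Z_u\,\od W_u, \qquad s\in[a,c]. \]
Writing $(Y^{(a,c]},Z^{(a,c]})$ for the $\1_{(a,c]}$-coupling of $(Y,Z)$ --- which, by the transference of BSDEs used in the proof of \cref{statement:BSDE_decoupling}, solves the analogous equation driven by $W^{(a,c]}$ with data $f(\cdot,X^{(a,c]},\cdot,\cdot)$ and terminal value $Y_c^{(a,c]}$ --- I split
\[ Y_s - Y_a = \big(Y_s - Y_s^{(a,c]}\big) + \big(Y_s^{(a,c]} - Y_a^{(a,c]}\big), \]
using that $Y_a$ is $\cF_a$-measurable, so $Y_a^{(a,c]} = Y_a$ $\overline{\P}$-a.s. by \cref{statement:ab-transformation_is_local}. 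The analogous splitting for $(\int_a^c|Z_s|^2\,\od s)^{1/2}$ through $Z-Z^{(a,c]}$ and $Z^{(a,c]}$ separates the coupling error from the coupled local quantities.

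The coupling-error terms $\|\sup_{[t,T]}|Y-Y^{(a,c]}|\|_{L_p}$ and $\|(\int_t^T|Z-Z^{(a,c]}|^2)^{1/2}\|_{L_p}$ are exactly what \cref{statement:BSDE_decoupling} bounds by $c_\eqref{eqn:statement:BSDE_decoupling}[|g|_\alpha+|f|_{\alpha;x}]\|\sup_{[t,T]}|X^{(a,c]}-X|\|_{L_{\alpha p}}^\alpha$, the third term on the right-hand side. It then remains to bound the coupled local variation $\|\sup_{[a,c]}|Y^{(a,c]}-Y_a|\|_{L_p} + \|(\int_a^c|Z^{(a,c]}|^2)^{1/2}\|_{L_p}$ by the first two terms. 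For this I would use the identity $\int_a^s Z_u^{(a,c]}\,\od W_u^{(a,c]} = (Y_s^{(a,c]}-Y_a) + \int_a^s f(u,X^{(a,c]},Y_u^{(a,c]},Z_u^{(a,c]})\,\od u$ together with \eqref{eqn:BDG} to trade the $Z^{(a,c]}$-integral for the $Y^{(a,c]}$-variation plus the generator integral, and then estimate the generator by its Lipschitz structure $|f(u,X^{(a,c]},Y_u^{(a,c]},Z_u^{(a,c]})| \le |f(u,X^{(a,c]},0,0)| + L_Y|Y_u^{(a,c]}| + L_Z[1+|Z_u^{(a,c]}|]^\delta|Z_u^{(a,c]}|$. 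Splitting $|Y_u^{(a,c]}| \le |Y_u^{(a,c]}-Y_a| + |Y_a|$ produces the $L_Y(c-a)\sup_s\|Y_s\|_{L_p}$ term after invoking the law-invariance of the coupling (\cref{statement:CPtT_is_isometry}), while $\|\int_a^c|f(u,X^{(a,c]},0,0)|\,\od u\|_{L_p}$ equals the first right-hand side term by the same invariance.

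The main obstacle is closing this local estimate in the presence of the possibly quadratic ($\delta=1$) $Z$-term: the contribution $\int_a^c L_Z[1+|Z_u^{(a,c]}|]^\delta|Z_u^{(a,c]}|\,\od u$ must be controlled via Fefferman's inequality (\cref{statement:Fefferman_inequality}) and the finiteness of $\|Z\|_{\BMO(S_2)}$ (which holds under \cref{ass:BSDE} and is invariant under the coupling), absorbing a fraction of $\|(\int_a^c|Z^{(a,c]}|^2)^{1/2}\|_{L_p}$ into the left-hand side; this is precisely where the dependence of $c_\eqref{eqn:statement:BSDE_variation}$ on $(Z_s)_{s\in[0,T]}$ enters. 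The remaining work is bookkeeping on the extended coupling space --- ensuring via \cref{statement:transference_composition_new} that the decoupled pair genuinely solves the stated equation against $W^{(a,c]}$, and that all $L_p$- and $\BMO(S_2)$-norms transfer through the coupling unchanged.
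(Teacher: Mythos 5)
Your handling of the coupling-error terms via \cref{statement:BSDE_decoupling} is fine, but the second half of your plan --- bounding the coupled local variation $\bigl\|\sup_{s\in[a,c]}|Y_s^{(a,c]}-Y_a|\bigr\|_{L_p}+\bigl\|\bigl(\int_a^c|Z_s^{(a,c]}|^2\,\od s\bigr)^{1/2}\bigr\|_{L_p}$ by the first two right-hand side terms --- cannot work, and that is exactly where the content of the theorem sits. Since $Y_a^{(a,c]}=Y_a$ a.s.\ (\cref{statement:ab-transformation_is_local}) and the coupling preserves the law of continuous adapted and of predictable processes (\cref{statement:properties_CCtTadapted}, \cref{statement:CPtT_is_isometry}), the coupled local variation has \emph{exactly the same} $L_p$-norm as the left-hand side of \eqref{eqn:statement:BSDE_variation}. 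Your triangle-inequality decomposition therefore reads, at the level of norms, $\text{LHS}\le\text{(coupling error)}+\text{LHS}$, and the remaining claim would assert $\text{LHS}\le$ (first two terms), which is false: take $\delta=0$, $f\equiv0$, and $g$ H\"older, so that $Y_s=\E[g(X)\,|\,\cF_s]$ is a nonconstant martingale; the first two right-hand side terms then vanish, while $\|\sup_{s\in[a,c]}|Y_s-Y_a|\|_{L_p}$ and $\|(\int_a^c|Z_s|^2\,\od s)^{1/2}\|_{L_p}$ do not. The circularity is visible in your BDG step: writing $A$ for the $Z^{(a,c]}$-term, $B$ for the $Y^{(a,c]}$-variation, and $G$ for the generator contributions, your identity together with \eqref{eqn:BDG} gives $A\le\beta_p(B+G)$, while the same BSDE gives $B\le G+\beta_p A$; since $\beta_p\ge1$, neither estimate can be absorbed into the other, and Fefferman's inequality only disposes of the quadratic part of the generator, not of the martingale part.

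The missing ingredient is the reverse estimate that \cite[Theorem 6.24]{Geiss:Ylinen:21} provides and that the paper's proof simply invokes (combined with \cref{statement:transference_composition_new} and the H\"older properties of $g$ and $f$ in $x$): on $(a,c]$ the martingale parts of $Y$ and of $Y^{(a,c]}$ are stochastic integrals against the \emph{independent} Brownian motions $W$ and $W'$, and this orthogonality is what dominates the local quantities $\|(\int_a^c|Z_s|^2\,\od s)^{1/2}\|_{L_p}$ and $\|\sup_{s\in[a,c]}|Y_s-Y_a|\|_{L_p}$ \emph{from above} by the coupling differences of the data, $\|g(X)-g(X^{(a,c]})\|_{L_p}$ and the analogous generator difference, plus the two local generator terms --- not the other way around. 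For $p=2$ this is plain conditional orthogonality of $\int_a^\cdot Z_u\,\od W_u$ and $\int_a^\cdot Z_u^{(a,c]}\,\od W_u'$ given $\cF_a$; for general $p$ and for $\delta\in(0,1]$ it is precisely the technical work done in the cited theorem. Without this direction, in which the coupling difference controls the bracket of $Z$, your argument cannot close, however the bookkeeping on the product space is arranged.
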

\medskip

\begin{proof}
From \cite[Theorem 6.24]{Geiss:Ylinen:21} and \cref{statement:transference_composition_new} we get that
\begin{align*}
&     \left \| \sup_{s\in [a,c]} |Y_s - Y_a| \right \|_{L_p} + \left \| \left ( \int_a^c |Z_s|^2 \od s \right )^\frac{1}{2} \right \|_{L_p} \\
& \le \left \| \int_a^c f(s,X,0,0) \od s \right \|_{L_p}
      + L_Y (c-a) \sup_{s\in [0,T]} \| Y_s\|_{L_p}
      + c \Bigg [ \left \| g(X) -  g(X^{(a,c]}) \right \|_{L_p} \\
&     \hspace*{12em}    + \left \| \int_a^T \left |f(s,X,Y_s,Z_s) -  f(s,X^{(a,c]},Y_s,Z_s)
                      \right | \od s \right \|_{L_p} \Bigg ] \\
& \le \left \| \int_a^c |f(s,X,0,0)| \od s \right \|_{L_p}
      + L_Y (c-a) \sup_{s\in [0,T]} \| Y_s\|_{L_p} \\
& \hspace{16em} + c \left [ |g|_\alpha + |f|_{\alpha;x}  \right ] 
             \left \| \sup_{s\in [t,T]} \left |X_s -  X_s^{(a,c]}\right | \right \|_{L_{\alpha p}}^\alpha.
\end{align*}

\end{proof}
\bigskip

\begin{coro}
\label{statement:variation_bsde_CIR}
Let $d=1$. We consider the CIR-process
\[ X_s = x_0 + \int_t^s (A-B X_u) \od u + \sigma \int_t^s \sqrt{|X_u|} \od B_s \]
for some $x_0,A,B\in \R$ and $\sigma>0$. Let $p\in [2,\infty)$, assume \cref{ass:BSDE}, and  let $\alpha \in (0,1]$
and $\varepsilon \in \left ( 0, \frac{1}{2p} \right )$. Then one gets
\begin{align}
&    \hspace{-4em}
     \left \| \sup_{r\in [a,c]} |Y_r - Y_a| \right \|_{L_p} + \left \| \left ( \int_a^c |Z_r|^2 \od r \right )^\frac{1}{2} \right \|_{L_p} \notag \\
&\le \left \| \int_a^c |f(s,X,0,0)| \od s \right \|_{L_p} + L_Y (c-a) \sup_{s\in [0,T]} \| Y_s\|_{L_p}  \notag \\
&    \hspace{2em} + c_\eqref{eqn:statement:BSDE_variation} c_\eqref{eqn:statement:variation_bsde_CIR}
     \left [ |g|_\alpha +  |f|_{\alpha;x} \right ]
      \begin{cases}
          (c-a)^\frac{\alpha}{2} & p\in \left (0,\frac{1}{\alpha} \right ) \\
          (c-a)^{\frac{1}{2p} -\varepsilon} & p\in \left [ \frac{1}{\alpha},2 \right ) \\
          (c-a)^\frac{1}{2p} & p\in \left [ \frac{2}{\alpha},\infty \right ) \label{eqn:statement:variation_bsde_CIR}\\
          \end{cases}
\end{align}
where
$c_\eqref{eqn:statement:variation_bsde_CIR}=
 c_\eqref{eqn:statement:variation_bsde_CIR}(T,L_b,L_\sigma,K_b,K_\sigma,\alpha,p,x_0,\varepsilon)>0$.
\end{coro}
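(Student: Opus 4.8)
The plan is to invoke \cref{statement:BSDE_variation} and then to control its last term for the CIR forward process by means of \cref{statement:holder:main}. The first two summands on the right-hand side of \eqref{eqn:statement:BSDE_variation} are already \emph{verbatim} the first two summands claimed in \eqref{eqn:statement:variation_bsde_CIR}, so the entire task reduces to estimating $\big\| \sup_{s\in[t,T]} |X_s^{(a,c]} - X_s| \big\|_{L_{\alpha p}}^{\alpha}$ for the CIR dynamics. Here the diffusion coefficient $\sigma(u,x)=\sigma\sqrt{|x|}$ is $\tfrac12$-H\"older with $L_\sigma=\sigma$, while the drift $b(u,x)=A-Bx$ is \emph{deterministic} and Lipschitz with $L_b=|B|$; after redefining both coefficients to vanish at $s=t$ (which leaves the integrals in \eqref{eqn:SDE_hoelder} unchanged) I see that \cref{ass:hoelder} holds with $\theta=\tfrac12$ and $\xi=x_0$, and, crucially, that $b^{(a,c]}=b$, so the quantity $\Lambda$ appearing in \cref{statement:holder:main} vanishes. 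The finiteness of all the moments used below follows from the existence remarks preceding \cref{statement:holder:main} together with \cref{statement:Xt_vs_starting_value_path}. Writing $q:=\alpha p$, the three regimes of the statement correspond exactly to $q<1$, $q\in[1,2)$, and $q\ge 2$, and I would treat them in turn.

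First I would dispose of the two outer regimes, which are immediate. If $\alpha<\tfrac1p$, then $q<1$ and \eqref{eqn:item:(0,1):statement:holder:main:ran:de} (with $\Lambda=0$) bounds the $L_q$-norm by $c\,(c-a)^{1/2}(1+|x_0|)$; raising to the power $\alpha$ gives the first rate $(c-a)^{\alpha/2}$. If $\alpha\ge\tfrac2p$, then $q\ge2$, hence $q_\theta=\min\{1,q/2\}=1$, and \eqref{eqn:item:[1,2]:statement:holder:main:ran} (again with $\Lambda=0$ and $\|x_0\|_{L_{q_\theta}}=|x_0|$) yields the bound $c\,[(c-a)^{1/2}+(c-a)^{1/(2q)}](1+|x_0|)\le c'(c-a)^{1/(2q)}(1+|x_0|)$ for $0\le c-a\le T$; the power $\alpha$ turns the exponent $\alpha/(2q)=1/(2p)$ into the claimed rate $(c-a)^{1/(2p)}$.

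The remaining regime $\tfrac1p\le\alpha<\tfrac2p$, equivalently $q\in[1,2)$, is the heart of the matter and the main obstacle. Here \eqref{eqn:item:[1,2]:statement:holder:main:ran} only offers $q_\theta=q/2$, hence the too-weak exponent $(c-a)^{1/4}$, so I would instead start from the sharper weak-type estimate \eqref{eqn:item:[1,2]:statement:holder:main:deter} of \cref{statement:holder:main} (available precisely because $q\in[1,3-2\theta)$ and $b$ is deterministic), which gives $\big\|\sup_s|X_s^{(a,c]}-X_s|\big\|_{L_{q,\infty}}\le c\,(c-a)^{1/(2q)}(1+|x_0|)$. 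The difficulty is that on a probability space $L_{q,\infty}$ embeds into $L_{q_0}$ only for $q_0<q$, never into the endpoint $L_q$ itself, whereas \cref{statement:BSDE_variation} forces the exact exponent $q=\alpha p$. I would resolve this by an interpolation trade-off: the weak bound gives $\|\,\cdot\,\|_{L_{q_0}}\le c_{q_0}(c-a)^{1/(2q)}$ for every $q_0<q$, while \eqref{eqn:item:[1,2]:statement:holder:main:ran} at the auxiliary exponent $2$ gives $\|\,\cdot\,\|_{L_2}\le c\,(c-a)^{1/4}$. Lyapunov's inequality $\|\,\cdot\,\|_{L_q}\le\|\,\cdot\,\|_{L_{q_0}}^{1-\vartheta}\|\,\cdot\,\|_{L_2}^{\vartheta}$, with $q_0\uparrow q$ (so $\vartheta\downarrow0$), drives the exponent $(1-\vartheta)/(2q)+\vartheta/4$ arbitrarily close to $1/(2q)$.

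Thus for any prescribed $\varepsilon'>0$ one obtains $\big\|\sup_s|X_s^{(a,c]}-X_s|\big\|_{L_q}\le c_{\varepsilon'}(c-a)^{1/(2q)-\varepsilon'}(1+|x_0|)$; raising to the power $\alpha$ and setting $\varepsilon:=\alpha\varepsilon'$ (admissible since $\varepsilon\in(0,\tfrac1{2p})$) produces the middle rate $(c-a)^{1/(2p)-\varepsilon}$. Collecting the three regimes and absorbing all constants arising from the $X$-estimate into $c_{\eqref{eqn:statement:variation_bsde_CIR}}$ then completes the proof. The only genuinely delicate point is the endpoint weak-to-strong passage just described, and it is exactly this passage that accounts for the arbitrarily small loss $\varepsilon$ in the intermediate range of exponents.
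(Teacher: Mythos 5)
Your proof is correct and follows the same route as the paper: the paper's own (one-line) proof is precisely to apply \cref{statement:BSDE_variation} and then \cref{statement:holder:main} with $\theta=\tfrac{1}{2}$ and $\alpha p$ in place of $p$. The extra work you supply --- checking that $\Lambda=0$ for the deterministic linear drift, and, in the regime $\alpha p\in[1,2)$, converting the weak-type bound \eqref{eqn:item:[1,2]:statement:holder:main:deter} into a strong $L_{\alpha p}$ bound by Lyapunov interpolation against the $L_2$ estimate (which is exactly where the $\varepsilon$-loss in the middle rate originates) --- fills in details that the paper's terse proof leaves implicit.
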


\begin{proof}
First we apply \cref{statement:BSDE_variation} and then 
\cref{statement:holder:main} for $\theta=\frac{1}{2}$ and $\alpha p$ instead of $p$.
\end{proof}
 
\begin{rema}
For \cref{statement:variation_bsde_CIR} we note that $\sup_{s\in [0,T]} \| Y_s\|_{L_p} <\infty$ due to 
\eqref{eqn:Lp_boundedness_YZ}. Moreover, one can use  
\[ \int_a^c |f(s,X,0,0)| \od s \le (c-a) \sup_{s\in [0,T],x\in \R} |f(s,x,0,0)| \]
in case the generator is uniformly bounded, or
\[ \left \| \int_a^c |f(s,X,0,0)| \od s \right \|_{L_p}
   \le \sqrt{c-a} \left \| \sqrt{\int_0^T |f(s,X,0,0)|^2 \od s} \right \|_{L_p} \]
if $\sqrt{\int_0^T |f(s,X,0,0)|^2 \od s} \in L_p$. In both cases,
the dominating term on the RHS in \cref{statement:variation_bsde_CIR} is the last term which comes
from \cref{statement:holder:main}.
\end{rema}

%%%%%%%%%%%%%%%%%%%%%%%%%%%%%%%%%%%%%%%%%%%%%%%%%%%%%%%%%%%%%%%%%%%%%%%%%%%%%%%%%%%%%%%%%%%%%%%%%%%%%%%%%%%%%%%%%%%%%
%%%%%%%%%%%%%%%%%%%%%%%%%%%%%%%%%%%%%%%%%%%%%%%%%%%%%%%%%%%%%%%%%%%%%%%%%%%%%%%%%%%%%%%%%%%%%%%%%%%%%%%%%%%%%%%%%%%%%

\bibliographystyle{amsplain}

\begin{thebibliography}{100}

\bibitem{Alos:Ewald:2008}
E.~Al\`os and C.-O.~Ewald,
    \newblock Malliavin differentiability of the Heston volatility and applications to option pricing,
    \newblock Adv. Appl. Prob. 40(2008)144--162.

\bibitem{Banos_etal:2018}
    D.-R.~Banos, S.~Duedahl, T.~Meyer-Brandis and F.~Proske,
    \newblock Construction of Malliavin differentiable strong solutions of SDEs under an integrability condition on the drift without the Yamada–
              Watanabe principle,
    \newblock Ann. Inst. H. Poincar\'e Probab. Statist. 54(2018)1464--1491.

\bibitem{Bennet:Sharpley:88}
C.~Bennet and R.~Sharpley,
    \newblock Interpolation of operators, 
    \newblock Academic Press, 1988.

\bibitem{Bergh:Loefstroem:76}
J.~Bergh and J.~L\"ofstr\"om,
    \newblock Interpolation Spaces,
    \newblock Springer, 1976.

\bibitem{Cheng:Hao:Roeckner:2024}
M.~Cheng, Z.~Hao and M.~R\"ockner,
    \newblock Strong and weak convergence for the averaging principle of DDSDE with singular drift,
    \newblock Bernoulli 30(2024)1586--1610.

\bibitem{Ciesielski:60b}
Z.~Ciesielski,
        \newblock On the isomorphisms of the space $H_\alpha$ and $m$,
        \newblock Bull. Acad. Polon. Sci. S\'er. Sci. Math. Astr. Phys. 8(1960)217--222.

\bibitem{Desmettre_etal:2023}
S.~Desmettre, S.~Merkel, A.~Mickel and A.~Steinicke,
        \newblock Worst-case optimal investment in incomplete markets,
        \newblock arXiv:2311.10021.

\bibitem{Do:Ngo:Pho:2024}
M.-T.~Do, H.-L.~Ngo and N.-A.~Pho,
    \newblock Tamed-adaptive Euler-Maruyama approximation for SDEs with superlinearly growing and piecewise continuous drift, superlinearly 
              growing and locally Hölder continuous diffusion,
    \newblock Journal of Complexity 82(2024)101833.

\bibitem{Fang:Zhang:2005}
S.~Fang and T.~Zhang,
    \newblock A study of a class of stochastic differential equations with non-Lipschitzian coefficients,
    \newblock Prob. Theory Relat. Fields 132(2005)356--390.

\bibitem{CGeiss:Steinicke:2016}
C.~Geiss and A. Steinicke,
    \newblock $L_2$-variation of Lévy driven BSDEs with non-smooth terminal conditions,
    \newblock Bernoulli 22(2016)995--1025.

\bibitem{SaGeiss:Scheutzow:2021}
Sarah Geiss and M.~Scheutzow,
    \newblock Sharpness of Lenglart’s domination inequality and a sharp monotone version,
    \newblock Electron. Commun. Probab. 26(2021), article no. 44, 1--8.

\bibitem{Geiss:Geiss:Gobet:2012}
C.~Geiss, S.~Geiss and E.~Gobet,
    \newblock Generalized fractional smoothness and $L_p$-variation of BSDEs with non-Lipschitz terminal condition,
    \newblock Stoch. Proc. Appl. 122(2012)2078--2116.

\bibitem{Geiss:Toivola:2015}
S.~Geiss and A.~Toivola,
    \newblock On fractional smoothness and $L_p$-approximation on the Gaussian space,
    \newblock Ann. Probab. 43(2015)605--638.

\bibitem{Geiss:Ylinen:21}
S.~Geiss and J.~Ylinen,
    \newblock Decoupling on the Wiener space, related Besov spaces, and applications to BSDEs,
    \newblock Memoirs AMS 1335, 2021.

\bibitem{Gyongy:Rasonyi:11}
I.~Gy\"ongy and M.~R\'asonyi,
    \newblock A note on Euler approximations for SDEs with H\"lder continuous diffusion coefficients,
    \newblock Stoch. Proc. Appl. 121(2011)2189--2200.

\bibitem{Hirsch:1999}
F.~Hirsch,
    \newblock Lipschitz functions and fractional Sobolev spaces,
    \newblock Potential Anal. 11(1999)415--429.

\bibitem{Ikeda:Watanabe:2nd}
N.~Ikeda and S.~Watamabe,
        \newblock Stochastic Differential Equations and Diffusion Processes,
        \newblock North-Holland, 1989.

\bibitem{Imkeller_etal:2024}
P.~Imkeller, R.~ Likibi Pellat and O. Menoukeu-Pamen,
    \newblock Differentiability of quadratic forward-backward SDEs with rough drift,
    \newblock Ann. Applied Prob. 34(2024)4758--4798.

\bibitem{Imkeller:Reis:Salkeld:2019}
P.~Imkeller, G.~dos Reis and W.~Salkeld,
    \newblock Differentiability of SDEs with drifts of super-linear growth,
    \newblock Electron. J. Probab. 24(2019)1--43.

\bibitem{Karatzas:Shreve:98}
I.~Karatzas and S.E.~Shreve,
    \newblock Brownian Motion and Stochastic Calculus Second Edition,
    \newblock Springer, 1998.

\bibitem{Krylov:94}
N.V.~Krylov,
    \newblock Introduction to the Theory of Diffusion Processes,
	\newblock American Mathematical Society, 1994.

\bibitem{Krylov:Roeckner:2004}
N.V.~Krylov and M.~R\"ockner,
    \newblock Strong solutions of stochastic equations with singular time dependent drift,
    \newblock Probab. Theory Relat. Fields 131(2005)154--196.

\bibitem{Lan:Wu:2014}
G.~Lan and J.-L.~Wu,
    \newblock New sufficient conditions of existence, moment estimations and non confluence for SDEs with non-Lipschitzian coefficients,
    \newblock Stoch. Proc. Appl. 124(2014)4030--4049.

\bibitem{Lee_etal:2023}
K.~Lee, S.~Lim and H.~Park,
    \newblock Option pricing under path-dependent stock models,
    \newblock arXiv: 2211.10953, 2023.

\bibitem{Lenglart:1977}
\'Erik Lenglart,
    \newblock Relation de domination entre deux processus,
    \newblock Ann. Inst. H. Poincar\'e Sect. B 13(1977)171--179.

\bibitem{Lionnet:dosReis:Lukasz:2015}
A. Lionnet, G. dos Reis and L. Szpruch,
				\newblock Time discretization of FBSDE with polynomial growth drivers and reaction–diffusion PDEs,
				\newblock Annals of Applied Probability 25(2015)2563--2625.

\bibitem{Marinelli:Roeckner:2016}
C.~Marinelli and M.~R\"ockner,
    \newblock On the maximal inequalities of Burkholder, Davis and Gundy,
    \newblock Expo. Math. 34(2016)1--26.

\bibitem{Menoukeu_etal:2013}
O.~Menoukeu-Pamen, T.~Meyer-Brandis, T.~Nilssen, F.~Proske and T.~Zhang,
    \newblock A variational approach to the construction and Malliavin differentiability of strong solutions of SDE’s,
    \newblock Mathematische Annalen 357(2013)761--799.

\bibitem{MG:LY:2024}
T.~M\"uller-Gronbach and L.~Yaroslavtseva,
    \newblock On the complexity of strong approximation of stochastic differential equations with a non-Lipschitz drift coefficient,
    \newblock Journal of Complexity 85(2024)101870.

\bibitem{Nualart:06}
D.~Nualart,
    \newblock The Malliavin Calculus and Related Topics,
    \newblock Springer, 2006.

\bibitem{deRaynal:2017}
P.E.C.~de Raynal,
    \newblock Strong existence and uniqueness for degenerate SDE with H\"older drift,
    \newblock Annales de l’Institut Henri Poincar\'e 53(2017)259--286.

\bibitem{dosReis:Wilde:2024}
G.~dos Reis and Z.~Wilde,
    \newblock Malliavin differentiability of McKean-Vlasov SDEs with locally Lipschitz coefficients,
    \newblock Boletim da Sociedade Portuguesa de Matem\'atica, 2024.

\bibitem{Ren:XZhang:2003}
J.~Ren and X.~Zhang,
    \newblock Stochastic flows for SDEs with non-Lipschitz coefficients.
    \newblock Bull. Sci. math. 127(2003)739--754.

\bibitem{Ren:XZhang:2006}
J.~Ren and X.~Zhang,
    \newblock Continuity modulus of stochastic homeomorphism flows for SDEs with non-Lipschitz coefficients,
    \newblock J. Funct. Anal. 241(2006)439--456.

\bibitem{Ren:Zhang:2024}
J.~Ren and H.~Zhang,
    \newblock One-dimensional SDEs with LPS-type singular drift coefficients and Hölder continuous diffusion coefficients,
    \newblock J. Math. Anal. Appl. 536(2024), no.1, Paper No. 128166.

\bibitem{Revuz:Yor:1999}
D.~Revuz and M.~Yor,
    \newblock Continuous martingales and Brownian motion, third ed.,
    \newblock Springer, 1999.

\bibitem{Rogers:Williams:2:2nd}
L.C.G.~Rogers and D.~Williams,
    \newblock Diffusions, Markov processes and martingales,
    \newblock Cambridge University Press, 2000.

\bibitem{Mohammed:etal:2015}
S.-E.A.~Mohammed, T.-K.~Nilssen and F.-N.~Proske,
    \newblock Sobolev differentiable stochastic flows for SDEs with singular coefficients: Applications to the transport equation,
    \newblock Ann. Probab. 43(2015)1535--1576.

\bibitem{Stroock:Varadhan:1979}
D.W.~Stroock and S.R.S.~Varadhan,
    \newblock Multidimensional Diffusion Processes,
    \newblock Springer, 1997.

\bibitem{Swart:01}
J.M.~Swart,
    \newblock A 2-dimensional SDE whose solutions are not unique,
    \newblock Elect. Comm. in Probab. 6, 2001.

\bibitem{Veretennikov:1981}
A.Yu.~Veretennikov,
    \newblock On strong solutions and explicit formulas for solutions of stochastic integral equations,
    \newblock Mathematics of the USSR-Sbornik 39(1981)387--403.

\bibitem{FYWang:XZhang:2015}
F.-Y.~Wang and X.~Zhang,
    \newblock Degenerate SDE with H\"older-Dini drift and non-Lipschitz noise coefficient,
    \newblock SIAM Journal on Mathematical Analysis 48(2015)2189--2226.

\bibitem{ZWang:XZhang:2020}
Z.~Wang and X.~Zhang,
    \newblock Existence and uniqueness of degenerate SDEs with Hölder diffusion and measurable drift,
    \newblock J. Math. Anal. Appl. 484(2020)123679.

\bibitem{Watanabe:Yamada:71}
S.~Watanabe and T.~Yamada,
    \newblock On the uniqueness of solutions of stochastic differential equations II,
    \newblock J. Math. Kyoto Univ. 11, 1971.

\bibitem{Xia:Xie:XZhang:Zhao:2020}
P.~Xia, L.~Xie, X.~Zhang and G.~Zhao,
    \newblock $L^q(L^p)$-theory of stochastic differential equations,
    \newblock Stoch. Proc. Appl. 130(2020)5188--5211.

\bibitem{Xie:Zhang:2016}
L.~Xie and X.~Zhang,
    \newblock Sobolev differentiable flows of SDEs with local Sobolev and super-linear growth coefficients,
    \newblock Ann. Probab. 44(6)(2016)3661--3687.

\bibitem{SQZhang:Yuan:2021}
S.Q.~Zhang and C.~Yuan,
    \newblock A Zvonkin's transformation for stochastic differential equations with singular drift
              and applications,
    \newblock Journal Diff. Equations 297(2021)277--319.

\bibitem{XZhang:2005}
X.~Zhang,
    \newblock Homeomorphic flows for multi-dimensional SDEs with non-Lipschitz coefficients,
    \newblock Stoch. Proc. Appl. 115(2005)435--448.

\bibitem{XZhang:2016}
X.~Zhang,
    \newblock Stochastic differential equations with Sobolev diffusion and singular drift 
              and applications,
    \newblock Ann. Appl. Prob. 26(2016)2697--2732.

\bibitem{Zvonkin:1974}
A.K.~Zvonkin,
    \newblock A transformation of the phase space of a diffusion process that removes the drift,
    \newblock Math. USSR Sb. 22(1974)129--149.

\end{thebibliography}

\section*{Acknowledgement} 
We thank Hannah Geiss for the careful reading of the manuscript and her comments that improved the manuscript.

\end{document}